\setheadfoot{\onelineskip}{2\onelineskip}
\DeclareMathAlphabet{\mathpzc}{OT1}{pzc}{m}{it}
\newcommand{\bigexists}{%
  \mathop{\lower.9ex\hbox{%
      \scalebox{1.9}{\ensuremath{\exists}}}}\limits}
\newif\ifpgfshaperectangleroundnortheast
\newif\ifpgfshaperectangleroundnorthwest
\newif\ifpgfshaperectangleroundsoutheast
\newif\ifpgfshaperectangleroundsouthwest
\def\pgf@sh@bg@rectangle{%
  \pgfkeysgetvalue{/pgf/outer xsep}{\outerxsep}%
  \pgfkeysgetvalue{/pgf/outer ysep}{\outerysep}%
  \pgfpathmoveto{\pgfpointadd{\southwest}{\pgfpoint{\outerxsep}{\outerysep}}}%
  {\ifpgfshaperectangleroundnorthwest\else\pgfsetcornersarced{\pgfpointorigin}\fi%
    \pgfpathlineto{\pgfpointadd{\southwest\pgf@xa=\pgf@x\northeast\pgf@x=\pgf@xa}{\pgfpoint{\outerxsep}{-\outerysep}}}}%
  {\ifpgfshaperectangleroundnortheast\else\pgfsetcornersarced{\pgfpointorigin}\fi%
    \pgfpathlineto{\pgfpointadd{\northeast}{\pgfpoint{-\outerxsep}{-\outerysep}}}}%
  {\ifpgfshaperectangleroundsoutheast\else\pgfsetcornersarced{\pgfpointorigin}\fi%
    \pgfpathlineto{\pgfpointadd{\southwest\pgf@ya=\pgf@y\northeast\pgf@y=\pgf@ya}{\pgfpoint{-\outerxsep}{\outerysep}}}}%
  {\ifpgfshaperectangleroundsouthwest\else\pgfsetcornersarced{\pgfpointorigin}\fi%
    \pgfpathclose}}
\tikzset{
    WD/.style={
      label/.style={
        font=\everymath\expandafter{\the\everymath\scriptstyle},
        inner sep=0pt,
        node distance=2pt and -2pt},
      semithick,
      node distance=\bbx and \bby,
      decoration={markings, mark=at position \stringdecpos with \stringdec},
      bb port length=0,
      bb port sep=.5,
      bbx = .4cm,
      bb min width=.4cm,
      bby = 2ex,
      bb rounded corners=2pt,
      dot size=3pt,
      pack size = 16pt,
      pack psize = 16pt,
      penetration = 0pt,
      link size = 2pt,
      pack color = blue,
      surround sep=2pt,
      ar/.style={postaction={decorate}},
      execute at begin picture={\tikzset{
        x=\bbx, y=\bby,
        circuit logic US, tiny circuit symbols
        }
      }
    },
    bbx/.store in=\bbx,
    bby/.store in=\bby,
    bb port sep/.store in=\bbportsep,
    bb port length/.store in=\bbportlen,
    bb min width/.store in=\bbminwidth,
    bb rounded corners/.store in=\bbcorners,
    bb/.code 2 args={
      \pgfmathsetlengthmacro{\bbheight}{\bbportsep * (max(#1,#2)+1) * \bby}
      \pgfkeysalso{draw,minimum height=\bbheight,minimum
       width=\bbminwidth,outer sep=0pt,
         rounded corners=\bbcorners,thick,
         prefix after command={\pgfextra{\let\fixname\tikzlastnode}},
         append after command={\pgfextra{\draw
            \ifnum #1=0{} \else foreach \i in {1,...,#1} {
              ($(\fixname.north west)!{(2*\i-1)/(2*#1)}!(\fixname.south west)$) +(-\bbportlen,0) coordinate (\fixname_in\i) -- +(\bbportlen,0) coordinate (\fixname_in\i')}\fi
            \ifnum #2=0{} \else foreach \i in {1,...,#2} {
              ($(\fixname.north east)!{(2*\i-1)/(2*#2)}!(\fixname.south east)$) +(-
\bbportlen,0) coordinate (\fixname_out\i') -- +(\bbportlen,0) coordinate (\fixname_out\i)}\fi;
           }}}
    },
  dot size/.store in=\dotsize,
  dot/.style={
    circle, draw, thick, inner sep=0, fill=black, minimum width=\dotsize
  },
  pack size/.store in=\psize,
  pack psize/.store in=\ppsize,
  penetration/.store in=\penetration,
  spacing/.store in=\spacing,
  link size/.store in=\lsize,
  pack color/.store in=\pcolor,
  pack inside color/.store in=\picolor,
  pack inside color=blue!20,
  pack outside color/.store in=\pocolor,
  pack outside color=blue!50!black,
  surround sep/.store in=\ssep,
  link/.style={
    circle,
    draw=black,
    fill=black,
    inner sep=0pt,
    minimum size=\lsize
  },
  pack/.style={
    circle,
    draw = \pocolor,
    fill = \picolor,
    minimum size = \ppsize,
  },
  packs/.style={
    pack,
    pack size=6pt,
    pack psize=6pt,
  },
  func/.style={
    pack,
    rectangle,
    shape border uses incircle,
    rounded corners=.5*\psize,
    inner ysep=.125*\psize,
    minimum width=1.125*\psize,
    inner xsep=.25*\psize,
  },
  funcr/.style={
    func,
    rectangle round north west=false,
    rectangle round south west=false,
  },
  funcl/.style={
    func,
    rectangle round north east=false,
    rectangle round south east=false,
  },
  funcu/.style={
    func,
    rectangle round south east=false,
    rectangle round south west=false,
  },
  funcd/.style={
    func,
    rectangle round north east=false,
    rectangle round north west=false,
  },
  oshell/.style={
    shape border uses incircle,
    kite,
    inner sep=0pt,
    draw = \pocolor,
    fill = \picolor,
    inner ysep=.125*\psize,
    minimum width=.5*\psize,
    inner xsep=.075*\psize,
  },
  oshellr/.style={
    oshell,
    shape border rotate=90,
    },
  oshelll/.style={
    oshell,
    shape border rotate=270,
    },
  oshellu/.style={
    oshell,
    shape border rotate=180,
    },
  oshelld/.style={
    oshell,
    shape border rotate=0,
    },
  outer pack/.style={
    ellipse,
    draw,
    inner sep=\ssep,
    color=gray,
  },
  intermediate pack/.style={
    ellipse,
    dashed,
    draw,
    inner sep=\ssep,
    color=\pocolor,
  },
 }
\tikzset{light gray nodes/.style={every node/.style={fill=gray!40}}}
\tikzset{
  oriented WD/.style={
    every to/.style={out=0,in=180,draw},
    label/.style={
      font=\everymath\expandafter{\the\everymath\scriptstyle},
      inner sep=0pt,
      node distance=2pt and -2pt},
    semithick,
    node distance=1 and 1,
    decoration={markings, mark=at position \stringdecpos with \stringdec},
    ar/.style={postaction={decorate}},
    execute at begin picture={\tikzset{
      x=\bbx, y=\bby,
      every fit/.style={inner xsep=\bbx, inner ysep=\bby}}}
    },
    string decoration/.store in=\stringdec,
    string decoration={\arrow{stealth};},
    string decoration pos/.store in=\stringdecpos,
    string decoration pos=.7,
    bbx/.store in=\bbx,
    bbx = 1.5cm,
    bby/.store in=\bby,
    bby = 1.5ex,
    bb port sep/.store in=\bbportsep,
    bb port sep=1.5,
    bb port length/.store in=\bbportlen,
    bb port length=4pt,
    bb penetrate/.store in=\bbpenetrate,
    bb penetrate=0,
    bb min width/.store in=\bbminwidth,
    bb min width=1cm,
    bb rounded corners/.store in=\bbcorners,
    bb rounded corners=2pt,
    bb spider/.style={
      bb port sep=1, bb port length=10pt, bbx=.4cm, bb min width=.4cm, bby=.8ex},
    bb small/.style={
      bb port sep=1, bb port length=2.5pt, bbx=.4cm, bb min width=.4cm, bby=.7ex},
    bb medium/.style={
      bb port sep=1, bb port length=2.5pt, bbx=.4cm, bb min width=.4cm, bby=.9ex},
    bb/.code 2 args={
      \pgfmathsetlengthmacro{\bbheight}{\bbportsep * (max(#1,#2)+1) * \bby}
      \pgfkeysalso{draw,minimum height=\bbheight,minimum
       width=\bbminwidth,outer sep=0pt,
         rounded corners=\bbcorners,thick,
         prefix after command={\pgfextra{\let\fixname\tikzlastnode}},
         append after command={\pgfextra{\draw
            \ifnum #1=0{} \else foreach \i in {1,...,#1} {
              ($(\fixname.north west)!{(2*\i-1)/(2*#1)}!(\fixname.south west)$) +(-\bbportlen,0) coordinate (\fixname_in\i) -- +(\bbpenetrate,0) coordinate (\fixname_in\i')}\fi
            \ifnum #2=0{} \else foreach \i in {1,...,#2} {
              ($(\fixname.north east)!{(2*\i-1)/(2*#2)}!(\fixname.south east)$) +(-
\bbpenetrate,0) coordinate (\fixname_out\i') -- +(\bbportlen,0) coordinate (\fixname_out\i)}\fi;
           }}}
    },
    bb name/.style={
      append after command={
        \pgfextra{\node[anchor=north] at (\fixname.north) {#1};}
      }
    },
  }
\tikzset{
  unoriented WD/.style={
    every to/.style={draw},
    shorten <=-\penetration, shorten >=-\penetration,
    label distance=-2pt,
    thick,
    node distance=\spacing,
    execute at begin picture={\tikzset{
      x=\spacing, y=\spacing, circuit logic US, tiny circuit symbols}
    }
  },
  pack size/.store in=\psize,
  pack psize/.store in=\ppsize,
  pack size = 8pt,
  pack psize = 8pt,
  penetration/.store in=\penetration,
  penetration = 0pt,
  spacing/.store in=\spacing,
  spacing = 8pt,
  link size/.store in=\lsize,
  link size = 2pt,
  pack color/.store in=\pcolor,
  pack color = blue,
  pack inside color/.store in=\picolor,
  pack inside color=blue!20,
  pack outside color/.store in=\pocolor,
  pack outside color=blue!50!black,
  surround sep/.store in=\ssep,
  surround sep=4pt,
  link/.style={
    circle,
    draw=black,
    fill=black,
    inner sep=0pt,
    minimum size=\lsize
  },
  pack/.style={
    circle,
    draw = \pocolor,
    fill = \picolor,
    inner sep = \ppsize/2,
    minimum size = \ppsize
  },
  func/.style={
    pack,
    rectangle,
    rounded corners=.5*\psize,
    inner ysep=0.3*\psize,
    minimum width=1.125*\psize,
    inner xsep=0.5*\psize,
  },
  funcr/.style={
    func,
    rectangle round north west=false,
    rectangle round south west=false,
  },
  funcl/.style={
    func,
    rectangle round north east=false,
    rectangle round south east=false,
  },
  funcu/.style={
    func,
    rectangle round south east=false,
    rectangle round south west=false,
  },
  funcd/.style={
    func,
    rectangle round north east=false,
    rectangle round north west=false,
  },
  outer pack/.style={
    ellipse,
    draw,
    inner sep=\ssep,
    color=gray,
  },
  to_out/.style={shorten >= -2pt},
  fr_out/.style={shorten <= -2pt},
  intermediate pack/.style={
    ellipse,
    dashed,
    draw,
    inner sep=\ssep,
    color=\pocolor,
   },
  syntax/.style={
    pack color = violet,
    pack inside color=violet!20,
    pack outside color=violet!50!black,
  },
}
\tikzset{
  spider diagram/.style={
    every to/.style={out=0, in=180, draw, thick},
    thick,
    dot size = 5pt,
    execute at begin picture={\tikzset{
      x=\leglen, y=\leglen/3}}
  },
  dot size/.store in=\dotsize,
  dot fill/.store in=\dotfill,
  dot fill = black,
  leg length/.store in=\leglen,
  leg length = 15pt,
  baby/.style={dot size = 2pt, leg length = 6pt},
  young/.style={dot size = 3pt, leg length = 10pt},
  adolescent/.style={dot size = 4pt, leg length = 12pt},
  special spider/.code n args={4}{
    \pgfkeysalso{circle, draw, thick, inner sep=0, fill=\dotfill, minimum width=\dotsize,
      prefix after command={\pgfextra{\let\fixname\tikzlastnode}},
      append after command={\pgfextra{
        \ifnum #1=0{} \else {\foreach \i in {1,...,#1} {
          \tikzmath{\anglei={-90*(#1+1-2*\i)/#1};}
          \draw [thick]
            (\fixname) .. controls
            ($(\fixname.center)-(\anglei:#3/3)$) and ($(\fixname.center)-(\anglei:#3*2/3)$) ..
            ({$(\fixname.center)-(\anglei:#3*2/3)$}-|{$(\fixname.center)-(#3,0)$}) coordinate (\fixname_in\i);
        }}\fi
        \ifnum #2=0{} \else {\foreach \i in {1,...,#2} {
          \tikzmath{\anglei={90*(#2+1-2*\i)/#2};}
          \draw [thick]
            (\fixname.center) .. controls
            ($(\fixname.center)+(\anglei:#4/3)$) and ($(\fixname.center)+(\anglei:#4*2/3)$) ..
            ({$(\fixname.center)+(\anglei:#4*2/3)$}-|{$(\fixname.center)+(#4,0)$}) coordinate (\fixname_out\i);
        }}\fi
      }}
    }
  },
  spider/.code 2 args={
    \pgfkeysalso{special spider={#1}{#2}{\leglen}{\leglen}}
  }
}
\tikzset{
  inner WD/.style={
    every to/.style={out=0, in=180, draw, thick},
    unoriented WD,
    surround sep=0pt,
    pack size=10pt,
    pack psize=3pt,
    to_out/.style={shorten >= -1pt},
    fr_out/.style={shorten <= -1pt},
    pack size/.store in=\psize,
    pack psize/.store in=\ppsize,
    font=\tiny,
    anchor=center
  }
}
\tikzset{
  function/.style={->, thin, shorten <=4pt, shorten >=4pt}
}
\tikzset{
  tick/.style={
    postaction={
      decorate,
      decoration={
        markings, mark=at position 0.5 with {
          \draw[-] (0,.4ex) -- (0,-.4ex);
        }
      }
    }
  }
}
\tikzset{shrink/.style={outer sep=0,inner sep=0, minimum size=0},
  squeeze/.style={outer sep=4pt,inner sep=0, minimum size=0},
  rc/.style={rounded corners=0.5cm},
  la/.style={scale=0.8},
  a/.style={->}, e/.style={->>},
  m/.style={right hook->},
  d/.style={double, double equal sign distance,-},
  u/.style={dashed,->},
  n/.style={double equal sign distance, -implies},
  ne/.style={double equal sign distance, -},
  t/.style={double distance=2.5pt, -implies, postaction={draw,-}},
  te/.style={double distance=2.5pt, -, postaction={draw,-}},
  cover/.style={preaction={draw=white, -,line width=6pt}},
  over/.style={auto=false,fill=white,inner sep=1pt}}
\definecolor{darkblue}{rgb}{0,0,0.7}
\setlist{noitemsep, nolistsep}
\theoremstyle{plain}
\newtheorem{theorem}{Theorem}[chapter]
\newtheorem{proposition}[theorem]{Proposition}
\newtheorem{corollary}[theorem]{Corollary}
\newtheorem{lemma}[theorem]{Lemma}
\theoremstyle{definition}
\newtheorem*{question}{Question}
\newtheorem{definition}[theorem]{Definition}
\newtheorem{notation}[theorem]{Notation}
\theoremstyle{remark}
\newtheorem{example}[theorem]{Example}
\newtheorem{remark}[theorem]{Remark}
\newtheorem{warning}[theorem]{Warning}
\theoremstyle{plain}
\newtheorem{altheorem}{Theorem}
\newcommand{\ord}[1]{\underline{#1}}
\newcommand{\const}[1]{\operatorname{\mathtt{#1}}}
\newcommand{\cat}[1]{\mathcal{#1}}
\newcommand{\ccat}[1]{\mathbb{#1}}
\newcommand{\Cat}[1]{{\mathsf{#1}}}
\newcommand{\CCat}[1]{\operatorname{\mathcal{\StrLeft{#1}{1}}\Cat{\StrGobbleLeft{#1}{1}}}}
\newcommand{\poCat}[1]{\operatorname{\mathbb{\StrLeft{#1}{1}}\Cat{\StrGobbleLeft{#1}{1}}}}
\newcommand{\Funr}[1]{\operatorname{\mathsf{#1}}}
\DeclareMathOperator{\model}{-Mod}
\DeclareMathOperator{\ob}{Ob}
\DeclareMathOperator{\dom}{dom}
\DeclareMathOperator{\comp}{comp}
\DeclareMathOperator{\cod}{cod}
\DeclareMathOperator{\id}{id}
\DeclareMathOperator{\er}{ER}
\DeclarePairedDelimiter{\pair}{\langle}{\rangle}
\DeclarePairedDelimiter{\copair}{[}{]}
\DeclarePairedDelimiter{\church}{\llbracket}{\rrbracket}
\newcommand{\tn}[1]{\textnormal{#1}}
\newcommand{\op}{^{\tn{op}}}
\newcommand{\tp}{^{\dagger}}
\newcommand{\co}{^{\tn{co}}}
\newcommand{\inv}{^{\text{-}1}}
\newcommand{\tpow}[1]{^{\otimes #1}}
\newcommand{\name}[1]{{#1}^{\sqsubset}}
\newcommand{\unname}[1]{{#1}^{\sqsupseteq}}
\newcommand{\tab}[1]{|{#1}|}
\newcommand{\rrgcat}{\CCat{RgCat}}
\newcommand{\fflcat}{\CCat{FLCat}}
\newcommand{\rrgcalc}{\CCat{RgCalc}}
\newcommand{\rrlpocat}{\CCat{RlPoCat}}
\newcommand{\pprlpocat}{\CCat{PrlPoCat}}
\newcommand{\rrel}{\poCat{Rel}}
\newcommand{\ccospan}{\poCat{Cospan}}
\newcommand{\sspan}{\poCat{Span}}
\newcommand{\pposet}{\poCat{Poset}}
\newcommand{\cont}[1]{\ccat{C}_{#1}}
\newcommand{\rc}[1]{(\cont{#1},{#1})}
\newcommand{\fsyn}{\poCat{Syn}}
\newcommand{\fprd}{\poCat{Prd}}
\newcommand{\frel}{\poCat{Rel}}
\newcommand{\fladj}{\Funr{LAdj}}
\newcommand{\fspanpo}{\Funr{Span^{po}}}
\newcommand{\finset}{\Cat{FinSet}}
\newcommand{\lax}{\Cat{Lax}}
\newcommand{\cc}{\ccat{C}}
\newcommand{\dd}{\ccat{D}}
\newcommand{\rr}{\ccat{R}}
\newcommand{\nn}{\mathbb{N}}
\newcommand{\pp}{\mathbb{P}}
\newcommand{\qq}{\mathbb{Q}}
\newcommand{\ww}{\mathbb{W}}
\newcommand{\true}{\const{true}}
\newcommand{\thecup}{\const{cup}}
\newcommand{\thecap}{\const{cap}}
\newcommand{\cp}{\mathbin{\fatsemi}}
\newcommand{\cocolon}{\ \,\llap{\normalfont:}}
\DeclareMathOperator{\iso}{\cong}
\DeclareMathOperator{\eqv}{\simeq}
\newcommand{\aeqv}{\ensuremath{\mathrel{\overset{ae}\eqv}}}
\newcommand{\To}[1]{\xrightarrow{#1}}
\newcommand{\Too}[1]{\To{\;\;#1\;\;}}
\newcommand{\from}{\leftarrow}
\newcommand{\From}[1]{\xleftarrow{#1}}
\newcommand{\surj}{\twoheadrightarrow}
\newcommand{\tto}{\Rightarrow}
\newcommand{\qqand}{\qquad\text{and}\qquad}
\newcommand{\qand}{\quad\text{and}\quad}
\newcommand{\adj}[5][30pt]{
  \begin{tikzpicture}[baseline=(1.base)]
    \node(1)[]{\ensuremath{#2}};
    \node(2)[right={#1} of 1]{\ensuremath{#5}};
    \draw[a]($(1.east)+(0,5pt)$)to node(left)[la,above]{\ensuremath{#3}}($(2.west)+(0,5pt)$);
    \draw[a]($(2.west)-(0,5pt)$)to node(right)[la,below]{\ensuremath{#4}}($(1.east)-(0,5pt)$);
    \path (1) -- node[midway,rotate=-90]{$\dashv$} (2);
  \end{tikzpicture}
}
\newcommand{\adjr}[5][30pt]{
  \begin{tikzpicture}[baseline=(1.base)]
    \node(1)[]{\ensuremath{#2}};
    \node(2)[right={#1} of 1]{\ensuremath{#5}};
    \draw[a]($(1.east)+(0,5pt)$)to node(left)[la,above]{\ensuremath{#3}}($(2.west)+(0,5pt)$);
    \draw[a]($(2.west)-(0,5pt)$)to node(right)[la,below]{\ensuremath{#4}}($(1.east)-(0,5pt)$);
    \path (1) -- node[midway,rotate=90]{$\dashv$} (2);
  \end{tikzpicture}
}
\newcommand{\sub}{\Cat{Sub}}
\newcommand{\out}{\mathrm{out}}
\newcommand{\define}[1]{\textbf{#1}}
\newlist{propenum}{enumerate}{1}
\setlist[propenum]{label=\roman*., ref=\theproposition~(\roman*)}
\newlist{lemenum}{enumerate}{1}
\setlist[lemenum]{label=\roman*., ref=\thelemma~(\roman*)}
\newcommand{\otimesp}{\mathbin{\otimes'}}
\newcommand{\biadj}{\dashv_{\mathrm{bi}}}
\newlength{\arrowlength}
\newcommand{\threecellarr}{\mathrel{\begin{tikzpicture}[baseline=(A.base)]
      \node(A)[inner sep=0,outer sep=0,minimum size=0] at (0,0) {\vphantom{a}};
      \draw[t](A) -- ++(\arrowlength,0);
    \end{tikzpicture}}}
\def\cellslide{0.5}
\def\celllength{0.2cm}
\NewDocumentCommand{\cell}{ O{} O{n} O{\cellslide} O{\celllength} m m m }{
  \coordinate (mid) at ($#5!{#3}!#6$);
  \coordinate (start) at ($(mid)!{#4}!#5$);
  \coordinate (end) at ($(mid)!{#4}!#6$);
  \draw[#2] (start) to node(secret)
  [inner sep=6pt,outer sep=0,minimum size=0,#1]{{#7}} (end);
}
\NewDocumentCommand{\celli}{ O{} O{n} O{\cellslide} O{\celllength} m m m }{
  \coordinate (mid) at ($#5!{#3}!#6$);
  \coordinate (start) at ($(mid)!{#4}!#5$);
  \coordinate (end) at ($(mid)!{#4}!#6$);
  \draw[#2] (start) to node(label)[inner sep=6pt,outer sep=0,minimum size=0,#1]{{#7}} (end);
  \coordinate (far) at ($(end)+(mid)-(label)$);
  \node[] at ($(end)!6pt!(far)$) {$\scriptscriptstyle\iso$} ;
}
\newlength{\diagrampunctdist}\setlength{\diagrampunctdist}{2ex}
\newsavebox{\diagrampunct}
\xdef\basenode{\@ifmtarg{#2}{current bounding box.south}{#2.base}}\savebox{\diagrampunct}{#1}\begin{center}\begin{tikzpicture}}
\newcounter{diagram}
  \xdef\basenode{\@ifmtarg{#2}{current bounding box.south}{#2.base}}\savebox{\diagrampunct}{#1}\begin{center}\hfill\begin{tikzpicture}}
\hfill\llap{\normalfont(\thediagram)}\end{center}}
\crefname{diagram}{}{}
\Crefname{diagram}{Diagram}{Diagram}
\newcommand{\funcrinl}[1]{\begin{tikzpicture}[inner WD,baseline=(f.-25)]
    \node(f)[syntax,oshellr]{#1};
    \draw(f.west) -- +(-0.5,0);
    \draw(f.east) -- +(+0.5,0);
  \end{tikzpicture}
}
\newcommand{\ladjinl}[1]{\begin{tikzpicture}[inner WD,baseline=(f.-25)]
    \node(f)[syntax,funcr]{#1};
    \draw(f.west) -- +(-0.5,0);
    \draw(f.east) -- +(+0.5,0);
  \end{tikzpicture}
}
\newcommand{\radjinl}[1]{\begin{tikzpicture}[inner WD,baseline=(f.-25)]
    \node(f)[syntax,funcl]{#1};
    \draw(f.west) -- +(-0.5,0);
    \draw(f.east) -- +(+0.5,0);
  \end{tikzpicture}
}
\newcommand{\examplewiringdiagram}{  \begin{tikzpicture}[inner WD]
    \node[pack, syntax, oshell, shape border rotate=230] (f1) {$f_1$};
    \node[packs, below = 1.5 of f1.south] (p1) {};
    \node[packs, right= 2.5 of p1] (p2) {};
    \node[pack, syntax, oshell, shape border rotate=135, right=3 of f1] (f2) {$f_2$};
    \node[link, "$c_3$" left] at ($(p1)!.5!(p2)+(0,-2)$) (dot) {};
    \node[link, "$c_{6}$" right, below= 1.2 of p2] (c6) {};
    \node[outer pack, inner xsep=4pt, inner ysep=2pt, fit={($(f1)+(-2,2)$) (f2) (p1) (p2) (dot)}] (outer) {};
    \draw (p1) -- (dot);
    \draw (p2) -- (dot);
    \draw[to_out] (dot) -- (outer.270-|dot);
    \draw (p2) -- (c6);
    \draw (p1) to node[above] {$c_2$} (p2);
    \draw (p1) to node[left] {$c_1$}  (f1);
    \draw (p2) to node[right] {$c_5$}  (f1);
    \draw[to_out] (f1.140) to node[above,pos=0.01] {$c_5$}  (outer.140);
    \draw[to_out] (f2.80) to node[left] {$c_4$} (outer.50);
    \draw[to_out] (f2.0) to node[below] {$c_5$} (outer.20);
  \end{tikzpicture}}
\DeclareFontFamily{U}{rcjhbltx}{}
\DeclareFontShape{U}{rcjhbltx}{m}{n}{<->s*[1.2]rcjhbltx}{}
\DeclareSymbolFont{hebrewletters}{U}{rcjhbltx}{m}{n}
\let\aleph\relax\let\beth\relax
\DeclareMathSymbol{\aleph}{\mathord}{hebrewletters}{39}
\DeclareMathSymbol{\beth}{\mathord}{hebrewletters}{98}
\title{Regular Calculi I: Graphical Regular Logic}
\author{tslil clingman \and Brendan Fong \and David I.\ Spivak}
\date{\vspace{-.3in}}
\begin{document}

\maketitle

\begin{abstract}
  What is ergonomic syntax for relations? In this first paper in a series of two, to answer the question we define regular calculi: a suitably structured functor from a category representing the syntax of regular logic to the category of posets, that takes each object to the poset of relations on that type. We introduce two major classes of examples, regular calculi corresponding to regular theories, and regular calculi corresponding to regular categories. For working in regular calculi, we present a graphical framework which takes as primitive the various moves of regular logic. Our main theorem for regular calculi is a syntax-semantics $2$-dimensional adjunction to regular categories.
\end{abstract}

\newpage

\tableofcontents*

\newpage

\chapter{Introduction}\label{chap.intro}

Classically, a set relation $R$ on finitely many sets $\{A_{i}\}_{I}$ is a subset $R\subseteq\prod A_{i}$ of the product of those sets. A salient feature of set relations is that they may be compared: given $R, R'$ set relations on $\{A_{i}\}_{I}$, it is reasonable to ask whether $R\subseteq R'$. More still, we may note that this partial ordering underlies in particular a meet-semilattice structure on set relations on $\{A_{i}\}_{I}$; there is a maximal set relation $\prod A_{i}$ and given two set relations $R,R'$ we may compute their meet as $R\cap R'$.

All of the structure we have highlighted so far is ``local'', it is particular to a fixed family of sets $\{A_{i}\}_{I}$ and set relations thereupon. Should we allow ourselves to involve multiple such families, we see that set relations support further structure still. For example, given set relations $R \subseteq X \times Y$ and $S \subseteq Y \times Z$, their composite set relation $R\cp_{Y} S\subseteq X\times Z$ may be described by the formula \[ R \cp_{Y} S = \big\{(x,z) \mid \exists y [R(x,y)\wedge S(y,z)]\big\}\ . \] This form of composition interacts with all of the ``local'' structure we listed above, and is in fact well defined for arbitrarily finitely many set relations $R_{k}$ over arbitrarily finitely many shared indices $F\subseteq I$---although we would quickly exhaust the utility of our notation $R\cp_{Y} S$ above in an attempt to write this precisely. Further examination of such operations reveals that we wish to deal precisely with those operations and structures permitted by and expressible in \emph{regular logic}: that fragment of first order logic generated by equality ($=$), true ($\true$), conjunction ($\wedge$), and existential quantification ($\exists$).

Having thus noted key aspects of the calculus of set relations, and their connection to regular logic, our motivating question is as follows.

\begin{question}\hypertarget{question}
  What is an \emph{ergonomic} syntax for \emph{relations}?
\end{question}\newcommand{\quest}{\hyperlink{question}{question}}

Let us now give meaning to the emphasised terms in the above question, beginning with the notion of \emph{relation}. Although we began our exploration by teasing apart the structure of set relations, we wish to find a suitably general syntax for \emph{any} class of objects supporting the structure of regular logic. Certainly at first we may straightforwardly generalise the notion of a set relation $R\subseteq\prod A_{i}$ to any sufficiently structured category by asking for a monomorphism $R\rightarrowtail\prod A_{i}$. However, we mean something broader even than this. Our interest is in allowing the type of things upon which relations are defined and the type of relations to be entirely different. For instance, we hope to also encompass conjunctive queries from database theory; essentially, those database queries that can be expressed using regular logic \cite{chandra1977optimal}. Another, abundant and indeed motivating source of examples where the type of relations differs from the type of objects is the class \emph{regular theories}.

A regular theory comprises a collection $\Sigma\text{-sort}$ of sorts, a collection $\Sigma\text{-rel}$ of relation symbols, and a collection $\mathbb T$ of axioms. From the first collection we build the notion of \emph{context}, a list of sorts $\sigma_{i}\in\Sigma\text{-sort}$ from which we are permitted to draw abstract variables. We then allow ourselves to form \emph{formulae} in contexts by inductively applying the structures of regular logic ($=$, $\true$, $\wedge$, $\exists$) to abstract variables and relation symbols on those variables. Finally we impose an ordering of \emph{provability} according to the laws of deduction of regular logic, as well as our stated axioms. As formulae host all of the same structures and properties we extracted from set relations above, we see that we are naturally led to consider formulae in a regular theory as relations. Note however that contexts are the type of things upon which formulae are defined and so we must allow for a difference between our relations and the things upon which they are defined.

Having attended to the word \emph{relation}, let us now give concrete meaning to our question by elaborating \emph{ergonomic}. By this term mean firstly the following rigorous notions. A relation $R$ defined on some finite collection of objects $\{A_{i}\}_{I}$ does not naturally admit notions of domain and codomain. While it may be possible in practice to choose somehow a division of the $A_{i}$, nevertheless on our view an ergonomic syntax for relations should not impose arbitrary measures such as a privileging of certain objects as domain or codomain. In a similar vein, while we only displayed explicitly a binary composition $R\cp_{Y}S$ for relations above, we noted that arbitrarily finitely many relations may be composed, over arbitrarily finitely shared indices, simultaneously. An ergonomic syntax then should directly support such unbiased, multi-ary operadic compositions.

However, there are also some non-rigorous criteria we wish to ascribe to our notion of ergonomics. The syntax, whatever its form, should intuitively encode the various deductions of regular logic. For instance, it should be evident in the syntax that we can eliminate any conjunct $\exists_{x'}[x=x']$ whenever $x$ is already present in the context. Furthermore, given the operadic nature of composition, our notion of ergonomic includes also the requirement of a graphical syntax.

Now that our \quest{} has been understood, we can turn our attention to the state of the art. The classical categorical syntax for relations is that of regular categories. These are categories which abstract enough of the structure of the category of sets to house regular logic, and thus to deal with relations. Unfortunately, by design regular categories do not capture the notion of relation we outlined above: a relation $R$ on objects $\{A_{i}\}_{I}$ in a regular category $\cat R$ is in particular an object $R\in\ob\cat R$ and so is necessarily of the same sort as the objects upon which it is defined. In order to remedy this, work was done on ``functionally complete `bicategories of relations'{}''\cite{carboni1987cartesian}\footnote{for another, ultimately equivalent approach in this general theme see \cite{freyd1990categories}}, or more recently and equivalently on \emph{relational po-categories} \cite{fong2019regular}---structures which form the progenitor of this paper. In common to both of these approaches is a privileging of relations to live between objects, instead of among them, thereby freeing the type of relations to differ from the type of objects. Moreover, these approaches benefit from sharing the same ``category theory'' as regular categories: \cite[Theorem 7.3]{fong2019regular} proves that $2$-category $\rrlpocat$ of relational po-categories is equivalent to the $2$-category $\rrgcat$ of regular categories. While this viewpoint does admit a pleasing graphical syntax for relations, nevertheless it suffers from the technical requirement of privileging a sense of ``binarity'' for relations: as morphisms, relations require a choice of domain and codomain, and composition is presented as a binary operation.

To organise more complicated multi-way composites of relations, many fields have developed some notion of wiring diagram. A good amount of recent work, including but not limited to control theory \cite{bonchi2014categorical,baez2015categories,fong2016categorical}, database theory and knowledge representation \cite{bonchi2018graphical,patterson2017knowledge}, electrical engineering \cite{baez2018compositional}, and chemistry \cite{baez2017compositional}, all serve to demonstrate the link between these languages and an ergonomic syntax for relations.

In a similar vein, we desire to use an ergonomic syntax to describe those relational structures arising from regular theories. Consider in particular the regular theory of a pre-order. This regular theory has a single sort $S$, and a single relation symbol $P$. Formally its axioms, reflexivity and transitivity, are implications between certain formulae in the theory. However, an ergonomic syntax would instead allow us to describe these axioms as follows.
\begin{diagram*}
  \node(A)[]{
    \begin{tikzpicture}[inner WD]
      \node(1)[minimum size=0, inner sep=0]{};
      \draw(1) -- ++(-2,0) -- ++(2,0);
      \node(2)[pack] at ($(1)+(6,0)$) {$P$};
      \draw(2.west) -- ++(-0.5, 0);
      \draw(2.east) -- ++(+0.5, 0);
      \path(1) -- node(a)[midway]{$\leq$} ($(2)-(1,0)$);
    \end{tikzpicture}
  };
  \node(B)[right=3cm of A.east,anchor=west]{
    \begin{tikzpicture}[inner WD]
      \node(1)[pack]{$P$};
      \node(2)[below= of 1,pack]{$P$};
      \coordinate(l) at ($(1)!0.5!(2)$){};
      \draw(1) -- (2);
      \draw(1.north) -- ++(0, +0.5);
      \draw(2.south) -- ++(0, -0.5);

      \node(3)[right=4 of l,pack]{$P$};
      \draw(3.north) -- ++(0,+0.5);
      \draw(3.south) -- ++(0,-0.5);
      \path(l-|2.east) -- node[midway]{$\leq$} (3);
    \end{tikzpicture}
  };
\end{diagram*}

The above-left diagram is our desired syntactic representation of reflexivity: whenever we have a wire---which represents an abstract variable of the unique sort $S$---we may deduce the presence of a $P$ relation between that wire and itself. Similarly, in the above-right diagram we see our desired syntactic representation of $\exists_{y}[xPy\wedge yPz]\vdash xPz$, that is, transitivity: given two abstract variables and an existentially quantified variable shared between two copies of $P$, we may deduce a $P$ relation directly between the two abstract variables.

How then do we go about answering our \quest{} and rigorously develop an ergonomic syntax for relations which affords us the above pictures? This paper and its companion \cite{grl2} are intended to serve as just such a means. Our first step, and the main thrust of this paper, is to introduce the notion of \emph{regular calculi}.

A regular calculus $\rc P$ comprises first the data of a symmetric monoidal \emph{po-category} $\cont P$---a category whose homs are posets and whose compositional and monoidal structures are monotonic; see \cref{def.pocats_funs_laxnts}. This po-category additionally has a structured notion of \emph{wiring diagram}, a purely combinatorial way to describe how objects are connected to one-another in $\cont P$ governed by cospans of sets and for which we develop a graphical syntax in \cref{chap.wires}. A regular calculus then additionally comprises the data of a \emph{right ajoint lax} po-functor $P$ from $\cont P$ to posets (\cref{def.ajax}). These pieces we assemble as follows. We think of the objects of $\Gamma\in\ob\cont P$ as contexts for relations in some relational theory, each poset $P(\Gamma)$ as the poset of relations in the context $\Gamma$ ordered by implication, the right adjoint lax structure on $P$ gives meets of relations $R\wedge R'\in P(\Gamma)$ and $\true_{\Gamma}\in P(\Gamma)$, and each wiring diagram $\Gamma\to\Gamma'$ gives a method for converting relations in the context $\Gamma$ to relations in the context $\Gamma'$ by using equality ($=$), true ($\true$), conjunction ($\wedge$), and existential quantification ($\exists$).

The structured notion of wiring diagram in the symmetric monoidal po-category of contexts $\cont P$ of a regular calculus $\rc P$ automatically extends to a graphical language for describing relations which arise out of regular logic operations on others. For instance,
in a fixed context $\Gamma$ from which we draw variables, and from relations $R_{1}$, $R_{2}$, and $R_{3}$ of arity 3, 3, and 4 respectively, we might wish to construct a composite relation $S$ graphically by specifying how the various $R_{i}$ are connected and share variables. By using the graphical notation for regular calculi we develop here, we will be able to rigorously draw and interpret the following \emph{graphical term} of the regular calculus $\rc P$
\begin{diagram*}[][][penetration=0, inner WD,  pack size=9pt, link size=2pt, scale=2, baseline=(out)]
  \node[packs] at (-1.5,-1) (f) {$R_{3}$};
  \node[packs] at (0,1.9) (g) {$R_{1}$};
  \node[packs] at (1.5,-1) (h) {$R_{2}$};
  \node[outer pack, inner sep=34pt] at (0,.2) (out) {};
  \node[above left=0.25 and 0.25 of out]{$S$};

  \node[link] at ($(f)!.5!(h)$) (link1) {};
  \node[link] at (-2.4,-.25) (link2) {};
  \node[link] at ($(f.75)!.5!(g.-135)$) (link3) {};

  \draw[fr_out] (out.270) to (link1);
  \draw[fr_out] (out.190)	to (link2);
  \draw[fr_out] (out.155) to (link3);
  \draw[fr_out] (out.-35)	to (h.-30);
  \draw[to_out,fr_out] (out.15)	to[out=-165,in=-110] (out.70);
  \draw (f.30) to[out=0,in=130] (link1);
  \draw (f.-30) to[out=0,in=-130] (link1);
  \draw (h.180) to (link1);
  \draw (g.-60) to (h.120);
  \draw (f.45) to (g.-105);
  \draw (f.75) to (link3);
  \draw (g.-135) to (link3);
\end{diagram*}
and recognise that it represents the following relation
\[
  S(y,z,z',x,x',z'') = \exists\, \tilde{x},\tilde{y}, \left[R_1(\tilde{x},\tilde{y},y) \wedge R_2(x',\tilde{x},x) \wedge R_3(y,\tilde{y},x',x') \wedge (z=z')\right]\ .
\] The structure governing these graphical terms, and the notion of wiring diagram, is here formalised as a \emph{supply for the po-prop} $\ww$. This object $\ww$ is morally the po-category $\ccospan\co$ of cospans of finite sets, and affords us intuitive encodings of the various theorems of regular logic. Indeed, that our wiring diagrams admit definition purely combinatorially in terms of four generators and some relations is a central aspect of our development; see \cref{sec.depict_ww}. For example, the relation $\exists_{x'}[x=x']$ is rendered as a graphical term as below left, and one of the equations in $\ww$ dictates precisely the equality below, therefore enabling us to eliminate any occurrences as indicated below right.
\begin{diagram*}
  \node(A){
    \begin{tikzpicture}[WD]
      \node[link] (mu) {};
      \draw (mu) -- +(-1,0);
      \draw (mu) to[out=60, in=180] +(.5,.5) node[link] {};
      \draw (mu) to[out=-60, in=180] +(1,-.5) -- +(.5,0);
    \end{tikzpicture}
  };
  \node(B)[right=of A.east,anchor=west]{
    \begin{tikzpicture}[WD]
      \draw (0,0) -- (2,0);
    \end{tikzpicture}
  };
  \path (A) -- node[midway]{$=$} (B);
\end{diagram*}

Let us now return to our motivating \quest{}. Note that regular calculi do not impose any constraints on the type of relations, and moreover do not enforce a notion of ``binarity'': relations have no chosen domain or codomain, and composition---as seen in the diagram above---is fully multi-ary and operadic. Thus, in making the notion of graphical term rigorous through the course of this work, we will see that regular calculi do present an ergonomic syntax for relations.

To claim that regular calculi are truly a syntax for relations, however, we must additionally establish that they may stand in wherever we would have otherwise used a different syntax to capture regular logic and relations.

The first such case we wish to draw attention to is the that of models for regular theories. Given a regular theory $(\Sigma,\mathbb T)$, one can describe what it means to model the theory in a regular category $\cat R$ and this gives a functor $\mathbb T\model(-)\colon \rrgcat\to\CCat{Cat}$. A major triumph of the classical work on categorical regular logic is to provide a representation\footnote{More correctly, a bi-representation for the $2$-functorial extension} for this functor: there is a \emph{syntactic regular category} $\cat C^{\mathrm{reg}}_{\mathbb T}$ associated to $(\Sigma,\mathbb T)$ and an appropriately natural family of equivalences $\rrgcat(\cat C^{\mathrm{reg}}_{\mathbb T},\cat R)\eqv\mathbb T\operatorname{-Mod}(\cat R)$---see, for instance, \cite{butz1998regular} for exposition in this vein.

We are able to give a complete analogy for regular calculi. Recall that we wish to understand regular calculi as housing a regular theory of relations. To this end, we expect that we should be able to extract from each regular calculus $\rc P$ a \emph{syntactic relational po-category} $\fsyn\rc P$ in analogy with the classical process $(\Sigma,\mathbb T)\mapsto\cat C^{\mathrm{reg}}_{\mathbb T}$ for regular theories to regular categories. Indeed, we provide such a construction which proceeds, in particular, by defining the objects of $\fsyn\rc P$ to be pairs $(\Gamma,\theta)$ of contexts $\Gamma\in\ob\cont P$ and relations $\theta$ in context $\Gamma$. In this way we see that $\fsyn$ models the syntax of the regular calculus closely. An immediate benefit of working categorically is that we are able to extend this construction to a $2$-functor $\fsyn\colon\rrgcalc\to\rrlpocat$ from regular calculi to relational po-categories.

To complete the analogy then we need two additional ingredients. Thinking once more of regular calculi as housing regular theories of relations, there is an appropriate notion of \emph{model for a regular calculi} and in \cref{def.models} we describe a $2$-functor $\rc P\model(\cdot)\colon\rrlpocat\to\CCat{Cat}$ sending a relational po-category $\rr$ to the $2$-category of models of $\rc P$ in $\rr$. Then, given a regular theory $(\Sigma,\mathbb T)$, in \cref{con.rgcalc_rgtheory} we prove the existence of an associated regular calculus $(\cont{\mathbb T},F_{\mathbb T})$.

With the ingredients prepared we are able to give the analogous theory of models. First, using tools developed in the companion, we prove that our notion of syntactic po-category suitably generalises the classical notion. This result appears as \cref{thm.syn_equiv} below.

\begin{altheorem}
  Given a regular theory $(\Sigma,\mathbb T)$, the syntactic po-category $\fsyn(\cc_{\mathbb T},F_{\mathbb T})$ of the regular calculus associated to the theory is equivalent to the po-category of relations $\frel(\cat C^{\mathrm{reg}}_{\mathbb T})$ of the regular category associated to the theory. That is, under the equivalence between relational po-categories and regular categories, the two constructions agree.
\end{altheorem}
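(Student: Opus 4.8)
The plan is to exhibit an explicit equivalence of relational po-categories, exploiting the fact that both sides are assembled from the same underlying syntactic data---contexts and formulae of $(\Sigma,\mathbb T)$ ordered by provability. First I would record the concrete description of each side. By \cref{con.rgcalc_rgtheory} the objects of $\fsyn(\cc_{\mathbb T},F_{\mathbb T})$ are pairs $(\Gamma,\theta)$ consisting of a context $\Gamma\in\ob\cont{\mathbb T}$ and a formula $\theta\in F_{\mathbb T}(\Gamma)$ in that context; dually, the objects of the classical syntactic regular category $\cat C^{\mathrm{reg}}_{\mathbb T}$ are the formulae-in-context $\{\Gamma\mid\theta\}$, and these survive unchanged as the objects of $\frel(\cat C^{\mathrm{reg}}_{\mathbb T})$. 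Thus $(\Gamma,\theta)\mapsto\{\Gamma\mid\theta\}$ is a bijection on objects, and I would take this as the object part of a comparison po-functor $\Phi$.

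For the $1$-cells, a morphism $(\Gamma,\theta)\to(\Gamma',\theta')$ in $\frel(\cat C^{\mathrm{reg}}_{\mathbb T})$ is a subobject of the product $\{\Gamma\mid\theta\}\times\{\Gamma'\mid\theta'\}$, which by the standard description of subobjects in a syntactic regular category is exactly a formula $\rho$ in the joint context $\Gamma,\Gamma'$ satisfying $\rho\vdash\theta\wedge\theta'$, taken up to provable equivalence and ordered by provability. I would then verify that the $1$-cells of $\fsyn(\cc_{\mathbb T},F_{\mathbb T})$ with the same endpoints are described by precisely the same data: this is where the definition of $\fsyn$ and of the calculus $F_{\mathbb T}$ enter, since the morphisms of the syntactic po-category are relations in the joint context refining both endpoints, with the refinement and the ordering again given by provability in $\mathbb T$. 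Once the hom-posets are identified, $\Phi$ is automatically a local order-isomorphism and essentially surjective, so the remaining content is that $\Phi$ respects the relational po-category structure: composition, identities, the dagger, and the local meets.

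The main obstacle is exactly this last point, that $\Phi$ intertwines the two notions of relational composition. In $\frel(\cat C^{\mathrm{reg}}_{\mathbb T})$ composition is the image-of-pullback formula $\exists y\,[\rho(x,y)\wedge\sigma(y,z)]$ computed internally to $\cat C^{\mathrm{reg}}_{\mathbb T}$, whereas in $\fsyn(\cc_{\mathbb T},F_{\mathbb T})$ it is obtained through the wiring-diagram (cospan) operations of the calculus together with the lax meet structure of $F_{\mathbb T}$. The crux is that the regular calculus of \cref{con.rgcalc_rgtheory} is engineered so that its wiring diagrams implement exactly the four generators of regular logic---$=$, $\true$, $\wedge$, and $\exists$---and hence the cospan-composite-then-quantify recipe reproduces the same formula $\exists y\,[\rho\wedge\sigma]$ up to provable equivalence. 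I would check this by unwinding both composites on the representative generators and invoking the coherence of the supply for $\ww$ from \cref{sec.depict_ww}. Finally, since $\frel$ realizes one direction of the equivalence $\rrlpocat\eqv\rrgcat$ of \cite{fong2019regular}, showing that $\Phi$ is an equivalence of relational po-categories is precisely the assertion that the two constructions agree under that equivalence. Alternatively, one may package the same content representably: the syntax--semantics adjunction of the companion \cite{grl2} makes $\fsyn(\cc_{\mathbb T},F_{\mathbb T})$ represent the models of the calculus, and matching these against $\mathbb T\model(-)$ as represented by $\cat C^{\mathrm{reg}}_{\mathbb T}$ yields the equivalence by uniqueness of representing objects.
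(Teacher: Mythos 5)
You should know at the outset that this paper never actually proves this statement: it is restated as \cref{thm.syn_equiv} in \cref{chap.preview} and its proof is explicitly deferred to the companion \cite{grl2}, so there is no in-paper argument to compare yours against. Judged on its own merits, your direct-verification strategy is sound and is essentially the natural proof. Your two key identifications are correct: by \eqref{eqn.syntactic_def} and the construction of $F_{\mathbb T}$ in \cref{con.rgcalc_rgtheory}, a morphism $(\Gamma,\theta)\to(\Gamma',\theta')$ of $\fsyn(\cc_{\mathbb T},F_{\mathbb T})$ is a provable-equivalence class of formulae $\rho$ in the joint context with $\rho\vdash\theta\wedge\theta'$, which is precisely the standard description of a subobject of $\{\Gamma\mid\theta\}\times\{\Gamma'\mid\theta'\}$ in $\cat C^{\mathrm{reg}}_{\mathbb T}$; and the $\fsyn$-composite, unwound through $\boxplus$ followed by $F_{\mathbb T}(\mu)$ and $F_{\mathbb T}(\epsilon)$, is exactly $\exists y[\rho\wedge\sigma]$, matching image-of-pullback composition in $\frel(\cat C^{\mathrm{reg}}_{\mathbb T})$.

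Three caveats. First, your claim of a \emph{bijection} on objects is too strong: $F_{\mathbb T}(\Gamma)$ is a quotient by provable equivalence while the objects of $\cat C^{\mathrm{reg}}_{\mathbb T}$ are not, and objects of $\cc_{\mathbb T}=\bigsqcup_{\Sigma\text{-sort}}\ww$ group variables by sort whereas classical contexts interleave them; the comparison functor is only essentially surjective, which is all you need. Second, your list of structure to be preserved (composition, identities, dagger, local meets) omits the one check that genuinely matters and includes several that come for free: an equivalence in $\rrlpocat$ is a strong symmetric monoidal po-functor, so you must verify that $\Phi$ is strong monoidal---easy, since $(\Gamma_1,p_1)\otimes(\Gamma_2,p_2)=(\Gamma_1\otimes\Gamma_2,p_1\boxplus p_2)$ is carried to $\{\Gamma_1,\Gamma_2\mid\theta_1\wedge\theta_2\}$---after which preservation of the supply of $\ww$ (hence of transposes and the Frobenius/meet structure) and of tabulations is automatic by \cref{lemma.pres_all_struct}. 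Third, your alternative representability argument is circular relative to the logical architecture of these papers: \cref{thm.model_equiv} is derived from \cref{thm.syn_equiv} together with \cref{thm.main}, so you cannot invoke the model correspondence to prove \cref{thm.syn_equiv} unless you first establish that correspondence independently of it.
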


Using this we prove additionally that our notion of model of a regular calculus suitably generalises the classical notion. This result appears as \cref{thm.model_equiv} below.

\begin{altheorem}
  There is an equivalence of categories $(\cc_{\mathbb T},F_{\mathbb T})\model(\frel\cat R)\eqv \mathbb T\model(\cat R)$ appropriately natural in regular categories $\cat R$, where $\frel\rr$ is the po-category of relations of $\cat R$.
\end{altheorem}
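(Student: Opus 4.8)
The plan is to present $(\cc_{\mathbb T},F_{\mathbb T})\model(\frel\cat R)$ and $\mathbb T\model(\cat R)$ as the two ends of a short chain of equivalences, each natural in $\cat R$, that pivots on the syntactic po-category $\fsyn(\cc_{\mathbb T},F_{\mathbb T})$. The decisive ingredient is the paper's syntax--semantics adjunction, which represents the models $2$-functor of \cref{def.models}: for every relational po-category $\rr$ it gives an equivalence
\[
  (\cc_{\mathbb T},F_{\mathbb T})\model(\rr)\;\eqv\;\rrlpocat\bigl(\fsyn(\cc_{\mathbb T},F_{\mathbb T}),\rr\bigr),
\]
natural in $\rr$. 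First I would specialise this to $\rr=\frel\cat R$, turning the computation of models in $\frel\cat R$ into a computation of maps out of $\fsyn(\cc_{\mathbb T},F_{\mathbb T})$.

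Next I would feed in \cref{thm.syn_equiv} (Theorem A), which furnishes an equivalence $\fsyn(\cc_{\mathbb T},F_{\mathbb T})\eqv\frel(\cat C^{\mathrm{reg}}_{\mathbb T})$ inside $\rrlpocat$. Substituting it into the hom-category above, and then using that $\frel$ underlies the $2$-equivalence $\rrgcat\eqv\rrlpocat$ of \cite[Theorem 7.3]{fong2019regular}---so that it is fully faithful on hom-categories---yields
\[
  \rrlpocat\bigl(\frel(\cat C^{\mathrm{reg}}_{\mathbb T}),\frel\cat R\bigr)\;\eqv\;\rrgcat\bigl(\cat C^{\mathrm{reg}}_{\mathbb T},\cat R\bigr).
\]
Finally I would close the chain with the classical bi-representation of the models $2$-functor by the syntactic regular category, $\rrgcat(\cat C^{\mathrm{reg}}_{\mathbb T},\cat R)\eqv\mathbb T\model(\cat R)$ (see \cite{butz1998regular}). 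Composing the four equivalences gives exactly $(\cc_{\mathbb T},F_{\mathbb T})\model(\frel\cat R)\eqv\mathbb T\model(\cat R)$.

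For the asserted naturality in $\cat R$ I would trace the variable through each link: the representability is natural in $\rr$, hence in $\cat R$ after precomposition with the $2$-functor $\frel$; the hom-equivalence induced by $\frel$ is natural by $2$-functoriality; the classical representation is natural by construction; and the substitution of \cref{thm.syn_equiv} is constant in $\cat R$. Pasting these naturality witnesses produces an equivalence natural in $\cat R$.

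The main obstacle is not any individual equivalence but their coherent alignment, and it concentrates in two places. First, the representability of regular-calculus models must be available in precisely the displayed form; this is the substantive content of the paper's main theorem, and it is where the genuine work lives. Second---and this is the delicate point specific to this statement---one must verify that the composite is not a mere abstract bijection but actually sends a $\mathbb T$-model to the regular-calculus model it ought to induce. Concretely, this requires that the equivalence of \cref{thm.syn_equiv} match the universal property of $\cat C^{\mathrm{reg}}_{\mathbb T}$ with that of $\fsyn(\cc_{\mathbb T},F_{\mathbb T})$ in a way compatible with the interpretation of the relation symbols of $(\Sigma,\mathbb T)$; checking this compatibility, rather than the formal chaining of equivalences, is the crux.
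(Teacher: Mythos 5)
Your proposal is correct and follows essentially the same route the paper takes (with the full details deferred to the companion): unfold $(\cc_{\mathbb T},F_{\mathbb T})\model(\frel\cat R)$ as $\rrlpocat(\fsyn(\cc_{\mathbb T},F_{\mathbb T}),\frel\cat R)$, substitute \cref{thm.syn_equiv} to replace $\fsyn(\cc_{\mathbb T},F_{\mathbb T})$ by $\frel(\cat C^{\mathrm{reg}}_{\mathbb T})$, transport homs along the $2$-equivalence of \cref{thm.carboni_equivalence}, and close with the classical representation $\rrgcat(\cat C^{\mathrm{reg}}_{\mathbb T},\cat R)\eqv\mathbb T\model(\cat R)$. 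The only slip is cosmetic: the first equivalence $(\cc_{\mathbb T},F_{\mathbb T})\model(\rr)\eqv\rrlpocat(\fsyn(\cc_{\mathbb T},F_{\mathbb T}),\rr)$ is not an output of the main theorem but is \cref{def.models} itself---an equality by definition---so that link of your chain is free, and the genuine content sits entirely in \cref{thm.syn_equiv} and its compatibility with the universal property of $\cat C^{\mathrm{reg}}_{\mathbb T}$, exactly as you identify at the end.
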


In fact, the above theorem is proven as a corollary of another result which shows that we may suitably substitute regular calculi as a syntax for relations in another fashion. Let us now turn our attention to the interaction between regular calculi and relational po-categories.

We have already demonstrated that regular calculi provide a home for regular theories, and so we would be remiss to omit addressing the question of regular categories---equivalently the relational po-categories. We show that from a relational po-category $\rr$ one may construct a regular calculus $\fprd\rr$ through a process we call \emph{taking predicates}. Given a regular category $\cat{R}$, viewed as its po-category of relations $\rrel\cat{R}$, our construction yields the regular calculus $\fprd\rrel\cat{R}$ whose contexts are the objects of $\cat{R}$ and whose relations $\theta$ in context $r$ are precisely the sub-objects $\theta\rightarrowtail r$, exactly as we might have hoped. This assignment of relational po-categories $\rr\mapsto\fprd\rr$ to regular calculi we extend to a $2$-functor $\fprd\colon\rrlpocat\to\rrgcalc$.

Recall however that we have the opposed $2$-functor $\fsyn\colon\rrgcalc\to\rrlpocat$ of the syntactic po-category construction, and so with $\fprd$ above we prove our first comparison theorem---appearing as part of \cref{thm.main} in this paper and whose proof is elaborated in the companion.

\begin{altheorem}
  The $2$-functors $\fsyn\colon\rrgcalc\to\rrlpocat$ and $\fprd\colon\rrlpocat\to\rrgcalc$ form a bi-adjunction $\fsyn\biadj\fprd$.
\end{altheorem}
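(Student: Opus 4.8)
The plan is to exhibit $\fsyn$ as a left bi-adjoint to $\fprd$ by producing a biuniversal arrow at each regular calculus, this being the $2$-dimensional analogue of equipping an ordinary adjunction with a unit satisfying a universal property. Concretely, for each regular calculus $\rc P$ I would first build a unit morphism of regular calculi $\eta_{\rc P}\colon\rc P\to\fprd\fsyn\rc P$. Since the objects of $\fsyn\rc P$ are the pairs $(\Gamma,\theta)$ with $\Gamma\in\ob\cont P$ and $\theta\in P(\Gamma)$, taking predicates of the relational po-category $\fsyn\rc P$ yields a regular calculus whose contexts are these pairs and whose relations-in-context are subobjects of $\fsyn\rc P$. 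The natural unit sends a context $\Gamma$ to the object $(\Gamma,\true_\Gamma)$ and a relation $\theta\in P(\Gamma)$ to the subobject presented by $(\Gamma,\theta)$; checking that this respects the symmetric monoidal, wiring-diagram and right-adjoint-lax structure of a regular calculus is the first block of routine verifications.

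The heart of the argument is to show that for every relational po-category $\rr$ the functor
\[
  \rrlpocat(\fsyn\rc P,\rr)\longrightarrow\rrgcalc(\rc P,\fprd\rr),\qquad G\longmapsto\eta_{\rc P}\cp(\fprd G),
\]
obtained by applying $\fprd$ and composing with the unit, is an equivalence of categories. For essential surjectivity I would take an arbitrary morphism $M\colon\rc P\to\fprd\rr$ of regular calculi and extend it to a relational po-functor $\widehat M\colon\fsyn\rc P\to\rr$, sending the object $(\Gamma,\theta)$ to the object of $\rr$ named by the predicate $M(\theta)$. Because the objects, morphisms and $2$-cells of $\fsyn\rc P$ are generated from the contexts and relations of $\rc P$ under precisely the operations of regular logic that both $\fprd$ and $M$ preserve, this extension is forced, well defined, and recovers $M$ up to coherent isomorphism after restriction along $\eta_{\rc P}$. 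Fully faithfulness then amounts to the statement that a relational po-functor out of $\fsyn\rc P$ is determined, up to a unique comparison $2$-cell, by its restriction along the unit, which again follows from this generators-and-relations description of the syntactic construction.

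Finally I would check that the family of equivalences above is pseudonatural in both variables: covariantly in $\rr$, so that it commutes up to coherent isomorphism with post-composition along relational po-functors $\rr\to\rr'$; and contravariantly in $\rc P$, through the behaviour of the unit under pre-composition by morphisms of regular calculi. Packaging the biuniversal arrows $\eta_{\rc P}$ together with these coherence data then yields the bi-adjunction $\fsyn\biadj\fprd$ by the standard representability characterisation of bi-adjoints.

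The step I expect to be the main obstacle is essential surjectivity, namely verifying that the explicit presentation of $\fsyn\rc P$ is \emph{exactly} rigid enough to force $\widehat M$ and impose no further constraints: one must confirm that every equation holding among composites in $\fsyn\rc P$ is already a consequence of a law of regular logic validated by $\fprd\rr$, so that $\widehat M$ is consistently defined on $2$-cells and not merely on objects. It is precisely this coherence of the syntactic construction with the predicate calculi that requires the detailed analysis carried out in the companion \cite{grl2}.
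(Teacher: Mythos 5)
A preliminary caveat: this paper only \emph{states} the bi-adjunction---it appears as part of \cref{thm.main}, whose proof is explicitly deferred to the companion paper \cite{grl2}---so there is no in-paper proof to compare your proposal against line by line. On its own terms, your packaging is the standard and correct one: produce a unit $\eta_{\rc P}\colon\rc P\to\fprd\fsyn\rc P$ (your formula $\Gamma\mapsto(\Gamma,\true_{\Gamma})$, with the sharp-component given by the canonical isomorphism $P(\Gamma)\cong\fsyn\rc P\big((I,\true),(\Gamma,\true_{\Gamma})\big)$, is the right one) and show that $G\mapsto\eta_{\rc P}\cp\fprd G$ gives an equivalence of hom-categories $\rrlpocat(\fsyn\rc P,\rr)\to\rrgcalc(\rc P,\fprd\rr)$, pseudo-naturally in $\rc P$ and $\rr$; this biuniversal-arrow formulation is a legitimate route to a bi-adjunction.

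The genuine gap sits exactly where you predicted trouble, but it is not the coherence issue you describe: your construction of $\widehat M$ never invokes tabulations. Writing $M=(M_{0},M^{\sharp})$, what $M$ hands you at an object $(\Gamma,\theta)$ of $\fsyn\rc P$ is a \emph{morphism} $M^{\sharp}_{\Gamma}(\theta)\colon I\to M_{0}(\Gamma)$ of $\rr$, not an object; ``the object of $\rr$ named by $M(\theta)$'' does not exist until you take the chosen tabulation $\tab{M^{\sharp}_{\Gamma}(\theta)}$ of \cref{def.tabulation}. This Freyd--Scedrov structure is precisely what separates $\rrlpocat$ from $\pprlpocat$, and its omission is fatal to the argument as written: $\fprd$ is defined on all of $\pprlpocat$, so if essential surjectivity really were ``forced'' by a generation property of $\fsyn\rc P$, the same argument would produce a bi-adjunction against $\pprlpocat$, which fails ($\ww$ itself is prerelational but not relational---the cospan $\ord0\to\ord1\from\ord0$ has no tabulation, \cref{ex.ww_not_relational}). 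Relatedly, the appeal to a ``generators-and-relations description'' is misplaced: $\fsyn\rc P$ is not freely presented; its objects are \emph{all} pairs $(\Gamma,p)$ and its homs are the down-sets of $p_{1}\boxplus p_{2}$, so nothing forces a value of $\widehat M$ on $(\Gamma,\theta)$---it must be constructed, then shown to be po-functorial on morphisms $\theta_{12}$ (by conjugating $M^{\sharp}(\theta_{12})$ with the tabulation data $f_{L}$, $f_{R}$), strong monoidal, supply-preserving, and to restrict along $\eta_{\rc P}$ to $M$ up to coherent isomorphism. That tabulation-based construction---the same mechanism that makes the co-unit $\fsyn\fprd\rr\to\rr$, $(r,\theta)\mapsto\tab{\theta}$, an equivalence (\cref{cor.vague_eqv})---is the actual mathematical content of the theorem, and it is what your sketch leaves out.
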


By a \emph{bi-adjunction} here we mean the appropriate notion of $2$-dimensional adjunction where the equations on the unit and co-unit now hold only up to invertible $3$-dimensional morphisms, each of which satisfy some appropriate equation. As adjunctions transfer a wealth of category-theoretic aspects, so too do bi-adjunctions---this bi-adjunction affords us a rich comparison of the $2$-category theory of regular calculi and relational po-categories. However, from the point of view of providing an ergonomic syntax for relational po-categories, or equivalently regular categories, it is as yet unsatisfactory.

Without a stronger theorem we cannot be sure that by working syntactically in the regular theory carried by the regular calculus $\fprd\rr$ we are in fact working in the relational po-category $\rr$ itself. That is, we wish to know: is there an equivalence between the syntax $\fsyn\fprd\rr$ given by our regular calculus approach and the relational po-category $\rr$? To this end we prove as part of our main theorem the following general answer to this question.

\begin{altheorem}
  The co-unit of the bi-adjunction $\fsyn\biadj\fprd$ is an adjoint equivalence, so relational po-categories are pseudo-reflective in regular calculi.
\end{altheorem}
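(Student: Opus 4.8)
The plan is to exhibit, for each relational po-category $\rr$, an explicit pseudo-inverse to the co-unit component $\varepsilon_{\rr}\colon\fsyn\fprd\rr\to\rr$, verify that it witnesses an equivalence of relational po-categories, and then promote the resulting equivalence to an adjoint equivalence using the standard fact that any equivalence $1$-cell in a $2$-category can be completed to an adjoint equivalence on the same $1$-cell. Since the pseudo-naturality of $\varepsilon$ is already supplied by the bi-adjunction data of \cref{thm.main}, once every component is an equivalence the co-unit is an adjoint equivalence; and because the right adjoint of a bi-adjunction whose co-unit is an equivalence is bifully faithful, this yields that $\rrlpocat$ is pseudo-reflective in $\rrgcalc$. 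Recall concretely that the objects of $\fsyn\fprd\rr$ are pairs $(r,\theta)$ consisting of an object $r$ of $\rr$ and a predicate $\theta\rightarrowtail r$, and that $\varepsilon_{\rr}$ sends $(r,\theta)$ to the object of $\rr$ obtained by splitting the coreflexive determined by $\theta$.

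First I would dispose of essential surjectivity, which is immediate. Writing $\delta_{\rr}\colon\rr\to\fsyn\fprd\rr$ for the assignment $s\mapsto(s,\top_{s})$ on objects, where $\top_{s}\colon s\rightarrowtail s$ is the maximal predicate, one has $\varepsilon_{\rr}\delta_{\rr}(s)=s$ on the nose, since splitting the identity coreflexive returns $s$. Verifying that $\delta_{\rr}$ extends to a relational po-functor and that the comparison $\varepsilon_{\rr}\delta_{\rr}\cong\id_{\rr}$ is an invertible $2$-cell is routine, as every relation of $\rr$ is visibly a morphism $(s,\top_{s})\to(s',\top_{s'})$.

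The substance lies in local full faithfulness: I would show that for all objects $(r,\theta)$ and $(r',\theta')$ the comparison map of hom-posets
\[
  \fsyn\fprd\rr\big((r,\theta),(r',\theta')\big)\;\longrightarrow\;\rr\big(\varepsilon_{\rr}(r,\theta),\varepsilon_{\rr}(r',\theta')\big)
\]
is an isomorphism of posets. Unwinding the construction of $\fsyn$, a morphism on the left is a relation $\phi$ in the combined context that is compatible with $\theta$ and $\theta'$, whereas the right-hand side is the poset of relations between the split objects; restriction and corestriction along the splitting monos of $\theta$ and $\theta'$ should furnish mutually inverse, order-preserving assignments between the two sides. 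This is exactly where the defining axioms of a relational po-category---that predicates (coreflexives) split, equivalently that relations tabulate---are essential, and I expect it to be the main obstacle: one must check that the restriction/corestriction operations are well defined on the nose, preserve and reflect the partial order, and are genuinely inverse to one another, invoking the splitting and tabulation lemmas established earlier and in the companion.

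Having established both conditions, the characterization of equivalences of relational po-categories (essentially surjective together with local poset-isomorphism) shows each $\varepsilon_{\rr}$ is an equivalence; the same local isomorphism simultaneously supplies the second invertible $2$-cell $\delta_{\rr}\varepsilon_{\rr}\cong\id$, which at $(r,\theta)$ identifies $(\theta,\top_{\theta})$ with $(r,\theta)$ via the splitting mono. Promoting this to an adjoint equivalence and checking its compatibility with the unit of \cref{thm.main} then shows the co-unit itself is an adjoint equivalence, and pseudo-reflectivity of $\rrlpocat$ in $\rrgcalc$ follows at once.
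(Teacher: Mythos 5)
There is nothing in this paper to compare your argument against: the statement is part of \cref{thm.main}, which is only \emph{previewed} here, and the paper explicitly defers its proof to the companion \cite{grl2}. The most I can do is measure your outline against the constructions the paper does provide---$\fprd$ in \cref{sec.prd}, the objects and hom-posets of $\fsyn\rc P$ in \eqref{eqn.syntactic_def}, chosen tabulations in \cref{def.tabulation} and \cref{def.rl2cat}, and the criterion (\cref{def.po_equiv} and the lemma following it) that a po-functor is an equivalence iff it is fully faithful and essentially surjective. Against that yardstick your skeleton is the natural one and contains no wrong turn: objects of $\fsyn\fprd\rr$ are indeed pairs $(r,\theta)$ with $\theta\in\rr(I,r)$; the co-unit component plausibly tabulates the coreflexive attached to $\theta$; essential surjectivity via $s\mapsto(s,\true_s)$ (note that $\true_s=\eta_s$ in $\fprd\rr$) is correct; and the hom-poset comparison is exactly where tabulation must enter.

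What keeps this from being a proof is that every load-bearing step is asserted rather than performed, and some of the gaps are more than routine. First, you never say what $\varepsilon_{\rr}$ does to a morphism $\phi\leq\theta\boxplus\theta'$, so the ``restriction/corestriction'' poset isomorphism---which you rightly call the substance---cannot even be stated, let alone verified against axiom (iii) of \cref{def.tabulation}. Second, an equivalence in $\rrlpocat$ is an equivalence of \emph{symmetric monoidal} po-categories: you must exhibit $\varepsilon_{\rr}$ and your $\delta_{\rr}$ as strong symmetric monoidal po-functors (supply preservation is then automatic by \cref{lemma.pres_all_struct}), and nothing in the sketch addresses their monoidal structure, which for $\varepsilon_{\rr}$ requires comparing the chosen tabulation of $\theta\boxplus\theta'$ with the tensor of the chosen tabulations. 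Relatedly, the claim that $\varepsilon_{\rr}\delta_{\rr}(s)=s$ holds ``on the nose'' is unavailable: tabulations are chosen, not canonical, so only an isomorphism is guaranteed---harmless for essential surjectivity in the sense of \cref{def.po_equiv}, but symptomatic of details being waved through. Third, componentwise equivalences of a pseudo-natural transformation do not automatically assemble into an adjoint equivalence in the relevant functor $2$-category; here the $2$-cells of $\rrlpocat$ are left adjoint oplax-natural transformations, so even stating what the promotion requires has content. None of these look fatal, and your strategy is very likely the one the companion executes, but they are precisely the work that remains.
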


By \emph{pseudo-reflective} here we mean the appropriate $2$-dimensional version of the analogous ordinary category theoretic notion of a fully-faithful inclusion of sub-categories which admits a left adjoint. Among other things, this result tells us that we may freely embed relational po-categories and their morphisms into regular calculi by taking predicates, and that all graphical manipulations and syntactical operations hold in the original object: $\fsyn\fprd\rr\eqv\rr$.

As \cite{fong2019regular} proves that regular categories and relational po-categories have equivalent $2$-categories, we have in fact also obtained the following theorem.

\begin{altheorem}
  The $2$-category of regular categories is pseudo-reflective in regular calculi.
\end{altheorem}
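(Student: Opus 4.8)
The plan is to transport the pseudo-reflection of the preceding theorem along the $2$-equivalence of \cite{fong2019regular}; essentially all of the mathematical content is already in hand, and what remains is a formal composition of biadjunctions.

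First I would collect the two inputs. From the previous theorem we have the biadjunction $\fsyn \biadj \fprd$ whose co-unit $\varepsilon\colon \fsyn\fprd \Rightarrow \id_{\rrlpocat}$ is an adjoint equivalence; this is precisely the statement that the inclusion $\fprd$ exhibits $\rrlpocat$ as pseudo-reflective in $\rrgcalc$. From \cite{fong2019regular} we have a $2$-equivalence, which I record as a pair of pseudofunctors $\Phi\colon \rrgcat \to \rrlpocat$ and $\Psi\colon \rrlpocat \to \rrgcat$ together with invertible pseudonatural equivalences $\Psi\Phi \simeq \id_{\rrgcat}$ and $\Phi\Psi \simeq \id_{\rrlpocat}$. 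Since any $2$-equivalence may be promoted to a biadjoint equivalence, I may and do assume $\Psi \biadj \Phi$ with invertible unit and co-unit.

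Next I would splice the two biadjunctions. Define the inclusion $I \coloneqq \fprd\,\Phi\colon \rrgcat \to \rrgcalc$ and the candidate reflector $L \coloneqq \Psi\,\fsyn\colon \rrgcalc \to \rrgcat$. Because left biadjoints compose---the composite of left biadjoints is left biadjoint to the composite of the corresponding right biadjoints---the biadjunctions $\Psi \biadj \Phi$ and $\fsyn \biadj \fprd$ assemble into a biadjunction $L \biadj I$, with hom-equivalences obtained by pasting the two given families. It then remains to identify the co-unit of $L \biadj I$ and verify that it too is an adjoint equivalence. At a regular category $\cat R$ the composite co-unit is, up to the coherence data of the splicing, the pasting
\[
  \Psi\,\fsyn\,\fprd\,\Phi\,\cat R \xrightarrow{\ \Psi\varepsilon_{\Phi\cat R}\ } \Psi\,\Phi\,\cat R \xrightarrow{\ \simeq\ } \cat R .
\]
The first arrow is an equivalence because $\varepsilon$ is an adjoint equivalence and $\Psi$, being half of a $2$-equivalence, preserves equivalences; the second is the co-unit of $\Psi \biadj \Phi$, hence an equivalence. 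A composite of adjoint equivalences is again an adjoint equivalence, so $L \biadj I$ has invertible co-unit and $\rrgcat$ is pseudo-reflective in $\rrgcalc$.

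The main obstacle is not any single computation but the coherence bookkeeping of the splicing: one must check that the hom-equivalences pasted from the two biadjunctions satisfy the required invertible-modification axioms, and that the identification of the composite co-unit above is compatible with those modifications. Because a $2$-equivalence is as coherent as possible---its unit and co-unit can be arranged as adjoint equivalences---these verifications are routine and introduce no phenomenon particular to regular categories. In this sense the theorem is a genuinely formal consequence of the preceding pseudo-reflection of $\rrlpocat$ together with the equivalence $\rrgcat \simeq \rrlpocat$.
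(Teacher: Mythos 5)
Your proposal is correct and is essentially the paper's own argument: the paper obtains this theorem precisely by transporting the pseudo-reflection $\fsyn\biadj\fprd$ (whose co-unit is an adjoint equivalence) along the $2$-equivalence $\rrgcat\eqv\rrlpocat$ of \cite{fong2019regular}, treating the result as an immediate consequence. Your write-up merely makes explicit the routine splicing of the two biadjunctions that the paper leaves implicit.
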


Finally, the presence of this syntax-semantics pseudo-reflection allows us to deduce further interesting consequences: in \cref{cor.ladj_birep} we prove that taking the regular category of left adjoints in a relational po-category is suitably represented as a $2$-functor, and in \cref{thm.reg_lex} we use it to give an alternate proof of an adjunction between finite limit categories and regular categories.

\section{Outline}

We have striven, where reasonable, to render this paper as self-contained as possible. Where we make use of results from the body of work of \cite{fong2019hypergraph,fong2019regular,fong2019supplying} we are careful to cite them or reprove them in our context. With that in mind, this paper is organised as follows.

\cref{chap.background} presents the setting of symmetric monoidal po-categories and morphisms thereof in which we will be working. \cref{chap.wires} introduces the po-prop for wiring $\ww$, the notion of supply for a po-prop, and develops our graphical notation for $\ww$ as well as for symmetric monoidal po-categories which supply it. Then in \cref{chap.regular_calculi} we define the central structures of this paper, the regular calculi and their morphisms, by way of the notions of right adjoint monoid and right ajax po-functor. Additionally, among our examples in this section, we show that regular theories give rise to regular calculi. In \cref{chap.relational_pocats} we recall the axiomatisation of relational po-categories, and construct and study the $2$-functor $\fprd$ which takes a relational po-category to its regular calculus of predicates. In \cref{chap.graphical_reglog} we develop our graphical formalism for regular calculi by defining graphical terms and establishing key lemmas which afford us intuitive means of graphical reasoning and manipulations, with examples drawn from regular theories and relational po-categories. Additionally in that section we sketch the construction of the syntactic po-category of a regular calculus. Finally in \cref{chap.preview} we preview some of the results and details that appear in the companion, including the statement of our main theorem \cref{thm.main} and its various corollaries.

\section{Related work}
Graphical formalisms for fragments of first order logic are an old subject, dating back to the existential graphs of Pierce in the late 19th Century \cite[Volume 4, Book II]{peirce1974collected}. More recently, Brady and Trimble \cite{brady2000categorical,brady2000string} and Haydon and Sobocinski \cite{haydon2020compositional} have sought to reinterpret Pierce's work within a categorical setting, providing a foundation for Pierce's diagrams through the string diagrams for monoidal categories defined by Joyal and Street \cite{joyal1991geometry}.

In the context of regular logic, Bonchi, Seeber, and Sobocinski's work on graphical conjunctive queries \cite{bonchi2018graphical}, as well as the last two authors' precursor version of this work \cite{fong2020string}, give alternate categorical, diagrammatic approaches to regular logic. Like regular calculi, these categorical approaches to regular logic and relational structures feature Frobenius monoids as a central way of capturing equality of variables or, graphically, the lines that can be drawn between relation symbols. A difference in our emphasis is that our diagrams are not simply string diagrams in a monoidal category; they are distinct, combinatorial objects inspired by an operadic notion of composition.

Brady and Trimble investigate Pierce's diagrams via Lawvere's notion of hyperdoctrine \cite{lawvere1969adjointness}, a foundational tool in categorical logic. A hyperdoctrine studies a logical system as a fibration, presenting it as a collection of posets indexed by a suitable base category. The required structure of this base category, such as having finite products, allows interpretation of certain syntactic aspects, while interpretations of connectives and quantifiers are studied through adjunctions. In particular, regular logic is modelled through the notion of an elementary existential doctrine \cite{lawvere1970equality}.

An elementary existential doctrine is a functor from the opposite of a category with finite products to the category of meet-semilattices, such that certain assignments of the functor have adjoints, and such that these adjoints obey Beck-Chavelley and Frobenius reciprocity conditions. Similar to elementary existential doctrines, regular calculi index posets. The difference lies in the choice of base category: while the base category of a hyperdoctrine is modelled on aspects of the category of finite sets and functions---that is, the free finite product category---the base category of a regular calculus is modelled on the po-category of finite sets and co-relations (certain equivalence classes of cospans)---or, more precisely, the free relational po-category.

Our regular calculus approach then expresses the logical structure through adjunctions in the base category and the right ajax condition. This leads to a more compact definition than that of an elementary existential doctrine, without the Beck-Chavelley and Frobenius reciprocity axioms. Indeed, we believe the regular calculus definition provides some motivation for how these axioms arise.

Many of the aspects of our work with regular calculi here have precursors in work done with elementary existential doctrines. The syntactic category, also known as a Lindenbaum--Tarski category of a logical theory or hyperdoctrine is a well-known construction (see for example \cite{butz1998regular}); our syntactic po-category is a relational analogue. Our completion theorem lies within the tradition of the completion theorems of Maietti and Rosolini \cite{maietti2013quotient}, with Trotta's recent paper on existential completions of primary doctrines perhaps the closest analogue \cite{trotta2020existential}.

Finally, the adjunction between hyperdoctrine-style logical presentations and categorical structure within the context of regular logic is also explored by Bonchi, Santamaria, Seeber, and Sobocinski \cite{bonchi2021doctrines}, who detail an adjunction between elementary existential doctrines and cartesian bicategories.

\section{Acknowledgements}

The second- and third-named authors would like thank Paolo Perrone for comments that have improved this article and Christina Vasilakopoulou for finding an error in a previous version, which led us to this fully 2-categorical formulation. The first-named author would like to thank Emily Riehl for conversations which informed the present structure of this paper and the companion. We acknowledge support from AFOSR grants FA9550-17-1-0058 and FA9550-19-1-0113.

\chapter{Background on symmetric monoidal po-categories}\label{chap.background}

To develop the theory of regular calculi and to state and prove our main results we will make extensive use of the language of symmetric monoidal po-categories and various higher morphisms thereof. In \cref{sec.two_cats} below we recall briefly the needed notions of \emph{oplax-natural transformation}, \emph{modification}, and \emph{adjunction in a $2$-category}. Readers familiar with these notions are invited to omit this section. Following this, in \cref{sec.po_categories} we will observe the several significant specialisations of these notions to the \emph{po-categorical} setting and cement terminology therein. Finally, in \cref{sec.symm_mom_po_cat} we will recall the notion of \emph{symmetric monoidal po-category} and various morphisms thereof.

Before we proceed with this background, let us pause a moment to record the salient features of our notation in this paper. While we endeavour to be standard in most aspects, it may nevertheless be useful to note the following.

\begin{itemize}
\item By a poset we mean a partially ordered set, that is, $x\leq y\wedge y\leq x\tto x=y$.

\item We typically denote composition in diagrammatic order, so the composite of $f\colon A\to B$ and $g\colon B\to C$ is $f\cp g\colon A\to C$. We often denote the identity morphism $\id_c\colon c\to c$ on an object $c\in\cat{C}$ simply by the name of the object, $c$. Thus if $f\colon c\to d$, we have $(c\cp f)=f=(f\cp d)$.

\item We may denote the unique map from an object $c$ to a terminal $1$ as $!\colon c \to 1$, and we denote the top element of any poset $P$ by $\true\in P$.

\item We denote the universal map into a product by $\pair{f,g}$ and the universal map out of a coproduct by $\copair{f,g}$.

\item Given a natural number $n\in\nn$, we write $\ord n$ for the set $\{1,2,\ldots,n\}\in\finset$; in particular $\ord{0}=\varnothing$.

\item We will write $c\tpow n$ in a monoidal category to denote the left-associated $n$-fold iterated binary tensor product $(\cdots((c\otimes c)\otimes c)\cdots)\otimes c$.

\item Given a lax monoidal functor $F\colon\cat{C}\to\cat{D}$, we denote the \emph{laxators} by $\varphi\colon I\to F(I)$ and $\varphi_{c,c'}\colon F(c)\otimesp F(c')\to F(c\otimes c')$ for objects $c,c'\in\ob\cat{C}$. If $F$ is strong, then we will make use of the same notation, but refer to these maps as \emph{strongators} instead.

\item Where our arguments make use of more than one dimension, we will write the morphisms with Latin letters, the $2$-morphisms with Greek letters, and the $3$-morphisms with Hebrew letters. For instance, $2$-functors will be denoted by $F$, $G$, \ldots, oplax-natural transformations will be denoted by $\alpha$, $\beta$, \ldots, and modifications will be denoted by $\aleph$, $\beth$, \ldots
\end{itemize}

\section{Background on \texorpdfstring{$2$}{2}-categories}\label{sec.two_cats} We will take for granted the notion of $2$-category and $2$-functor, but briefly recall here the definitions of higher morphisms between these. The reader already comfortable with such notions is nevertheless encouraged to review the various specialisations obtained in the \emph{po-categorical} setting in \cref{sec.po_categories}, and the later background on \emph{symmetric monoidal po-categories} in \cref{sec.symm_mom_po_cat}.

\begin{definition}[Oplax-natural transformations \& modifications]\label{def.strong_twont}
  Given a pair of parallel $2$-functors $F,G\colon\cat{K}\to\cat{L}$, an \define{oplax-natural transformation} $\alpha\colon F\tto G$ comprises the data of object components $\alpha_c\colon Fc\to Gc$ for each object $c\in\ob\cat{K}$, and morphism components $\alpha_h\colon (\alpha_c\cp Gh)\tto(Fh\cp \alpha_{c'})$ for each morphism $h\colon c\to c'$ of $\cat{K}$. These morphism components are required to be natural with respect to $2$-morphisms of $\cat{K}$, and are required to be compatible with identity morphisms and composition in $\cat{K}$. For details see, for example, \cite[Definition 4.2.1]{JohYau}.

  An oplax-natural transformation $\alpha$ is \define{pseudo-natural} when each morphism component $\alpha_{h}$ is a $2$-isomorphism, and is \define{$2$-natural} when each morphism component $\alpha_{h}$ is an identity.

  A \define{modification} $\aleph\colon \alpha\threecellarr\beta$ between oplax-natural transformations $\alpha,\beta\colon F\tto G$ comprises the data of object components $\aleph_c\colon\alpha_c\tto\beta_c$ in $\cat{L}(Fc,Gc)$ for each object $c\in\ob\cat{K}$, which are required to be compatible with the morphism components of $\alpha$ and $\beta$. For details see, for example, \cite[Definition 4.4.1]{JohYau}.
\end{definition}

Recall that, given a $2$-category $\cat{K}$, an \define{adjunction in $\cat{K}$} consists of a pair of objects $c,d\in\ob\cat{K}$, a pair of morphisms $l\colon c\to d$ and $r\colon d\to c$, and a pair of 2-morphisms $\eta\colon d\tto (l\cp r)$ and $\epsilon\colon (l\cp r)\tto c$ such that the following pair of diagrams, the \define{triangle equalities}, are rendered commutative.
\begin{diagram}[][][baseline=(b.base)]\label{eqn.adjunction}
  \node(1)[]{$l$};
  \node(2)[right= of 1]{$lrl$};
  \node(3)[below= of 2]{$l$};
  \draw[n](1)to node[la,above]{$\eta\cp l$}(2);
  \draw[n](2)to node[la,right]{$l\cp\epsilon$}(3);
  \draw[d](1)to(3);
  \node(4)[right= 3cm of 2]{$r$};
  \node(5)[below= of 4]{$rlr$};
  \node(6)[right= of 5]{$r$};
  \draw[n](4)to node(b)[la,left]{$r\cp\eta$}(5);
  \draw[n](5)to node[la,below]{$\epsilon\cp r$}(6);
  \draw[d](4)to(6);
\end{diagram}
One may verify that adjunctions compose, and so by $\fladj\cat{K}$ we denote the $1$-category with the same objects as $\cat{K}$ and whose morphisms are the data of left adjoints $(l,r,\eta,\epsilon)$ in $\cat{K}$.

For given data $(l,r,\eta,\epsilon)$ as above, the property of being an adjunction is expressed equationally in the compositions of the ambient $2$-category. As such, we obtain the following lemma.

\begin{lemma}
  Let $F\colon\cat{K}\to\cat{L}$ be a $2$-functor. The assignment $(l,r,\eta,\epsilon)\mapsto (Fl,Fr,F\eta,F\epsilon)$ sends adjunctions in $\cat{K}$ to adjunctions in $\cat{L}$, and so gives rise to a functor between the categories of left adjoints, $\fladj F\colon\fladj\cat{K}\to\fladj\cat{L}$. Moreover, this assignment $F\mapsto\fladj F$ of $2$-functors itself functorial in $2$-functors and so extends to a functor $\fladj\colon2\!\CCat{Cat}\to\CCat{Cat}$.\hfill$\qed$
\end{lemma}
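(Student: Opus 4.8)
The plan is to leverage the observation made just before the statement: being an adjunction is a purely \emph{equational} condition, phrased entirely in terms of the composition operations of the ambient $2$-category, and a $2$-functor preserves all of these operations. Recall that $F$ preserves objects, $1$-morphisms and $2$-morphisms together with their sources and targets, identity $1$- and $2$-morphisms, vertical composition, and horizontal composition (hence also whiskering). Every symbol occurring in the triangle equalities \cref{eqn.adjunction} is assembled from exactly these operations, so the whole argument reduces to transporting those equalities across $F$.

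First I would confirm that $(Fl,Fr,F\eta,F\epsilon)$ is well-typed adjunction data in $\cat{L}$. Since $F$ respects sources, targets, identities and $1$-composition, we have $Fl\colon Fc\to Fd$ and $Fr\colon Fd\to Fc$, while the relations $F(\id)=\id_{F(-)}$ and $F(l\cp r)=Fl\cp Fr$ force $F\eta$ and $F\epsilon$ to have exactly the domains and codomains demanded of a unit and counit for $Fl\dashv Fr$. Next, I would apply $F$ to each of the two triangle equalities. Because $F$ sends a pasting composite of $2$-cells to the pasting composite of the images (this is precisely preservation of vertical and horizontal composition together with identity $2$-cells), the left-hand side of each equality is carried to the corresponding composite built from $Fl,Fr,F\eta,F\epsilon$, and the right-hand side to the relevant identity. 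As the two sides are equal in $\cat{K}$, their images coincide in $\cat{L}$; hence $(Fl,Fr,F\eta,F\epsilon)$ satisfies the triangle equalities and is an adjunction. This defines $\fladj F$ on objects by $c\mapsto Fc$ and on morphisms by $(l,r,\eta,\epsilon)\mapsto(Fl,Fr,F\eta,F\epsilon)$.

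To see that $\fladj F$ is a functor I would check preservation of identities and composites. The identity left adjoint on $c$ is $(\id_c,\id_c,\id,\id)$, which $F$ sends to $(\id_{Fc},\id_{Fc},\id,\id)$, the identity on $Fc$. For composites, the unit and counit of a composite adjunction are certain pastings (whiskerings and vertical composites) of the units and counits of the factors; since $F$ commutes with all of these operations and with $1$-composition of the underlying morphisms, $F$ of a composite adjunction equals the composite of the $F$-images, so $\fladj F$ preserves composition.

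Finally, functoriality of $F\mapsto\fladj F$ is immediate from the componentwise definition of $\fladj F$. On objects it is the assignment $\cat{K}\mapsto\fladj\cat{K}$, and on morphisms, for composable $2$-functors $F\colon\cat{K}\to\cat{L}$ and $G\colon\cat{L}\to\cat{M}$, one computes $\fladj(F\cp G)(l,r,\eta,\epsilon)=(GFl,GFr,GF\eta,GF\epsilon)=(\fladj F\cp\fladj G)(l,r,\eta,\epsilon)$ and $\fladj(\id_{\cat{K}})=\id_{\fladj\cat{K}}$, giving a functor $\fladj\colon2\!\CCat{Cat}\to\CCat{Cat}$. I expect no genuine obstacle: the content is the single remark that both the defining equations of an adjunction and the composition law for adjunctions are written using only $2$-categorical operations that every $2$-functor preserves; the sole care needed is the bookkeeping of sources and targets and the explicit pasting description of a composite adjunction.
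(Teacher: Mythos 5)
Your proposal is correct and takes exactly the route the paper intends: the paper states this lemma with no written proof, justifying it by the one-line remark that being an adjunction (and composing adjunctions) is expressed purely equationally in the $2$-categorical operations, all of which a $2$-functor preserves. Your write-up simply makes that argument explicit---transporting the triangle equalities, the pasting formula for composite units/counits, and the componentwise functoriality across $F$---so there is nothing to correct.
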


In fact more is true, $\fladj$ is a $2$-functor when the $2$-morphisms in $2\!\CCat{Cat}$ are themselves required to be left adjoints, but we will not need this fact in this generality.

\section{Po-categories}\label{sec.po_categories}
The theory of $2$-categories specialises significantly to the context of \emph{po-categories}, and so we recall briefly the appropriate definitions now.

\begin{definition}[Po-category]\label{def.pocats_funs_laxnts}
  A \define{po-category $\cc$} is a locally-posetal $2$-category, that is, it is an ordinary $2$-category $\cc$ for which the category $\cc(c,c')$ of morphisms between any two objects is a partially ordered set.

  A \define{po-functor} $F\colon\cc\to\dd$ between po-categories is an ordinary $2$-functor, but we may summarise this structure by requiring that $F$ is an ordinary functor of the underlying $1$-categories and that the functions $F_{c,c'}\colon\cc(c,c')\to\dd(Fc,Fc')$ are monotonic for all objects $c,c'\in\cc$.

  An \define{oplax-natural transformation} $\alpha\colon F \tto G$ between po-functors $F,G\colon\cc\to\dd$ is an ordinary oplax-natural transformation between the $2$-functors $F$ and $G$. However, all of the compatibility conditions are degenerate and so the data is merely a collection of morphisms $\alpha_c\colon Fc \to Gc$ which satisfy $F(f) \cp \alpha_{c'}\leq \alpha_c \cp G(f)$ for all morphisms $f\colon c \to c'$ of $\cc$. In particular, a \define{$2$-natural transformation} of po-functors is merely a natural transformation of the underlying functors.

  Modifications are especially degenerate. Given parallel oplax-natural transformations $\alpha,\beta\colon F\tto G$, we write $\alpha\leq\beta$ if for each $c\in\cc$ there is an inequality $\alpha_c\leq\beta_c$ in $\dd(Fc,Gc)$ between $c$-components. Thus we are motivated in writing $[\cc,\dd]$ for the po-category of po-functors, oplax-natural transformations, and modifications; we call it the \define{po-category of po-functors from $\cc$ to $\dd$}.
\end{definition}

\begin{notation}[Po-categories]
  To distinguish po-categories from $1$-categories we will write po-categories with double-struck letters, as in $\cc,\dd,\ldots$, and reserve script for $1$-categories, as in $\cat{C},\cat{D},\ldots$
\end{notation}

Note that in any $2$-category, any two right adjoints to a given morphism are isomorphic, so in a po-category, a given morphism has \emph{at most one} right adjoint.

\begin{definition}[Left adjoint oplax-natural transformation]\label{def.ladj_lax_nt}
  Let $\cc$ and $\dd$ be po-categories. A \define{left adjoint oplax-natural transformation} is a left adjoint in the po-category $[\cc,\dd]$ of \cref{def.pocats_funs_laxnts}.
\end{definition}

As a consequence of the posetal nature of the po-category of po-functors $[\cc,\dd]$ we may freely pass the structure of left adjointness between oplax-natural transformations and their components, in the following sense.

\begin{lemma}\label{lemma.adj_in_pocat}
  Let $F,G\colon\cc\to\dd$ be po-functors, and let $\lambda\colon F\tto G$ and $\rho\colon F\tto G$ be opposed oplax-natural transformations. Then $\lambda$ is left adjoint to $\rho$ if and only if for each $c\in\cc$ the component $\lambda_c\colon Fc\to Gc$ are left adjoint to $\rho_c\colon Gc\to Fc$ in $\dd$.
\end{lemma}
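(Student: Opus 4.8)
The plan is to reduce everything to two facts about the po-category $[\cc,\dd]$ of \cref{def.pocats_funs_laxnts}. First, composition of oplax-natural transformations and the order between parallel ones are both computed \emph{componentwise}: for $\lambda\colon F\tto G$ and the opposed $\rho\colon G\tto F$ we have $(\lambda\cp\rho)_c=\lambda_c\cp\rho_c$ and $(\id_F)_c=\id_{Fc}$, and by the ``especially degenerate'' description of modifications recorded in \cref{def.pocats_funs_laxnts}, an inequality $\alpha\leq\beta$ of parallel transformations is \emph{by definition} the assertion that $\alpha_c\leq\beta_c$ holds in $\dd$ for every $c$. Second, in any po-category an adjunction is nothing more than a pair of unit and counit inequalities: the triangle equalities \cref{eqn.adjunction} hold automatically, since each is an identity between parallel $2$-cells in a hom-poset, and in a locally-posetal $2$-category parallel $2$-cells coincide.

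With these in hand I would first unwind $\lambda\dashv\rho$ in $[\cc,\dd]$. Because $[\cc,\dd]$ is a po-category, the only data are the unit $\id_F\leq\lambda\cp\rho$ in $[\cc,\dd](F,F)$ and the counit $\rho\cp\lambda\leq\id_G$ in $[\cc,\dd](G,G)$, with nothing further to check. Applying the componentwise description of the order, together with the componentwise formulas for composition and identities above, these two global inequalities hold if and only if, for every $c\in\cc$,
\[
  \id_{Fc}\leq\lambda_c\cp\rho_c \quad\text{in }\dd(Fc,Fc)
  \qand
  \rho_c\cp\lambda_c\leq\id_{Gc}\quad\text{in }\dd(Gc,Gc).
\]
I would then observe that, for each fixed $c$, these are precisely the unit and counit of an adjunction $\lambda_c\dashv\rho_c$ in $\dd$ between $\lambda_c\colon Fc\to Gc$ and $\rho_c\colon Gc\to Fc$; once more the triangle equalities are free, as $\dd$ too is a po-category. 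Reading this chain of equivalences in both directions yields the claim: $\lambda$ is left adjoint to $\rho$ in $[\cc,\dd]$ exactly when each $\lambda_c$ is left adjoint to $\rho_c$ in $\dd$.

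There is no genuinely hard step here; the proof is a direct translation. The only points that warrant care are bookkeeping rather than substance: one must respect the variance so that $\rho$ carries components $\rho_c\colon Gc\to Fc$ and the composites $\lambda_c\cp\rho_c$, $\rho_c\cp\lambda_c$ land in the intended hom-posets, and one must invoke, in both $[\cc,\dd]$ and $\dd$, that no triangle identity needs verifying in a locally-posetal setting. The single fact worth emphasising explicitly is that a componentwise family of inequalities automatically \emph{is} an inequality (equivalently, a modification) in $[\cc,\dd]$; this is exactly what licenses passing freely between the unit and counit as global $2$-cells and their object components, and it is the crux that makes the equivalence immediate.
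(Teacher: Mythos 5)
Your proposal is correct and is essentially the paper's own argument, spelled out: the paper's two-line proof notes that the forward direction holds in any $2$-category and that the backward direction follows because the triangle equalities \cref{eqn.adjunction} hold trivially in a po-category. Your uniform componentwise unwinding of the unit and counit---using that inequalities (modifications) in $[\cc,\dd]$ are exactly componentwise inequalities in $\dd$ and that triangle identities are free in any locally posetal setting---is precisely the content behind those two remarks.
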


\begin{proof}
  The forward direction is true even for $2$-categories that aren't locally posetal; the backwards direction holds since the uniqueness of $2$-morphisms in a po-category implies that the triangle equalities \cref{eqn.adjunction} hold trivially.
\end{proof}

Observe that an invertible modification whose eventual codomain is a po-category is necessarily an equality. As such, between po-categories the notions of $2$-dimensional and $1$-dimensional equivalence coincide.

\begin{definition}\label{def.po_equiv}
  An \define{equivalence of po-categories} is an equivalence in the $1$-category of po-categories and po-functors. We say that a po-functor $F\colon\cc\to\dd$ is \define{fully-faithful} when the morphism $F_{c,c'}\colon\cc(c,c')\to\dd(Fc,Fc')$ is an isomorphism of posets for each $c,c'\in\ob\cc$. Furthermore, we say that \define{$F$ is essentially surjective} if one can construct a function sending $d\in\ob\dd$ to a pair $(c\in\ob\cc, Fc\iso d)$, and we will tend to leave the function implicit.
\end{definition}

We will take for granted the following extension of the classical result relating fully-faithful essentially surjective functors and equivalences.

\begin{lemma}
  Given a po-functor $F\colon\cc\to\dd$, the data of an equivalence on $F$ is equivalently the data of essential surjectivity of $F$ and the property of fully-faithfulness for $F$.\hfill$\qed$
\end{lemma}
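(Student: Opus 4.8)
The plan is to transport the classical proof that a functor is an equivalence precisely when it is fully-faithful and essentially surjective, being careful at each step that the maps we build between hom-posets are \emph{monotone} and not merely bijective. Throughout I read an equivalence on $F$ as the data of a po-functor $G\colon\dd\to\cc$ together with natural isomorphisms $\alpha\colon\id_\cc\tto G\circ F$ and $\beta\colon F\circ G\tto\id_\dd$; by the observation recorded just before \cref{def.po_equiv} no higher coherence data is required, since any invertible modification landing in a po-category is an equality.

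For the forward direction, essential surjectivity is immediate: given $d\in\ob\dd$ we produce the pair $(Gd,\ \beta_d\colon F(Gd)\iso d)$. For fully-faithfulness fix $c,c'\in\ob\cc$ and consider the map $\Phi\colon\dd(Fc,Fc')\to\cc(c,c')$ sending $g$ to $\alpha_c\cp Gg\cp\alpha_{c'}\inv$; this is monotone because $G$ is a po-functor and pre-/post-composition by fixed morphisms is monotone in a po-category. Naturality of $\alpha$ yields $\Phi\circ F_{c,c'}=\id$, so $F_{c,c'}$ is a split monomorphism of posets, and since $\Phi$ factors as conjugation by the isomorphisms $\alpha_c,\alpha_{c'}$ (a poset isomorphism) after $G_{Fc,Fc'}$, the map $G_{Fc,Fc'}$ is a split epimorphism. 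Running the symmetric argument for the equivalence structure of $G$ (whose inverse is $F$, with $\beta$ playing the role of $\alpha$) shows that $G_{Fc,Fc'}$ is also a split monomorphism. A morphism that is both split mono and split epi is an isomorphism, and an isomorphism in the category of posets and monotone maps is a poset isomorphism, so $G_{Fc,Fc'}$—and hence $F_{c,c'}$—is an isomorphism of posets.

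For the backward direction, essential surjectivity hands us on objects a function $d\mapsto G_0 d$ together with isomorphisms $\epsilon_d\colon F(G_0d)\iso d$; set $G$ to be $G_0$ on objects. Fully-faithfulness lets us define $G$ on morphisms: for $g\colon d\to d'$ the morphism $\epsilon_d\cp g\cp\epsilon_{d'}\inv\colon F(G_0d)\to F(G_0d')$ has a unique $F$-preimage, which we declare to be $Gg$. Functoriality of $G$ follows by telescoping the $\epsilon$'s and using functoriality of $F$, while monotonicity on homs is the monotonicity of $F_{-,-}\inv$ (a poset isomorphism) composed with conjugation by the $\epsilon$'s. By construction $\epsilon$ is a natural isomorphism $F\circ G\tto\id_\dd$. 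Finally, each $\epsilon_{Fc}\inv\colon Fc\to F(GFc)$ is, by fully-faithfulness, equal to $F(\alpha_c)$ for a unique $\alpha_c\colon c\to GFc$, which is invertible because fully-faithful po-functors reflect isomorphisms; naturality of $\alpha$ follows from naturality of $\epsilon$ after applying the faithful $F$. This assembles the required equivalence data $(G,\alpha,\epsilon)$.

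The main thing to watch—and the only place where the po-categorical setting does genuine work—is the full-faithfulness step: a monotone \emph{bijection} of hom-posets need not be an isomorphism, so it is not enough to count morphisms as in the $1$-categorical argument. The split-mono-plus-split-epi reasoning is what supplies an honestly monotone two-sided inverse, and it relies precisely on conjugation by the isomorphism components of $\alpha$ and $\beta$ being poset isomorphisms. Once this is in hand, one checks routinely that the two constructions above are mutually inverse as assignments of data, delivering the claimed equivalence of data rather than a bare bi-implication.
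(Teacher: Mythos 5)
Your argument is correct in substance, and there is in fact nothing in the paper to compare it against: the lemma is explicitly ``taken for granted'' and carries a terminal $\qed$ with no proof. What you give is the classical essentially-surjective-plus-fully-faithful argument, adapted to po-categories with exactly the right care at the two places where the adaptation is non-trivial. First, essential surjectivity in \cref{def.po_equiv} is \emph{data}, so defining $G$ on objects requires no choice. Second, fully-faithfulness demands a poset \emph{isomorphism} on homs, not a monotone bijection, and your split-mono/split-epi argument is precisely the right fix: since conjugation by the components of $\alpha$ and $\beta$ is a poset isomorphism, the identity $\Phi\circ F_{c,c'}=\id$ forces $G_{Fc,Fc'}\circ F_{c,c'}$ to be an isomorphism, and combining with the symmetric argument for $G$ yields honestly monotone two-sided inverses. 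The backward direction (defining $Gg$ as the unique $F$-preimage of $\epsilon_d\cp g\cp\epsilon_{d'}\inv$, then recovering $\alpha$ from $\epsilon_{Fc}\inv$ by fullness) is likewise sound.

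One claim in your closing paragraph is false, although it does not damage the lemma under its intended reading: the two constructions are \emph{not} mutually inverse. Your forward construction discards $\alpha$ entirely (only $\beta$ is used to produce the essential-surjectivity data, and fully-faithfulness is a property), while your backward construction regenerates a canonical $\alpha'$ satisfying $F\alpha'_c=\epsilon_{Fc}\inv$ --- a triangle-type identity that arbitrary equivalence data $(G,\alpha,\beta)$ need not satisfy, only adjoint equivalences do. And $\alpha$ is genuinely extra data: in the one-object po-category whose hom-poset is $\mathbb{Z}$ with the discrete order and composition given by addition, every integer is a natural isomorphism $\id\iso\id$, so equivalence data on $\id$ contains two independent integers while essential-surjectivity data contains only one. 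Hence only the composite $(\text{eso}+\text{ff})\to(\text{equivalence})\to(\text{eso}+\text{ff})$ is the identity; the other composite replaces $\alpha$ by $\alpha'$. The lemma should therefore be read, as its use in the paper requires, as asserting that each kind of data can be constructed from the other --- which is exactly what the rest of your proof establishes --- and not as a literal bijection of sets of data, which would be false.
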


\section{Symmetric monoidal po-categories}\label{sec.symm_mom_po_cat}

We will have a great deal of use for symmetric monoidal po-categories. These objects may be viewed as 3-categories which are ``petite'' in two dimensions: they are locally po-categorical, and they have only one object. While it is notoriously difficult to give a full definition of monoidal bi-category, all of the coherence issues degenerate for po-categories. As such it is conceptually simpler to think of symmetric monoidal po-categories instead as symmetric monoidal $1$-categories with extra structure: hom-sets are equipped with an order, and the monoidal operation is monotonic on morphisms.

\begin{definition}[Symmetric monoidal po-category]\label{def.symm_mon_po}
  A \define{symmetric monoidal structure} on a po-category $\cc$ consists of a symmetric monoidal structure $(\otimes, I, \lambda, \rho)$ on its underlying $1$-category, such that $\otimes$ is a additionally a po-functor. That is, $(f_1\otimes g_1)\leq(f_2\otimes g_2)$ whenever $f_1\leq f_2$ and $g_1\leq g_2$. Recall that this means $\lambda$ and $\rho$ are automatically $2$-natural (\cref{def.pocats_funs_laxnts}).

  A \define{strong symmetric monoidal po-functor}  $(F,\varphi)\colon(\cc,\otimes, I)\to(\dd,\otimesp,I')$ is a po-functor $F\colon\cc\to\dd$ whose underlying functor is strong symmetric monoidal. Recall that this means that the \define{strongators} $\varphi_{c,c'}\colon Fc\otimesp Fc'\to F(c\otimes c')$ are automatically $2$-natural.

  We will frequently wish to apply the qualifier ``monoidal'' to various forms of natural transformations; by a \define{monoidal `\emph{adjective}' natural transformation} we will always mean an `\emph{adjective}' natural transformation whose components additionally obey the monoidal natural transformation conditions strictly. For example, a \define{monoidal left adjoint oplax-natural transformation} $\alpha\colon(F,\varphi)\tto(G,\psi)$ is a left adjoint oplax-natural transformation $\alpha\colon F\tto G$ whose components $\alpha_{c}\colon Fc\to Gc$ additionally obey the monoidal natural transformation conditions strictly---for instance $\varphi_{I}\cp\alpha_{I}=\psi_{I}$.
\end{definition}

\begin{notation}[Symmetry isomorphisms]
  If $(\cc,\otimes,I)$ is a symmetric monoidal po-category, $m,n\in\nn$ are natural numbers, and $c\colon \ord{m}\times \ord{n}\to\cc$ is a family of objects in $\cc$, then there is a canonical natural isomorphism
  \begin{equation}\label{eqn.symmetry}
    \sigma\colon
    \bigotimes_{i\in\ord{m}}\bigotimes_{j\in\ord{n}}c(i,j)\Too{\cong}
    \bigotimes_{j\in\ord{n}}\bigotimes_{i\in\ord{m}}c(i,j)\ .
  \end{equation}
  We will refer to these $\sigma$ as the \define{symmetry} isomorphisms, though note that generally such isomorphisms involve associators and unitors too.
\end{notation}

\chapter{Supplying wires}\label{chap.wires}

Our work toward understanding graphical regular logic begins with the establishment of the supporting machinery which was developed in \cite{fong2019regular,fong2019supplying} and whose salient details we recall here. In order to render our graphical terms, we will need already the more primitive notion of \emph{wiring diagram}. The somehow prototypical case of these is the generic structure supporting a basic graphical calculus, viz., the \emph{po-prop} for wiring $\ww$. Once we have gained some proficiency in this context, we will see how the notion of \emph{supply} for a po-prop allows us to understand wiring diagrams in any po-category which supplies $\ww$.

\section{The po-prop \texorpdfstring{$\ww$}{W} for wiring}\label{sec.prop_ww}

\begin{definition}[Po-prop]\label{def.props}
  A \define{po-prop} is a symmetric strict monoidal po-category $\pp$ whose monoid of objects is isomorphic to $(\nn,0,+)$. A \define{po-prop functor} $F\colon\pp\to\qq$ is a bijective-on-objects symmetric strict monoidal po-functor.
\end{definition}

The first, and indeed most important example we intend to consider is the po-prop for wiring. Consider the symmetric monoidal $2$-category $(\ccospan\co,\varnothing,+)$, i.e.\ the $2$-dual of cospans between finite sets.

\begin{definition}[$\ww$]\label{def.wwiring}
  The \define{po-prop for wiring}, $\ww$, is the local poset reflection of the full and locally full sub-$2$-category of $(\ccospan\co,\varnothing,+)$ spanned by the finite ordinals $\ord{n}$.
\end{definition}

Even though we have presented $\ww$ in a way that emphasises ``binarity''---its morphisms have domains and codomains and its composition is that of an ordinary category---nevertheless as we shall see in the next section, and in particular in \cref{ex.wiring_diagram}, the combinatorial nature of $\ww$ affords us a completely unbiased way to depict morphisms.

In \cref{prop.ww_explicit} we shall give a more explicit description of the hom posets of $\ww$, after which we will exhibit the generating morphisms and relations for $\ww$ graphically. Note that $\ww$ is almost the prop of equivalence relations, also known as corelations \cite{coya2017corelations}, but without the ``extra'' law, which would equate the following cospans.
\begin{equation}
  \label{eqn.extra_law}
  \begin{gathered}
    \ord0\to \ord0\from \ord0\\
    \text{``truth''}
  \end{gathered}\qqand
  \begin{gathered}
    \ord0\to \ord1\from \ord0\\
    \text{``inhabitedness''}
  \end{gathered}
\end{equation}
We'll see later that equating these two morphisms bears the consequence of forcing every sort in a regular theory to be inhabited.

\begin{proposition}\label{prop.ww_explicit}
  The hom-posets of $\ww$ admit the following explicit description:
  \[
    \ww(\ord m,\ord n)\cong
    \begin{cases}
      \{0\leq 1\}\op&\tn{ if }m=n=0\\
      \er\op(m+n)&\tn{ if }m+n\geq 1
    \end{cases}
  \]
  where $\{0\leq 1\}$ is the poset of booleans, and $\er(p)$ is the poset of equivalence relations on the set $\ord{p}$, ordered by inclusion.
\end{proposition}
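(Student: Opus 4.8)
The plan is to compute $\ww(\ord m,\ord n)$ directly from \cref{def.wwiring} by unwinding what the local poset reflection does to a single hom-category of $\ccospan\co$ and then matching the result against equivalence relations. By definition an object of $\ccospan(\ord m,\ord n)$ is a cospan $\ord m\xrightarrow{f}S\xleftarrow{g}\ord n$ with $S$ a finite set, and a $2$-cell from one such cospan to another is a leg-preserving map of apices $h\colon S\to S'$ (so $f\cp h=f'$ and $g\cp h=g'$). Passing to $\ccospan\co$ reverses these $2$-cells, and then the poset reflection declares $[C]\leq[C']$ precisely when there exists a leg-preserving map $h\colon S'\to S$, i.e.\ one with $[f',g']\cp h=[f,g]$ for the copairings. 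To each cospan $C$ I associate the equivalence relation $\mathrm{ER}(C)$ on $\ord{m+n}\cong\ord m+\ord n$ given by the kernel of $[f,g]\colon\ord{m+n}\to S$, that is, $i\sim j$ iff $[f,g](i)=[f,g](j)$; this is the candidate comparison map.

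For the generic case $m+n\geq 1$, I would prove that $[C]\leq[C']$ holds in $\ww$ if and only if $\mathrm{ER}(C')\subseteq\mathrm{ER}(C)$. The forward direction is automatic, since a leg-preserving $h\colon S'\to S$ with $[f',g']\cp h=[f,g]$ forces the kernel of $[f',g']$ to be contained in that of $[f,g]$. For the converse I would define $h$ on the image of $[f',g']$ by $h([f',g'](x))=[f,g](x)$, which is well-defined exactly because $\mathrm{ER}(C')\subseteq\mathrm{ER}(C)$, and then extend $h$ over any points of $S'$ lying outside this image by sending them anywhere into $S$. This extension is available precisely because $m+n\geq 1$ forces $S$ to be nonempty. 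Consequently $\mathrm{ER}$ is monotone and order-reflecting into $\er\op(m+n)$ (the opposite, since $[C]\leq[C']$ corresponds to $\mathrm{ER}(C)\supseteq\mathrm{ER}(C')$); it therefore descends to an order-embedding on the reflection, and it is surjective because any equivalence relation $E$ is realised by the quotient cospan $\ord m\to\ord{m+n}/E\leftarrow\ord n$. This gives $\ww(\ord m,\ord n)\cong\er\op(m+n)$.

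Finally I would dispatch the case $m=n=0$ by hand: here a cospan is just its apex $S$, a leg-preserving apex map is an arbitrary function, and the co-dualised reflection sets $[S]\leq[S']$ iff there is a function $S'\to S$. Such a function exists iff $S$ is nonempty or $S'$ is empty, so all nonempty apices collapse to one class lying below the class of the empty apex, yielding exactly $\{0\leq 1\}\op$ with ``inhabitedness'' $[\ord 1]$ below ``truth'' $[\ord 0]$, matching \cref{eqn.extra_law}. I expect the main obstacle to be precisely this interaction with non-jointly-surjective cospans: the extension argument over extra apex points is what produces the clean equivalence-relation description when $m+n\geq 1$ and also what breaks it at $m=n=0$, where $\mathrm{ER}$ can no longer detect the size of the apex, so the booleans appear in place of the one-point poset $\er(0)$. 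Keeping straight the two independent order reversals---one from the $\co$ dual and one from reverse inclusion---is the other point demanding care.
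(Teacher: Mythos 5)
Your proof is correct and takes essentially the same route as the paper's: both reduce a cospan to its copairing $\ord{m+n}\to S$ (the paper phrases this as the coslice category $\ord{m+n}/\finset$) and compute the poset reflection via kernel equivalence relations, with the $m=n=0$ case split off identically. Your explicit construction of $h$ on the image of $[f',g']$ followed by an arbitrary extension (available since $m+n\geq 1$ forces $S\neq\varnothing$) is precisely the paper's observation that every function factors as an epi followed by a mono and every mono out of a nonempty set has a retraction.
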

\begin{proof}
  For any $m,n$, may identify $\ww(\ord m,\ord n)\op$ with the poset reflection of $\ccospan(\ord{m},\ord{n})$. But $\ccospan(\ord{m},\ord{n})$ is the coslice category $\ord{m+n}/\finset$, consisting of finite sets $S$ equipped with functions $\ord{m+n}\to S$. If $m+n=0$, then the poset reflection is that of $\finset$, namely $\{0\leq1\}$; otherwise it may be identified with the poset of equivalence relations on $\ord{m+n}$. Indeed, every function $\ord{m+n}\to S$ factors as an epi followed by a mono, and every mono out of a nonempty set has a retraction.
\end{proof}

The proof just given suggests two further results on the structure of $\ww$ which we record here.

\begin{corollary}\label{cor.jointly_surj_w}
  Every morphism $\omega\colon\ord n\to\ord m$ in $\ww$ with $m+n>0$ admits a unique representation as a jointly surjective cospan of finite sets $\ord n\to\ord n_{w}\from \ord m$.\hfill$\qed$
\end{corollary}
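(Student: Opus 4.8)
The plan is to read the statement through the explicit description of $\ww$ established in \cref{prop.ww_explicit}, and then to isolate a canonical representative among all cospans presenting a given morphism. Recall from that proof that a morphism $\omega\colon\ord n\to\ord m$ of $\ww$ (with $n+m>0$) is an element of the poset reflection of $\ccospan(\ord n,\ord m)=\ord{n+m}/\finset$: concretely it is represented by a cospan $\ord n\to S\from\ord m$ of finite sets, equivalently by the induced function $\ord{n+m}\to S$ obtained from the two legs, and two such cospans represent the same $\omega$ exactly when there are leg-preserving maps of apexes in both directions. (The $\co$ in $\ccospan\co$ only reverses the $2$-cells, hence merely reverses the order after reflection, which is irrelevant to the present counting.)

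For existence I would reuse the epi--mono factorization already appearing in the proof of \cref{prop.ww_explicit}. Given any representing cospan with apex $S$, factor the induced map as $\ord{n+m}\twoheadrightarrow T\rightarrowtail S$ onto its image. The cospan with apex $T$ is jointly surjective by construction, and the mono $T\rightarrowtail S$ is a leg-preserving comparison one way. Since $n+m>0$ the set $\ord{n+m}$ is nonempty, so its image $T$ is nonempty and the mono admits a retraction $S\to T$; one checks that this retraction is automatically leg-preserving. Thus $S$ and $T$ represent the same morphism, so every $\omega$ has a jointly surjective representative.

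For uniqueness, suppose $\ord n\to T\from\ord m$ and $\ord n\to T'\from\ord m$ are both jointly surjective and represent the same $\omega$, giving leg-preserving maps $f\colon T\to T'$ and $g\colon T'\to T$. The composites $gf$ and $fg$ are leg-preserving endomaps, and because the two legs jointly cover the apex, any leg-preserving endomap is forced to be the identity; hence $gf=\id_T$ and $fg=\id_{T'}$, so $f$ is the unique leg-preserving isomorphism $T\cong T'$. Taking the apex to be the ordinal $\ord{n_w}$ of the same cardinality as $T$ then pins down a canonical representative, which completes the argument.

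I expect the main obstacle to be the uniqueness step---specifically the clean justification that joint surjectivity rigidifies the comparison maps into mutually inverse isomorphisms---together with being careful that the hypothesis $n+m>0$ is exactly what is needed in the existence half, namely nonemptiness of the apex so that the defining mono splits.
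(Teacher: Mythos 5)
Your proposal is correct and follows essentially the same route the paper intends: the corollary is stated with no separate proof precisely because it is extracted from the proof of \cref{prop.ww_explicit}, whose key steps---identifying $\ww(\ord n,\ord m)$ with the poset reflection of $\ord{n+m}/\finset$, epi--mono factorisation of $\ord{n+m}\to S$, and the retraction of a mono out of a nonempty set (which is exactly where $n+m>0$ enters)---are the same ones you use. Your uniqueness step (joint surjectivity rigidifies leg-preserving comparison maps into mutually inverse, indeed unique, isomorphisms) is just an explicit spelling-out of the correspondence with equivalence relations that the paper leaves implicit.
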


\begin{remark}\label{rem.canonical_decomp}
  At this point we have succeeded in showing that there are in fact \emph{canonical choices} of representative cospans for every morphism of $\ww$. When the domain or codomain are inhabited, then the above corollary uniquely determines a jointly epimorphic cospan, and in the remaining case we choose $\ord 0\to \ord 0\from\ord 0$ and $\ord 0\to \ord 1\from\ord 0$ as our representatives for the two distinct elements of $\ww(\ord 0,\ord0)$.
\end{remark}

\begin{corollary}\label{cor.mor_tensors_ww}
  In $\ww$, for an arbitrary morphism $\omega\colon\ord n\to\ord m$, we have:
  \begin{enumerate}
  \item $\omega+\id_{\ord 0}=\omega$
  \item $\omega+\eta\cp\epsilon=\omega$ if and only if $\omega\ne\id_{\ord0}$.\hfill$\qed$
  \end{enumerate}
\end{corollary}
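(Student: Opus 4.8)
The plan is to reason throughout with explicit cospan representatives, using \cref{prop.ww_explicit} and \cref{cor.jointly_surj_w} to decide when two parallel morphisms of $\ww$ coincide. Recall that a morphism of $\ww$ is a class of cospans $\ord n\to S\from\ord m$, that $\id_{\ord 0}$ is the ``truth'' cospan $\ord 0\to\ord 0\from\ord 0$, and that $\eta\cp\epsilon$ is the ``inhabitedness'' cospan $\ord 0\to\ord 1\from\ord 0$; by \cref{prop.ww_explicit} these are the two distinct elements of $\ww(\ord 0,\ord 0)\iso\{0\le 1\}\op$ precisely because the law \eqref{eqn.extra_law} is \emph{not} imposed. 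Since equality in the hom-poset is detected by the existence of maps of cospans in both directions (this is the poset reflection appearing in the proof of \cref{prop.ww_explicit}), every equation below reduces to producing suitable functions between apices.

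For part (i): the monoidal product $+$ is disjoint union of apices, and $\id_{\ord 0}$ has empty apex and empty legs. Hence a representative for $\omega+\id_{\ord 0}$ is the very cospan $\ord n\to S\from\ord m$ representing $\omega$, since $S+\varnothing\iso S$ with unchanged leg maps. So $\omega+\id_{\ord 0}=\omega$ already at the level of representatives, as one expects from strictness of the monoidal structure.

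For part (ii), I would first dispatch the reverse implication by contraposition: if $\omega=\id_{\ord 0}$ then $\omega+\eta\cp\epsilon=\id_{\ord 0}+\eta\cp\epsilon=\eta\cp\epsilon$ by the empty-apex computation of part (i), and this is inhabitedness, hence $\ne\id_{\ord 0}=\omega$. For the forward implication, assume $\omega\ne\id_{\ord 0}$. If $\omega\colon\ord 0\to\ord 0$ (so $\omega=\eta\cp\epsilon$, the other element of $\ww(\ord 0,\ord 0)$), then $\omega+\eta\cp\epsilon$ is represented by $\ord 0\to\ord 2\from\ord 0$, which the poset reflection of \cref{prop.ww_explicit} identifies with $\ord 0\to\ord 1\from\ord 0=\eta\cp\epsilon=\omega$ since all nonempty apices collapse. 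Otherwise $m+n>0$, and \cref{cor.jointly_surj_w} furnishes a jointly surjective representative $\ord n\to\ord n_w\from\ord m$ of $\omega$; joint surjectivity together with $m+n>0$ forces the apex $\ord n_w$ to be nonempty. Tensoring with inhabitedness gives the representative $\ord n\to\ord n_w+\ord 1\from\ord m$ of $\omega+\eta\cp\epsilon$, whose apex map is the original one post-composed with the inclusion $\ord n_w\hookrightarrow\ord n_w+\ord 1$. That inclusion is a map of cospans; a retraction $\ord n_w+\ord 1\to\ord n_w$ fixing $\ord n_w$ and sending the extra point to any element of $\ord n_w$ is a map of cospans in the other direction, and it exists precisely because $\ord n_w\ne\varnothing$. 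Having maps both ways, the two classes coincide, so $\omega+\eta\cp\epsilon=\omega$.

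The one genuinely load-bearing point---and the reason the statement is an ``if and only if'' rather than an unconditional equality---is the nonemptiness of the apex $\ord n_w$, which is what licenses the collapsing retraction. This fails exactly when $\omega=\id_{\ord 0}$, whose representative $\ord 0\to\ord 0\from\ord 0$ has empty apex: there the extra point cannot be absorbed, and adding inhabitedness genuinely changes the morphism. Everything else is routine manipulation of cospan representatives, so I expect no real obstacle beyond keeping this nonemptiness bookkeeping straight.
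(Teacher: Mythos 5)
Your proof is correct and is essentially the argument the paper intends: the corollary is left as an immediate consequence of \cref{prop.ww_explicit}, whose proof already contains your key mechanism (equality in the poset reflection via cospan maps both ways, with the collapsing retraction available exactly when the apex is nonempty, i.e.\ when $\omega\ne\id_{\ord 0}$). Your explicit case analysis just spells out what the paper records with a bare $\qed$.
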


\begin{remark}
  Later on we shall see that tensor of morphisms in $\ww$ bears interpretation as a form of logical conjunction (\cref{con.rgcalc_rgtheory}), and so we may understand the above corollary, in the language of \eqref{eqn.extra_law}: logical conjunction by ``truth'' is the identity, while logical conjunction by ``inhabitedness'' is the identity when we have any context involving the implicit sort. In addition, the logical conjunction of ``inhabitedness'' with ``inhabitedness'' is once more ``inhabitedness''.
\end{remark}

Now that we have explicated the structure of the hom posets of $\ww$, let us turn our attention to its morphisms, and for this purpose introduce a graphical notation prototypical of those to come. $\ww$ may be generated by four morphisms, and we list these generating morphisms, their canonical cospan representatives in $\ccospan\co$ (see \cref{rem.canonical_decomp}), and their graphical icons in the table below.

\setlength{\belowrulesep}{0pt} \setlength{\tabcolsep}{8pt} \renewcommand{\arraystretch}{1.5}
\begin{equation}\label{eqn.generating_wires}
  \begin{tabular}{|c|c|c|}\toprule[1pt]
    Morphism in $\ww$ & Corresponding cospan & Icon \\\toprule[1pt]
    $\epsilon\colon \ord 1\to \ord0$&
                                      $\ord1\to \ord1\from \ord0$&
                                                                   \begin{tikzpicture}[WD]
                                                                     \node[link] (epsilon) {};
                                                                     \draw (epsilon) to +(-.8,0);
                                                                   \end{tikzpicture}\\\hline
    $\delta\colon \ord1\to \ord2$&
                                   $\ord1\to \ord1\from \ord2$&
                                                                \begin{tikzpicture}[WD]
                                                                  \node[link] (delta) {};
                                                                  \draw (delta) -- +(-1,0);
                                                                  \draw (delta) to[out=60, in=180] +(1,.5);
                                                                  \draw (delta) to[out=-60, in=180] +(1,-.5);
                                                                \end{tikzpicture}\\\hline
    $\eta\colon \ord0\to \ord1$&
                                 $\ord0\to \ord1\from \ord1$&
                                                              \begin{tikzpicture}[WD]
                                                                \node[link] (eta) {};
                                                                \draw (eta) to +(.8,0);
                                                              \end{tikzpicture}\\\hline
    $\mu\colon \ord2\to \ord1$&
                                $\ord2\to \ord1\from \ord1$&
                                                             \begin{tikzpicture}[WD]
                                                               \node[link] (mu) {};
                                                               \draw (mu) -- +(1,0);
                                                               \draw (mu) to[out=120, in=0] +(-1,.5);
                                                               \draw (mu) to[out=-120, in=0] +(-1,-.5);
                                                             \end{tikzpicture}\\\toprule[1pt]
  \end{tabular}
\end{equation}
\setlength{\tabcolsep}{3pt}

\noindent As the underlying 1-category of $\ww$ is a symmetric monoidal category, we may use these icons as primitive generators in the usual string diagram language in the sense of Joyal and Street \cite{joyal1991geometry}. These generators satisfy the following equations and inequalities involving additionally the symmetry $\ord 2\to\ord 2$ of $\ww$, and we render these constraints graphically with composition indicated via horizontal juxtaposition and tensor indicated via vertical juxtaposition.
\begin{equation}\label{eqn.equations_wires}
  \renewcommand{\arraystretch}{2}
  \begin{array}{rl<{\qquad}rl<{\qquad}rl}
    \begin{tikzpicture}[WD]
      \node[link] (mu) {};
      \draw (mu) -- +(-1,0);
      \draw (mu) to[out=60, in=180] +(.5,.5) coordinate (out1);
      \draw (mu) to[out=-60, in=180] +(.5,-.5) coordinate (out2);
      \coordinate (end) at ($(out1)+(1,0)$);
      \draw (out1) to[out=0, in=180] (out2-|end);
      \draw (out2) to[out=0, in=180] (out1-|end);
    \end{tikzpicture}
    &\;\raisebox{2pt}{=}\quad
      \begin{tikzpicture}[WD]
        \node[link] (mu) {};
        \draw (mu) -- +(-1,0);
        \draw (mu) to[out=60, in=180] +(1,.5);
        \draw (mu) to[out=-60, in=180] +(1,-.5);
      \end{tikzpicture}
    &
      \begin{tikzpicture}[WD]
        \node[link] (mu) {};
        \draw (mu) -- +(-1,0);
        \draw (mu) to[out=60, in=180] +(.5,.5) node[link] {};
        \draw (mu) to[out=-60, in=180] +(1,-.5) -- +(.5,0);
      \end{tikzpicture}
    &\;\raisebox{2pt}{=}\quad
      \begin{tikzpicture}[WD,baseline=-5pt]
        \draw (0,0) -- (2,0);
      \end{tikzpicture}
    &
      \begin{tikzpicture}[WD]
        \node[link] (mu) {};
        \draw (mu) -- +(-1,0);
        \draw (mu) to[out=60, in=180] +(.5,.5) node[link] (mu2) {};
        \draw (mu2) to[out=60, in=180] +(1,.5) coordinate (end);
        \draw (mu2) to[out=-60, in=180] +(1,-.5);
        \draw (mu) to[out=-60, in=180] +(1,-.5) coordinate (h);
        \draw (h) -- (h-|end);
      \end{tikzpicture}
    &\;\raisebox{2pt}{=}\quad
      \begin{tikzpicture}[WD]
        \node[link] (mu) {};
        \draw (mu) -- +(-1,0);
        \draw (mu) to[out=-60, in=180] +(.5,-.5) node[link] (mu2) {};
        \draw (mu2) to[out=60, in=180] +(1,.5) coordinate (end);
        \draw (mu2) to[out=-60, in=180] +(1,-.5);
        \draw (mu) to[out=60, in=180] +(1,.5) coordinate (h);
        \draw (h) -- (h-|end);
      \end{tikzpicture}
    \\
    \begin{tikzpicture}[WD]
      \node[link] (mu) {};
      \draw (mu) -- +(1,0);
      \draw (mu) to[out=120, in=0] +(-.5,.5) coordinate (out1);
      \draw (mu) to[out=-120, in=0] +(-.5,-.5) coordinate (out2);
      \coordinate (end) at ($(out1)+(-1,0)$);
      \draw (out1) to[out=180, in=0] (out2-|end);
      \draw (out2) to[out=180, in=0] (out1-|end);
    \end{tikzpicture}
    &\;\raisebox{2pt}{=}\quad
      \begin{tikzpicture}[WD]
        \node[link] (mu) {};
        \draw (mu) -- +(1,0);
        \draw (mu) to[out=120, in=0] +(-1,.5);
        \draw (mu) to[out=-120, in=0] +(-1,-.5);
      \end{tikzpicture}
    &
      \begin{tikzpicture}[WD]
        \node[link] (mu) {};
        \draw (mu) -- +(1,0);
        \draw (mu) to[out=120, in=0] +(-.5,.5) node[link, anchor=center] {};
        \draw (mu) to[out=-120, in=0] +(-1,-.5) -- +(-.5,0);
      \end{tikzpicture}
    &\;\raisebox{2pt}{=}\quad
      \begin{tikzpicture}[WD,baseline=-5pt]
        \draw (0,0) -- (2,0);
      \end{tikzpicture}
    &
      \begin{tikzpicture}[WD]
        \node[link] (mu) {};
        \draw (mu) -- +(1,0);
        \draw (mu) to[out=120, in=0] +(-.5,.5) node[link, anchor=center] (mu2) {};
        \draw (mu2) to[out=120, in=0] +(-1,.5) coordinate (end);
        \draw (mu2) to[out=-120, in=0] +(-1,-.5);
        \draw (mu) to[out=-120, in=0] +(-1,-.5) coordinate (h);
        \draw (h) -- (h-|end);
      \end{tikzpicture}
    &\;\raisebox{2pt}{=}\quad
      \begin{tikzpicture}[WD]
        \node[link] (mu) {};
        \draw (mu) -- +(1,0);
        \draw (mu) to[out=-120, in=0] +(-.5,-.5) node[link, anchor=center] (mu2) {};
        \draw (mu2) to[out=120, in=0] +(-1,.5) coordinate (end);
        \draw (mu2) to[out=-120, in=0] +(-1,-.5);
        \draw (mu) to[out=120, in=0] +(-1,.5) coordinate (h);
        \draw (h) -- (h-|end);
      \end{tikzpicture}
    \\
    \begin{tikzpicture}[WD]
      \node[link] (delta) {};
      \draw (delta) -- +(-.75,0);
      \draw (delta) to[out=60, in=180] +(.5,.5) coordinate (out1);
      \draw (delta) to[out=-60, in=180] +(.5,-.5) coordinate (out2);
      \node[link, right=1 of delta] (mu) {};
      \draw (mu) -- +(.75,0);
      \draw (mu) to[out=120, in=0] +(-.5,.5) -- (out1);
      \draw (mu) to[out=-120, in=0] +(-.5,-.5) -- (out2);
    \end{tikzpicture}
    &\;\raisebox{2pt}{=}\quad
      \begin{tikzpicture}[WD,baseline=-5pt]
        \draw (0,0) -- (1.5,0);
      \end{tikzpicture}
    &
      \multicolumn{4}{c}{$
      \begin{tikzpicture}[WD]
        \coordinate (htop);
        \coordinate[below=.7 of htop] (hmid);
        \coordinate[below=.7 of hmid] (hbot);
        \node[link, left=.5] (dotL) at ($(htop)!.5!(hmid)$) {};
        \node[link, right=.5] (dotR) at ($(hbot)!.5!(hmid)$) {};
        \draw (dotL) -- +(-1,0) coordinate (l);
        \draw (dotR) -- +(1,0) coordinate (r);
        \draw (dotL) to[out=60, in=180] (htop);
        \draw (dotL) to[out=-60, in=180] (hmid);
        \draw (hmid) to[out=0, in=120] (dotR);
        \draw (hbot) to[out=0, in=-120] (dotR);
        \draw (htop) -- (htop-|r);
        \draw (hbot) -- (hbot-|l);
      \end{tikzpicture}
      \;\raisebox{2pt}{=}\quad
      \begin{tikzpicture}[WD]
        \node[link] (delta) {};
        \draw (delta) -- +(-.5,0) coordinate (end);
        \draw (delta) to[out=60, in=180] +(1,.5);
        \draw (delta) to[out=-60, in=180] +(1,-.5);
        \node[link, left=1 of delta] (mu) {};
        \draw (mu) -- (end);
        \draw (mu) to[out=120, in=0] +(-1,.5);
        \draw (mu) to[out=-120, in=0] +(-1,-.5);
      \end{tikzpicture}
      \;\raisebox{2pt}{=}\quad
      \begin{tikzpicture}[WD]
        \coordinate (htop);
        \coordinate[below=.7 of htop] (hmid);
        \coordinate[below=.7 of hmid] (hbot);
        \node[link, left=.5] (dotL) at ($(hbot)!.5!(hmid)$) {};
        \node[link, right=.5] (dotR) at ($(htop)!.5!(hmid)$) {};
        \draw (dotL) -- +(-1,0) coordinate (l);
        \draw (dotR) -- +(1,0) coordinate (r);
        \draw (dotL) to[out=-60, in=180] (hbot);
        \draw (dotL) to[out=60, in=180] (hmid);
        \draw (hmid) to[out=0, in=-120] (dotR);
        \draw (htop) to[out=0, in=120] (dotR);
        \draw (htop) -- (htop-|l);
        \draw (hbot) -- (hbot-|r);
      \end{tikzpicture}
      $}
    \\
    \begin{tikzpicture}[WD,baseline=-5pt]
      \draw (0,0) -- (1.5,0);
    \end{tikzpicture}
    &\;\raisebox{2pt}{$\leq$}\quad
      \begin{tikzpicture}[WD,baseline=-5pt]
        \node[link] (epsilon) {};
        \draw (epsilon) to +(-.8,0);
        \node[link, right=.5 of epsilon] (eta) {};
        \draw (eta) to +(.8,0);
      \end{tikzpicture}
    &
      \begin{tikzpicture}[WD,baseline=-5pt]
        \node[link] (epsilon) {};
        \draw (epsilon) to +(-.8,0);
        \node[link, left=1 of epsilon] (eta) {};
        \draw (eta) to +(.8,0);
      \end{tikzpicture}
    &\;\raisebox{2pt}{$\leq$}\quad
      \id_{\ord0}
    &
      \begin{tikzpicture}[WD]
        \node[link] (delta) {};
        \draw (delta) -- +(-.5,0) coordinate (end);
        \draw (delta) to[out=60, in=180] +(1,.5);
        \draw (delta) to[out=-60, in=180] +(1,-.5);
        \node[link, left=1 of delta] (mu) {};
        \draw (mu) -- (end);
        \draw (mu) to[out=120, in=0] +(-1,.5);
        \draw (mu) to[out=-120, in=0] +(-1,-.5);
      \end{tikzpicture}
    &\;\raisebox{2pt}{$\leq$}\quad
      \begin{tikzpicture}[WD]
        \draw (0, 0) -- (1.5, 0);
        \draw (0, .75) -- (1.5, .75);
      \end{tikzpicture}
  \end{array}
\end{equation}

We refer to the composites $\eta\cp\delta$ and $\mu\cp\epsilon$ as the \define{cup} and the \define{cap}; they are denoted $\thecup$ and $\thecap$ and are depicted as follows:
\begin{equation}\label{eqn.cap_cup}
  \begin{array}{cccc}
    \begin{tikzpicture}[inner WD]
      \draw (.5, .5) -- (0,.5) to[out=180, in=180] (0, -.5) -- (.5, -.5);
    \end{tikzpicture}
    &\;\raisebox{2pt}{$\coloneqq$}\quad
      \begin{tikzpicture}[WD]
        \node[link] (mu) {};
        \node[link, left=.3 of mu] (eta) {};
        \draw (mu) -- (eta);
        \draw (mu) to[out=60, in=180] +(.5,.5) -- +(.25,0);
        \draw (mu) to[out=-60, in=180] +(.5,-.5) -- +(.25,0);
      \end{tikzpicture}
    &\qqand
      \begin{tikzpicture}[inner WD]
        \draw (-.5, .5) -- (0,.5) to[out=0, in=0] (0, -.5) -- (-.5, -.5);
      \end{tikzpicture}
    &\;\raisebox{2pt}{$\coloneqq$}\quad
      \begin{tikzpicture}[WD]
        \node[link] (mu) {};
        \node[link, right=.3 of mu] (eta) {};
        \draw (mu) -- (eta);
        \draw (mu) to[out=120, in=0] +(-.5,.5) -- +(-.25,0);
        \draw (mu) to[out=-120, in=0] +(-.5,-.5) -- +(-.25,0);
      \end{tikzpicture}
  \end{array}
\end{equation}
It follows from \eqref{eqn.equations_wires} that $\thecap$ and $\thecup$ satisfy the ``yanking'' or adjunction identities
\begin{equation}\label{eqn.yanking}
  \begin{tikzpicture}[inner WD]
    \draw (1, 1) --
    (-.25, 1) to[out=180, in=180]
    (-.25, 0) --
    (.25, 0) to[out=0, in=0]
    (.25, -1) --
    (-1, -1);
    \node[font=\normalsize] at (3,0) {=};
    \draw (5,0) -- (7,0);
    \node[font=\normalsize] at (9,0) {=};
    \draw (13, -1) --
    (11.75, -1) to[out=180, in=180]
    (11.75, 0) --
    (12.25, 0) to[out=0, in=0]
    (12.25, 1) --
    (11, 1);
  \end{tikzpicture}
\end{equation}

\renewcommand{\arraystretch}{1}
The equations in the first and second lines of \eqref{eqn.equations_wires} are known as the \define{(co)commutativity, (co)unitality, and (co)associativity} equations for comonoids and monoids, respectively. The equations in the next line are known as the \define{special law} and the \define{frobenius law}. We refer to the inequalities in the last line as the \define{adjunction inequalities}, because they show up as the unit and co-unit of adjunctions, as we see next in the following proposition.

\begin{proposition}\label{prop.left_adj_in_ww}
  With notation as in \eqref{eqn.generating_wires}, there are adjunctions
  \begin{equation}\label{eqn.ww_adjunctions}
    \adj{\ord1}{\epsilon}{\eta}{\ord0}
    \qqand
    \adj{\ord1}{\delta}{\mu}{\ord2}.
  \end{equation}
\end{proposition}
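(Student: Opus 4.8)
The plan is to observe that, in the po-categorical setting, almost all of the work has already been carried out in \eqref{eqn.equations_wires}. In a po-category an adjunction $l\dashv r$ is witnessed by nothing more than a unit and a counit $2$-cell: the triangle equalities \cref{eqn.adjunction} are statements about parallel $2$-morphisms living in a poset, so any two such are equal and the equalities hold automatically. This is exactly the mechanism used in the proof of \cref{lemma.adj_in_pocat}, namely the uniqueness of $2$-cells in a po-category. Accordingly, for each of the two claimed adjunctions I would reduce the proposition to exhibiting the two defining inequalities in the appropriate hom-posets of $\ww$, and then simply point to where each appears among the generating relations.

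For $\epsilon\dashv\eta$ I would set $l=\epsilon\colon\ord1\to\ord0$ and $r=\eta\colon\ord0\to\ord1$. Unwinding the diagrammatic-order convention $f\cp g$, the unit is the inequality $\id_{\ord1}\leq\epsilon\cp\eta$ in $\ww(\ord1,\ord1)$ and the counit is $\eta\cp\epsilon\leq\id_{\ord0}$ in $\ww(\ord0,\ord0)$. These are precisely the first two of the \emph{adjunction inequalities} displayed in the last line of \eqref{eqn.equations_wires}, matched against the icons of \eqref{eqn.generating_wires}.

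For $\delta\dashv\mu$ I would set $l=\delta\colon\ord1\to\ord2$ and $r=\mu\colon\ord2\to\ord1$. The counit is $\mu\cp\delta\leq\id_{\ord2}$ in $\ww(\ord2,\ord2)$, which is the third adjunction inequality of \eqref{eqn.equations_wires}. The unit is $\id_{\ord1}\leq\delta\cp\mu$ in $\ww(\ord1,\ord1)$, and here one gets more than an inequality: it is the \emph{special law} $\delta\cp\mu=\id_{\ord1}$ recorded in \eqref{eqn.equations_wires}, which in particular yields the required $\leq$. With both unit/counit pairs in hand, the automatic triangle equalities finish off both adjunctions.

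The only genuine care needed---and hence the sole place an error could slip in---is the orientation bookkeeping: one must check that the composites named by the string-diagram icons run in the correct direction under the convention $f\cp g$, so that each drawn inequality truly sits in the hom-poset demanded by the unit or counit of the stated adjunction rather than in its opposite. Once the icons of \eqref{eqn.generating_wires} are read against the $\op$ appearing in \cref{prop.ww_explicit}, this matching is routine and the proposition follows.
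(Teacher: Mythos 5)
Your proposal is correct and follows essentially the same route as the paper's own proof: read off the unit/counit inequalities $\id_{\ord1}\leq\epsilon\cp\eta$, $\eta\cp\epsilon\leq\id_{\ord0}$, $\mu\cp\delta\leq\id_{\ord2}$ and the special law $\delta\cp\mu=\id_{\ord1}$ from \eqref{eqn.equations_wires}, then invoke the fact that the triangle equalities \eqref{eqn.adjunction} hold automatically in a po-category. Your orientation bookkeeping is in fact slightly more careful than the paper's prose, which misstates two of the codomains/directions (writing $\eta\cp\epsilon\leq\id_{\ord1}$ and $\id_{\ord2}\leq\mu\cp\delta$) even though the intended inequalities are exactly the ones you identify.
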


\begin{proof}
  The inequalities $\id_{\ord1}\leq(\epsilon\cp\eta)$,\quad $(\eta\cp\epsilon)\leq\id_{\ord1}$,\quad $\id_{\ord2}\leq(\mu\cp\delta)$, and the equation $\id_{\ord1}=(\delta\cp\mu)$ are all shown in \eqref{eqn.equations_wires}, which itself is proved via computations in $\ccospan\co$. The required equalities \eqref{eqn.adjunction} are automatic in a po-category.
\end{proof}

\begin{remark}\label{rem.surprising_inequality}
  The perhaps surprising half of the ``special law'', i.e.\ the inequality $(\delta\cp\mu)\leq\id_{\ord1}$ not arising from adjointness, is in fact derivable from the rest of the structure:
  \[
    \begin{tikzpicture}
      \node (P1) {
        \begin{tikzpicture}[WD]
          \node[link] (delta) {};
          \draw (delta) -- +(-.75,0);
          \draw (delta) to[out=60, in=180] +(.5,.5) coordinate (out1);
          \draw (delta) to[out=-60, in=180] +(.5,-.5) coordinate (out2);
          \node[link, right=1 of delta] (mu) {};
          \draw (mu) -- +(.75,0);
          \draw (mu) to[out=120, in=0] +(-.5,.5) -- (out1);
          \draw (mu) to[out=-120, in=0] +(-.5,-.5) -- (out2);
        \end{tikzpicture}
      };
      \node (P2) [right=1 of P1] {
        \begin{tikzpicture}[WD]
          \node[link] (delta) {};
          \draw (delta) -- +(-.75,0);
          \draw (delta) to[out=60, in=180] +(.5,.5) coordinate (out1);
          \draw (delta) to[out=-60, in=180] +(.5,-.5) coordinate (out2);
          \node[link, right=1 of delta] (mu) {};
          \draw (mu) to[out=120, in=0] +(-.5,.5) -- (out1);
          \draw (mu) to[out=-120, in=0] +(-.5,-.5) -- (out2);
          \draw (mu) -- +(.75,0) node[link] (delta2) {};
          \draw (delta2) to[out=60, in=180] +(.5,.5) -- +(.5,0);
          \draw (delta2) to[out=-60, in=180] +(.5,-.5) node[link] {};
        \end{tikzpicture}
      };
      \node (P3) [right=1 of P2] {
        \begin{tikzpicture}[WD]
          \node[link] (delta) {};
          \draw (delta) -- +(-.75,0);
          \draw (delta) to[out=60, in=180] +(.5,.5) -- +(.5,0);
          \draw (delta) to[out=-60, in=180] +(.5,-.5) node[link] {};
        \end{tikzpicture}
      };
      \node (P4) [right=1 of P3] {
        \begin{tikzpicture}[WD]
          \draw (0,0) -- (1.5,0);
        \end{tikzpicture}
      };
      \node at ($(P1.east)!.5!(P2.west)$) {$=$};
      \node at ($(P2.east)!.5!(P3.west)$) {$\leq$};
      \node at ($(P3.east)!.5!(P4.west)$) {$=$};
    \end{tikzpicture}
  \]
  More generally, if $f\colon \ord{m}\surj \ord{n}$ is any surjective function then $f\tp\cp f=\id$, where $f\tp$ is the transpose (left adjoint) of $f$.
\end{remark}

\begin{definition}[Sub-props of $\ww$]\label{def.comon_mon_selfduals}
  The prop $\ww$ contains several other important full sub-props:
  \begin{itemize}
  \item That generated by $\epsilon$ and $\delta$ is called the \define{prop for cocommutative comonoids}.
  \item That generated by $\eta$ and $\mu$ is called the \define{prop for commutative monoids}.
  \item That generated by $\thecup$ and $\thecap$ \eqref{eqn.cap_cup} is called the \define{prop for self-duals}.
  \end{itemize}
  In fact these three props are equivalent to the monoidal category of finite sets, its opposite, and the category of unoriented cobordisms, respectively. See also \cite[Section 3]{fong2019supplying}.
\end{definition}

\begin{notation}[General arity $\delta$ and $\mu$]
  We will adopt the convention that for $n\in\nn$ we will write $\delta^{n}\colon\ord 1\to\ord n$ and $\mu^{n}\colon \ord n\to\ord 1$ for the maps associated to the cospans $\ord 1\to\ord1\from\ord n$ and $\ord n\to\ord1\from\ord 1$ respectively. In particular we have $\delta^{0}=\epsilon$, $\mu^{0}=\eta$, and $\delta^{1}=\mu^{1}=\id_{\ord 1}$ among other identities. Graphically we might render these morphisms as
  \begin{diagram*}
    \node(D){\begin{tikzpicture}[WD]
        \node[link] (delta) {};
        \draw (delta) -- +(-1,0);
        \draw (delta) to[out=60, in=180] +(1,1.3);
        \draw (delta) to[out=-60, in=180] +(1,-1.3);
        \draw (delta) to[out=60, in=180] +(1,0.8) node(t){};
        \draw (delta) to[out=-60, in=180] +(1,-0.8) node(b){};
        \path (t) -- node[right]{$m$} (b);
        \path (t) -- node[sloped]{$\ldots$} (b);
      \end{tikzpicture}};
    \node(M)[right=2cm of D]{\begin{tikzpicture}[WD]
        \node[link] (mu) {};
        \draw (mu) -- +(+1,0);
        \draw (mu) to[out=120, in=0] +(-1,1.3);
        \draw (mu) to[out=-120, in=0] +(-1,-1.3);
        \draw (mu) to[out=120, in=0] +(-1,0.8) node(t2)[anchor=center]{};
        \draw (mu) to[out=-120, in=0] +(-1,-0.8) node(b2)[anchor=center]{};
        \path (t2) -- node[left]{$n$} (b2);
        \path (t2) -- node[sloped,anchor=center]{$\ldots$} (b2);
      \end{tikzpicture}};
    \path (D) -- node[midway]{and} (M);
  \end{diagram*}
  for $\delta^{m}$ and $\mu^{n}$ respectively.
\end{notation}

\section{Picturing morphisms in \texorpdfstring{$\ww$}{W}}\label{sec.depict_ww}

The po-category $\ww$ forms the foundation of our diagrammatic language for regular logic. We have already seen that morphisms in $\ww$ can be given a graphical description by depicting generating morphisms using special icons, and working in the usual Joyal--Street string diagram language for morphisms in symmetric monoidal categories.

We now present an alternate way to depict morphisms in $\ww$. While the description we provide here is presently informal, later we shall describe a rigorous manner in which to draw, interpret, and manipulate these pictures.

\begin{notation}[Objects as IO shells]
  By definition, an object $\ord n\in\ww$ is a finite set and we represent it graphically by a circle with $n$ ports around the exterior.
  \begin{equation}\label{eqn.shell_pic}
    \ord{n}=
    \begin{tikzpicture}[inner WD, baseline=(rho)]
      \node[pack,minimum size = 3ex] (rho) {};
      \draw (rho.180) to  +(180:2pt);
      \draw (rho.110) to +(110:2pt);
      \node at ($(rho.0)+(60:4pt)$) {$\vdots$};
      \node at ($(rho.0)+(0:7pt)$) {\big\}};
      \node at ($(rho.0)+(0:29pt)$) {$n-3$ ports};
      \draw (rho.-110) to +(-110:2pt);
    \end{tikzpicture}
  \end{equation}
  Our convention will be for the ports to be numbered clockwise from the left of the circle, unless otherwise indicated. We refer to such an annotated circle as an \define{input-output shell}, or \define{IO shell}.
\end{notation}

\begin{notation}[Picturing morphisms of $\ww$] \label{notation.picturing_ww}
  Given a morphism $\omega\colon \ord n_1+ \dots + \ord n_s \to \ord n_\out$ in $\ww$, we associate to it the canonical representation as a cospan of finite sets assured by \cref{rem.canonical_decomp}.
  \begin{equation}\label{eqn.cospan}
    \ord{n}_1+\cdots+\ord{n}_s
    \To{\copair{\omega_1,\ldots,\omega_s}}
    \ord{n}_\omega
    \From{\omega_\out}
    \ord{n}_\out
  \end{equation}
  From this we draw our depiction of $\omega$ as follows.
  \begin{enumerate}[nolistsep, noitemsep]
  \item Draw the IO shell for $\ord n_\out$.
  \item Draw each object $\ord n_i$, for $i=1,\dots,s$, as non-overlapping IO shells inside the $\ord n_\out$ IO shell.
  \item For each $i \in \ord{n}_\omega$, draw a black dot anywhere in the region interior to the $\ord n_\out$ IO shell but exterior to all the $\ord n_i$ IO shells.
  \item For each element $(i,j) \in \sum_{i=1,\dots,s,\out} \ord{n}_i$, draw a wire connecting the $j$th port on the object $\ord n_i$ to the black dot $\omega_i(j)$.
  \end{enumerate}

  For a more compact notation, we may also neglect to explicitly draw the object $\ord n_\out$, leaving it implicit as comprising the wires left dangling on the boundary of the diagram.
\end{notation}

\begin{remark}
  While we take the convention of \cref{notation.picturing_ww} for drawing morphisms of $\ww$, note that once we determine the domain and codomain of the morphism, the rest of the data is combinatorial. This implies that, up to a choice of domain and codomain, the morphism represented by any picture is invariant under topological deformation.
\end{remark}

\begin{example} \label{ex.wiring_diagram}
  Here is the combinatorial data of a morphism $\omega\colon \ord n_1+ \ord n_2 + \ord n_3 \to \ord n_\out$ in $\ww$, together with its depiction:
  \begin{align*}
    \parbox{3.5in}{\raggedright
    $n_1 = 3$,\quad $n_2 = 3$,\quad $n_3 = 4$,\quad $n_\out = 6$, \quad $n_\omega = 7$\\
    $\omega_1(1)=4$, $\omega_1(2)=2$, $\omega_1(3)=1$,\\
    $\omega_2(1)=6$, $\omega_2(2)=4$, $\omega_2(3)=5$,\\
    $\omega_3(1)=1$, $\omega_3(2)=2$, $\omega_3(3)=\omega_3(4)=6$,\\
    $\omega_\out(1)=1$, $\omega_\out(2)=\omega_\out(3)=3$\\
    $\omega_\out(4)=5$, $\omega_\out(5)=6$, $\omega_\out(6)=7$.}
    &\qquad
      \begin{tikzpicture}[penetration=0, inner WD,  pack size=9pt, link size=2pt, font=\tiny, scale=2, baseline=(out)]
        \node[packs] at (-1.5,-1) (f) {$3$};
        \node[packs] at (0,1.9) (g) {$1$};
        \node[packs] at (1.5,-1) (h) {$2$};
        \node[outer pack, inner sep=34pt] at (0,.2) (out) {};
        \node[link,label=90:$6$] at ($(f)!.5!(h)$) (link1) {};
        \node[link,label=0:$7$] at (-2.4,-.25) (link2) {};
        \node[link,label={[label distance=0pt,xshift=-2pt]90:$1$}] at ($(f.75)!.5!(g.-135)$) (link3) {};
        \begin{scope}[label distance=-6pt]
          \draw[fr_out] (out.270) to (link1);
          \draw[fr_out] (out.190)	to (link2);
          \draw[fr_out] (out.155) to (link3);
          \draw[fr_out] (out.-35)	to node[pos=.5,link,label=10:$5$]{} (h.-30);
          \draw[to_out,fr_out] (out.15)	to[out=-165,in=-110] node[pos=.5,link,label
          distance=3pt,label=200:$3$]{} (out.70);
          \draw (f.30) to[out=0,in=145] (link1);
          \draw (f.-30) to[out=0,in=-145] (link1);
          \draw (h.180) to (link1);
          \draw (g.-60) to node[pos=.5,link,label={[xshift=2pt]10:$4$}]{} (h.120);
          \draw (f.45) to node[pos=.5,link,label=-10:$2$]{} (g.-105);
          \draw (f.75) to (link3);
          \draw (g.-135) to (link3);
        \end{scope}
      \end{tikzpicture}
  \end{align*}
\end{example}

\begin{example}\label{ex.no_inner}
  Note that we may have $s=0$, in which case there are no inner IO shells.
  For example, the following has $n_\omega=2$, and $n_{\out}=4$.
  \[
    \begin{tikzpicture}[inner WD]
      \node[link] (dot270) {};
      \node[outer pack, fit=(dot270), inner sep=12pt] (outer) {};
      \node[link] at ($(outer.0) - (0:5pt)$) (dot0) {};
      \draw[fr_out] (outer.0) -- (dot0);
      \draw[fr_out] (outer.70) -- (dot270);
      \draw[fr_out] (outer.180) -- (dot270);
      \draw[fr_out] (outer.300) -- (dot270);

      \pgfresetboundingbox
      \useasboundingbox (outer.270) rectangle (outer.north);
    \end{tikzpicture}
  \]
  In addition we may have $\ord n_{\out}=\ord 0$ in which case we render the two possible morphisms $\id_{\ord 0}$ and $\eta\cp\epsilon$ in $\ww(\ord0,\ord0)$ as the empty picture and a free floating dot, respectively.
\end{example}

\begin{remark}\label{rem.multiple}
  When multiple wires meet at a point, our convention will be to draw a dot iff the number of wires is different from two.
  \begin{center}
    \begin{tikzpicture}[unoriented WD, font=\small]
      \node["1 wire"] (P1) {
        \begin{tikzpicture}[inner WD, surround sep=4pt]
          \node[link] (dot) {};
          \node[outer pack, fit=(dot)] (outer) {};
          \draw[to_out] (dot) -- (outer.west);
        \end{tikzpicture}
      };
      \node[right=4 of P1, "2 wires"] (P2) {
        \begin{tikzpicture}[inner WD, surround sep=4pt]
          \node[link, white] (dot) {};
          \node[outer pack, fit=(dot)] (outer) {};
          \draw[fr_out,to_out] (outer.west) -- (outer.east);
        \end{tikzpicture}
      };
      \node[right=4 of P2, "3 wires"] (P3) {
        \begin{tikzpicture}[inner WD, surround sep=4pt]
          \node[link] (dot) {};
          \node[outer pack, fit=(dot)] (outer) {};
          \draw[to_out] (dot) -- (outer.west);
          \draw[to_out] (dot) -- (outer.east);
          \draw[to_out] (dot) -- (outer.south);
        \end{tikzpicture}
      };
      \node[right=4 of P3, "4 wires"] (P4) {
        \begin{tikzpicture}[inner WD, surround sep=4pt, font=\tiny]
          \node[link] (dot) {};
          \node[outer pack, fit=(dot)] (outer) {};
          \draw[to_out] (dot) -- (outer.west);
          \draw[to_out] (dot) -- (outer.east);
          \draw[to_out] (dot) -- (outer.south);
          \draw[to_out] (dot) -- (outer.north);
        \end{tikzpicture}
      };
      \node[right=5 of P4, "$\cdots$\quad etc."] (etc){};

      \pgfresetboundingbox
      \useasboundingbox ($(P1.north west)+(0,10pt)$) rectangle ($(etc)+(0,-5pt)$);
    \end{tikzpicture}
  \end{center}
  When wires intersect and we do not draw a black dot, the intended interpretation is that the wires are \emph{not connected}:
  \begin{tikzpicture}[baseline=(P1.-20)]
    \node (P1) {
      \begin{tikzpicture}[inner WD, surround sep=2pt, font=\tiny]
        \node[link] (dot) {};
        \node[outer pack, fit=(dot)] (outer) {};
        \draw[to_out] (dot) -- (outer.west);
        \draw[to_out] (dot) -- (outer.east);
        \draw[to_out] (dot) -- (outer.south);
        \draw[to_out] (dot) -- (outer.north);
      \end{tikzpicture}
    };
    \node[right=0.25 of P1] (P2)	{
      \begin{tikzpicture}[inner WD, surround sep=2pt, font=\tiny]
        \node[link, white] (dot) {};
        \node[outer pack, fit=(dot)] (outer) {};
        \draw[fr_out,to_out] (outer.east) -- (outer.west);
        \draw[fr_out,to_out] (outer.south) -- (outer.north);
      \end{tikzpicture}
    };
    \path (P1) -- node[midway](N){$\neq$} (P2);
  \end{tikzpicture}.
\end{remark}

The following examples give a flavor of how composition, monoidal product, and
$2$-morphisms are represented using this graphical notation.

\begin{example}[Composition as substitution]\label{ex.comp_as_subst}
  Composition of morphisms is described by \define{nesting} of their depictions. Let $\omega'\colon \ord n'\to \ord n_1$ and $\omega\colon \ord n_1 \to \ord n_\out$ be morphisms in $\ww$. Then the composite morphism $\omega'\cp \omega\colon \ord n' \to \ord n_\out$ is depicted by
  \begin{enumerate}[nolistsep, noitemsep]
  \item drawing  $\omega'$ inside the inner circle of the picture for $\omega$,
  \item erasing the IO shell representing $\ord n_1$,
  \item amalgamating any connected black dots into a single black dot,
  \item removing either
    \begin{enumerate}[nolistsep,noitemsep, label=(\roman*)]
    \item all but one of the black dots not connected to a IO shell (if $n' = n_\out = 0$) or
    \item all black dots not connected to a IO shell (if $n' \ne 0$ or $n_\out\ne 0$).
    \end{enumerate}
  \end{enumerate}
  Note that step 3 corresponds to taking pushouts in $\finset$, while step 4 corresponds to taking the poset reflection.

  As a shorthand for composition, we simply draw one morphism directly substituted into another, as per step 1. For example, we have
  \begin{center}
    \begin{tikzpicture}[unoriented WD, font=\small]
      \node["$\omega'$"] (P1a) {
        \begin{tikzpicture}[inner WD, pack size=8pt]
          \node[packs] (a) {};
          \node[outer pack, inner sep=10pt, fit=(a)] (outer) {};

          \node[link] (link1) at ($(a.west)!.6!(outer.west)$) {};
          \node[link] (link3) at ($(a.-20)!.5!(outer.-20)$) {};

          \draw[fr_out] (outer.west) -- (link1);
          \draw[to_out] (a.40) -- (outer.45);
          \draw (a.-20) -- (link3);
          \draw[to_out] (link3) -- (outer.0);
          \draw[to_out] (link3) -- (outer.-45);
        \end{tikzpicture}
      };
      \node[right=1 of P1a, "$\omega$"] (P1b) {
        \begin{tikzpicture}[inner WD, pack size=8pt]
          \node[packs] (c) {};
          \node[outer pack, inner sep=10pt, fit=(c)] (outer2) {};

          \node[link] (link4) at ($(c.west)!.4!(outer2.west)$) {};
          \node[link] (link5) at ($(c.20)!.5!(outer2.20)$) {};

          \draw (c.west) -- (link4);
          \draw (c.45) -- (link5);
          \draw (c.0) -- (link5);
          \draw[to_out] (link5) -- (outer2.20);
          \draw[to_out] (c.-45) -- (outer2.-45);
        \end{tikzpicture}
      };
      \node[right=3 of P1b] (P2) {
        \begin{tikzpicture}[inner WD, pack size=8pt]
          \node[packs] (a) {};
          \node[outer pack, inner sep=7pt, fit=(a)] (c) {};
          \node[outer pack, inner sep=5pt, fit=(c)] (outer2) {};

          \node[link] (link1) at ($(a.west)!.6!(c.west)$) {};
          \node[link] (link3) at ($(a.-20)!.5!(c.-20)$) {};

          \draw (c.west) -- (link1);
          \draw (a.40) -- (c.45);
          \draw (a.-20) -- (link3);
          \draw (link3) -- (c.0);
          \draw (link3) -- (c.-45);

          \node[link] (link4) at ($(c.west)!.4!(outer2.west)$) {};
          \node[link] (link5) at ($(c.20)!.5!(outer2.20)$) {};

          \draw (c.west) -- (link4);
          \draw (c.45) -- (link5);
          \draw (c.0) -- (link5);
          \draw[to_out] (link5) -- (outer2.20);
          \draw[to_out] (c.-45) --  (outer2.-45);
        \end{tikzpicture}
      };
      \node[right=3 of P2] (P3) {
        \begin{tikzpicture}[inner WD, pack size=8pt]
          \node[packs] (c) {};
          \node[outer pack, inner sep=10pt, fit=(c)] (outer2) {};

          \node[link] at ($(c.180)!.5!(outer2.180)$) {};
          \node[link] (link) at ($(c.0)!.5!(outer2.0)$) {};

          \draw (c.25) -- (link);
          \draw (c.-25) -- (link);
          \draw[to_out] (link) -- (outer2.15);
          \draw[to_out] (link) -- (outer2.-15);
        \end{tikzpicture}
      };
      \node[right=3 of P3, "$\omega'\cp\omega$"] (P4) {
        \begin{tikzpicture}[inner WD, pack size=8pt]
          \node[packs] (c) {};
          \node[outer pack, inner sep=10pt, fit=(c)] (outer2) {};

          \node[link] (link) at ($(c.0)!.5!(outer2.0)$) {};

          \draw (c.25) -- (link);
          \draw (c.-25) -- (link);
          \draw[to_out] (link) -- (outer2.15);
          \draw[to_out] (link) -- (outer2.-15);
        \end{tikzpicture}
      };
      \node (P1) at ($(P1a.east)!.5!(P1b.west)$) {$\cp$};
      \node (E1) at ($(P1b.east)!.5!(P2.west)$) {$=$};
      \node[above=0 of E1] {\tiny 1};
      \node (E2) at ($(P2.east)!.5!(P3.west)$) {$=$};
      \node[above=0 of E2] {\tiny 2, 3};
      \node (E3) at ($(P3.east)!.5!(P4.west)$) {$=$};
      \node[above=0 of E3] {\tiny 4};
    \end{tikzpicture}
  \end{center}

  For the more general $n$-ary or operadic case, we may obtain the composite
  \[
    (\ord n_1 + \dots + \ord n_{i-1} + \omega' +
    \ord n_{i+1} + \dots + \ord n_j) \cp \omega
  \]
  of any two morphisms $\omega'\colon \ord n'_1+ \dots + \ord n'_{s'} \to \ord n_i$ and $\omega\colon \ord n_1+ \dots + \ord n_s \to \ord n_\out$, with $1\leq i\leq s$, by substituting the picture for $\omega'$ into the $i$\textsuperscript{th} inner circle of the picture for $\omega$, and following a procedure similar to that in \cref{ex.comp_as_subst}.
\end{example}

\begin{example}[Monoidal product as juxtaposition]
   Recall \cref{cor.mor_tensors_ww}: $\id_{\ord 0}$ acts as an identity under tensoring of morphisms. In our graphical notation the monoidal product of two morphisms in $\ww$ is simply their juxtaposition if neither morphism is $\eta\cp\epsilon$. For example, we might have:
  \begin{center}
    \begin{tikzpicture}[unoriented WD, font=\small]
      \node (P1a) {
        \begin{tikzpicture}[inner WD, pack size=8pt]
          \node[packs] (a) {};
          \node[packs, below right=.1 and 2 of a] (b) {};
          \node[outer pack, fit=(a) (b)] (outer) {};
          \draw (a.180) -- (a.180-|outer.west);
          \draw (a.20) to[out=0, in=120] (b.140);
          \draw (a.-30) to[out=-40, in=180] (b.180);
          \draw (b.east) -- (b.east-|outer.east);
        \end{tikzpicture}
      };
      \node[below=1 of P1a] (P1b) {
        \begin{tikzpicture}[inner WD, pack size=8pt]
          \node[packs, below=2 of $(a)!.5!(b)$] (c) {};
          \node[link, right=.8 of c] (link1) {};
          \node[outer pack, fit=(link1) (c)] (outer) {};
          \draw (c.20) -- (link1);
          \draw (c.-20) -- (link1);
          \draw[to_out] (link1) -- (link1-|outer.east);
        \end{tikzpicture}
      };
      \node (P1) at ($(P1a.south)!.5!(P1b.north)$) {$+$};
      \node[right=5 of $(P1a.north)!.5!(P1b.south)$] (e) {$=$};
      \node[right=1 of e] (P2) {
        \begin{tikzpicture}[inner WD, pack size=8pt]
          \node[packs] (a) {};
          \node[packs, below right=.1 and 2 of a] (b) {};
          \node[packs, below=2 of $(a)!.5!(b)$] (c) {};
          \node[link, right=.8 of c] (link1) {};
          \node[outer pack, fit=(a) (b) (c)] (outer) {};
          \draw (a.180) -- (a.180-|outer.west);
          \draw (a.20) to[out=0, in=120] (b.140);
          \draw (a.-30) to[out=-40, in=180] (b.180);
          \draw[to_out] (b.east) -- (b.east-|outer.east);
          \draw (c.20) -- (link1);
          \draw (c.-20) -- (link1);
          \draw (link1) -- (link1-|outer.east);
        \end{tikzpicture}
      };

      \pgfresetboundingbox
      \useasboundingbox (P1b.190-|P1a.west) rectangle (P2.east|-P1a.150);
    \end{tikzpicture}
  \end{center}
  By \cref{cor.mor_tensors_ww}, $\omega+\eta\cp\epsilon=\omega$ if and only if $\omega\ne\id_{\ord 0}$, and so we render the monoidal product $\omega+\eta\cp\epsilon$ as $\omega$, unless $\omega=\id_{\ord 0}$, in which case it is once more $\eta\cp\epsilon$.
\end{example}

\begin{example}[$2$-morphisms as breaking wires and removing disconnected black dots]\label{lemma.breaking}
  Let $\omega,\omega'\colon \ord n \to \ord n_\out$ be morphisms in $\ww$. Each is canonically represented by a cospan of finite sets, \[ \ord n\to \ord n_{\omega}\from \ord n_\out \qqand \ord n\to \ord n_{\omega'}\from \ord n_\out\ .\] By definition, there exists a $2$-morphism $\omega\leq\omega'$ iff there is a function $x\colon \ord n_{\omega'}\to \ord n_\omega$ making the requisite diagrams commute. For any element $i\in\ord n_\omega$, the pre-image $x^{*}(i)$ is either empty, has one element, or has multiple elements. In the first case, the pair of pictures depicting each side of the inequality $\omega\leq\omega'$ would show dot $i$ being removed; in the second case, it would show dot $i$ remaining as it was; and in the third case, it would show a connection being broken at dot $i$. For example, we have $2$-morphisms
  \begin{tikzpicture}[inner WD,baseline=(P1.-20)]
    \node (P1) {
      \begin{tikzpicture}[inner WD,baseline=(current bounding box.south)]
        \node[link] (dot) {};
        \node[outer pack, surround sep=3pt, fit=(dot)] (outer) {};
      \end{tikzpicture}
    };
    \node[right=2 of P1] (P2) {
      \begin{tikzpicture}[inner WD,baseline=(current bounding box.south)]
        \node[link, white] (dot) {};
        \node[outer pack, surround sep=3pt, fit=(dot)] (outer) {};
      \end{tikzpicture}
    };
    \node at ($(P1.east)!.5!(P2.west)$) {$\leq$};
  \end{tikzpicture}
  and
  \begin{tikzpicture}[inner WD,baseline=(P3.-20)]
    \node (P3) {
      \begin{tikzpicture}[inner WD,baseline=(current bounding box.south)]
        \node[link] (dot) {};
        \node[outer pack, surround sep=3pt, fit=(dot)] (outer) {};
        \draw[to_out] (dot) -- (outer.0);
        \draw[to_out] (dot) -- (outer.120);
        \draw[to_out] (dot) -- (outer.240);
      \end{tikzpicture}
    };
    \node[right=2 of P3] (P4) {
      \begin{tikzpicture}[inner WD,baseline=(current bounding box.south)]
        \node[link, draw=none,fill=none] (fake dot) {};
        \node[outer pack, surround sep=3pt, fit=(fake dot)] (outer) {};
        \node[link] (dot) at ($(outer.0)+(0:-5pt)$) {};
        \draw[to_out] (dot) -- (outer.0);
        \draw[to_out,fr_out] (outer.120) to[out=300, in=60] (outer.240);
      \end{tikzpicture}
    };
    \node at ($(P3.east)!.5!(P4.west)$) {$\leq$};
  \end{tikzpicture}.
\end{example}

\section{Supply}\label{sec.supply}

It often happens that every object in a symmetric monoidal category $\cat{C}$ is equipped with the same sort of algebraic structure -- say coming from a prop $\pp$ -- with the property that these algebraic structures are compatible with the monoidal structure. In \cite{fong2019supplying}, we refer to this situation by saying that $\cat{C}$ \emph{supplies} $\pp$. For our purposes we need to slightly generalise this theory, from props to po-props and from symmetric monoidal categories $\cat{C}$ to symmetric monoidal po-categories $\cc$.

\begin{definition}[Supply]\label{def.supply}
  Let $\pp$ be a po-prop and $\cc$ a symmetric monoidal po-category. A \define{supply of $\pp$ in $\cc$} consists of a strong monoidal po-functor $s_c\colon\pp\to\cc$ for each object $c\in\cc$, such that
  \begin{enumerate}[label=(\roman*)]
  \item $s_c(m)=c\tpow{m}$ for each $m\in\nn$,
  \item the strongator $c\tpow{m}\otimes c\tpow{n}\to c\tpow{(m+n)}$ is equal to the associator for each $m,n\in\nn$,
  \item the following diagrams commute for every $c,d\in\cc$ and $\mu\colon m\to n$ in $\pp$, where the $\sigma$'s are the symmetry isomorphisms from \eqref{eqn.symmetry}.
    \begin{diagram}[][][baseline=(s.base)]\label{eqn.supply_commute_tensors}
      \node(1)[]{$c\tpow m\otimes d\tpow m$};
      \node(2)[right=2cm of 1.east,anchor=west]{$c\tpow n\otimes d\tpow n$};
      \node(3)[below= of 1]{$(c\otimes d)\tpow m$};
      \node(4)[below= of 2]{$(c\otimes d)\tpow n$};
      \draw[a](1)to node[la,above]{$s_{c}(\mu)\otimes s_{d}(\mu)$}(2);
      \draw[a](3)to node[la,below]{$s_{c\otimes d}(\mu)$}(4);
      \draw[a](1)to node(s)[la,left]{$\sigma$}(3);
      \draw[a](2)to node[la,right]{$\sigma$}(4);
      \node(5)[right=2cm of 2.east]{$I$};
      \node(6)[right= of 5]{$I$};
      \node(7)[below= of 5]{$I\tpow m$};
      \node(8)[below= of 6]{$I\tpow n$};
      \draw[a](5)to node[la,left]{$\sigma$}(7);
      \draw[a](6)to node[la,right]{$\sigma$}(8);
      \draw[a](7)to node[la,below]{$s_{I}(\mu)$}(8);
      \draw[d](5)to(6);
    \end{diagram}
  \end{enumerate}
  We often denote the morphism $s_c(\mu)$ in $\cc$ simply by $\mu_c\colon c\tpow{m}\to c\tpow{n}$ for typographical reasons; i.e.\ we elide explicit mention of $s$.

  We further say that $f\colon c\to d$ in $\cc$ is a \define{lax $s$-homomorphism} (resp.\ \define{oplax $s$-homomorphism}) if, for each $\mu\colon m\to n$ in the prop $\pp$, there is a $2$-morphism as shown in the left-hand (resp.\ right-hand) diagram:
  \begin{diagram*}[.][4][node distance=1.5cm]
    \node(1)[]           {$c\tpow{m}$};
    \node(2)[right of= 1]{$d\tpow{m}$};
    \node(3)[below of= 1]{$c\tpow{n}$};
    \node(4)[below of= 2]{$d\tpow{n}$};
    \draw[a](1)to node[la,above]{$f\tpow m$}(2);
    \draw[a](1)to node[la,left]{$\mu_{c}$}(3);
    \draw[a](2)to node[la,right]{$\mu_{d}$}(4);
    \draw[a](3)to node[la,below]{$f\tpow n$}(4);
    \path (2) to node[la,sloped]{$\geq$}(3);
    \node(1)[right=3cm of 2]{$c\tpow{m}$};
    \node(2)[right of= 1]   {$d\tpow{m}$};
    \node(3)[below of= 1]   {$c\tpow{n}$};
    \node(4)[below of= 2]   {$d\tpow{n}$};
    \draw[a](1)to node[la,above]{$f\tpow{m}$}(2);
    \draw[a](1)to node[la,left]{$\mu_{c}$}(3);
    \draw[a](2)to node[la,right]{$\mu_{d}$}(4);
    \draw[a](3)to node[la,below]{$f\tpow{n}$}(4);
    \path (3) to node[la,sloped]{$\leq$}(2);
  \end{diagram*}

  Since $\cc$ is locally posetal, if $f$ is both a lax and an oplax $s$-homomorphism, then these diagrams commute and we simply say $f$ is an \define{$s$-homomorphism}.

  We say that $\cc$ \define{(lax-/oplax-) homomorphically supplies $\pp$} if every morphism $f$ in $\cc$ is a (lax/oplax) $s$-homomorphism.
\end{definition}

\begin{example}
  An important class of examples of homomorphic supply are those categories with finite products. By the main  theorem of \cite{fox1976coalgebras}, replicated below, such categories are precisely the discretely ordered po-categories that homomorphically supply cocommutative comonoids (\cref{def.comon_mon_selfduals}).
\end{example}

\begin{proposition}\label{prop.fox}
  A category $\cat{C}$ has finite products iff it can be equipped with a homomorphic supply of commutative comonoids. If $\cat{C}$ and $\cat{D}$ have finite products, a functor $\cat{C}\to\cat{D}$ preserves them iff it preserves the supply of comonoids.
\end{proposition}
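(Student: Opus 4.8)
The plan is to recognise this statement as Fox's theorem \cite{fox1976coalgebras}, transported to our po-categorical vocabulary. Since a category with finite products is discretely ordered, every hom-poset involved is trivial, so a \emph{homomorphic} supply of the comonoid sub-prop of \cref{def.comon_mon_selfduals} (the one generated by $\epsilon$ and $\delta$) amounts to a supply of comonoids in which \emph{every} morphism is a strict comonoid homomorphism; the lax/oplax $2$-cells of \cref{def.supply} collapse to equalities and play no further role. I would then prove the two implications separately and read off the functor statement from the reconstruction used in the converse.

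For the forward implication, suppose $\cat{C}$ has finite products. The monoidal unit is then the terminal object, and I would supply each object $c$ with the counit $\epsilon_c\coloneqq\;!\colon c\to I$ and the comultiplication $\delta_c\coloneqq\pair{\id_c,\id_c}\colon c\to c\otimes c$. The (co)unit, (co)associativity, and (co)commutativity laws are immediate from the universal property of the product. That every $f\colon c\to d$ is a homomorphism is forced by the same universal property: $f\cp\epsilon_d=\epsilon_c$ by uniqueness of maps into the terminal object, and $\delta_c\cp(f\otimes f)=f\cp\delta_d$ by the uniqueness clause of pairing. The compatibility diagrams \eqref{eqn.supply_commute_tensors} reduce to instances of the coherence of the cartesian structure, so $\cat{C}$ homomorphically supplies comonoids.

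For the converse, suppose $\cat{C}$ homomorphically supplies comonoids. I would first show $I$ is terminal: for any $c$ the counit $\epsilon_c\colon c\to I$ is a map to $I$, and if $h\colon c\to I$ is arbitrary then its homomorphism condition against $\epsilon$ (with $\epsilon_I=\id_I$) forces $h=\epsilon_c$. I would then make $c\otimes d$ into a product by taking projections $\pi_c\coloneqq(\id_c\otimes\epsilon_d)\cp\rho_c$ and $\pi_d$ symmetrically, and defining the pairing of $f\colon x\to c$ and $g\colon x\to d$ to be $\pair{f,g}\coloneqq\delta_x\cp(f\otimes g)$. The equation $\pair{f,g}\cp\pi_c=f$ follows by sliding $g$ past $\epsilon_d$ using that $g$ is a homomorphism ($g\cp\epsilon_d=\epsilon_x$) and then applying the right counit law for the comonoid on $x$; symmetrically for $\pi_d$. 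The step I expect to be the main obstacle is \emph{uniqueness}: given any $h\colon x\to c\otimes d$ I must show $h=\pair{h\cp\pi_c,\,h\cp\pi_d}$. Expanding the right-hand side as $\delta_x\cp(h\otimes h)\cp(\pi_c\otimes\pi_d)$ and using that $h$ itself is a homomorphism rewrites it as $h\cp\delta_{c\otimes d}\cp(\pi_c\otimes\pi_d)$; condition (iii) of \cref{def.supply} identifies $\delta_{c\otimes d}$ with the tensor comonoid on $c\otimes d$, whence $\delta_{c\otimes d}\cp(\pi_c\otimes\pi_d)=\id_{c\otimes d}$ by the counit laws and the composite collapses to $h$. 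It is precisely the hypothesis that every morphism---not merely the structure maps---is a homomorphism that makes this argument work; a bare supply would furnish comonoids but not the universal property.

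Finally, for the functorial statement I would observe that in both implications the cartesian structure has been reconstructed entirely from the comonoid data $(\delta,\epsilon)$ and the monoidal structure. Hence a strong monoidal functor $F\colon\cat{C}\to\cat{D}$ preserves the supply---meaning $F\delta_c=\delta_{Fc}$ and $F\epsilon_c=\epsilon_{Fc}$ up to the strongators---iff it carries the reconstructed projections and pairings of $\cat{C}$ to those of $\cat{D}$, which is exactly preservation of finite products. The forward direction is immediate, since a product-preserving functor preserves terminal objects and diagonals; for the converse one simply re-runs the product reconstruction of the previous paragraph inside $\cat{D}$.
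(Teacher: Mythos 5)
Your proof is correct, but it is worth noting that the paper itself offers no proof of this proposition at all: it is stated as a replication of the main theorem of \cite{fox1976coalgebras}, and the surrounding text simply defers to that citation. What you have written is, in effect, the standard proof of Fox's theorem, adapted to the vocabulary of \cref{def.supply}, and it is sound. In particular you place the emphasis exactly where it belongs: the comonoid data alone gives candidate projections $(\id_c\otimes\epsilon_d)\cp\rho_c$ and pairings $\delta_x\cp(f\otimes g)$, and it is the hypothesis that \emph{every} morphism $h$ is a comonoid homomorphism---applied to $h$ itself in the uniqueness step, via $\delta_x\cp(h\otimes h)=h\cp\delta_{c\otimes d}$ together with the compatibility condition \eqref{eqn.supply_commute_tensors} identifying $\delta_{c\otimes d}$ with the interchange of $\delta_c\otimes\delta_d$---that upgrades these candidates to a genuine universal property. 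Your reconstruction argument for the functorial clause is likewise the standard one; the only point to be careful about (inherited from the loose phrasing of the statement rather than from your argument) is that ``preserves the supply'' presupposes a strong monoidal structure on the functor in the sense of \cref{def.preserve_supply}, which for a product-preserving functor is the canonical comparison structure, as you implicitly use. So relative to the paper your proposal buys self-containment at the cost of length, whereas the paper buys brevity by citation; there is no gap.
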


\begin{example}
  We shall meet another large class of examples of lax homomorphic supply in \cref{chap.relational_pocats}, wherein we shall find that regular categories are equivalently po-categories with a lax homomorphic supply of $\ww$ and some additional structure.
\end{example}

Now that we have established the definition of supply we collect some results which we will variously leverage in our later sections.

\begin{notation}[Coproduct of symmetric monoidal po-categories]
  We will write $\bigsqcup$ to denote the coproduct of symmetric monoidal po-categories in the $2$-category \textsf{SMC} of symmetric monoidal po-categories, symmetric monoidal po-functors, and monoidal natural transformations.
\end{notation}

\begin{warning}\label{warn.coprod}
  The coproduct of symmetric monoidal categories in \textsf{SMC} \emph{does not} coincide with the po-categorical coproduct in $\CCat{PoCat}$. Instead, for a set $J$ and symmetric monoidal po-categories $\{\cc_{j}\}_{j\in J}$,
  \[
    \ob\Big(\bigsqcup_{j\in J}\cc_{j}\Big)\coloneqq\Big\{ (c_{j})\in\prod_{j\in J}\ob\cc_{j}\ \Big|\  c_{j}=I_{j} \text{ for all but finitely many } j\in J\Big\}\ .
  \]
  See \cite[Theorem 2.2 \& Appendix A]{fong2019supplying} for details.
\end{warning}

\begin{lemma}\label{lemma.irritating_supply_coprod}
  Let $\cc$ be a symmetric monoidal po-category supplying $\pp$ a po-prop. Then for any set $J$, the supply $s$ of $\pp$ in $\rr$ extends to a supply $\tilde s$ of $\cc$ in $\bigsqcup_{J}\cc$ such that \[\tilde s_{(c_{j})_{J}}\cp\pi_{i} =
    \begin{cases}
      I,& c_{i}=I\\
      s_{c_{i}},& \mathrm{otherwise}
    \end{cases}\quad ,\]
  as functors $\pp\to\cc$. If in particular $\cc$ is symmetric strict monoidal then $\tilde s$ satisfies $\tilde s_{(c_{j})_{j}}\cp\pi_{i}=s_{c_{i}}$.
\end{lemma}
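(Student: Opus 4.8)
The plan is to build $\tilde s$ by transporting the given supply $s$ through the coprojections $\iota_j\colon\cc\to\bigsqcup_J\cc$, which are strong monoidal po-functors, and to verify the axioms of \cref{def.supply} one coordinate at a time. First I would fix an object $C=(c_j)_J$ of $\bigsqcup_J\cc$ and let $\{j_1,\dots,j_k\}\subseteq J$ be its finite support, so that $C=\iota_{j_1}(c_{j_1})\otimes\cdots\otimes\iota_{j_k}(c_{j_k})$ and, since the monoidal structure on the coproduct is computed componentwise (\cref{warn.coprod}), $C\tpow m=(c_j\tpow m)_J$ for every $m\in\nn$; this forces the object assignment $\tilde s_C(m)\coloneqq C\tpow m$ demanded by axiom (i). On a morphism $\mu\colon m\to n$ of $\pp$ I would set
\[
  \tilde s_C(\mu)\coloneqq \iota_{j_1}(s_{c_{j_1}}(\mu))\otimes\cdots\otimes\iota_{j_k}(s_{c_{j_k}}(\mu))\colon C\tpow m\to C\tpow n,
\]
the morphism that is $s_{c_j}(\mu)$ in each coordinate; this is a genuine morphism of $\bigsqcup_J\cc$, being a finite tensor of images of coprojections, and the order of factors is immaterial up to the coproduct's symmetry.

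I would then check that each $\tilde s_C$ is a strong monoidal po-functor and that the family $\{\tilde s_C\}$ satisfies axioms (ii) and (iii). For a fixed $C$, functoriality, monotonicity, preservation of the unit, and the strongator axiom (ii) (strongator $=$ associator) all hold because they hold in each coordinate for the $s_{c_j}$, and because composition, order, tensor, associators, and the strong coprojections $\iota_j$ act coordinatewise on finitely-supported objects. Axiom (iii), which compares $\tilde s_C$, $\tilde s_D$ and $\tilde s_{C\otimes D}$, is the only place the symmetry isomorphisms $\sigma$ of \eqref{eqn.symmetry} genuinely intervene: interleaving the $m$ (resp.\ $n$) copies of $C$ and $D$ introduces exactly the shuffle $\sigma$, and the resulting square commutes because it does so coordinatewise, where it is precisely axiom (iii) for $s$ applied to $c_j$ and $d_j$.

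Finally I would verify the projection formula. Recall that $\pi_i$ arises from the universal property of the SMC-coproduct as the copairing of the family $(\kappa_j)_{j\in J}$ with $\kappa_i=\id_\cc$ and $\kappa_j$ the constant-at-$I$ po-functor for $j\neq i$, so that $\iota_j\cp\pi_i=\kappa_j$. Applying the strong monoidal $\pi_i$ to $\tilde s_C(\mu)$ and using this identity sends every factor with $j_\ell\neq i$ to $\id_I$ and returns $s_{c_i}(\mu)$ on the factor $j_\ell=i$ when present. Since $i$ lies in the support of $C$ exactly when $c_i\neq I$, this yields $\tilde s_{(c_j)_J}\cp\pi_i=s_{c_i}$ when $c_i\neq I$ and the constant functor $I$ when $c_i=I$; in the strict case the unitors are identities and $s_{c_i}=s_I$ is literally the constant functor $I$, so the two cases collapse to the uniform equality $\tilde s_{(c_j)_J}\cp\pi_i=s_{c_i}$. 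The main obstacle throughout is not any single computation but correctly handling the morphism and coherence structure of $\bigsqcup_J\cc$, whose homs are not products of the homs of $\cc$; for this I would lean on the explicit description of \cite[Theorem 2.2 \& Appendix A]{fong2019supplying}, after which every verification is the componentwise echo of an axiom already known for $s$.
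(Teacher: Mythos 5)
Your proposal is correct and follows essentially the same route as the paper: the paper's (very terse) proof likewise defines $\tilde s$ coordinatewise and observes that the supply conditions of \cref{def.supply} hold ``point-wise'' for each $c_j$, with the symmetries of $\bigsqcup_J\cc$ being pointwise those of $\cc$. Your write-up simply fills in the details the paper leaves implicit, including the projection formula and the collapse of the two cases in the strict setting.
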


\begin{proof}
  The supply conditions (\cref{def.supply}) hold ``point-wise'' for each $c_{i}$, and as the symmetries of $\bigsqcup_{J}\cc$ are point-wise those of $\cc$, the functors $\tilde s_{(c_{i})_{I}}$ constitute a supply of $\pp$ in $\bigsqcup\cc$.
\end{proof}

In what follows, we do not produce proofs for \cref{prop.supply_coprod,prop.p_supplies_itself,cor.change_of_supply} as these were essentially proven in \cite[Propositions 3.13, 3.14, and 3.21]{fong2019supplying}; the change from props to po-props makes no difference in this context.

\begin{proposition}\label{prop.supply_coprod}
  A supply $s$ of $\pp$ in $\cc$ induces a strong monoidal po-functor \[s^{\sqcup}\colon\bigsqcup\limits_{\ob\cc}\pp\to\cc\] uniquely determined by $\iota_{c}\cp s^{\sqcup}=s_{c}$ for each $c\in\cc$ and inclusion $\iota_{c}\colon\pp\rightarrowtail\bigsqcup_{\ob\cc}\pp$.\hfill$\qed$
\end{proposition}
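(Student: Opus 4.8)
The plan is to obtain $s^\sqcup$ directly from the universal property of the coproduct $\bigsqcup_{\ob\cc}\pp$ in the $2$-category \textsf{SMC}. Recall from \cref{warn.coprod} and the cited \cite{fong2019supplying} that this coproduct exists and comes equipped with coprojection strong symmetric monoidal po-functors $\iota_c\colon\pp\to\bigsqcup_{\ob\cc}\pp$, one for each $c\in\ob\cc$, whose defining universal property is that, for every symmetric monoidal po-category $\dd$, precomposition with the $\iota_c$ induces a bijection between strong symmetric monoidal po-functors $\bigsqcup_{\ob\cc}\pp\to\dd$ and $\ob\cc$-indexed families of strong symmetric monoidal po-functors $\pp\to\dd$.

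By \cref{def.supply}, a supply $s$ of $\pp$ in $\cc$ is exactly such a family: it assigns to each $c\in\ob\cc$ a strong symmetric monoidal po-functor $s_c\colon\pp\to\cc$, with $s_c(m)=c\tpow m$ by condition~(i) and strongators equal to the associators by condition~(ii). First I would take $\dd=\cc$ and feed the family $(s_c)_{c\in\ob\cc}$ into the universal property. This produces a strong symmetric monoidal po-functor $s^\sqcup\colon\bigsqcup_{\ob\cc}\pp\to\cc$ satisfying $\iota_c\cp s^\sqcup=s_c$ for every $c$, and uniqueness is immediate: any two candidates agree after precomposition with each coprojection, hence coincide.

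Unwinding the construction shows concretely that $s^\sqcup$ sends a finitely supported object $(n_c)_{c\in\ob\cc}$ to the finite tensor $\bigotimes_{c}c\tpow{n_c}$, and sends a generating morphism $\iota_c(\mu)$ to $\mu_c=s_c(\mu)$, extended by the monoidal structure. The finite-support condition of \cref{warn.coprod} is precisely what keeps these tensors finite and the assignment well defined, while the symmetry isomorphisms $\sigma$ of \eqref{eqn.symmetry} reconcile the different orderings of the index set $\ob\cc$. Crucially, I would not need to separately re-verify that $s^\sqcup$ respects the relations of the coproduct or that it is strong rather than merely lax: this is guaranteed by the universal property itself, once each $s_c$ is known to be strong and monotone.

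The step I expect to carry the weight is therefore establishing that the universal property is available in the stated form for this \emph{non-standard} coproduct, whose objects are finitely supported tuples rather than arbitrary ones. This is exactly the content of \cite[Proposition 3.13]{fong2019supplying}. Since each $s_c$ preserves the poset structure on hom-sets and the coproduct of po-categories inherits its order pointwise, the passage from props to po-props introduces no new obstruction, and the cited argument transfers essentially verbatim.
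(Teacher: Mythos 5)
Your overall strategy---deferring the real content to the coproduct machinery of \cite{fong2019supplying}---is in spirit exactly what the paper does, since the paper offers no proof at all: it cites \cite[Proposition 3.13]{fong2019supplying} and remarks that the passage from props to po-props changes nothing. However, the keystone of your argument is not available in the form you state it. The coproduct of \cref{warn.coprod} is a coproduct in \textsf{SMC} only in the \emph{bicategorical} sense: precomposition with the $\iota_c$ induces an \emph{equivalence} of hom-categories, not a bijection. Indeed a strict universal property is impossible in a $2$-category whose $1$-cells are strong (not strict) monoidal po-functors: already for two copies of the terminal symmetric monoidal po-category $1$ one has $1\sqcup 1\iso 1$ with both inclusions equal to the identity, so precomposition is the diagonal functor $\textsf{SMC}(1,\dd)\to\textsf{SMC}(1,\dd)\times\textsf{SMC}(1,\dd)$, which is an equivalence but not a bijection on objects. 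Consequently the abstract universal property only produces a functor with $\iota_c\cp s^\sqcup\iso s_c$, unique up to monoidal natural isomorphism---strictly weaker than the equality $\iota_c\cp s^\sqcup=s_c$ asserted in \cref{prop.supply_coprod}. Your one-line uniqueness argument fails for the same reason: two strong monoidal po-functors can agree strictly after precomposition with every $\iota_c$ (i.e.\ on all singly-supported objects) and yet differ on multiply-supported objects, e.g.\ by a different choice of ordering of the tensor factors.

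What rescues the strict equation is precisely the part of \cref{def.supply} you set aside as not needing re-verification: conditions (i) and (ii) say that $s_c(m)=c\tpow{m}$ \emph{on the nose} and that the strongators of $s_c$ are associators, so the explicit assignment $(n_c)_c\mapsto\bigotimes_c c\tpow{n_c}$, $\iota_c(\mu)\mapsto s_c(\mu)$ (with orderings of the finite supports reconciled by the symmetries \eqref{eqn.symmetry}) restricts along $\iota_c$ to \emph{literally} $s_c$, rather than to something merely isomorphic to it. In other words, the ``unwinding'' in your third paragraph is not an optional sanity check---it \emph{is} the proof, and the remark that you ``would not need to separately re-verify'' strongness and the relations has the logic backwards. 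Note also a misattribution: \cite[Proposition 3.13]{fong2019supplying} is the prop-level analogue of the very statement being proven, not the universal property of the coproduct (that is their Theorem 2.2 and Appendix A, as referenced in \cref{warn.coprod}). The repair is routine---it is what that appendix carries out, and the po-enrichment indeed adds nothing---but as written your argument rests on a universal property in a form that does not hold.
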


\begin{proposition}\label{prop.p_supplies_itself}
  Let $\pp$ be a po-prop. Then there is a supply of $\pp$ in $\pp$ where the functors $s_{c}$ for $c\in\ob\pp$ and $\mu\colon m\to n$ of $\pp$ are given by
  \[ s_{c}(\mu)\colon s_{c}(m)=\underbrace{c+\ldots +c}_{m}=\underbrace{m+\ldots+m}_{c}\xrightarrow{\mu+\ldots+\mu} \underbrace{n+\ldots+n}_{c}=\underbrace{c+\ldots+c}_{n}=s_{c}(n)\ .\hfill\qed\]
\end{proposition}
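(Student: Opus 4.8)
The plan is to verify directly that the assignment $c\mapsto s_{c}$ defined by the displayed formula meets the three clauses of \cref{def.supply}. Since the prop-level version of this statement is \cite[Proposition 3.14]{fong2019supplying}, I would arrange the argument so that the only genuinely new ingredient, the local order, is isolated into a single monotonicity check. First I would confirm that each $s_{c}\colon\pp\to\pp$ is a well-defined po-functor. On objects $s_{c}(m)=c\tpow m$, and on a morphism $\mu\colon m\to n$ the functor returns the $c$-fold monoidal product $s_{c}(\mu)=\mu\tpow c=\mu+\cdots+\mu$; functoriality is then immediate from the interchange law, since $s_{c}(\id_{m})=\id_{m}\tpow c=\id_{c\tpow m}$ and $s_{c}(\mu\cp\nu)=(\mu\cp\nu)\tpow c=\mu\tpow c\cp\nu\tpow c=s_{c}(\mu)\cp s_{c}(\nu)$. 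Monotonicity---the one po-categorical obligation---is equally immediate: because $\otimes$ is a po-functor in $\pp$ (\cref{def.symm_mon_po}), $\mu\leq\mu'$ forces $\mu\tpow c\leq(\mu')\tpow c$, so each $s_{c}$ preserves the order on hom-posets.

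Next I would equip $s_{c}$ with a strong symmetric monoidal structure and dispatch clauses (i) and (ii). Clause (i) holds by definition. For the monoidal structure, strictness of $\pp$ gives the object equality $s_{c}(m)\otimes s_{c}(n)=c\tpow m\otimes c\tpow n=c\tpow{(m+n)}=s_{c}(m+n)$, so the strongator required by clause (ii) is the identity on $c\tpow{(m+n)}$. The point demanding care is that this identity strongator is not naively compatible with the morphism formula: $s_{c}(\mu\otimes\nu)$ interleaves the copies of $\mu$ and $\nu$, whereas $s_{c}(\mu)\otimes s_{c}(\nu)$ segregates them, the two differing by a shuffle. I would therefore record the strongators as the symmetry isomorphisms $\sigma$ reindexing the iterated tensor \eqref{eqn.symmetry}, which the formula silently absorbs, and then appeal to coherence for symmetric monoidal po-categories---every such diagram in $\pp$ commutes because $\pp$ is genuinely symmetric monoidal---to conclude that $s_{c}$ is strong symmetric monoidal with the strongator of clause (ii).

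The main obstacle, and the step that deserves real attention, is clause (iii): the commutativity of the two squares of \eqref{eqn.supply_commute_tensors} for the self-supply. Here I would unwind both legs for fixed $\mu\colon m\to n$ and $c,d\in\ob\pp$. The top-then-right route first forms $s_{c}(\mu)\otimes s_{d}(\mu)=\mu\tpow c\otimes\mu\tpow d$ and then reindexes by $\sigma$, while the left-then-bottom route first reindexes by $\sigma$ and then forms $s_{c\otimes d}(\mu)=\mu\tpow{(c+d)}$, using that objects add under $\otimes$ in a prop. Both composites are assembled from $c+d$ copies of $\mu$ together with symmetries permuting those copies, so the desired equality collapses to naturality of the symmetry $\sigma$ against $\mu\tpow{\bullet}$---the fact that a symmetry of $\pp$ slides past a tensor product of copies of a single morphism. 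The right-hand square of \eqref{eqn.supply_commute_tensors} is the degenerate instance $c=I$, in which every tensor factor is the unit and both legs collapse to identities. Having verified these squares I would conclude that $c\mapsto s_{c}$ is a supply, observing, as in \cite[Proposition 3.14]{fong2019supplying}, that nothing in the verification exceeds the symmetric monoidal structure, so the passage from props to po-props contributes only the monotonicity already noted.
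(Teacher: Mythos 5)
The paper never actually proves this proposition: it defers to \cite[Proposition 3.14]{fong2019supplying} with the remark that the passage from props to po-props changes nothing, a remark your closing sentence reproduces. So your proposal is a reconstruction of that deferred verification, and most of it is sound: functoriality via interchange, monotonicity from $\otimes$ being a po-functor (the only genuinely po-theoretic input), the degenerate unit square in clause (iii), and the reduction of the main square of \cref{eqn.supply_commute_tensors} to naturality of the symmetries are all correct. You also correctly identify the one real subtlety, which the proposition's displayed formula hides behind equality signs: read literally, with $c\tpow{m}=m\tpow{c}$ taken as an identity, the assignment $s_c(\mu)=\mu+\cdots+\mu$ is \emph{not} strictly monoidal. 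Concretely, in $\ww$ with $c=2$, $\mu=\epsilon$ and $\nu=\id_{\ord 1}$, one has $s_2(\epsilon+\id_{\ord 1})=(\epsilon+\id_{\ord 1})+(\epsilon+\id_{\ord 1})$, which discards wires $1$ and $3$, whereas $s_2(\epsilon)+s_2(\id_{\ord 1})=\epsilon+\epsilon+\id_{\ord 2}$ discards wires $1$ and $2$; these are distinct morphisms $\ord 4\to\ord 2$.

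Where your proposal goes wrong is the repair. You say you would ``record the strongators as the symmetry isomorphisms $\sigma$,'' but clause (ii) of \cref{def.supply} leaves no freedom here: the strongators must equal the associators, which in a strict po-prop are identities. Taking the strongators to be nontrivial shuffles does yield a strong symmetric monoidal po-functor, but it is then \emph{not} a supply in the sense of \cref{def.supply}, so this step establishes strong monoidality at the cost of the very clause you claim to verify. The correct resolution puts the symmetries into the action of $s_c$ on morphisms rather than into the strongators: define
\[
  s_c(\mu)\coloneqq \sigma_{c,m}\cp(\underbrace{\mu+\cdots+\mu}_{c})\cp\sigma_{n,c},
\]
where $\sigma_{c,m}\colon c\tpow{m}\to m\tpow{c}$ is the transpose symmetry of \cref{eqn.symmetry}---this is what the equalities in the displayed formula silently denote. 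With identity strongators, clauses (i) and (ii) then hold on the nose; functoriality follows since $\sigma_{n,c}\cp\sigma_{c,n}=\id$; strict monoidality $s_c(\mu+\nu)=s_c(\mu)+s_c(\nu)$ and clause (iii) reduce, exactly as you argue, to naturality of the symmetries plus coherence; and your monotonicity observation carries over unchanged, since $s_c(\mu)$ depends monotonically on $\mu$.
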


\begin{proposition}[Change of supply]\label{cor.change_of_supply}
  Let $G\colon\pp\to\qq$ be a po-prop functor. For any supply $s$ of $\qq$ in $\cc$, we have a supply $(G\cp s)$ of $\pp$ in $\cc$.\hfill$\qed$
\end{proposition}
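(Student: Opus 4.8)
The plan is to take the evident candidate: for each object $c \in \cc$ define the component $(G \cp s)_c \coloneqq G \cp s_c \colon \pp \to \cc$, the composite of $G$ with the strong monoidal po-functor $s_c$ furnished by the supply $s$. Since a composite of strong monoidal po-functors is again one, each $(G \cp s)_c$ is automatically a strong monoidal po-functor, so the only work is to verify the three clauses of \cref{def.supply} for this family.

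The one structural fact I would isolate first is that $G$ acts as the identity on objects and has trivial strongators. Indeed, a po-prop functor is by definition bijective-on-objects and symmetric \emph{strict} monoidal; strictness makes the action of $G$ on the object-monoid $(\nn,0,+)$ a monoid homomorphism, and a bijective monoid endomorphism of $(\nn,0,+)$ is forced to be the identity. Hence $G(m)=m$ for all $m$, so $G(\mu)\colon m\to n$ whenever $\mu\colon m\to n$, and strictness moreover gives that the strongators of $G$ are identities.

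Granting this, each clause of \cref{def.supply} reduces to the corresponding clause already known for $s$. For clause (i), $(G\cp s_c)(m)=s_c(G(m))=s_c(m)=c\tpow m$. For clause (ii), the strongator of a composite is the composite of strongators, and since those of $G$ are identities the strongator of $G\cp s_c$ equals that of $s_c$, which is the associator by hypothesis. For clause (iii), the three squares of \eqref{eqn.supply_commute_tensors} demanded of $G\cp s$ at a morphism $\mu\colon m\to n$ of $\pp$ are literally the squares for the supply $s$ evaluated at the morphism $G(\mu)\colon m\to n$ of $\qq$ — using $(G\cp s)_c(\mu)=s_c(G(\mu))$ and the fact that the symmetries $\sigma$ are data of $\cc$ and so are unchanged — and these commute precisely because $s$ is a supply.

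The only genuine obstacle is the bookkeeping of the first step, pinning down that $G$ is identity-on-objects with trivial strongators; after that there is essentially nothing to prove, each supply axiom for $G\cp s$ being an instance of the same axiom for $s$ at the relabelled morphism $G(\mu)$. This is exactly why the passage from props to po-props changes nothing, and the argument transports verbatim from \cite[Proposition 3.21]{fong2019supplying}.
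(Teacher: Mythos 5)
Your proposal is correct, and it matches the paper's treatment: the paper gives no proof at all, deferring to \cite[Proposition 3.21]{fong2019supplying} with the remark that passing from props to po-props changes nothing, and your argument is exactly the routine verification that citation encapsulates. The one substantive observation you isolate --- that a po-prop functor, being bijective-on-objects and strict monoidal, is forced to act as the identity on the object monoid $(\nn,0,+)$ and has identity strongators, so each supply axiom for $G\cp s$ is literally the corresponding axiom for $s$ at $G(\mu)$ --- is the right one, and it is correct.
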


\begin{example}\label{ex.wiring_comonoids}
  Any supply of $\ww$ in $\cc$ induces a supply of cocommutative comonoids in $\cc$ by change of supply and \cref{def.comon_mon_selfduals}, and hence implies that $\cc$ has finite products (among left adjoints) by \cite{fox1976coalgebras}.
\end{example}

\begin{example} \label{ex.W_self_dual}
  We say that a symmetric monoidal po-category $(\cc,I,\otimes)$ is self-dual compact closed iff it supplies self duals (see \cref{def.comon_mon_selfduals}). Thus $\ww$ is self-dual compact closed by \cref{prop.p_supplies_itself}, as is any po-category $\cc$ supplying $\ww$ by \cref{cor.change_of_supply}.
\end{example}

\begin{example}\label{ex.corel}
  Consider the po-prop of finite sets and corelations, where a morphism $m\to n$ is an equivalence relation on $m+n$, and the order is given by coarsening. Since this po-prop receives a po-prop functor from $\ww$ (see \cref{prop.ww_explicit}), it supplies $\ww$ by \cref{prop.p_supplies_itself,cor.change_of_supply}.
\end{example}

\begin{definition}[Preservation of supply]\label{def.preserve_supply}
  Let $\pp$ be a po-prop, $\cc$ and $\dd$ symmetric monoidal po-categories, and suppose $s$ is a supply of $\pp$ in $\cc$ and $t$ is a supply of $\pp$ in $\dd$. We say that a strong symmetric monoidal po-functor $(F,\varphi)\colon\cc\to\dd$ \define{preserves the supply} if the strongators $\varphi$ provide an isomorphism $t_{Fc}\cong (s_c\cp F)$ of po-functors $\pp\to\dd$ for each $c\in\cc$.
\end{definition}

Unpacking, a strong monoidal po-functor $(F,\varphi)$ preserves the supply iff the following diagram commutes for each morphism $\mu\colon m\to n$ in $\pp$ and object $c\in\cc$:
\begin{diagram}[][][baseline=(p.base)]\label{eqn.unpack_preserve_supply}
  \node(1)[]{$F(c)\tpow m$};
  \node(2)[right= of 1]{$F(c)\tpow n$};
  \node(3)[below= of 1]{$F(c\tpow m)$};
  \node(4)[below= of 2]{$F(c\tpow n)$};
  \draw[a](1)to node[la,above]{$\mu_{F(c)}$}(2);
  \draw[a](3)to node[la,below]{$F(\mu_{c})$}(4);
  \draw[a](1)to node(p)[la,left]{$\varphi$}(3);
  \draw[a](2)to node[la,right]{$\varphi$}(4);
\end{diagram}

One common example of preservation of supply arises from the supply of a po-prop $P$ in itself.

\begin{lemma}\label{lemma.supply_preserves_supply}
  Let $\cc$ be a po-category supplying $\pp$ a po-prop, and let $c\in\ob\cc$ be an object. Then the strong symmetric monoidal po-functor $s_{c}\colon\pp\to\cc$ determined by the supply of $\pp$ in $\cc$ preserves the supply of $\pp$ in itself of \cref{prop.p_supplies_itself}.
\end{lemma}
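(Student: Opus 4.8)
The plan is to unpack \cref{def.preserve_supply} for the functor $s_c\colon\pp\to\cc$ and to verify the resulting square by hand. Writing $F=s_c$ in \cref{eqn.unpack_preserve_supply}, preservation of supply is the assertion that for every object $d\in\ob\pp$ and every morphism $\mu\colon m\to n$ of $\pp$, the square whose top edge is $s_{c\tpow d}(\mu)$, whose bottom edge is $s_c(\mu_d)$, and whose vertical edges are the strongators $\varphi$ of $s_c$, commutes; here $\mu_d$ is the image of $\mu$ under the supply of $\pp$ in itself of \cref{prop.p_supplies_itself}. Two features are immediate and will drive the argument. First, by condition (ii) of \cref{def.supply} the vertical strongators $\varphi\colon (c\tpow d)\tpow m\to c\tpow{dm}$ are iterated associators, involving no symmetries. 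Second, reading \cref{prop.p_supplies_itself} carefully, $\mu_d$ is not merely the $d$-fold monoidal power $\mu+\dots+\mu$: the two ``$=$'' signs appearing there are the transpose symmetries of $\pp$ identifying $\underbrace{d+\dots+d}_m$ with $\underbrace{m+\dots+m}_d$, so that $\mu_d$ is $(\mu+\dots+\mu)$ conjugated by these symmetries. That the symmetries are genuinely present, and not identities, is forced: the naive power $\mu+\dots+\mu$ already fails condition (iii) of \cref{def.supply}.

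With this in hand I would express each edge of the square as $s_c(\mu)\tpow d$ flanked by structural isomorphisms of the underlying symmetric monoidal $1$-category of $\cc$. For the bottom edge, functoriality and strong monoidality of $s_c$ turn $s_c(\mu_d)$ into $s_c(\mu)\tpow d$ conjugated by the images under $s_c$ of the transpose symmetries together with the associators of $s_c$; crucially, $s_c$ being strong \emph{symmetric} monoidal sends the transpose symmetries of $\pp$ to the corresponding symmetries of $\cc$. For the top edge, iterating condition (iii) of \cref{def.supply} along $c\tpow d=(\cdots(c\otimes c)\cdots)\otimes c$ rewrites $s_{c\tpow d}(\mu)$ as $s_c(\mu)\tpow d$ conjugated by the interleaving symmetries $\sigma$ of \eqref{eqn.symmetry}. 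I would organise this rewriting as an induction on $d$: the base case $d=0$ is the unit instance of condition (iii), which forces $s_{c\tpow 0}=s_I$ to send $\mu$ to an identity, matching $\mu_0=\id_{\ord 0}$; the inductive step applies a single instance of \cref{eqn.supply_commute_tensors} to $c\tpow{d+1}=c\tpow d\otimes c$ and then uses strong monoidality of $s_c$ to recombine $s_c(\mu)\tpow d\otimes s_c(\mu)$ into $s_c(\mu)\tpow{d+1}$.

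The one genuinely non-formal step, and the place where I expect the work to concentrate, is matching the two families of structural isomorphisms. One must check that the interleaving symmetries $\sigma$ produced by condition (iii) are exactly cancelled by the transpose symmetries carried along from \cref{prop.p_supplies_itself}, so that after pre- and post-composing with the associator strongators $\varphi$ both edges become literally $s_c(\mu)\tpow d$. Each isomorphism in sight is built from associators, unitors, and symmetries acting on a tensor power of the single object $c$, and all have the same underlying permutation -- the relevant grid transpose -- so their coincidence is an instance of the coherence theorem for symmetric monoidal categories applied to the underlying $1$-category of $\cc$. Because $\cc$ is locally posetal there is no further, higher-dimensional coherence to verify; assembling these identities yields commutativity of the square, which is precisely the claim that $s_c$ preserves the supply of \cref{prop.p_supplies_itself}.
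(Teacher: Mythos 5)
Your proposal is correct and takes essentially the same route as the paper, whose entire proof is the one-line remark that the claim ``reduces to iterated applications of the supply conditions \eqref{eqn.supply_commute_tensors} for $s_{c}$''---precisely the induction on $d$ you perform for the top edge of the square. The extra work you do---reading the implicit transpose symmetries into the self-supply of \cref{prop.p_supplies_itself} (correctly: the naive power $\mu+\dots+\mu$ really does violate condition (iii)), using strong symmetric monoidality of $s_{c}$ on the bottom edge, and discharging the final comparison by symmetric monoidal coherence---is exactly the detail the paper's ``unwinding'' leaves unstated.
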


\begin{proof}
  By unwinding this claim we may see that it reduces to iterated applications of the supply conditions \eqref{eqn.supply_commute_tensors} for $s_{c}$.
\end{proof}

\section{Supplying \texorpdfstring{$\ww$}{W}}\label{sec.ww_suppliers}

Our definition of regular calculi will be built upon a po-category supplying $\ww$. The supply of $\ww$ provides a notion of equality: roughly speaking, the morphisms in $\ww$, represented by cospans, allow the marking of any number of objects as `the same', and composition in $\ww$ takes the transitive closure of two markings, re-completing the result to an equivalence relation. Graphically, we reflect this in our depicting of the morphisms as $\ww$ as wirings: wires connect objects that are to be considered `the same'.

In this section we explore a few properties of po-categories supplying $\ww$, emphasising the sense in which their underlying 1-categories can be understood as undirected thus fulfilling a promise from the introduction. We have already seen a hint of this in \cref{ex.W_self_dual}, where we noted that any po-category supplying $\ww$ has an induced supply of self-dualities (i.e.\ is self-dual compact closed). This implies that we may bijectively move tensor factors of the domain to the codomain and visa-versa by making use of the supplied images of the morphisms $\thecup$ and $\thecap$. Roughly speaking, this allows us to treat the distinction between domain and codomain as a matter of bookkeeping. We give two important examples of this, depicting them in string diagrams.

The first is the transpose of a morphism, which exchanges domain and codomain.
\begin{definition}[Transpose]
  Let $f\colon c \to d$ be a morphism in a self dual compact closed po-category $\cc$. We define its \define{transpose} $f\tp\colon d \to c$ to be the morphism
  \[
    f\tp
    \coloneqq\quad
    \begin{tikzpicture}[inner WD, baseline=(f.-20)]
      \node[oshellr, syntax] (f) {$f$};
      \draw (f.west) --
      ++(-.5, 0) to[out=180, in=180]
      ++(0, 1.25) --
      ++(4.5, 0)
      ;
      \draw (f.east) --
      ++(.5,0) to[out=0, in=0]
      ++(0, -1.25) --
      ++(-4.5, 0)
      ;
    \end{tikzpicture}
  \]
\end{definition}

\begin{notation}[Transpose]\label{notation.transpose}
  Note that we have used the asymmetric icon
  $
  \begin{tikzpicture}[inner WD, baseline=(f.-20)]
    \node[oshellr, syntax] (f) {$f$};
    \draw (f.west) -- +(-.5,0);
    \draw (f.east) -- +(.5,0);
  \end{tikzpicture}
  $
  to denote $f$. This asymmetry allows us to distinguish the domain against the codomain without appealing to the orientation of the picture: the domain $c$ is attached at largest angle of the kite, while the domain $d$ is attached at the least angle.

  In line with this convention, and the adjunction equations \eqref{eqn.yanking}, we depict the transpose simply by the 180-degree rotation of $f$:
  \[
    f\tp =
    \begin{tikzpicture}[inner WD, baseline=(f.-20)]
      \node[oshellr, syntax] (f) {$f\tp$};
      \draw (f.west) -- +(-.5,0);
      \draw (f.east) -- +(.5,0);
    \end{tikzpicture}
    =
    \begin{tikzpicture}[inner WD, baseline=(f.-25)]
      \node[oshelll, syntax] (f) {$f\phantom{\tp}$};
      \draw (f.west) -- +(-.5,0);
      \draw (f.east) -- +(.5,0);
    \end{tikzpicture}
  \]
  We will make liberal use of asymmetric icons when it will help to have rotation denote transpose.
\end{notation}

Transposition gives an isomorphism $\cc(c,d) \cong \cc(d,c)$ of posets. The second example of moving factors between domain and codomain is given by naming and unfolding. These give an isomorphism $\cc(c,d) \cong \cc(I,c\otimes d)$.

\begin{definition}[Name and unfolding]\label{def.name_unfolding}
  Let $f\colon c\to d$ and $g\colon I \to c\otimes d$ be morphisms in a self dual compact closed po-category. We define the \define{name} $\name{f}\colon I\to c\otimes d$ of $f$ and the \define{unfolding} $\unname{g}\colon c\to d$, to be respectively:
  \begin{center}
    \begin{tikzpicture}[inner WD, syntax]
      \node[oshellr] (f) {$f$};
      \draw (f.east) --
      ++(.5,0) -- +(.5, 0) coordinate (end)
      node[right] {$c$}
      ;
      \draw (f.west) --
      ++(-.5, 0) to[out=180, in=180]
      ++(0, 1.5) coordinate (top) --
      (top-|end)
      node[right] {$d$};
      ;

      \node[oshellr,right=4cm of end] (g) {$g\vphantom{f}$};
      \draw (g.30) --
      ++(1, 0) to[out=0, in=0]
      ++(0, 1) --
      ++(-4, 0)
      node[left] {$c$};
      ;
      \draw (g.-30) --
      ++(2, 0)
      node[right] {$d$};
      ;

      \begin{scope}[font=\normalsize]
        \node[left=1.5 of f] {$\name{f}\coloneqq$};
        \node[below=.5 of f] (formula) {$\thecup_c\cp(\id_c\otimes f)$};
        \node[left=1.5 of g] {$\unname{g}\coloneqq$};
        \node at (g|-formula) {${\rho_{c}}\inv\cp(\id_c\otimes g)\cp(\thecap_c\otimes\id_d)\cp\lambda_{d}$};
      \end{scope}
    \end{tikzpicture}
  \end{center}
\end{definition}

We invite the reader to construct the pleasing graphical proof of the following.

\begin{lemma}\label{lemma.name_unfolding_iso}
  The maps $\name{\cdot}$ and $\unname{\cdot}$ are mutually inverse.\hfill$\qed$
\end{lemma}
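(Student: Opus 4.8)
The plan is to prove the two composites $\unname{\name{\cdot}}$ and $\name{\unname{\cdot}}$ are both the identity, and to observe that in each case the computation collapses onto the yanking identities \eqref{eqn.yanking}, which hold for $\thecup_c$ and $\thecap_c$ in any self-dual compact closed po-category (\cref{ex.W_self_dual}). The graphical calculus for symmetric monoidal po-categories suppresses all associators and unitors, so the essential content is simply that a wire carrying a single cup--cap zigzag can be straightened; everything else is bifunctoriality of $\otimes$.

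First I would verify $\unname{\name{f}} = f$ for $f\colon c\to d$. Substituting $\name{f} = \thecup_c\cp(\id_c\otimes f)$ into the definition of unfolding expands $\unname{\name{f}}$ into a composite in which $f$ sits on the middle of a $c$-wire that is fed by a cup on one side and consumed by a cap on the other. Using bifunctoriality of $\otimes$ I would slide $f$ past the cap, so that it is applied only after the wire has been straightened; concretely this rewrites the composite as $\big(\rho_c\inv\cp(\id_c\otimes\thecup_c)\cp(\thecap_c\otimes\id_c)\cp\lambda_c\big)\cp f$. The bracketed morphism is exactly one of the snake equations of \eqref{eqn.yanking} and equals $\id_c$, leaving precisely $f$.

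Next I would verify $\name{\unname{g}} = g$ for $g\colon I\to c\otimes d$. Expanding, the cup $\thecup_c$ produces two $c$-legs $a,b$; the leg $b$ is then capped against the $c$-output of $g$ produced inside $\unname{g}$, while the $a$-leg and the $d$-output of $g$ remain free. Applying bifunctoriality to isolate $g$ (which, being a state out of $I$, touches neither $a$ nor $b$) exhibits a cup--cap zigzag running through $a$, $b$, and the $c$-output of $g$; the other yanking identity of \eqref{eqn.yanking} straightens it, identifying the free $a$-leg with the $c$-output of $g$. The two remaining free wires are then exactly the $c$- and $d$-components of $g$, so the composite is $g$.

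The hard part is essentially only bookkeeping. The single genuine point of care is the second direction: since $g$ is an arbitrary state rather than something in the literal image of a wire, one cannot cite a verbatim instance of \eqref{eqn.yanking} but must first use bifunctoriality to slide the cap past the $c$-leg of $g$ and expose the zigzag; graphically this is immediate, yet in a fully algebraic write-up it is where the associator/unitor coherence (together with the fact that the supplied cup and cap satisfy yanking because the supply functors are strong monoidal) must be tracked carefully. I expect no deeper obstacle, which is why the result is best presented as the ``pleasing graphical proof'' the reader is invited to construct.
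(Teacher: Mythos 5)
Your proof is correct and is precisely the ``pleasing graphical proof'' the paper invites the reader to construct: both composites reduce, via interchange/naturality bookkeeping, to the yanking identities \eqref{eqn.yanking} for the supplied $\thecup_c$ and $\thecap_c$. Your flagged point of care in the direction $\name{\unname{g}}=g$ (isolating the state $g$ by interchange before the zigzag can be straightened) is exactly the right subtlety, so there is nothing to add.
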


More generally, this `undirected' nature of 1-morphisms is a manifestation of the fact that the underlying 1-category of a po-category supplying $\ww$ is a \emph{hypergraph category}. While we will not go into this in detail, to give the rough idea, recall that we refer to the generating morphisms of $\ww$ as $(\eta,\mu,\epsilon,\delta)$; see \eqref{eqn.generating_wires}. These morphisms allow, visually, the termination, combining, initialisation, and splitting of wires. Moreover, the equations relating these generators in \eqref{eqn.equations_wires} define what is known as a \emph{special commutative frobenius monoid}, and capture precisely the fact that only the connectivity of their composites matter---for example, the special law says that splitting a wire and then combining the resulting pair is the same as doing nothing to that wire. For more on hypergraph categories, see \cite{fong2019hypergraph}. In our work here, rather than explore the intricacies of hypergraph categories, we will use wiring diagrams to capture this invariance.

\section{Wiring diagrams for po-categories supplying \texorpdfstring{$\ww$}{W}}\label{sec.wds_for_ww_suppliers}

We have used two methods for depicting morphisms in $\ww$. The first is the string diagrams of Joyal--Street, which we used in \cref{sec.prop_ww}, and which works for any monoidal category. In these diagrams, the domain of the morphism is represented on the left of the diagram, and the codomain is on the right. The second method is the pictures of \cref{sec.depict_ww}. For these diagrams, the domain of the morphism is represented by the interior blue circles, while the codomain is represented by the outer circle.

We may extend this latter notation to a notation for morphisms any po-category $\cc$ supplying $\ww$, which we term \emph{wiring diagrams}. The change in our notation is essentially that each wire is labelled by an object in $\cc$ and that in addition to IO shells, we may also have kites labelled by morphisms of $\cc$. We first define these IO shells and kites, and describe how to draw them.

\begin{definition}[IO shell]
  Let $\cc$ be a po-category supplying $\ww$. An \define{input-output shell}, or \define{IO shell}, $\Gamma = (n,\tau)$ in $\cc$ is a function $\tau\colon \ord{n} \to \ob\cc$.
\end{definition}

Whereas in picturing morphisms of $\ww$ (\cref{sec.depict_ww}) IO shells stand for objects, in a po-category $\cc$ supplying $\ww$ IO shells are intended to capture ``contexts'': specified, ordered collections of ``types''. These IO shells will help us organise and capture the calculus we need in order to support a regular logic of relations.

\begin{notation}[Depicting IO shells]
  We depict an IO shell as a circle with $n$ ports, labelled clockwise starting from its left with the objects $\tau(i) \in \ob\cc$:
  \[
    \Gamma =
    \begin{aligned}
      \begin{tikzpicture}[inner WD]
        \node[pack,minimum size=16pt] (rho) {};
        \draw (rho.180) to[pos=1] node[left] (w) {$\tau(1)$} +(180:2pt);
        \draw (rho.110) to[pos=1] node[above] (n) {$\tau(2)$} +(110:2pt);
        \node at ($(rho.0)+(60:5pt)$) {$\vdots$};
        \draw (rho.-110) to[pos=1] node[below] (e) {$\tau(n)$} +(-110:2pt);
      \end{tikzpicture}
    \end{aligned}
  \]
  We shall abuse notation by writing $\Gamma$ for both the pair $(n,\tau)$ as well as the tensor product $\Gamma \coloneqq \tau(1) \otimes \dots \otimes \tau(n)\in\ob\cc$.
\end{notation}

\begin{definition}[Kite]
  Let $\cc$ be a po-category supplying $\ww$. A \define{kite} $(\kappa,f) \coloneqq (a,b,\tau_l,\tau_r,f)$ is a pair of natural numbers $a,b$, functions $\tau_l\colon \ord{a} \to \ob\cc$, $\tau_r\colon \ord{b} \to \ob\cc$, and a morphism $f\colon \bigotimes_{i \in \ord{a}} \tau_l(i) \to  \bigotimes_{i \in \ord{b}} \tau_r(i)$ in $\cc$.
\end{definition}

\begin{notation}[Depicting kites]
  We depict a kite as a kite with $a$ ports on the side with greater angle, $b$ ports on the side with lesser angle, with each port labelled by the objects $\tau_l(i), \tau_r(i) \in \ob\cc$ in sequence, and the kite itself labelled by $f$:
  \[
    \kappa =
    \begin{tikzpicture}[WD, baseline=(rho.base)]
      \node[oshellr,syntax,minimum size=21pt] (rho) {$f$};
      \draw (rho.120) to[pos=1] node[left,la] (n) {$\tau_l(1)$} +(150:10pt);
      \draw (rho.-120) to[pos=] node[left,la] (e) {$\tau_l(a)$} +(210:10pt);
      \path (n) to node[sloped]{$\ldots$} (e);
      \draw (rho.60) to[pos=1] node[right,la] (m) {$\tau_r(1)$} +(30:10pt);
      \draw (rho.-60) to[pos=1] node[right,la] (d) {$\tau_r(b)$} +(330:10pt);
      \path (m) to node[sloped]{$\ldots$} (d);
    \end{tikzpicture}
  \]
  As we do for IO shells, we shall abuse notation by writing $\kappa$ for both the tuple $(a,b,\tau_l,\tau_l)$ as well as the tensor product $\kappa \coloneqq \tau_l(1) \otimes \dots \otimes \tau_l(a) \otimes \tau_r(1) \otimes \dots \otimes \tau_r(b)$.
\end{notation}

A wiring diagram consists of IO shells and kites wired together by morphisms in the supply of $\ww$. Recall from \cref{prop.supply_coprod} that a supply of $\ww$ in $\cc$ induces a strong monoidal functor $s^\sqcup \colon \bigsqcup_{\ob\cc} \ww \to \cc$.

\begin{definition}[Wiring diagram]\label{def.extended_wiring_diagram}
  Let $\cc$ be a po-category supplying $\ww$. A \define{wiring diagram for $\cc$} is:
  \begin{enumerate}[label=(\roman*)]
  \item natural numbers $s$ and $k$,
  \item for each $i\in\{1,\dots, s,\text{out}\}$, an IO shell $\Gamma_i$,
  \item for each $i\in \{1, \dots, k\}$, a kite $(\kappa_i, f_i)$,
  \item a morphism $\omega^\ast\colon \Gamma_1 \otimes \dots \otimes \Gamma_s \otimes
    \kappa_1 \otimes \dots \otimes \kappa_k
    \to \Gamma_\out$ in the image of $s^\sqcup$ in $\cc$.
  \end{enumerate}
\end{definition}

\begin{notation}[Wiring diagrams for a supply]\label{notation.wiring_diagrams_supplying_ww}
  We depict wiring diagrams for $\cc$ in much the same way as we depict wiring diagrams for $\ww$ (see \cref{notation.picturing_ww}), except that wires are now labelled by objects of $\cc$, and we draw kites as well as IO shells.

  To give a bit more detail, since the morphism $\omega^\ast$ is in the image of $\bigsqcup_{\ob\cc} \ww$ under $s^\sqcup$, it may be canonically represented by a cospan of finite sets
  \[
    \begin{tikzcd}
      \ord{n_1}+\dots+\ord{n_s}+\ord{a_1}+\ord{b_1}+\dots+\ord{a_k}+\ord{b_k} \ar[r] \ar[dr,"{[\tau_1,\dots,\tau_s,\tau_{l_1},\dots,\tau_{r_k}]}"']
      & \ord{n_\omega} \ar[d,"\tau_\omega"]
      & \ord{n_{\text{out}}} \ar[l] \ar[dl,"\tau_{\text{out}}"] \\
      & \ob\cc
    \end{tikzcd}
  \]
  such that the above diagram commutes. We depict this cospan as we do in \cref{notation.picturing_ww}, labelling each wire with the object of $\cc$ indicted by the $\tau_i$.
\end{notation}

\begin{example} \label{ex.a_wiring_diagram}
  Here's a depiction of a wiring diagram in which $s = k= 2$, $\Gamma_1\coloneqq c_1\otimes c_2\otimes c_3$, $\Gamma_2\coloneqq c_2 \otimes c_5\otimes c_6\otimes c_3$, $\kappa_1 = c_1\otimes c_5 \otimes c_5$, $\kappa_2 = c_4 \otimes c_5$, and $\Gamma_\out\coloneqq c_5\otimes c_4\otimes c_5\otimes c_3$. Note in particular that $c_{5}$ appears twice in $\Gamma_{\out}$.
  \begin{center}
    \examplewiringdiagram{}
  \end{center}
\end{example}

Each wiring diagram itself represents a morphism $\Gamma_1 \otimes \dots \otimes \Gamma_s \to \Gamma_{\text{out}}$ in $\cc$.
\begin{definition}[The morphism represented by a wiring diagram] \label{def.morphism_represented_by_wd}
  Let
  \[
    (s,k,\{\Gamma_{i}\}_{i\in\ord s},\, \{(\kappa_i,f_i)\}_{i \in \ord k},\, \omega^\ast)
  \]
  be a wiring diagram. We say that this wiring diagram \define{represents} the following morphism of $\cc$.
  \[
    \omega \coloneqq \big(\Gamma_1 \otimes \dots \otimes \Gamma_s \otimes \name{f_1} \otimes \dots \otimes \name{f_k}\big) \cp \omega^\ast \colon \Gamma_1 \otimes \dots \otimes \Gamma_s \longrightarrow \Gamma_{\text{out}}
  \]
\end{definition}

\begin{example}
  The wiring diagram in \cref{ex.a_wiring_diagram} represents the following morphism $\Gamma_1\otimes \Gamma_2\to \Gamma_\out$ of $\cc$. Note in particular that $c_{5}$ appears twice in $\Gamma_{\out}$, that is, $\Gamma_{\out}$ includes the $c_{5}\otimes c_{5}$ up to symmetry.
  \begin{center}
    \begin{tikzpicture}
      \begin{scope}[font=\scriptsize,x=1em, decoration={brace, amplitude=4pt},y=3ex]
        \node (a1) {$1$};
        \node[below=0 of a1] (a2) {$2$};
        \node[below=0 of a2] (a3) {$3$};
        \node[below=0 of a3] (b1) {$2$};
        \node[below=0 of b1] (b2) {$5$};
        \node[below=0 of b2] (b3) {$6$};
        \node[below=0 of b3] (b4) {$3$};

        \node[right=15] at (b1) (out1) {$5$};
        \node[below=0 of out1] (out2) {$4$};
        \node[below=0 of out2] (out3) {$5$};
        \node[below=0 of out3] (out4) {$3$};

        \draw[decorate] ($(a3.west)+(-5pt, -3pt)$) to node[left=6pt] {$\Gamma_1$} ($(a1.west)+(-5pt, 3pt)$);
        \draw[decorate] ($(b4.west)+(-5pt, -3pt)$) to node[left=6pt] {$\Gamma_2$} ($(b1.west)+(-5pt, 3pt)$);
        \draw[decorate] ($(out1.east)+(5pt, 3pt)$) to node[right=6pt] {$\Gamma_\out$} ($(out4.east)+(5pt, -3pt)$);
      \end{scope}
      \begin{scope}[inner WD]
        \node[oshellr, syntax, below=of b4] (f1) {$\name{f_1}$};
        \node[oshellr, syntax, below=of f1] (f2) {$\name{f_2}$};

        \draw(a1) -- (a1-|f1.east) coordinate(a1e);
        \draw(f1.45) -- (f1.45-|f1.east) coordinate(f1t);
        \draw(f1t) to[out=0,in=0] (a1e);

        \draw(b2) -- (b2-|f1.east) coordinate(b2e);
        \draw(f1.-45) -- (f1.-45-|f1.east) coordinate(f1b);
        \draw(f1b) to[out=0,in=0] (b2e);

        \draw(f1) to ($(f1)+(1.5,0)$) to[out=0,in=180] ($(out1)-(1.5,0)$) to (out1);

        \draw(f2.45) to ($(f2.45)+(1.5,0)$) to[out=0,in=180] ($(out2)-(1,0)$) to (out2);
        \draw(f2.-45) to ($(f2.-45)+(2,0)$) to[out=0,in=180] ($(out3)-(0.5,0)$) to (out3);

        \draw(b3) -- (b3-|f1.east) node[link]{};

        \draw(a2) to (a2-|f1.east) to[out=0,in=0] (b1-|f1.east) to (b1);

        \node(l)[right=2 of b2,link]{};
        \draw(a3) to ($(a3)+(1,0)$) to[out=0,in=90] (l);
        \draw(b4) to ($(b4)+(1,0)$) to[out=0,in=270] (l);
        \draw(l) to ($(l)+(1,0)$) to[out=0,in=180] ($(out4)-(1,0)$) to (out4);
      \end{scope}
    \end{tikzpicture}
  \end{center}
  Note that we may use the manipulations afforded to us by the string diagram calculus to draw this as the following string diagram instead
  \begin{diagram*}[,][out4]
    \begin{scope}[font=\scriptsize,x=1em, decoration={brace, amplitude=4pt},y=3ex]
      \node (a1) {$1$};
      \node[below=0 of a1] (a2) {$2$};
      \node[below=0 of a2] (a3) {$3$};
      \node[below=0 of a3] (b1) {$2$};
      \node[below=0 of b1] (b2) {$5$};
      \node[below=0 of b2] (b3) {$6$};
      \node[below=0 of b3] (b4) {$3$};

      \coordinate (a2a3) at ($(a2)!.5!(a3)$);
      \node[right=6] at (a2a3) (out1) {$5$};
      \node[below=0 of out1] (out2) {$4$};
      \node[below=0 of out2] (out3) {$5$};
      \node[below=0 of out3] (out4) {$3$};

      \draw[decorate] ($(a3.west)+(-5pt, -3pt)$) to node[left=6pt] {$\Gamma_1$} ($(a1.west)+(-5pt, 3pt)$);
      \draw[decorate] ($(b4.west)+(-5pt, -3pt)$) to node[left=6pt] {$\Gamma_2$} ($(b1.west)+(-5pt, 3pt)$);
      \draw[decorate] ($(out1.east)+(5pt, 3pt)$) to node(go)[right=6pt] {$\Gamma_\out$} ($(out4.east)+(5pt, -3pt)$);
    \end{scope}
    \begin{scope}[inner WD]
      \coordinate (helper) at ($(a2a3)!.5!(out1)$);
      \coordinate (a1b1) at ($(a1)!.5!(b1)$);
      \coordinate (a2b4) at ($(a2)!.5!(b4)$);
      \coordinate (a3b3) at ($(a3)!.4!(b3)$);
      \coordinate (out4out5) at ($(out2)!.5!(out3)$);
      \node[oshellr, syntax] at (helper|-a1b1) (f1) {$f_1$};
      \node[oshellr, syntax] at (helper|-out4out5) (f2) {$f_2$};
      \node[link, right=of b3] (dot6) {};
      \node[link] at (helper|-out4) (dot3) {};
      \draw (a1) to[out=0,in=150] (f1);
      \draw (b2) to[out=0,in=210] (f1);
      \draw (f1) -- (out1);
      \draw (b3) -- (dot6);
      \draw (a2) to[out=0, in=0] (b1);
      \draw (a3) to[out=0, in=135] (dot3);
      \draw (b4) to[out=0, in=225] (dot3);
      \draw (dot3) -- (out4);
      \draw (out2) to[bend right=15pt] (f2);
      \draw (out3) to[bend left=15pt] (f2);
    \end{scope}
  \end{diagram*}
  which is a bit easier to read.
\end{example}

\begin{remark}
  Wiring diagrams are robust to topological deformation, so long as we keep the connections to the outer IO shell---our ``context'', so to speak---consistent. For example, the following pictures all represent the same wiring diagram and indeed same morphism of $\cc$:
  \begin{center}
    \begin{tikzpicture}[inner WD]
      \coordinate (g1);
      \coordinate[below=2 of g1] (g2);
      \node[link] at ($(g1)!.5!(g2)+(1,0.3)$) (dot) {};
      \node[oshelll, syntax, right=.5 of dot] (f0) {$f$};
      \node[oshelll, syntax, left=.5 of g1,inner sep=1pt] (g0) {$g$};
      \draw (f0) -- (dot);
      \draw (g0) -- (g1);
      \draw (g1) to[out=0, in=120] (dot);
      \draw (g2) to[out=0, in=-120] (dot);
      \draw (g2) -- +(-3,0);
      \draw (dot) to[out=-45, in=180] +(1,-1.3) -- +(3.5,0);
      \draw (f0.east) to[out=0, in=0] +(0,2) -- ($(dot-|g1)+(0,2)$);
      \draw ($(dot-|g1)+(0,2)$) -- +(-3,0);
    \end{tikzpicture}
    \hspace{1.5cm}
    \begin{tikzpicture}[inner WD]
      \node[link] (dot) {};
      \node[oshelld, syntax, above=.5 of dot] (f0) {$f$};
      \node[oshelld, syntax, below=.5 of dot,inner sep=1pt] (g0) {$g$};
      \draw (f0) -- (dot);
      \draw (g0) -- (dot);
      \draw (dot) -- +(-3,0);
      \draw (dot) -- +(3,0);
      \draw (f0) to[out=90, in=0] +(-1,1.5) -- +(-2,0);
    \end{tikzpicture}
    \hspace{1.5cm}
    \begin{tikzpicture}[inner WD]
      \coordinate (g1);
      \coordinate[below=2 of g1] (g2);
      \node[link] at ($(g1)!.5!(g2)+(1,0)$) (dot) {};
      \node[oshellr, syntax, left=.5 of g1] (f0) {$f$};
      \node[oshellu, syntax, above=1 of dot,inner sep=1pt] (g0) {$g$};
      \draw (f0.west) -- +(-1,0);
      \draw (f0.east) -- (g1);
      \draw (g1) to[out=0, in=120] (dot);
      \draw (g2) to[out=0, in=-120] (dot);
      \draw (dot) -- +(2,0);
      \draw (g0) -- (dot);
      \draw (g2) -- (g2-|f0.west) -- +(-1,0);
    \end{tikzpicture}
  \end{center}
\end{remark}

\begin{remark}\label{rem.suppress}
  In the coming sections, where it causes no ambiguity, we will gradually suppress infer-able details in our wiring diagrams, as well as choose topological layouts for wiring diagrams that emphasise easy reading and understanding, rather than strictly following our procedure for laying them out as per \cref{notation.picturing_ww}.
\end{remark}

\begin{definition}[Bare wiring diagram]
  A wiring diagram is a \define{bare wiring diagram} if $k=0$; that is, if it contains no kites.
\end{definition}

\begin{remark} \label{rem.bare_wd_represents_itself}
  In the case of a bare wiring diagram, $\omega= \omega^\ast \colon \Gamma_1 \otimes \dots \otimes \Gamma_s \to \Gamma_{\text{out}}$.
\end{remark}

\begin{proposition}\label{prop.barebare}
  Let $ \big(s,k,\{\Gamma_{i}\}_{i\in\ord s},\, \{(\kappa_i,f_i)\}_{i \in \ord k},\, \omega^\ast\big) $ be a wiring diagram in a po-category $\bigsqcup_{J} \ww$ that is a coproduct of copies of $\ww$. Then
  \[
    \big(s,0,\{\Gamma_{i}\}_{i\in\ord s},\, \varnothing,\, (\Gamma_1 \otimes \dots \otimes \Gamma_s \otimes \name{f_1} \otimes \dots \otimes \name{f_k}\big) \cp \omega^\ast)\big)
  \]
  is a bare wiring diagram representing the same morphism, where $\name{f}\colon I\to\dom(f)\otimes\cod(f)$ is the name of $f$, see \cref{def.name_unfolding}.
\end{proposition}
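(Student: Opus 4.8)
The plan is to show that the proposed tuple is a \emph{valid} bare wiring diagram which happens to represent the same morphism. Since a bare wiring diagram represents itself (\cref{rem.bare_wd_represents_itself}), and its morphism-part is by construction exactly the morphism $\omega = (\Gamma_1\otimes\dots\otimes\Gamma_s\otimes\name{f_1}\otimes\dots\otimes\name{f_k})\cp\omega^\ast$ that the original diagram represents (\cref{def.morphism_represented_by_wd}), the second assertion is automatic once the first is established. Thus all the content lies in verifying clause (iv) of \cref{def.extended_wiring_diagram} for the new diagram: that $\omega$ itself lies in the image of $s^\sqcup\colon\bigsqcup_{\ob\cc}\ww\to\cc$, where $\cc=\bigsqcup_J\ww$.

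I would reduce this to a single claim: \emph{every} morphism of $\cc=\bigsqcup_J\ww$ lies in the image of $s^\sqcup$. Granting it, each kite morphism $f_i$ is in the image, and hence so is its name $\name{f_i}=\thecup_{\dom f_i}\cp(\id\otimes f_i)$ (\cref{def.name_unfolding}): the cup is a supplied morphism, since $\cc$ is self-dual compact closed by \cref{ex.W_self_dual} so that $\thecup_{\dom f_i}=s_{\dom f_i}(\eta\cp\delta)$ lies in the image, and the image is closed under composition and tensor. Combining the $\name{f_i}$, the identities $\id_{\Gamma_i}$, and the hypothesis that $\omega^\ast$ is already in the image then yields $\omega\in\operatorname{im}(s^\sqcup)$.

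To prove the claim I would argue that $s^\sqcup$ is full. Surjectivity on objects is immediate from $s^\sqcup(\iota_c(\ord 1))=s_c(\ord 1)=c$ (\cref{def.supply}, \cref{prop.supply_coprod}). For morphisms, the key observation is that each coproduct injection $\iota^J_j\colon\ww\hookrightarrow\bigsqcup_J\ww$ coincides with the supplied functor $\tilde s_{e_j}$, where $e_j$ is the tuple with $\ord 1$ in coordinate $j$ and $\ord 0$ elsewhere: composing either with the projection $\pi_i$ gives $\id_\ww$ when $i=j$ and the constant functor at the unit when $i\ne j$, using $\tilde s_{e_j}\cp\pi_i=s_{(e_j)_i}$ from \cref{lemma.irritating_supply_coprod} together with $s_{\ord 1}=\id_\ww$ from \cref{prop.p_supplies_itself}. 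Since $\tilde s_{e_j}=\iota_{e_j}\cp s^\sqcup$ by \cref{prop.supply_coprod}, the injected generators $\iota^J_j(\eta),\iota^J_j(\mu),\iota^J_j(\epsilon),\iota^J_j(\delta)$ all lie in the image, as do the symmetries (a strong symmetric monoidal functor surjective on objects hits every symmetry). As $\bigsqcup_J\ww$ is generated as a symmetric monoidal po-category by these injected generators together with symmetries, fullness follows.

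The main obstacle is exactly the closure of $\operatorname{im}(s^\sqcup)$ under composition, which both the final step above and the argument of the second paragraph invoke: because $s^\sqcup$ is not injective on objects, a composite of two image-morphisms need not visibly factor as $s^\sqcup(a)\cp s^\sqcup(b)$ with $a,b$ composable in the source. I would settle this using the explicit description of the coproduct of symmetric monoidal po-categories (\cref{warn.coprod} and \cite{fong2019supplying}): under the cospan/corelation presentation of the morphisms of $\bigsqcup_J\ww$, the image-morphisms are precisely the typed wirings, composition is computed by pushout of cospans and so remains a wiring, and forming a name merely bends wires around. Consequently the entire composite $\omega$ is realised by a single cospan, i.e.\ by a single morphism of $\bigsqcup_{\ob\cc}\ww$, which is what closure under composition amounts to here.
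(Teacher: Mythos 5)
Your proposal is correct and follows essentially the same route as the paper: the paper's entire proof is the one-line observation that the result ``follows from \cref{rem.bare_wd_represents_itself} and the fact that all morphisms in $\bigsqcup_{J}\ww$ are in the image of the supply functor $s^\sqcup$,'' which is precisely the reduction you make in your first two paragraphs. The only difference is that you go on to prove the image fact, which the paper asserts without argument.

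One remark on your third paragraph: the ``main obstacle'' you identify (closure of $\operatorname{im}(s^\sqcup)$ under composition) can be avoided entirely, because your own key observation already does more than you use it for. The identity $\iota^J_j=\tilde s_{e_j}$, combined with $\iota_{e_j}\cp s^\sqcup=s_{e_j}$ from \cref{prop.supply_coprod}, shows that $\iota^J_j(f)=s^\sqcup(\iota_{e_j}(f))$ for \emph{every} morphism $f$ of $\ww$, not merely for the generators. Since a morphism of $\bigsqcup_J\ww$ is exactly a $J$-tuple $(f_j)_j$ with all but finitely many components identities, it equals the tensor $\bigotimes_{j\in F}\iota^J_j(f_j)$ over the finite support $F$, and hence equals $s^\sqcup$ applied to the \emph{single} morphism of $\bigsqcup_{\ob\cc}\ww$ concentrating $f_j$ at the distinct objects $e_j$. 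This gives fullness directly, with no generators-and-relations argument, no claim about symmetries being hit (which, as stated, is only true up to conjugation by strongators), and no need for the cospan/pushout patch -- which, read literally (``the image-morphisms are precisely the typed wirings''), presupposes the very surjectivity you are trying to establish.
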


\begin{proof}
  This follows from \cref{rem.bare_wd_represents_itself} and the fact that all morphisms in $\bigsqcup_{J} \ww$ are in the image of the supply functor $s^\sqcup$.
\end{proof}

Although wiring diagrams in their general form will play a critical role in our upcoming explorations of regular calculi and regular logic, a corollary of our main theorem (\cref{cor.vague_eqv}) is that bare wiring diagrams suffice in general for our purposes.

\chapter{Regular calculi}\label{chap.regular_calculi}

Graphical regular logic is the graphical representation of the logical calculus carried by a \emph{regular calculus}: a structure understanding contexts, predicates, and the regular fragment of logic thereupon. In this section we will introduce the framework of \emph{right ajax po-functors} which will allow us to compactly capture all of the desired properties of regular calculi. We will then define regular calculi in terms of these right ajax po-functors and assemble regular calculi into a suitable $2$-category.

\section{Right ajax po-functors and right adjoint monoids}

\begin{definition}[Right ajax po-functor]\label{def.ajax}
  Let $\ccat{C}$ and $\ccat{D}$ be symmetric monoidal po-categories. A \define{right adjoint-lax} or \define{right ajax} po-functor $F\colon \ccat{C} \to \ccat{D}$ is a lax symmetric monoidal po-functor for which the laxators are right adjoints.

  We denote the laxators by $\rho$ and their left adjoints by $\lambda$:
  \begin{equation}\label{eqn.ajax}
    \adjr{I}{\rho}{\lambda}{F(I)}
    \qqand
    \adjr{F(c)\otimes F(c')}{\rho_{c,c'}}{\lambda_{c,c'}}{F(c\otimes c')}.
  \end{equation}
\end{definition}

\begin{example}[Cartesian monoidal po-categories]\label{ex.terminal_ajax}
  If $\ccat{D}$ is a cartesian monoidal po-category then every symmetric monoidal po-category $\ccat{C}$ has a canonical right ajax po-functor $\ccat{C}\to\ccat{D}$, viz., the constant po-functor at the terminal object of $\ccat{D}$. This po-functor is easily seen to be terminal in the po-category of right ajax po-functors $\ccat{C}\to\ccat{D}$ and monoidal $2$-natural transformations. Once we have proven \cref{prop.adjoint_monoids} below we will see that this is a special case of the composition of po-functors $\cc\to1\to\dd$ and the fact that $1$ is an \emph{right adjoint monoid} in $\ccat D$. Although we do not prove it here, for a class of symmetric monoidal po-categories $\ccat{C}$ we shall see in the companion that the initial right ajax po-functor $\ccat{C}\to\pposet$ is given by $\ccat{C}(I,-)$.
\end{example}

\begin{example}[Represented po-functors]\label{ex.ajax_defer}
  As we shall see later in \cref{sec.regular_calc,sec.prd} there are many important classes of examples of right ajax po-functors. One such class, established in \cref{prop.prerel_ajax}, is the represented po-functor $\rr(I,-)\colon\rr\to\pposet$ for any \emph{prerelational} po-category $\rr$. Examples of prerelational po-categories include the po-category $\fladj\cat{R}$ of left adjoints in a regular category $\rr$---see \cref{thm.carboni_equivalence} for details.
\end{example}

\begin{warning}
  If $F$ is a right ajax po-functor, the left adjoints $\lambda_{c,c'}$ of the laxators \emph{do not} in general equip $F$ with the structure of an oplax monoidal functor. \emph{A priori}, the naturality squares for the laxators $\rho$ are only lax-naturality squares for their left adjoints $\lambda$.
\end{warning}

Let us record two immediate but nevertheless useful consequences of the definition of right ajax po-functors.
\begin{lemma}\label{lemma.ajax}\
  \begin{enumerate}
  \item Every strong monoidal functor between monoidal po-categories is right ajax.
  \item The composite of right ajax po-functors is again right ajax.
  \end{enumerate}
\end{lemma}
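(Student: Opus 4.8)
The plan is to handle the two clauses in turn, reducing each to the basic fact recorded around \cref{lemma.adj_in_pocat}: in a po-category an adjunction is witnessed by a single pair of inequalities, the triangle equalities \cref{eqn.adjunction} holding automatically because $2$-morphisms are unique.

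For clause (1), recall that a strong monoidal po-functor is precisely a lax one whose laxators --- the strongators $\varphi$ --- are isomorphisms. So I would first observe that \emph{any} isomorphism in a po-category is a right adjoint: for an iso $\varphi$ with inverse $\varphi\inv$, both composites $\varphi\inv\cp\varphi$ and $\varphi\cp\varphi\inv$ are identities, hence the two adjunction inequalities hold as equalities and, the triangle equalities being automatic, $\varphi\inv\dashv\varphi$. Applying this to each binary strongator $\rho=\varphi_{c,c'}$ and to the unit strongator $\rho\colon I\to F(I)$ shows that the laxators of a strong monoidal po-functor are right adjoints, i.e.\ that it meets \cref{def.ajax}.

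For clause (2), let $F\colon\cc\to\dd$ and $G\colon\dd\to\ccat{E}$ be right ajax, with laxators $\varphi$ (for $F$) and $\psi$ (for $G$). The composite $F\cp G$ is lax symmetric monoidal in the standard way, its binary laxator at $(c,c')$ being the composite
\[
  G(Fc)\otimes G(Fc')\xrightarrow{\ \psi_{Fc,Fc'}\ }G(Fc\otimes Fc')\xrightarrow{\ G(\varphi_{c,c'})\ }G(F(c\otimes c')),
\]
and analogously for the unit laxator. I would then argue that this composite is a right adjoint in three formal moves: first, $\psi_{Fc,Fc'}$ is a right adjoint because $G$ is right ajax; second, $\varphi_{c,c'}$ is a right adjoint because $F$ is right ajax, and since $G$ is a po-functor --- hence a genuine $2$-functor --- it preserves that adjunction (the lemma of \cref{sec.two_cats} that $2$-functors carry adjunctions to adjunctions), so $G(\varphi_{c,c'})$ is again a right adjoint; third, the composite of two right adjoints is a right adjoint, since adjunctions compose (\cref{sec.two_cats}). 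The identical argument applies to the unit laxators, completing the proof.

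The only step needing care is the second: one must invoke that po-functors are honest $2$-functors and so preserve adjunctions wholesale, rather than attempting to transport the left adjoint of $\varphi_{c,c'}$ by hand. Everything else is purely formal, and in particular no compatibility among the resulting left adjoints is required --- consistent with the earlier warning that these left adjoints need not assemble into an oplax monoidal structure.
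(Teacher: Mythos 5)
Your proof is correct and is exactly the argument the paper intends: the paper states this lemma without proof as an immediate consequence of \cref{def.ajax}, and your two reductions---isomorphisms are adjoints in a po-category (triangle equalities being automatic), and the laxators of a composite being composites of right adjoints and of $2$-functor images of right adjoints---supply precisely the missing details. Both supporting facts you invoke, that $2$-functors preserve adjunctions and that adjunctions compose, are recorded in \cref{sec.two_cats}, so nothing further is needed.
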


To understand right ajax po-functors and their importance in the story of regular logic, it is useful to introduce the notion of \emph{right adjoint monoid}. Let us write $1$ for the terminal monoidal po-category.

\begin{proposition}\label{prop.adjoint_monoids}
  Let $(\ccat{C},I,\otimes)$ be a symmetric monoidal po-category. There is a bijection between:
  \begin{enumerate}
  \item The set of right ajax po-functors $1\to\ccat{C}$,
  \item The set of commutative monoid objects $(c,\mu,\eta)$ such that $\mu$ and $\eta$ are right adjoints,
  \item The set of cocommutative comonoid objects $(c,\delta,\epsilon)$ such that $\delta$ and $\epsilon$ are left adjoints.
  \end{enumerate}
\end{proposition}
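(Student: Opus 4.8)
The plan is to prove the two bijections $(1)\leftrightarrow(2)$ and $(2)\leftrightarrow(3)$ separately: the first by unwinding the definition of a lax symmetric monoidal po-functor out of the terminal object, and the second by the calculus of adjoints in a po-category.

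For $(1)\leftrightarrow(2)$, I would first recall that the terminal symmetric monoidal po-category $1$ has a single object $\ast$ (which is its own monoidal unit, with $\ast\otimes\ast=\ast$), only the identity morphism, and only identity $2$-cells. Thus a lax symmetric monoidal po-functor $F\colon 1\to\ccat{C}$ is determined by the object $c\coloneqq F(\ast)$ together with its two laxators, the unit laxator $\eta\coloneqq(\rho\colon I\to F(\ast))$ and the binary laxator $\mu\coloneqq(\rho_{\ast,\ast}\colon F(\ast)\otimes F(\ast)\to F(\ast))$; the po-functor and monotonicity conditions are vacuous since $1$ has no nonidentity cells. The associativity and unit coherence axioms for a lax monoidal functor are exactly the associativity and unit laws for the monoid $(c,\mu,\eta)$, and compatibility with the symmetry of $1$ (which is an identity) forces $\mu$ to be commutative; this is the standard identification of commutative monoids with lax symmetric monoidal functors out of $1$, transcribed into the po-categorical setting. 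Finally, by definition a lax po-functor is \emph{right ajax} precisely when its laxators are right adjoints, which is exactly the hypothesis that $\mu$ and $\eta$ are right adjoints. Reading off $(c,\mu,\eta)$ from $F$ is evidently inverse to this assignment, giving the bijection.

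For $(2)\leftrightarrow(3)$, the idea is that taking left adjoints sends a commutative monoid with right-adjoint structure maps to a cocommutative comonoid with left-adjoint structure maps. Given $(c,\mu,\eta)$ as in $(2)$, set $\delta\coloneqq\mu^{L}$ and $\epsilon\coloneqq\eta^{L}$, the left adjoints, which are unique since adjoints in a po-category are unique. The key facts are: adjunctions compose with reversal of order, so $(f\cp g)^{L}=g^{L}\cp f^{L}$; the $2$-functor $\otimes$ preserves adjunctions, so $(\mu\otimes\id_c)^{L}=\delta\otimes\id_c$ and likewise on the other factor; and an isomorphism (associator, unitor, or symmetry) has its inverse as left adjoint. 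Applying $(-)^{L}$ to each monoid axiom and invoking uniqueness of left adjoints then turns it into the corresponding comonoid axiom. For instance, taking left adjoints of the associativity equation $(\mu\otimes \id_c)\cp\mu=(\id_c\otimes\mu)\cp\mu$ yields
\[
  \delta\cp(\delta\otimes \id_c)=\delta\cp(\id_c\otimes\delta),
\]
which is coassociativity, and commutativity of $\mu$ likewise passes to cocommutativity of $\delta$. As left and right adjoints are unique, the assignment $(c,\mu,\eta)\mapsto(c,\delta,\epsilon)$ and its reverse (taking right adjoints) are mutually inverse, yielding $(2)\leftrightarrow(3)$.

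The main obstacle is the careful bookkeeping in $(2)\leftrightarrow(3)$: one must check that each structural isomorphism in the monoid coherence diagrams is replaced by exactly the inverse demanded in the matching comonoid diagram when left adjoints are taken, and that whiskered morphisms such as $\mu\otimes\id_c$ really do have the stated left adjoints. All of this follows from $2$-functoriality of $\otimes$ together with the fact that isomorphisms are adjoint to their inverses, but it requires attention to the order-reversal of composition under $(-)^{L}$; no genuinely new difficulty arises beyond this transcription.
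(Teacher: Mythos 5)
Your proposal is correct and follows essentially the same route as the paper: the bijection $(1)\leftrightarrow(2)$ via the standard identification of symmetric lax monoidal po-functors out of $1$ with commutative monoid objects (with the right ajax condition becoming exactly the right-adjointness of $\mu$ and $\eta$), and $(2)\leftrightarrow(3)$ by taking adjoints. The paper leaves the verification in $(2)\leftrightarrow(3)$ to the reader, whereas you spell out the mechanism (contravariance of adjoint-taking on composites, preservation of adjunctions by the po-functor $\otimes$, isomorphisms being adjoint to their inverses, and uniqueness of adjoints in a po-category), which is a faithful elaboration of the same argument.
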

\begin{proof}
  $(1)\Leftrightarrow(2)$: The set $\lax(1,\ccat{C})$ of symmetric lax monoidal po-functors $1\to\ccat{C}$ may be seen to be in bijection with the set of commutative monoid objects $(c,\mu,\eta)$ in $\ccat{C}$. Indeed, $\eta$ and unit $\mu$ come from the 0-ary and $2$-ary laxators respectively: $\eta=\rho$ and $\mu=\rho_{1,1}$. Hence the added condition that $\eta$ and $\mu$ have left adjoints is precisely the right ajax condition.

  \noindent
  \hphantom{\textit{Proof.} }
  $(2)\Leftrightarrow(3)$: This bijection is implemented by taking adjoints. That is, given an object $c\in\ccat{C}$ and two adjunctions
  \begin{equation}\label{eqn.adjmon}
    \adjr{I}{\eta}{\epsilon}{c}
    \qqand
    \adjr{c\otimes c}{\mu}{\delta}{c},
  \end{equation}
  it may be verified that $\mu$ and $\eta$ satisfy the commutative monoid laws if and only if $\delta$ and $\epsilon$ satisfy the cocommutative comonoid laws. \qedhere
\end{proof}
To summarise, if $(c,\rho,\lambda)\colon 1\to\ccat{C}$ is a right ajax po-functor then the corresponding monoid and comonoid structures on $c$ are given by
\begin{equation}\label{eqn.monoid_comonoid_ajax}
  \eta=\rho\qquad
  \mu=\rho_{1,1}
  \qqand
  \epsilon=\lambda\qquad
  \delta=\lambda_{1,1}
\end{equation}

\Cref{prop.adjoint_monoids} motivates the following definition.

\begin{definition}[Right adjoint monoid]
  Let $(\ccat{C},I,\otimes)$ be a monoidal po-category. A \define{right adjoint monoid} in $\ccat{C}$ is a commutative monoid object $(c,\mu,\eta)$ in $\ccat{C}$ such that $\mu$ and $\eta$ are right adjoints.
\end{definition}

Right adjoint monoids are like \emph{internal $\wedge$-semilattices}; see \cite[Chapter 5]{schalk1994algebras} and references therein. To get a feel for why this might be so, it helps to first recall a po-categorical analogue of a classical lemma of \cite{fox1976coalgebras}.

\begin{lemma}\label{lemma.comonoids_unique}
  Let $\ccat{C}$ be a monoidal po-category. If the monoidal structure is cartesian (given by finite products in the underlying 1-category) then every object has a unique comonoid structure, and it is cocommutative.
\end{lemma}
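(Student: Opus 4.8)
The plan is to reduce the statement to the classical theorem of Fox recorded in \cite{fox1976coalgebras}, observing at the outset that a comonoid structure is purely $1$-categorical data: a pair of morphisms $\delta\colon c\to c\otimes c$ and $\epsilon\colon c\to I$ subject to equations between $1$-morphisms. Since the comonoid axioms never mention the order on hom-posets, the po-enrichment is irrelevant to the statement, and it suffices to argue in the underlying $1$-category, where by hypothesis $\otimes$ is the categorical product $\times$ and $I$ is terminal.

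First I would pin down the counit. As $I$ is terminal there is exactly one morphism $c\to I$, so any comonoid structure $(c,\delta,\epsilon)$ must have $\epsilon=\;!\colon c\to I$, with no choice involved. Next I would pin down the comultiplication from the counit laws. In a cartesian structure the product projections are recovered from the counit and the unitors as $\pi_1=(\id_c\otimes\epsilon)\cp\rho_c$ and $\pi_2=(\epsilon\otimes\id_c)\cp\lambda_c$, and the two counit laws for $(c,\delta,\epsilon)$ say precisely that $\delta\cp\pi_1=\id_c=\delta\cp\pi_2$. By the universal property of the product $c\otimes c=c\times c$ there is a \emph{unique} morphism $c\to c\otimes c$ whose two components are both $\id_c$, namely the diagonal $\pair{\id_c,\id_c}$; hence $\delta$ is forced to be this diagonal. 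Together with the forced $\epsilon$, this establishes uniqueness.

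Finally I would verify existence and cocommutativity, all of which follow from the same universal property. Counitality holds for the diagonal by construction; coassociativity holds because both composites $c\to c\otimes c\otimes c$ built from $\delta$ have all three components equal to $\id_c$ and so agree by the universal property of the triple product; and cocommutativity holds because $\delta\cp\sigma$, for $\sigma\colon c\otimes c\to c\otimes c$ the symmetry swapping the two factors, again has both components equal to $\id_c$ (using $\sigma\cp\pi_1=\pi_2$ and $\sigma\cp\pi_2=\pi_1$), whence $\delta\cp\sigma=\delta$. I do not expect a genuine obstacle here; the only point requiring care is the reading of ``cartesian'' as products in the \emph{underlying $1$-category}, so that the relevant universal property is the ordinary $1$-categorical one and no po-categorical or $2$-dimensional universal property is invoked.
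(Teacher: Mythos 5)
Your proof is correct and follows essentially the same route as the paper's: terminality of the unit forces $\epsilon$, the counit laws identify $\delta\cp\pi_1=\delta\cp\pi_2=\id_c$ so the universal property of the product forces $\delta$ to be the diagonal, and cocommutativity follows by further universal-property arguments. The paper's proof is just a compressed version of this; your additional care in spelling out the recovery of the projections from the unitors and in checking existence and coassociativity fills in details the paper leaves implicit.
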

\begin{proof}
  Since the unit object is terminal, the maps $c\times\epsilon$ and $\epsilon\times c$ are forced to be the projections $c\times c\to c$, so $\delta$ is forced to be the diagonal. Commutativity follows by universal property arguments.
\end{proof}

\begin{example}[Right adjoint monoids in $\pposet$ are $\wedge$-semilattices]\label{prop.adjmon_msl}
  A poset $P\in\pposet$ is an adjoint monoid iff it is a meet-semilattice, in which case $\eta=\true$ and $\mu=\wedge$. To see this we can use \cref{lemma.comonoids_unique}. This states that any poset $P$ has a unique comonoid structure given by the terminal and diagonal maps $\epsilon\colon P\to 1$ and $\delta\colon P\to P\times P$. Thus $P$ is an adjoint monoid iff these maps have right adjoints as in \eqref{eqn.adjmon}, which holds iff $\eta$ is a top element and $\mu$ is a meet.

  Note however that a map of right adjoint monoids in $\pposet$, defined to be a monoidal $2$-natural transformation (\cref{def.symm_mon_po}) between the corresponding right ajax po-functors $1\to\pposet$ (\cref{prop.adjoint_monoids}), does \emph{not} preserve meets---instead it is merely a monotone map between posets which happen to be $\wedge$-semilattices. This is a deliberate choice, and will feature in our coming definition of morphism of regular calculus.
\end{example}

\begin{example}[Right adjoint monoids in a relational po-category are objects]\label{prop.adjmon_reg}
  In later sections we will have cause to consider \emph{relational} and \emph{prerelational} po-categories. By \cref{thm.carboni_equivalence}, every relational po-category is equivalent to the po-category of left adjoints $\fladj\cat R$ in a regular category $\cat R$. In particular, $\fladj\cat R$ is cartesian monoidal and so \cref{lemma.comonoids_unique} above applies to relational po-categories: each object has a unique cocommutative comonoid structure. By \cref{prop.adjoint_monoids} $(2)\Leftrightarrow(3)$ we see that each object in fact must have a unique commutative monoid structure.
\end{example}

It will be useful to know that, just as lax monoidal functors map monoids to monoids, right ajax po-functors map right adjoint monoids to right adjoint monoids.

\begin{proposition}\label{prop.ajax_pres_adjmon}
  Right ajax po-functors send right adjoint monoids to right adjoint monoids.
\end{proposition}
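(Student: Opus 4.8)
The plan is to exhibit the monoid structure on the image object explicitly and then observe that each structure map is a composite of right adjoints. Let $F\colon\ccat{C}\to\ccat{D}$ be a right ajax po-functor with laxators $\rho\colon I\to F(I)$ and $\rho_{c,c'}\colon F(c)\otimes F(c')\to F(c\otimes c')$, and let $(c,\mu,\eta)$ be a right adjoint monoid in $\ccat{C}$. Since $F$ is in particular a lax symmetric monoidal po-functor, the standard transport of monoid structure along a lax monoidal functor endows $F(c)$ with a commutative monoid structure whose unit and multiplication are the composites
\[
  \eta'\coloneqq\rho\cp F(\eta)\colon I\to F(I)\to F(c)
  \qqand
  \mu'\coloneqq\rho_{c,c}\cp F(\mu)\colon F(c)\otimes F(c)\to F(c\otimes c)\to F(c).
\]
The commutative monoid laws for $(F(c),\mu',\eta')$ follow from functoriality of $F$, the monoid laws for $(c,\mu,\eta)$, and the coherence axioms of the lax symmetric monoidal structure; these are routine and I would not reproduce them.

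It remains to check that $\eta'$ and $\mu'$ are right adjoints. First I would note that $\rho$ and $\rho_{c,c}$ are right adjoints by the very definition of right ajax (\cref{def.ajax}). Next, since $\eta$ and $\mu$ are right adjoints in $\ccat{C}$ by the hypothesis that $(c,\mu,\eta)$ is a \emph{right adjoint} monoid, and since a po-functor is in particular a $2$-functor, which sends adjunctions to adjunctions (as recorded in the lemma just after \cref{eqn.adjunction}), the images $F(\eta)$ and $F(\mu)$ are right adjoints in $\ccat{D}$. Finally, using that adjunctions compose, each of $\eta'=\rho\cp F(\eta)$ and $\mu'=\rho_{c,c}\cp F(\mu)$ is a composite of right adjoints and hence is itself a right adjoint. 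This shows that $(F(c),\mu',\eta')$ is a right adjoint monoid, completing the argument.

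The proof is essentially a bookkeeping exercise, so there is no serious obstacle; the only points requiring care are confirming that the transported monoid structure is indeed given by those two composites and invoking the two standing facts---that po-functors preserve adjunctions and that adjunctions compose---so that the principle ``a composite of right adjoints is a right adjoint'' applies. If one prefers a more conceptual rendering, one could instead route the whole statement through \cref{prop.adjoint_monoids}: a right adjoint monoid on $c$ is the same datum as a right ajax po-functor $1\to\ccat{C}$, and postcomposing with $F$ yields a right ajax po-functor $1\to\ccat{D}$ by \cref{lemma.ajax}(2), whose corresponding monoid under \cref{prop.adjoint_monoids} is exactly $(F(c),\mu',\eta')$.
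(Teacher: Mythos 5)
Your proposal is correct, and it effectively contains two proofs: your main explicit argument, and---in your closing sentence---precisely the paper's own proof. The paper disposes of this proposition in one line: by \cref{prop.adjoint_monoids} a right adjoint monoid in $\ccat{C}$ is the same datum as a right ajax po-functor $1\to\ccat{C}$, and postcomposition with $F$ is again right ajax by \cref{lemma.ajax}, so the composite corresponds to a right adjoint monoid on $F(c)$. Your main argument instead unwinds this conceptual statement: you transport the monoid structure along the underlying lax symmetric monoidal functor to obtain $\eta'=\rho\cp F(\eta)$ and $\mu'=\rho_{c,c}\cp F(\mu)$, and then observe that each is a composite of right adjoints, using that the laxators are right adjoints (\cref{def.ajax}), that $2$-functors preserve adjunctions (the unlabelled lemma following \cref{eqn.adjunction}), and that adjunctions compose. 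This is sound. What the explicit route buys is a concrete description of the structure maps on $F(c)$ and independence from the bijection of \cref{prop.adjoint_monoids} (whose proof is itself essentially this kind of unwinding); what the paper's route buys is brevity and reuse of machinery already in place. Your identification of the two constructions at the end is also correct, and rests on the fact that the laxators of a composite of lax monoidal po-functors are the evident composites of laxators, so the monoid extracted from $1\to\ccat{C}\to\ccat{D}$ via \eqref{eqn.monoid_comonoid_ajax} is exactly $(F(c),\mu',\eta')$.
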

\begin{proof}
  The composite of right ajax po-functors is again right ajax, so the result follows from \cref{prop.adjoint_monoids}.
\end{proof}

Although this result may seem anodyne at present, we will consistently leverage this in our development of graphical regular calculus in \cref{chap.graphical_reglog} as it lends our graphical calculus its ``regular'' aspect.

\section{Regular calculi as indexed right adjoint monoids}\label{sec.regular_calc}

Now that we have understood the notion of right ajax po-functor, we move to define one of the central notions in this paper.

\begin{definition}[Regular calculus]\label{def.reg_calc}
  A \define{regular calculus} P consists of a pair $(\cont{P},P)$ where $(\cont{P},I,\otimes)$ is a symmetric monoidal po-category supplying $\ww$ and $P\colon\cont{P}\to\pposet$ is a right ajax po-functor, whose laxators we denote by $\true$ and $\boxplus$:
  \begin{equation}\label{eqn.ajax_reg_calc}
    \adjr{1}{\true}{\lambda_{I}}{P(I)}
    \qqand
    \adjr{P(\Gamma_1)\times P(\Gamma_2)} {\boxplus_{\Gamma_{1},\Gamma_{2}}}{\lambda_{\Gamma_{1},\Gamma_{2}}} {P(\Gamma_1\otimes \Gamma_2)}.
  \end{equation}
  We call $\cont{P}$ the po-category of \define{contexts}, and $P$ the \define{predicates} po-functor. If $\Gamma\in\ob\cont P$ is a context, and $\theta,\theta'\in P(\Gamma)$ are predicates in context $\Gamma$ such that $\theta\leq \theta'$, then we say that \define{$\theta$ entails $\theta'$}, and write $\theta\vdash \theta'$. Finally, a regular calculus is termed \define{bare} if $\cont{P}=\bigsqcup_{J}\ww$ is a coproduct\footnote{see \cref{warn.coprod}} of copies of $\ww$.
\end{definition}

As a first example let us cast $\wedge$-semilattices as regular calculi.

\begin{example}[$\wedge$-semilattices are regular calculi]\label{ex.reg_calc_meet_sl}
  Observe that the terminal po-category $1$ supplies $\ww$. By \cref{prop.adjoint_monoids,prop.adjmon_msl} a right ajax po-functor $1\to\pposet$ is the same as a $\wedge$-semilattice. Hence a regular calculus whose po-category of contexts is terminal is a $\wedge$-semillatice.

  Should we fix a $\wedge$-semilattice $L$ viewed as a right ajax po-functor $L\colon1\to\pposet$, we may extend this idea to endow any po-category $\cc$ which supplies $\ww$ with the structure of a regular calculus by considering the composite po-functor $\cc\To{!}1\To{L}\pposet$.
\end{example}

This phenomenon, that regular calculi induce $\wedge$-semilattice structures on their posets of predicates, is not special to factoring through the terminal po-category $1$. In the following sense it must occurs point-wise for any regular calculus.

\begin{proposition}\label{prop.meet_sl}
  If $\rc{P}$ is a regular calculus then the poset $P(\Gamma)$ has the structure of a $\wedge$-semilattice for each $\Gamma\in\cont P$.
\end{proposition}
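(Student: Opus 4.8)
The plan is to exhibit $P(\Gamma)$ as a right adjoint monoid in $\pposet$ and then invoke \cref{prop.adjmon_msl}, which identifies right adjoint monoids in $\pposet$ with $\wedge$-semilattices. The entire argument amounts to transporting a single right adjoint monoid through two right ajax po-functors by repeated use of \cref{prop.ajax_pres_adjmon}.

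First I would observe that the object $\ord 1$ is a right adjoint monoid in $\ww$. Its commutative monoid structure $(\ord 1,\mu,\eta)$ is the one recorded in \eqref{eqn.generating_wires}, whose commutativity, unitality, and associativity laws appear among the equations of \eqref{eqn.equations_wires}; and by \cref{prop.left_adj_in_ww} the multiplication $\mu$ and the unit $\eta$ are right adjoints, namely $\delta\dashv\mu$ and $\epsilon\dashv\eta$. This is exactly the extra condition needed to upgrade a commutative monoid to a right adjoint monoid.

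Next, since $\cont{P}$ supplies $\ww$, for the object $\Gamma$ there is a supply functor $s_{\Gamma}\colon\ww\to\cont{P}$, a strong monoidal po-functor with $s_{\Gamma}(\ord 1)=\Gamma$. Strong monoidal po-functors are right ajax by \cref{lemma.ajax}, so \cref{prop.ajax_pres_adjmon} applies and shows that $\Gamma=s_{\Gamma}(\ord 1)$ inherits the structure of a right adjoint monoid in $\cont{P}$, with multiplication $\mu_{\Gamma}$ and unit $\eta_{\Gamma}$ the supplied images of $\mu$ and $\eta$. The predicates po-functor $P\colon\cont{P}\to\pposet$ is right ajax by the very definition of a regular calculus, so applying \cref{prop.ajax_pres_adjmon} a second time yields that $P(\Gamma)$ is a right adjoint monoid in $\pposet$. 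By \cref{prop.adjmon_msl} this is precisely the data of a $\wedge$-semilattice structure on $P(\Gamma)$, whose top is $\true_{\Gamma}$ and whose meet is the image $\mu_{\Gamma}$ of $\boxplus_{\Gamma,\Gamma}$ under the relevant identifications.

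There is no serious obstacle here; the only step demanding any care is the very first one, verifying that $\ord 1$ is genuinely a \emph{right adjoint} monoid in $\ww$ and not merely a commutative monoid, but this is immediate from the adjunctions of \cref{prop.left_adj_in_ww}. Equivalently, one may package the whole computation as the composite right ajax po-functor $1\to\ww\xrightarrow{s_{\Gamma}}\cont{P}\xrightarrow{P}\pposet$ selecting $\ord 1$, and read off the resulting right adjoint monoid on $P(\Gamma)$ directly via \cref{prop.adjoint_monoids}.
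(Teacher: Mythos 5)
Your proof is correct and follows essentially the same route as the paper's: the supply of $\ww$ equips $\Gamma$ with a right adjoint monoid structure $(\eta_{\Gamma},\mu_{\Gamma})$, which is transported to $P(\Gamma)$ by \cref{prop.ajax_pres_adjmon} and identified as a $\wedge$-semilattice by \cref{prop.adjmon_msl}. The only difference is that you spell out why the supply yields a right adjoint monoid on $\Gamma$ (via the right adjoint monoid $\ord 1$ in $\ww$ and the strong monoidal, hence right ajax, functor $s_{\Gamma}$), a step the paper asserts directly before recording the resulting composite adjunctions explicitly.
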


\begin{proof} Recall that by supplying $\ww$, $\cont P$ has a supply of chosen right adjoint monoid structures $(\eta_{\Gamma},\, \mu_{\Gamma})$ for each object $\Gamma\in\cont P$. The result then follows from fact that ajax po-functors send right adjoint monoids to right adjoint monoids by \cref{prop.ajax_pres_adjmon}, as well as \cref{prop.adjmon_msl}.
  Explicitly, \eqref{eqn.ajax} and \eqref{eqn.ww_adjunctions} give rise to the following adjunctions.
  \[
    \begin{tikzpicture}[baseline=(1.base)]
      \node(1)[]{\ensuremath{1}};
      \node(2)[right= of 1]{\ensuremath{P(I)}};
      \node(3)[right= of 2]{\ensuremath{P(\Gamma)}};
      \draw[a]($(1.east)+(0,5pt)$)to node(left)[la,above]{\ensuremath{\true}}($(2.west)+(0,5pt)$);
      \draw[a]($(2.west)-(0,5pt)$)to node(right)[la,below]{\ensuremath{\lambda_{I}}}($(1.east)-(0,5pt)$);
      \draw[a]($(2.east)+(0,5pt)$)to node(left)[la,above]{\ensuremath{P(\eta_{\Gamma})}}($(3.west)+(0,5pt)$);
      \draw[a]($(3.west)-(0,5pt)$)to node(right)[la,below]{\ensuremath{P(\epsilon_{\Gamma})}}($(2.east)-(0,5pt)$);
      \path (1) -- node[midway,rotate=90]{$\dashv$} (2);
      \path (2) -- node[midway,rotate=90]{$\dashv$} (3);
    \end{tikzpicture}\qand
    \begin{tikzpicture}[baseline=(1.base)]
      \node(1)[]{\ensuremath{P(\Gamma)\times P(\Gamma)}};
      \node(2)[right= of 1]{\ensuremath{P(\Gamma\otimes\Gamma)}};
      \node(3)[right= of 2]{\ensuremath{P(\Gamma)}};
      \draw[a]($(1.east)+(0,5pt)$)to node(left)[la,above]{\ensuremath{\boxplus_{\Gamma,\Gamma}}}($(2.west)+(0,5pt)$);
      \draw[a]($(2.west)-(0,5pt)$)to node(right)[la,below]{\ensuremath{\lambda_{\Gamma,\Gamma}}}($(1.east)-(0,5pt)$);
      \draw[a]($(2.east)+(0,5pt)$)to node(left)[la,above]{\ensuremath{P(\mu_{\Gamma})}}($(3.west)+(0,5pt)$);
      \draw[a]($(3.west)-(0,5pt)$)to node(right)[la,below]{\ensuremath{P(\delta_{\Gamma})}}($(2.east)-(0,5pt)$);
      \path (1) -- node[midway,rotate=90]{$\dashv$} (2);
      \path (2) -- node[midway,rotate=90]{$\dashv$} (3);
    \end{tikzpicture}
    \qedhere
  \]
\end{proof}

This result motivates the following notation.

\begin{notation}[The $\wedge$-semilattice on each object in a regular calculus]
  In a regular calculus $\rc P$, for each context $\Gamma\in\ob\cont P$ we denote the $\wedge$-semilattice structure on $P(\Gamma)$ given by \cref{prop.meet_sl} by:
  \begin{equation}\label{eqn.meet_sl}
    \adjr{1}{\true_{\Gamma}}{!}{P(\Gamma)}
    \qqand
    \adjr{P(\Gamma)\times P(\Gamma)}{\wedge_{\Gamma}}{\Delta_{\Gamma}}{P(\Gamma)}.
  \end{equation}
  In keeping with the conventions thus far, for $n\in\nn$ we will write $\wedge^{n}_{\Gamma}$ for the composite map $\boxplus^{n}_{\Gamma}\cp P(\mu_{\Gamma}^{n})\colon P(\Gamma)^{\times n}\to P(\Gamma)$. In the case of $n=0$ we see that $\wedge_{\Gamma}^{0}=\true_{\Gamma}$, and we will freely confuse this map $\true_{\Gamma}$ with the top element $\true_{\Gamma}(*)$. Where it will cause no ambiguity we will omit the label $\Gamma$ on the maps $\true$, $\wedge$, $\wedge^{n}$, $\boxplus$, and so forth.
\end{notation}

Further examples of regular calculi abound. In the next section we will meet our first large class of regular calculi, those arising syntactically from ``regular theories''. After that, \cref{chap.relational_pocats} is devoted to constructing another large class, those arising semantically from regular categories.

\section{The \texorpdfstring{$2$}{2}-category \texorpdfstring{$\mathcal{R}\Cat{gCalc}$}{RgCalc} of regular calculi}

The central result of the companion paper compares the $2$-category theory of regular calculi with that of regular categories. In order to effect such a comparison we must first define suitable notions of $1$- and $2$-morphisms of regular calculi.

\begin{definition}[Morphism of regular calculi]\label{def.reg_calc_morphisms}
  Let $(\cont P, P)$ and $(\cont{P'}, P')$ be regular calculi. A \define{morphism of regular calculi} from $(\cont P, P)$ to $(\cont{P'}, P')$ is a pair $(F,F^\sharp)$ where $(F,\varphi)\colon\cont P\to\cont{P'}$ is a strong monoidal po-functor preserving the supply and $F^\sharp$ is a monoidal $2$-natural transformation (\cref{def.symm_mon_po}) as follows.
  \begin{diagram*}
    \node(1)[]{$\cont {P\phantom{'}}$};
    \node(2)[below=0.50cm of 1]{$\cont{P'}$};
    \node(3)[right=1.00cm of 2]{$\pposet$};
    \draw[a](1)to node(F)[la,left]{$F$}(2);
    \draw[a,rc](1) to (3|-1) to (3);
    \draw[a](2)to node(P')[la,below]{$P'$}(3);
    \node[above,la] at (P'|-1) {$P$};
    \cell[right,la]{(P'|-1)}{(P'.north)}{$F\smash{^{\sharp}}$};
  \end{diagram*}
  A \define{$2$-morphism of regular calculi} from $(F,F^{\sharp})$ to $(G,G^{\sharp})$ is the data of a monoidal left adjoint oplax-natural transformation (\cref{def.symm_mon_po,def.ladj_lax_nt}) $\alpha\colon F\tto G$ which satisfies the property $F^{\sharp}\cp P'\alpha\leq G^{\sharp}$ as explicated below.
  \begin{equation}\label{eqn.two_morphism_modification}
    \begin{tikzpicture}[baseline=(F.base)]
      \node(1)[]{$\cont P$};
      \node(2)[below=1cm of 1]{$\cont{P'}$};
      \node(4)[right=3cm of 2]{$\pposet$};
      \draw[a](2)to node(Pp)[la,below]{$P'$}(4);
      \node(P)[la,yshift=-4pt] at (Pp|-1.north east){$P$};
      \draw[a,rc](1) -- (1-|4) -- (4);
      \draw[a,bend right](1)to node(F)[la,left]{$F$}(2);
      \draw[a,bend left](1)to node(G)[la,right]{$G$}(2);
      \cell[la,above,inner sep=5pt]{(F)}{(G)}{$\alpha$};
      \draw[n,bend right,shorten <=6pt,shorten >=6pt](P)to node(Fs)[]{}(Pp);
      \draw[n,bend left,shorten <=4pt, shorten >=1pt](P)to node(Gs){}(Pp);
      \node(Fs)[la,left] at (Fs|-F) {$F\smash{^{\sharp}}$};
      \node(Gs)[la,right] at (Gs|-G) {$G\smash{^{\sharp}}$};
      \path (Fs.east) -- (Gs.west) node[midway,la]{$\leq$};
    \end{tikzpicture}
    \quad\parbox{3.5cm}{i.e., for each $\Gamma\in\cont P$,\\an inequality in $\pposet\big(P(\Gamma),P'G(\Gamma)\big)$:}\quad
    \begin{tikzpicture}[baseline=(Fs.base)]
      \node(1)[]{$P(\Gamma)$};
      \node(2)[below=0.75cm of 1]{$P'F(\Gamma)$};
      \node(3)[right=0.75cm of 2]{$P'G(\Gamma)$};
      \draw[a](1)to node(Fs)[la,left]{$F^{\sharp}_{\Gamma}$}(2);

      \draw[a,rc] (1) -- (1-|3) -- (3);
      \draw[a](2)to node(P')[la,below]{$P'\alpha_{\Gamma}$}(3);
      \node[la,above](Gs) at (P'|-1) {$G^{\sharp}_{\Gamma}$};
      \path (P') to node[la,sloped,pos=0.45]{$\leq$}(Gs);
    \end{tikzpicture}
  \end{equation}
\end{definition}

\begin{remark}
  The reader would be justified in wondering whether these particular choices of $1$- and $2$-morphisms preserves all of the relevant structure at hand. We provide some discussion on this topic in \cref{sec.graphical_morphisms}, and for greater detail still direct the reader to the companion paper \cite{grl2} in which we show that these morphisms support the construction of a pseudo-reflection of regular categories into regular calculi.
\end{remark}

It is a straightforward if lengthy task to verify that morphisms of regular calculi obey strict composition laws, and moreover that $2$-morphisms may be composed along shared morphism boundaries as well as whiskered with morphisms in a functorial manner. With these facts, we are justified in making the following definition.

\begin{definition}[$\mathcal{R}\Cat{gCalc}$]\label{def.rgcalc_2cat}
  The \define{$2$-category of regular calculi}, denoted $\mathcal{R}\Cat{gCalc}$, has as objects regular calculi, and as $1$- and $2$-morphisms the $1$- and $2$-morphisms of regular calculi of \cref{def.reg_calc_morphisms}.
\end{definition}

\begin{example}\label{ex.rgcalc_subcat}
  Recall from \cref{ex.reg_calc_meet_sl} that a regular calculus $\rc P$ with a terminal context po-category $\cont P=1$ is a $\wedge$-semilattice. It is furthermore the case that the full subcategory of $\rrgcalc$ spanned by the regular calculi with terminal context po-category is terminal is isomorphic to the full subcategory of $\pposet$ spanned by the $\wedge$-semilattices. Note in particular that the morphisms here are arbitrary monotone maps, and not just the $\wedge$-preserving ones.
\end{example}

\section{The bare regular calculus of a regular theory}\label{sec.regular_theories}

Now that we have defined regular calculi, we turn our attention to the first interesting class of examples, viz., the \emph{regular theories}. Indeed the main purpose of regular calculi is to capture syntactic regular logic of relations: as a data structure to support the understanding of regular logic in graphical terms. To illustrate this role, we now explore how regular calculi can be constructed from presentations of regular theories, and hence how they are a tool for describing regular theories in categorical terms.

To begin let us briefly recall the definition of a regular theory. For further detail we direct the curious reader to \cite[Definition D.1.1.1]{Johnstone:2002a}, for example.

\begin{definition}[Regular theory]
  A \define{regular theory} $((\Sigma\text{-sort},\Sigma\text{-rel}),\mathbb T)$ is the data of
  \begin{enumerate}
  \item a set $\Sigma\text{-sort}$ of sorts
  \item a set $\Sigma\text{-rel}$ of relation symbols, each of which is paired with a set of sorts
  \item a set $\mathbb T$ of axioms
  \end{enumerate}
  From $\Sigma\text{-sort}$ we define \emph{contexts}: formal lists $\vec X$ of sorts $X_{i}\in\Sigma\text{-sort}$ where repetition is allowed. Using this, and the first two sets above, we inductively define the collection of formulae over formal variables associated to \emph{suitable} contexts, by using the relation symbols and the regular fragment of logic $(\true,\exists,=,\wedge)$. For instance, if $\varphi(\vec x)$ is a formula in variables $\langle x_{1} : X_1,\ x_2 : X_{2}\rangle$ in the suitable context $\vec X=\langle X_{1},\ X_{2}\rangle$, and $\psi(y)$ is a formula in variable $y:Y$, then $\exists_{y}[\varphi(\vec x)\wedge\psi(y)]$ is a formula in the context $\vec X$ once more. This construction depends on a notion of {suitability}: a context $\vec X$ is suitable for a formula $\varphi(\vec x)$ if the variables $x_{i}\in\vec x$ may be drawn appropriately from the sorts $X_{i}\in\vec X_{i}$. Finally, the set of axioms $\mathbb T$ is a collection of sequents of the form $\varphi\vdash\psi$ which we take in addition to those sequents provided by regular fragment of logic.
\end{definition}

\begin{remark}
  Note that we have defined a regular theory to appear without ``function symbols''. In the companion \cite{grl2} we sketch a proof that this is an anodyne assumption inasmuch as the categories of models are concerned. The essence of this replacement is that function symbols may instead appear as relation symbols along with axioms asserting their functionality, and that models are sufficiently structured so as to prove that the internal relations thus formed are in fact the graphs of morphisms.
\end{remark}

Now that we have recalled the notion of regular theory we are prepared to give a construction assigning to each regular theory $(\Sigma,\mathbb T)$ a bare regular calculus $(\cc_{\mathbb T},F_{\mathbb T})$.

\begin{theorem}\label{con.rgcalc_rgtheory}
  Given a regular theory $(\Sigma,\mathbb T)$, we may define a regular calculus
  \[(\ccat C_{\mathbb T},F_{\mathbb T}) \quad\text{where}\quad \ccat C_{\mathbb T}\coloneqq\displaystyle{\bigsqcup_{\Sigma\text{-sort}}}\ww\ ,\] and $F_{\mathbb T}$ sends each object, viewed as a context, to the poset of equivalence classes of regular formulas in that context quotiented by $\alpha$-equivalence.
\end{theorem}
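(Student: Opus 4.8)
The plan is to supply the two pieces of data demanded by \cref{def.reg_calc}: the supply of $\ww$ on $\cc_{\mathbb T}=\bigsqcup_{\Sigma\text{-sort}}\ww$, and the right ajax po-functor $F_{\mathbb T}\colon\cc_{\mathbb T}\to\pposet$. The first is immediate: since $\ww$ supplies itself by \cref{prop.p_supplies_itself}, \cref{lemma.irritating_supply_coprod} extends this supply to the coproduct, so $\cc_{\mathbb T}$ is a symmetric monoidal po-category supplying $\ww$. By \cref{warn.coprod}, an object of $\cc_{\mathbb T}$ is a finitely-supported family $(n_X)_{X\in\Sigma\text{-sort}}$ assigning a natural number to each sort; reading off the sorts with multiplicity recovers a context $\vec X$, and we set $F_{\mathbb T}(\vec X)$ to be the poset of regular formulae over $\vec X$, taken up to $\alpha$-equivalence (and, to force antisymmetry, up to provable bi-entailment), ordered by provable entailment $\vdash$.

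Next I would define $F_{\mathbb T}$ on morphisms. By \cref{cor.jointly_surj_w} each morphism $\omega\colon\Gamma\to\Gamma'$ has a canonical jointly-surjective cospan representative $\Gamma\xrightarrow{f}\ord{n_\omega}\xleftarrow{g}\Gamma'$ whose wires are typed by sorts, and I assign to a formula $\varphi(\vec x)$ in context $\Gamma$ the formula
\[
  F_{\mathbb T}(\omega)(\varphi)\;\coloneqq\;\exists\,\vec w\,\Bigl[\varphi\bigl(w_{f(1)},\dots,w_{f(n)}\bigr)\wedge\textstyle\bigwedge_{i'}\bigl(y_{i'}=w_{g(i')}\bigr)\Bigr]
\]
in context $\Gamma'$, where $\vec w=(w_k)_{k\in\ord{n_\omega}}$ are fresh variables of the appropriate sorts and $\vec y$ are the free variables of $\Gamma'$. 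This uniformly packages the four elementary moves: $\eta$ introduces a fresh variable, $\mu$ equates two variables, $\epsilon$ existentially quantifies a variable away, and $\delta$ copies a variable. Equivalently one may define $F_{\mathbb T}$ on these four typed generators and check that each relation of \eqref{eqn.equations_wires} becomes a provable equivalence of formulae; monotonicity in $\varphi$ is clear since substitution, conjunction, and existential quantification are all monotone for $\vdash$.

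For the lax symmetric monoidal structure I would take the binary laxator $\boxplus_{\Gamma_1,\Gamma_2}$ of \eqref{eqn.ajax_reg_calc} to be conjunction, $(\varphi,\psi)\mapsto\varphi\wedge\psi$ in the concatenated context, and the nullary laxator $\true\colon 1\to F_{\mathbb T}(I)$ to select the top formula $\true$ in the empty context; coherence of these laxators is the associativity and unitality of $\wedge$ up to provable equivalence. The left adjoints required by the right ajax condition of \eqref{eqn.ajax} are the existential projections, $\lambda_{\Gamma_1,\Gamma_2}(\chi)\coloneqq(\exists_{\Gamma_2}\chi,\exists_{\Gamma_1}\chi)$ and $\lambda_I$ the unique map to $1$. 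That $\lambda_{\Gamma_1,\Gamma_2}\dashv\boxplus_{\Gamma_1,\Gamma_2}$ reduces to the chain ``$\exists_{\Gamma_2}\chi\vdash\varphi$ and $\exists_{\Gamma_1}\chi\vdash\psi$'' iff ``$\chi\vdash\varphi$ and $\chi\vdash\psi$'' iff $\chi\vdash\varphi\wedge\psi$, which is precisely the adjunction between existential quantification and weakening together with the universal property of $\wedge$; dually $\lambda_I\dashv\true$ says $\true$ is the top element. Hence $F_{\mathbb T}$ is right ajax.

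The main obstacle is the functoriality identity $F_{\mathbb T}(\omega\cp\omega')=F_{\mathbb T}(\omega)\cp F_{\mathbb T}(\omega')$, together with the verification that the construction descends through the poset reflection defining $\ww$ (so that $\omega\leq\omega'$ forces the corresponding provable entailment). Concretely one must show that the pushout of cospans computing $\omega\cp\omega'$ corresponds, on the logical side, to contracting the two nested blocks of existentially-bound variables and their defining equalities into a single such block. This is exactly where the Beck--Chevalley and Frobenius-reciprocity phenomena are discharged automatically by the combinatorics of $\ww$ rather than imposed as axioms; if one instead argues generator-by-generator, the obstacle migrates to verifying that the typed forms of the commutativity, (co)unitality, (co)associativity, special, Frobenius, and adjunction relations of \eqref{eqn.equations_wires} each hold as provable (in)equivalences of regular formulae, a finite but attentive bookkeeping exercise in variable management.
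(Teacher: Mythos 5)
Your proposal is correct, and it matches the paper's construction in most of its components: the same assignment on objects (formulae up to $\alpha$-equivalence and provable bi-entailment, ordered by provability), the same laxators ($\top$ in the empty context and disjoint-context conjunction $\boxplus$), and the same left adjoints $\lambda$ given by existentially quantifying away the complementary block of variables. Where you genuinely diverge is the action on morphisms: the paper defines $F_{\mathbb T}$ only on the four generators $\eta,\epsilon,\mu,\delta$ (one index at a time, using the decomposition of a morphism of $\bigsqcup_{\Sigma\text{-sort}}\ww$ into such pieces) and then verifies po-functoriality by checking that each relation of \eqref{eqn.equations_wires} becomes a provable (in)equivalence, e.g.\ $\delta\cp\mu=\id$ and $\mu\cp\delta\leq\id_{\ord 2}$; you instead give a single closed-form formula read off the canonical jointly-surjective cospan representative of \cref{cor.jointly_surj_w} (essentially anticipating the translation of \cref{rem.graphical_regular}). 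Each route buys something: yours makes well-definedness and monotonicity in $\varphi$ immediate and makes the logical content transparent, but concentrates all the work into the single check that composition of cospans (pushout, then poset reflection) matches the provable contraction of nested existential blocks --- which you correctly flag as the main obstacle but do not carry out; the paper's route makes each individual verification a one-line logical computation, at the price of implicitly relying on the generators-and-relations presentation of $\ww$ being complete and on independence of the chosen decomposition. Two small points: your adjunction argument for $\lambda\dashv\boxplus$ via the hom-set bijection is cleaner than the paper's unit/counit computation and is correct; but you omit the $2$-naturality of the laxators $\true$ and $\boxplus$ (naturality of $\boxplus$ in both arguments), which is part of the lax monoidal structure required by \cref{def.reg_calc} and which the paper explicitly flags as a condition to be checked on generators --- with your uniform formula it follows from the commutation of $\exists$ and $\wedge$ across disjoint variable blocks, so it is not a gap in substance, but it should be stated.
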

\begin{proof}
  Let us begin by more precisely stating the assignment of $F_{\mathbb T}$ on objects.

  An object of $\ccat C_{\mathbb T}$ is a list $\langle(\sigma_{i},n_{i})\rangle$ of pairs of sorts $\sigma_{i}\in\Sigma\text{-sort}$ and natural numbers $n_{i}\in\mathbb N$. From this list, let us form the context given by concatenating each $\sigma_{i}$ repeated $n_{i}$ many times, and write this as $\sigma_{1}^{n_{1}},\ldots,\sigma_{k}^{n_{k}}$. If $n_{i}=0$ then $\sigma_{i}$ does not appear in the context. We thus set $F_{\mathbb T}(\left< (\sigma_{i},n_{i})\right>)$ to be the set of all formulae $\varphi$ over $\Sigma$ for which the context formed by $\sigma_{1}^{n_{1}},\ldots,\sigma_{k}^{n_{k}}$ is suitable. We quotient this set by $\alpha$-equivalence, and impose the order associated to provability in the sense of \cite{Johnstone:2002a}. Finally, we perform the poset quotient of what is \emph{a priori} a pre-order.

  Next we must give the data of the functorial action $F_{\mathbb T}\colon\ccat C_{\mathbb T}\to \pposet$. An arbitrary morphism in $\ccat C_{\mathbb T}$ is decomposable as a list of morphisms $\left< f_{i}\right>$, and may be further decomposed as a composite of morphisms which are the identity in all but one index. Thus, to give the functorial action it suffices to specify the action of $F_{\mathbb T}$ on a morphism of a single index, and ensure that functoriality holds. Moreover, observe that the formulae which are suitable for a given context are precisely the same as the formulae which are suitable for any permutation of that context, so without loss of generality we may decompose an arbitrary morphism in $\ccat C_{\mathbb T}$ as a finite sequence of composites which are the identity in all but their last index. We may yet further simplify this task by recalling \eqref{eqn.generating_wires}, that in each index, $\ww$ is generated by four morphisms---$\eta$, $\epsilon$, $\mu$, $\delta$---and some relations governing their composites. Let us therefore address each one of these in turn.

  For a fixed sort $\sigma$, let us define $F_{\mathbb T}(\id+\ldots+\id+\eta_{\sigma})$ to be the function which weakens a formula $\varphi(\vec x)$ in context $\sigma_{1}^{n_{1}},\ldots,\sigma_{k}^{n_{k}}$ to a formula in context $\sigma_{1}^{n_{1}},\ldots,\sigma_{k}^{n_{k}},\sigma$ by adding a free variable $y:\sigma$. Then let us set $F_{\mathbb T}(\id+\ldots+\id+\epsilon_{\sigma})$ to be the assignment of formulae $\varphi(\vec x,y)$ in context $\sigma_{1}^{n_{1}},\ldots,\sigma_{k}^{n_{k}},\sigma$ to formulae $\exists_{y}\varphi(\vec x,y)$ in context $\sigma_{1}^{n_{1}},\ldots,\sigma_{k}^{n_{k}}$. Next, we set $F_{\mathbb T}(\id+\ldots+\id+\epsilon_{\mu})$ as the mapping taking a  formula $\varphi(\vec x, y,y')$ in context $\sigma_{1}^{n_{1}},\ldots,\sigma_{k}^{n_{k}},\sigma^{2}$ where $y,y':\sigma$ to the formula $\mu(\varphi)(\vec x,y)\coloneqq\varphi(\vec x,y,y)$ in context $\sigma_{1}^{n_{1}},\ldots,\sigma_{k}^{n_{k}},\sigma$. Finally, define $F_{\mathbb T}(\id+\ldots+\id+\delta_{\mu})$ to take a formula $\varphi(\vec x,y)$ in context $\sigma_{1}^{n_{1}},\ldots,\sigma_{k}^{n_{k}},\sigma$ to the formula $\delta(\varphi)(\vec x,y,y')\coloneqq [\varphi(\vec x,y)\wedge y=y']$ in context $\sigma_{1}^{n_{1}},\ldots,\sigma_{k}^{n_{k}},\sigma^{2}$. It is straightforward that all these assignments respect $\alpha$-equivalence, and respect the order of provability and the quotient thereby.

  Checking that $F_{\mathbb T}$ is po-functorial amounts to checking that the relations exhibited in \eqref{eqn.equations_wires} hold. For instance, in $\ww$ we have $\delta\cp\mu=\id$ and $\mu\cp\delta\leq\id_{\ord 2}$. Under $F_{\mathbb T}$ these become $\mu(\delta(\varphi))(\vec x,y)=\delta(\varphi)(\vec x,y,y)=[\varphi(\vec x,y)\wedge y=y]=\varphi(\vec x,y)$ and $\delta(\mu(\varphi))(\vec x,y,y')=[\mu(\varphi)(\vec x,y)\wedge y=y']=[\varphi(\vec x,y,y)\wedge y=y']\vdash \varphi(\vec x,y,y')$.

  Finally it remains to give the ajax structure of $F_{\mathbb T}$. First let us define laxator $1\to F_{\mathbb T}(\left<\right>)$ to pick out the formula $\top$ in the empty context, and this is evidently a right adjoint to $!\colon F_{\mathbb T}(\left<\right>)\to 1$ for $\varphi\vdash \top$. Next let us define the laxator \[\boxplus\colon F_{\mathbb T}(\left<(\sigma_{i},n_{i})\right>)\times F_{\mathbb T}(\left<(\sigma'_{i},n'_{i})\right>)\to F_{\mathbb T}(\left<(\sigma_{i},n_{i})\right>+\left<(\sigma'_{i},n'_{i})\right>)\] to be the function sending pairs of formulae $\varphi(\vec x),\psi(\vec y)$ to the formula $\varphi(\vec x)\wedge \psi(\vec y)$ where by construction the contexts for $\vec x$ and $\vec y$ are disjoint in the codomain. In this sense the laxators behave as a form of ``exterior conjunction''. We claim that this map is a right adjoint in $\pposet$. To see this, consider the opposed map $\lambda$ which sends a formula $\gamma(\vec x,\vec y)$ to the pair $\left<\exists_{\vec y}\gamma(\vec x,\vec y),\exists_{\vec x}\gamma(\vec x,\vec y) \right>$. First observe that \[\gamma(\vec x,\vec y)\vdash \exists_{\vec y}\gamma(\vec x,\vec y) \wedge \exists_{\vec x}\gamma(\vec x,\vec y) = \boxplus(\lambda (\gamma(\vec x,\vec y)))\ .\] In the other direction $\lambda(\boxplus(\left<\varphi(\vec x),\varphi(\vec y)\right>))$ has first component $\exists_{y}(\varphi(\vec x)\wedge\psi(\vec y))$ but this entails in particular $\varphi(\vec x)$, and similarly so for the second component. Thus $\boxplus$ is a right adjoint.

  To conclude the construction it remains to check that $\top$ and $\boxplus$ obey the symmetric lax-monoidal coherence conditions and are $2$-natural. The former conditions reduce to the associativity, commutativity and unitality of conjunction, while the latter may be checked on each generator and relation.
\end{proof}

Let us turn to some examples of this construction, beginning with the simplest. In \cref{sec.graph_terms_regul} later we shall find pleasing graphical renditions of some of the formulae we discuss below.

\begin{example}[The theory of an inhabited sort]\label{ex.rgth_inhabited}
  The theory of an inhabited sort is a regular theory. In this regular theory, $\Sigma\text{-sort}=\{S\}$ the unique sort, and $\Sigma\text{-rel}=\emptyset$ is empty. There is precisely one axiom in this theory, viz., $\mathbb T=\{\vdash\exists_{s}\top\}$---the sort $S$ is inhabited.

  The regular calculus $(\cc_{\mathbb T},F_{\mathbb T})$ has $\cc_{\mathbb T}=\ww$, and $F_{\mathbb T}(\ord n)$ is the set of regular formulae in $n$-many variables $s_{i}$. We wish to draw attention to the formula $[\exists_{s}\top]\in F_{\mathbb T}(\ord 0)$ which is asserted by our axiom. Note that this formula is obtained from $\top\in F_{\mathbb T}(\ord 0)$ by the functorial action $F_{\mathbb T}(\eta\cp\epsilon:\ord 0\to\ord0)(\top)$ using the unique morphism $\eta\cp\epsilon\ne\id_{\ord 0}\in\ww(\ord0,\ord 0)$. That is, without this extra morphism $\ord 0\to\ord1\from\ord0$ of \eqref{eqn.extra_law} we would not be able to speak of inhabitedness of a sort in a bare regular calculus.
\end{example}

It is also of interest to consider a regular theory whose corresponding regular calculus has a genuine non-trivial inequality.

\begin{example}[The theory of a pre-order]\label{ex.rgth_preorder}
  The theory of a pre-order is a regular theory. In this regular theory, $\Sigma\text{-sort}=\{X\}$ the unique sort, and $\Sigma\text{-rel}=\{P\}$ the pre-order relation symbol. The axiom set of this theory is small enough to be stated completely: $\mathbb T=\{x\vdash xPx,\, \exists_{y}[xPy\wedge yPz]\vdash xPz\}$, reflexivity and transitivity.

  The regular calculus $(\cc_{\mathbb T},F_{\mathbb T})$ has $\cc_{\mathbb{T}}=\ww$ and $F_{\mathbb T}(\ord n)$ is the set of regular formulae in $n$-many variables $x_{i}$ and the relation symbol $P$, ordered by provability on $\mathbb T$. In particular, observe that $F_{\mathbb T}(\ord 2)$ contains the sub-poset $\{[x=x'] \leq [xPx']\leq\top\}$ which displays two genuine inequalities imposed by the axioms of the theory.
\end{example}

Some more algebraic examples include the theories of monoids and categories.

\begin{example}[The theory of a monoid]\label{ex.rgth_monoid}
  The theory of a monoid is a regular theory. Here $\Sigma\text{-sort}=\{M\}$, $\Sigma\text{-rel}=\{\star,e\}$ where the $\star$ is a ternary relation symbol and $e$ is a unary relation symbol, both on $M$. Our axioms include the usual axioms of a monoid, say $m\star e=m'\vdash m=m'$, but phrased relationally instead, $\star(m,e(m'),m'')\vdash m=m'$. We also include axioms to ensure that what are \emph{a priori} the relations of $\star$ and $e$ are in fact functions, for example, $\vdash\exists_{m}e(m)$ and $e(m)\wedge e(m')\vdash m=m'$.

  The regular calculus $(\cc_{\mathbb T},F_{\mathbb T})$ associated to the theory $(\Sigma,\mathbb T)$ of monoids therefore has as its category of contexts $C_{\mathbb T}=\ww$, and $F_{\mathbb T}(\ord n)$ is the collection of formulae in $n$-many variables of this theory. For example, $F(\ord 2)$ has the formula for expressing commutativity: $\exists_{m,m'}[\star(x,y,m)\wedge\star(y,x,m')\wedge m=m']\in F(\ord 2)$.
\end{example}

\begin{example}[The theory of a category]\label{ex.rgth_cat}
  The theory of a category is also a regular theory. Here $\Sigma\text{-sort}=\{O,M\}$, $\Sigma\text{-rel}=\{\dom,\cod,\id,\comp\}$ where $\id,\dom,\cod$ are all binary relations on $O,M$, and $\comp$ is a binary relation on $M,M$.

  The axioms of the regular theory of categories include once more all the usual axioms, say $\vdash \dom\id(x)=x$, but phrased relationally instead, $\id(x,f)\wedge\dom(f,y)\vdash x=y$. We also include axioms to ensure that $\dom,\cod,\id$ are functions, as well as axioms to ensure that $\comp$ is a function whenever it is defined.

  The regular calculus $(\cc_{\mathbb T},F_{\mathbb T})$ associated to the theory $(\Sigma,\mathbb T)$ of categories therefore has as its category of contexts $C_{\mathbb T}=\ww\sqcup\ww$, and $F_{\mathbb T}([\ord n, \ord m])$ is the collection of formulae in $n$ object variables and $m$ morphism variables. For example, $F([\ord 0, \ord 2])$ contains the formula for isomorphisms (where $f,g:M$ are the isomorphism pair):
  \begin{align*}
    \exists_{h,k:M}\exists_{x,x',y,y':O}\big[&
                                              [\dom(f,x)\wedge\cod(f,y)\wedge\dom(g,y')\wedge\dom(g,x')\wedge(x=x')\wedge(y=y')]\\
                                            &\wedge [\comp(f,g,h)\wedge\comp(g,f,k)]\wedge[\id(x,h)\wedge\id(y,k)] \big]
  \end{align*}
\end{example}

Once we pass to the graphical regular logic of \cref{chap.graphical_reglog} we will see that all three example formulas above---the inhabitedness of a sort, commutativity in monoids, and isomorphisms in categories---admit simple and intuitive graphical depictions.

To conclude this section, let us pause to consider the question of models. Classically, one defines what it means for a regular category to be a model of a given regular theory $(\Sigma,\mathbb T)$. One then also defines a ``syntactic regular category'' $\cat{C}^{\mathrm{reg}}_{\mathbb T}$ and the theoretical edifice is considered satisfactory when regular functors $\cat{C}^{\mathrm{reg}}_{\mathbb T}\to\cat{R}$ are equivalent to models in $\cat{R}$.

In order to replicate this process for regular calculi we will need to develop two main tools in the coming section. First we must find a structure that plays the role of regular categories in the story of models; for us these are the ``relational po-categories'' of \cref{chap.relational_pocats} next. We must also provide the construction analogous to the syntactic regular category; for us this is the ``syntactic po-category'' construction of \cref{sec.syn} which takes a regular calculus to a relational po-category. With these tools we will, in the companion \cite{grl2}, prove a satisfactory model theorem---\cref{thm.model_equiv} below---for regular calculi which subsumes that sketched for regular categories above.

\chapter{Relational po-categories}\label{chap.relational_pocats}

The goal of this section is at first to axiomatise those po-categories whose objects, morphisms, and $2$-morphisms arise as the objects, relations, and inclusions of relations in a regular category. We call these po-categories \emph{relational} and introduce this notion and the $2$-category $\rrlpocat$ of such in \cref{sec.relational}. As we shall see there, relational po-categories admit tidy definition in terms of the notions of supply of \cref{sec.supply}, the po-category for wiring $\ww$, and an extra piece of structure due to Freyd and Scedrov \cite{freyd1990categories}. Moreover, as we note in \cref{thm.carboni_equivalence}, the $2$-category of relational po-categories is appropriately equivalent to the $2$-category of regular categories. The fundamental impetus for so characterising regular categories is the construction of the $2$-functor $\fprd$ of \cref{sec.prd}, from relational po-categories to regular calculi. In our main theorem, \cref{thm.main}, we show that this $2$-functor $\fprd$ is $2$-dimensionally fully-faithful and is a $2$-dimensional right adjoint, and thereby affirm that regular calculi and graphical logic may be leveraged as tools for studying relational po-categories and so regular categories.

\section{The \texorpdfstring{$2$}{2}-category \texorpdfstring{$\mathcal{P}\Cat{rlPoCat}$}{PrlPoCat} of prerelational po-categories}\label{sec.prerel}

The first step towards characterising relations in regular categories is the notion of a prerelational category. Whereas relational po-categories play host to a full regular structure, prerelational po-categories support, roughly speaking, only the underlying cartesian structure.

\begin{definition}[Prerelational po-category] \label{def.prerel_pocat}
  A symmetric monoidal po-category $(\rr,I,\otimes)$ is \define{prerelational} if it supplies wiring $\ww$, such that the induced supply of cocommutative comonoids (\cref{ex.wiring_comonoids}) is lax homomorphic.
\end{definition}

\begin{remark}\label{rem.lax_hom_prerel}
  The lax homomorphicity condition requires that for every $f\colon c\to d$ in $\rr$, both $f\cp\epsilon_d\leq\epsilon_c$ and $f\cp\delta_d\leq\delta_c\cp (f\otimes f)$. In graphical notation:
  \begin{equation}\label{eqn.lax_hom_prerel}
    \begin{tikzpicture}[inner WD,baseline=(1.-30)]

      \node(1){};
      \node(f)[syntax, oshellr,right=1.25 of 1] (f) {$f$};
      \node(2)[right=1.25 of f,link]{};
      \draw(1) to node[above]{$c$} (f);
      \draw(f) to node[above]{$d$} (2);
    \end{tikzpicture}\
    \leq\
    \begin{tikzpicture}[inner WD,baseline=(1.-30)]

      \node(1)[]{};
      \node(2)[right=1.25 of 1,link]{};
      \draw(1) to node[above]{$d$} (2);
    \end{tikzpicture}\hspace{0.75cm}\text{and}\hspace{0.75cm}
    \begin{tikzpicture}[inner WD,baseline=(1.-30)]

      \node(1){};
      \node(f)[syntax, oshellr,right=1.25 of 1] (f) {$f$};
      \node(2)[right=1 of f,link]{};
      \coordinate (d1) at ($(2)+(1,1.3)$);
      \coordinate (d2) at ($(2)+(1,-1.3)$);
      \draw(1) to node[above]{$c$} (f);
      \draw(f) to node[above]{$d$} (2);
      \draw(2) to [out=60, in=180] (d1) -- ++(1,0);
      \draw(2) to [out=-60, in=180] (d2) -- ++(1,0);
    \end{tikzpicture}\ \leq \
    \begin{tikzpicture}[inner WD,baseline=(1.-30)]

      \node(1){};
      \node(2)[right=1 of 1,link]{};
      \coordinate (d1) at ($(2)+(1,1.3)$);
      \coordinate (d2) at ($(2)+(1,-1.3)$);
      \draw(1) to node[above]{$c$} (2);
      \draw(2) to [out=60, in=180] (d1) to node[syntax,oshellr,pos=0.3]{$f$} ++(3,0) node[above]{$d$};
      \draw(2) to [out=-60, in=180] (d2) to node[syntax,oshellr,pos=0.3]{$f$} ++(3,0) node[below]{$d$};
    \end{tikzpicture}
  \end{equation}
\end{remark}

As follows from the mate calculus, or indeed through pleasing graphical manipulations, we may re-characterise prerelational po-categories in terms of the induced supply of commutative monoids instead.

\begin{lemma} \label{lemma.lax_comons_oplax_mons}
  Suppose $\cc$ supplies wiring $\ww$. Then the induced supply of comonoids is lax homomorphic iff the supply of monoids is oplax homomorphic.
\end{lemma}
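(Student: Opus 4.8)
The plan is to recognise this as an instance of the mate calculus. First I would unwind both sides into conditions on the generating morphisms. Since the comonoid sub-prop is generated by $\epsilon$ and $\delta$ and the monoid sub-prop by $\eta$ and $\mu$ (\cref{def.comon_mon_selfduals}), and since the defining $2$-cells of an (op)lax $s$-homomorphism (\cref{def.supply}) are closed under the pastings implementing composition and tensor in the prop, it suffices to check the homomorphism condition on these generators. Concretely, lax homomorphicity of the comonoid supply asserts, for every $f\colon c\to d$, the two inequalities $f\cp\epsilon_d\leq\epsilon_c$ and $f\cp\delta_d\leq\delta_c\cp(f\otimes f)$ (as spelled out in \cref{rem.lax_hom_prerel}), whereas oplax homomorphicity of the monoid supply asserts $\eta_c\cp f\leq\eta_d$ and $\mu_c\cp f\leq(f\otimes f)\cp\mu_d$. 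The claim is thus the conjunction of two equivalences, one relating the $\epsilon$- and $\eta$-conditions and one relating the $\delta$- and $\mu$-conditions.

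The crucial input is that a supply of $\ww$ transports the adjunctions of \cref{prop.left_adj_in_ww} to every object: applying the strong monoidal po-functor $s_c\colon\ww\to\cc$ to $\epsilon\dashv\eta$ and $\delta\dashv\mu$ (po-functors preserve adjunctions) yields adjunctions $\epsilon_c\dashv\eta_c$ and $\delta_c\dashv\mu_c$ in $\cc$, with units $\id_c\leq\epsilon_c\cp\eta_c$ and $\id_c\leq\delta_c\cp\mu_c$ and counits $\eta_c\cp\epsilon_c\leq\id_I$ and $\mu_c\cp\delta_c\leq\id_{c\otimes c}$, all images of the inequalities displayed in \eqref{eqn.equations_wires}.

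For the first equivalence I would take mates. Assuming $f\cp\epsilon_d\leq\epsilon_c$, I would compute $\eta_c\cp f\leq\eta_c\cp f\cp(\epsilon_d\cp\eta_d)\leq\eta_c\cp\epsilon_c\cp\eta_d\leq\eta_d$, inserting the unit of $\epsilon_d\dashv\eta_d$, applying the hypothesis under monotonicity of composition, and then the counit of $\epsilon_c\dashv\eta_c$. The reverse implication is the symmetric computation $f\cp\epsilon_d\leq(\epsilon_c\cp\eta_c)\cp f\cp\epsilon_d\leq\epsilon_c\cp\eta_d\cp\epsilon_d\leq\epsilon_c$, now inserting the unit of $\epsilon_c\dashv\eta_c$ and applying the counit of $\epsilon_d\dashv\eta_d$. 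The second equivalence is formally identical with $(\epsilon,\eta)$ replaced by $(\delta,\mu)$ and $I$ by the relevant tensor object: from $f\cp\delta_d\leq\delta_c\cp(f\otimes f)$ one obtains $\mu_c\cp f\leq\mu_c\cp f\cp(\delta_d\cp\mu_d)\leq\mu_c\cp\delta_c\cp(f\otimes f)\cp\mu_d\leq(f\otimes f)\cp\mu_d$, and conversely by the mirrored chain using the unit of $\delta_c\dashv\mu_c$ and the counit of $\delta_d\dashv\mu_d$.

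There is no real obstacle here beyond bookkeeping: the only care required is to keep straight the directions of the units and counits, and to note that for $\delta_c\dashv\mu_c$ the unit is in fact the equality $\delta_c\cp\mu_c=\id_c$ coming from the special law, which is harmless since only the inequality $\id_c\leq\delta_c\cp\mu_c$ is used. I would finally remark that exactly the same mate manipulations can be read off diagrammatically via the cup/cap yanking identities \eqref{eqn.yanking}, which is the ``pleasing graphical manipulations'' alternative alluded to before the statement.
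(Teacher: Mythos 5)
Your proposal is correct and takes essentially the same route as the paper: the paper's proof is exactly the adjunction-transposition (mate) argument, citing the general facts $a \le (l \cp b) \Leftrightarrow (r \cp a) \le b$ and $(a \cp l) \le b \Leftrightarrow a \le (b \cp r)$ together with the adjunctions $\epsilon\dashv\eta$ and $\delta\dashv\mu$ of \cref{prop.left_adj_in_ww}, transported to each object by the supply. Your explicit unit/counit chains and the reduction to the generators $\epsilon,\delta,\eta,\mu$ merely spell out what the paper's two-line proof leaves implicit.
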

\begin{proof}
  Note that given $l$ left adjoint to $r$, we always have that $a \le (l \cp b)$ iff $(r \cp a) \le b$ and that $(a \cp l)\le b$ iff $a \le (b \cp r)$. The result now follows from \cref{prop.left_adj_in_ww}.
\end{proof}

\begin{example}\label{ex.ww_prerelational}
  The po-category $\ww$ is prerelational. It supplies $\ww$ by \cref{prop.p_supplies_itself}. To see that the supply of comonoids is lax homomorphic, take a cospan $m\To{f}n\From{g}p$. In the case of the diagonal, the necessary composites are computed as pushouts (e.g.\ $n +_m n$), and the inequality from \eqref{eqn.lax_hom_prerel} is indicated by the dotted arrow, in the diagram
  \[
    \begin{tikzcd}[row sep=3pt]
      &&
      p\ar[dl, "g"']\ar[dr, equal]\\&
      n\ar[r, equal]&
      n&
      p\ar[l, "g"]\\
      m\ar[ur, "f"]\ar[dr, equal]&&&&
      p+p\ar[ul, "\copair{p,p}"']\ar[dl, "g+g"]\\&
      m\ar[r]&
      n+_m n\ar[uu, dashed]&
      n+n\ar[l]\\&&
      m+m\ar[ul, "\copair{m,m}"]\ar[ur, "f+f"']
    \end{tikzcd}
  \]
  The case of co-units is similar.
\end{example}

\begin{example}
  The po-category of finite sets and corelations is also prerelational; see \cref{ex.corel}. More generally, given any category $\cat{C}$ with finite limits, the poset reflection of $\sspan(\cat{C})$ is prerelational. As a proof sketch, by Fox's theorem \cite{fox1976coalgebras}, any category $\cat C$ with finite limits homomorphically supplies $\finset\op$ and with some work this may transferred to a suitable supply of $\ww$ in $\sspan(\cat C)$.
\end{example}

\begin{example}
  If $\cat{C}$ is regular, then $\rrel(\cat{C})$ is prerelational, see \cite{fong2019regular}.
\end{example}

A class of examples for which we will have use later is given by the following lemma.

\begin{lemma}\label{lemma.irritating_prerelational_coprod}
  Let $J$ be a set and $\rr$ a prerelational po-category supplying $\ww$. Then with induced supply of $\ww$ in $\bigsqcup_{J}\rr$ of \cref{lemma.irritating_supply_coprod}, the symmetric monoidal po-category $\bigsqcup_{J}\rr$ is prerelational.
\end{lemma}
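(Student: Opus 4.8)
The plan is to reduce the whole statement, one coordinate at a time, to the assumption that $\rr$ is prerelational. By \cref{lemma.irritating_supply_coprod} the symmetric monoidal po-category $\bigsqcup_{J}\rr$ already carries a supply $\tilde s$ of $\ww$, so by \cref{def.prerel_pocat} the only thing left to establish is that the \emph{induced} supply of cocommutative comonoids is lax homomorphic. Unwinding through \cref{rem.lax_hom_prerel}, this means verifying, for every morphism $f\colon(c_{j})\to(d_{j})$ of $\bigsqcup_{J}\rr$, the two inequalities $f\cp\epsilon_{(d_{j})}\leq\epsilon_{(c_{j})}$ and $f\cp\delta_{(d_{j})}\leq\delta_{(c_{j})}\cp(f\otimes f)$, where $\epsilon_{(c_{j})}=\tilde s_{(c_{j})}(\epsilon)$ and $\delta_{(c_{j})}=\tilde s_{(c_{j})}(\delta)$ are the supplied comonoid maps.

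First I would bring in, for each $i\in J$, the strong symmetric monoidal po-functor $\pi_{i}\colon\bigsqcup_{J}\rr\to\rr$ that already appears in \cref{lemma.irritating_supply_coprod}: it is obtained from the universal property of the \textsf{SMC}-coproduct by copairing the identity on the $i$-th copy of $\rr$ with the constant functor at $I$ on every other copy (the latter being strong monoidal since $I\otimes I\cong I$). The two facts about these projections that drive the argument are: (a) by the point-wise description of the coproduct in \cite[Theorem 2.2 \& Appendix A]{fong2019supplying}, the family $\{\pi_{i}\}_{i\in J}$ is jointly order-reflecting, so an inequality between parallel morphisms of $\bigsqcup_{J}\rr$ holds as soon as it holds after applying each $\pi_{i}$; and (b) \cref{lemma.irritating_supply_coprod} gives $\tilde s_{(c_{j})}\cp\pi_{i}=s_{\pi_{i}(c_{j})}$ (the two clauses there agree because $s_{I}$ sends every morphism of $\ww$ to $\id_{I}$), so $\pi_{i}$ preserves the supply in the sense of \cref{def.preserve_supply} and in particular carries $\epsilon_{(c_{j})},\delta_{(c_{j})}$ to $\epsilon_{\pi_{i}(c_{j})},\delta_{\pi_{i}(c_{j})}$.

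With these in hand the argument closes quickly. Applying the po-functor $\pi_{i}$ to each of the two displayed inequalities, and using that $\pi_{i}$ preserves composition and order and—being strong monoidal and supply-preserving—carries $f\otimes f$ and the comonoid maps to $\pi_{i}(f)\otimes\pi_{i}(f)$ and the comonoid maps of $\rr$ (the strongators matching up exactly as dictated by \cref{def.preserve_supply}), each inequality turns into precisely the lax-homomorphicity inequality in $\rr$ for the morphism $\pi_{i}(f)$. These hold because $\rr$ is prerelational, and then by the joint order-reflection (a) the original inequalities hold in $\bigsqcup_{J}\rr$. Conceptually this is the same ``point-wise'' principle already invoked to prove \cref{lemma.irritating_supply_coprod} itself: composition, tensor, symmetries, and the supply in $\bigsqcup_{J}\rr$ are all computed coordinate-wise, and lax homomorphicity is an inequality built from exactly these ingredients.

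The step I expect to require the most care is (a), the joint order-reflection of the projections, since it rests on the concrete construction of the hom-posets of the \textsf{SMC}-coproduct (finitely supported tuples of morphisms with the point-wise order) rather than on any formal universal-property manipulation; here I would lean squarely on \cite{fong2019supplying}. A minor secondary point is the bookkeeping of the strongators of $\pi_{i}$ when transporting $f\otimes f$ and the comonoid maps, but since $\pi_{i}$ preserves the supply these are the canonical isomorphisms already accounted for in \cref{def.preserve_supply}, and in a po-category they carry no genuine content.
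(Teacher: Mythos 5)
Your proposal is correct and amounts to exactly the argument the paper leaves implicit: its proof of this lemma is literally ``Direct computation,'' meaning the coordinate-wise verification of the two lax-homomorphicity inequalities of \cref{rem.lax_hom_prerel}, which is what you carry out, organized cleanly via the jointly order-reflecting projections $\pi_{i}$ and the compatibility $\tilde s_{(c_{j})}\cp\pi_{i}=s_{c_{i}}$ from \cref{lemma.irritating_supply_coprod}. Your handling of the degenerate $c_{i}=I$ components (where the supply axioms force $s_{I}$ to act by coherence isomorphisms) is the right way to reconcile the two clauses in that lemma's statement, so no gap remains.
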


\begin{proof}
  Direct computation.
\end{proof}

By {\cite[Corollary 6.2]{fong2019regular}}, if $f\colon r\to s$ is a left adjoint in a prerelational po-category $(\rr,I,\otimes)$ then its right adjoint is its transpose $f\tp\colon r\to s$. With this fact we are justified in introducing the following notation.

\begin{notation}[Left adjoints]
  We shall denote \funcrinl{$f$} by \ladjinl{$f$} when $f\colon r\to s$ is known to be a left adjoint. In keeping with \cref{notation.transpose}, we shall denote its right adjoint and transpose $f\tp\colon s\to r$ as \radjinl{$f$}.
\end{notation}

\begin{remark}
  Left adjoints in a prerelational po-category are profitably understood as the true morphisms of the cartesian $1$-category whose structure has been elaborated. As such, we should expect all analogues of cartesian results for left adjoints, such as: two left adjoints $f,g\colon r\to s\otimes t$ are equal iff their ``projections'' under $\epsilon_{s}$ and $\epsilon_{t}$ agree; left adjoints are monoid homomorphisms strictly, for to them the structure is cartesian; a ``natural transformation'' between ``cartesian'' po-functors of prerelational po-categories is automatically ``monoidal'' if its components are left adjoints. Indeed we make the first and last results precise and prove them as \cref{lemma.pres_all_struct_pre,lemma.ladjlaxnat_m} later, while the middle appears as \cite[Corollary 6.2]{fong2019regular}. This serves as additional motivation to distinctly signify left adjoints graphically.
\end{remark}

\begin{definition}
  The \define{$2$-category $\pprlpocat$ of prerelational po-categories} has as objects the prerelational po-categories, as morphisms the strong symmetric monoidal po-functors, and as $2$-morphisms the left adjoint oplax-natural transformations $\alpha\colon F\tto G$; see \cref{def.ladj_lax_nt}.
\end{definition}

Although it is not immediate, the $1$ and $2$-morphisms we have chosen above are indeed appropriate for respecting \emph{all} the structure present in prerelational po-categories. To show this we appeal to a few results of \cite{fong2019regular}.

\begin{lemma}[{\cite[Proposition 6.22]{fong2019regular}}]\label{lemma.pres_all_struct_pre}
  If $\rr$ and $\rr'$ are prerelational po-categories and $F\colon\rr\to\rr'$ is any strong symmetric monoidal po-functor, then $F$ automatically preserves the supply of $\ww$.
\end{lemma}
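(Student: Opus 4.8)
The plan is to verify the defining condition of \cref{def.preserve_supply} — equivalently, the commuting square \cref{eqn.unpack_preserve_supply} — for every morphism of $\ww$, and to reduce this to the four generators $\eta,\epsilon,\mu,\delta$ of \eqref{eqn.generating_wires} together with the symmetry. The reduction is formal: the preservation square for a composite $\mu_1\cp\mu_2$ is obtained by pasting the squares for $\mu_1$ and for $\mu_2$ (the middle strongators cancel), the square for a tensor $\mu_1+\mu_2$ follows from the monoidal coherence of $\varphi$ together with the supply compatibility \eqref{eqn.supply_commute_tensors}, and the square for the symmetry is immediate since $(F,\varphi)$ is symmetric monoidal. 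Thus it suffices to treat $\eta,\epsilon,\mu,\delta$. Throughout I would use three facts: by \cref{prop.left_adj_in_ww} we have $\epsilon\dashv\eta$ and $\delta\dashv\mu$ in $\ww$; since the supply po-functors $s_c\colon\ww\to\rr$ and $t_{Fc}\colon\ww\to\rr'$ are $2$-functors they preserve these adjunctions, yielding $\epsilon_c\dashv\eta_c$, $\delta_c\dashv\mu_c$ in $\rr$ and likewise $\epsilon_{Fc}\dashv\eta_{Fc}$, $\delta_{Fc}\dashv\mu_{Fc}$ in $\rr'$; and, being a $2$-functor, $F$ itself preserves adjunctions, while its strongators $\varphi$ are isomorphisms and so are (both left and right) adjoints.

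I would first dispatch the comonoid generators $\epsilon$ and $\delta$ by a uniqueness argument. Because $(F,\varphi)$ is strong monoidal it carries the cocommutative comonoid $(c,\delta_c,\epsilon_c)$ to a cocommutative comonoid on $Fc$ whose structure maps are the transported maps $F(\delta_c)\cp\varphi_{c,c}^{\text{-}1}$ and $F(\epsilon_c)\cp\varphi^{\text{-}1}$; these are composites of the left adjoint $F(\delta_c)$ (respectively $F(\epsilon_c)$) with strongator isomorphisms, hence are themselves left adjoints. On the other hand, the supplied comonoid $(Fc,\delta_{Fc},\epsilon_{Fc})$ also has left-adjoint structure maps, by the adjunctions recorded above. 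By \cref{ex.wiring_comonoids} the left adjoints in $\rr'$ carry finite products, so $\fladj\rr'$ is cartesian monoidal and \cref{lemma.comonoids_unique} applies: $Fc$ has a unique comonoid structure in $\fladj\rr'$. Since the transported comonoid and the supplied comonoid both live in $\fladj\rr'$, they must coincide — which is exactly the assertion that the preservation square \cref{eqn.unpack_preserve_supply} commutes for $\delta$ and for $\epsilon$.

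Finally I would deduce the monoid generators $\mu$ and $\eta$ by passing to right adjoints. The supplied multiplication $\mu_{Fc}$ is the right adjoint of $\delta_{Fc}$, while the transported multiplication $\varphi_{c,c}\cp F(\mu_c)$ is the right adjoint of the transported comultiplication $F(\delta_c)\cp\varphi_{c,c}^{\text{-}1}$; indeed $F(\delta_c)\dashv F(\mu_c)$ and each strongator isomorphism is adjoint to its inverse, so composing these adjunctions exhibits the required adjunction. As the previous paragraph shows the two comultiplications on $Fc$ are equal, and as a morphism in a po-category has \emph{at most one} right adjoint, the two multiplications agree; running the same argument with $\epsilon$ in place of $\delta$ handles $\eta$. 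This establishes the remaining preservation squares and, via the reduction of the first paragraph, proves the lemma.

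The substantive point — and the only place prerelationality enters — is the comonoid step: one must check that the transported comonoid structure consists of left adjoints, so that the uniqueness of \cref{lemma.comonoids_unique} can be invoked \emph{inside} $\fladj\rr'$. Once that is in hand the monoid step is purely formal, a consequence of the uniqueness of right adjoints in a po-category. I expect the main fiddly part to be the strongator bookkeeping in the reduction to generators, which must be arranged so that the pasted squares genuinely cancel, rather than any genuine conceptual obstacle.
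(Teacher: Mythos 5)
Your proof is correct, but note that the paper itself offers no proof of this lemma to compare against: it is imported wholesale from \cite[Proposition~6.22]{fong2019regular}. Judged on its own terms, your argument is sound and is the standard Carboni--Walters-style argument, which is also the spirit of the cited proof: reduce to the generators of $\ww$; identify the transported and supplied comonoid structures on $Fc$ by uniqueness of comonoid structures in a cartesian monoidal po-category (\cref{lemma.comonoids_unique}); then transfer to the monoid generators by uniqueness of right adjoints in a po-category. All the ingredients you invoke are available in the paper: po-functors are $2$-functors and hence preserve adjunctions, the supplied generators are adjoints by \cref{prop.left_adj_in_ww}, and strongators are isomorphisms.

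Two small points of bookkeeping are worth tightening. First, your appeal to \cref{ex.wiring_comonoids} for the cartesianness of $\fladj\rr'$ hides the substantive input: Fox's theorem (\cref{prop.fox}) needs the supply of comonoids restricted to $\fladj\rr'$ to be \emph{homomorphic}, and that is exactly where prerelationality of $\rr'$ enters, via the fact that left adjoints in a prerelational po-category are strict comonoid homomorphisms (\cite[Corollary~6.2]{fong2019regular}, quoted in the paper's proof of \cref{lemma.prerelational_epsilon_joint_mono}). You correctly flag that prerelationality only enters at the comonoid step, but the chain ``lax homomorphic $+$ adjointness $\Rightarrow$ strict homomorphisms on maps $\Rightarrow$ Fox $\Rightarrow$ cartesian'' should be made explicit. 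Second, in the reduction to generators, the tensor case uses condition (ii) of \cref{def.supply} (the strongators of each $s_c$ are associators) together with naturality and coherence of $\varphi$, rather than \cref{eqn.supply_commute_tensors}, which governs tensoring the supplying \emph{objects} $c\otimes d$ and is not what is needed there. Neither point affects the validity of the proof.
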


\begin{lemma}\label{lemma.prerelational_epsilon_joint_mono}
  If $\rr$ is prerelational, and $f,g\colon r\to s\otimes s'$ are two left adjoints, then $f=g$ iff
  \begin{center}
    \begin{tikzpicture}[inner WD,baseline=(f.base)]
      \node(f)[syntax,funcr]{$f$};
      \draw (f.west) -- +(-1,0);
      \draw ($(f.east)+(0,0.3)$) -- +(1,0) node[link]{};
      \draw ($(f.east)-(0,0.3)$) -- +(1,0);
      \node(g)[right=2cm of f,syntax,funcr]{$g$};
      \draw (g.west) -- +(-1,0);
      \draw ($(g.east)+(0,0.3)$) -- +(1,0) node[link]{};
      \draw ($(g.east)-(0,0.3)$) -- +(1,0);
      \node at ($(f)!0.5!(g)$) {$=$};
    \end{tikzpicture}\hspace{1cm} and \hspace{1cm}
    \begin{tikzpicture}[inner WD,baseline=(f.base)]
      \node(f)[syntax,funcr]{$f$};
      \draw (f.west) -- +(-1,0);
      \draw ($(f.east)+(0,0.3)$) -- +(1,0);
      \draw ($(f.east)-(0,0.3)$) -- +(1,0) node[link]{};
      \node(g)[right=2cm of f,syntax,funcr]{$g$};
      \draw (g.west) -- +(-1,0);
      \draw ($(g.east)+(0,0.3)$) -- +(1,0);
      \draw ($(g.east)-(0,0.3)$) -- +(1,0) node[link]{};
      \node at ($(f)!0.5!(g)$) {$=$};
    \end{tikzpicture}\quad .
  \end{center}
\end{lemma}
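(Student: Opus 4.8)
The forward implication is immediate: if $f=g$ then post-composing with $\epsilon_s\otimes\id_{s'}$ and with $\id_s\otimes\epsilon_{s'}$ yields the two displayed equalities. The content is the converse, and the plan is to show that a left adjoint into a tensor product is reconstructible from its two ``projections''. Writing $f_1\coloneqq f\cp(\id_s\otimes\epsilon_{s'})\colon r\to s$ and $f_2\coloneqq f\cp(\epsilon_s\otimes\id_{s'})\colon r\to s'$ for the two legs appearing in the displayed diagrams (and similarly $g_1,g_2$ for $g$), I would establish the single reconstruction identity
\[
  f \;=\; \delta_r\cp(f_1\otimes f_2),
\]
and likewise $g=\delta_r\cp(g_1\otimes g_2)$. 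The two hypotheses are exactly $f_1=g_1$ and $f_2=g_2$, so these reconstructions immediately force $f=g$. Everything therefore reduces to the reconstruction identity.

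The reconstruction rests on the fact that a left adjoint $f\colon c\to d$ in a prerelational po-category is a strict comonoid homomorphism, i.e.\ $\delta_c\cp(f\otimes f)=f\cp\delta_d$ (the counit-compatibility $f\cp\epsilon_d=\epsilon_c$ will not even be needed). This is the ``middle'' fact flagged in the remark preceding the lemma and available as \cite[Corollary 6.2]{fong2019regular}; for self-containedness I would re-derive the inequality I need by mates. The definition of prerelational po-category (\cref{def.prerel_pocat,rem.lax_hom_prerel}) supplies the lax-homomorphicity inequality $h\cp\delta_d\le\delta_c\cp(h\otimes h)$ for \emph{every} morphism $h$, which I apply to the right adjoint $f\tp\colon d\to c$ (the right adjoint being the transpose, again by \cite[Corollary 6.2]{fong2019regular}). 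Using the unit $\id_c\le f\cp f\tp$ and counit $f\tp\cp f\le\id_d$ of $f\dashv f\tp$, I would sandwich
\[
  \delta_c\cp(f\otimes f)\;\le\; f\cp\bigl(f\tp\cp\delta_c\cp(f\otimes f)\bigr)\;\le\; f\cp\delta_d\cp\bigl((f\tp\cp f)\otimes(f\tp\cp f)\bigr)\;\le\; f\cp\delta_d,
\]
where the first step uses the unit, the second uses lax-homomorphicity of $f\tp$ together with $(f\tp\otimes f\tp)\cp(f\otimes f)=(f\tp\cp f)\otimes(f\tp\cp f)$, and the third uses the counit. Combined with the lax inequality $f\cp\delta_d\le\delta_c\cp(f\otimes f)$, this gives the desired equality.

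With strict comonoid homomorphy in hand, the reconstruction is a direct computation. Expanding
\[
  \delta_r\cp(f_1\otimes f_2)=\delta_r\cp(f\otimes f)\cp\bigl((\id_s\otimes\epsilon_{s'})\otimes(\epsilon_s\otimes\id_{s'})\bigr)
\]
and pushing the diagonal through $f$ yields $f\cp\delta_{s\otimes s'}\cp\bigl((\id_s\otimes\epsilon_{s'})\otimes(\epsilon_s\otimes\id_{s'})\bigr)$. I would then use the supply compatibility \eqref{eqn.supply_commute_tensors} with $\mu=\delta$ to write $\delta_{s\otimes s'}=(\delta_s\otimes\delta_{s'})\cp\sigma$ for the symmetry $\sigma$ reordering $s\otimes s\otimes s'\otimes s'$ into $(s\otimes s')\otimes(s\otimes s')$, after which the comonoid counit laws $\delta_s\cp(\id_s\otimes\epsilon_s)=\id_s$ and $\delta_{s'}\cp(\epsilon_{s'}\otimes\id_{s'})=\id_{s'}$ collapse the whole tail to $\id_{s\otimes s'}$, leaving $f$. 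I expect the one genuinely fiddly point to be purely bookkeeping: tracking the symmetry $\sigma$ so that the two counits land on the correct tensor factors, which is precisely where a string-diagram argument is far cleaner than the symbolic one, and I would present this step graphically.
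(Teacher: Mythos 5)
Your proof is correct and takes essentially the same approach as the paper: both hinge on the reconstruction identity $f=\delta_r\cp(f_1\otimes f_2)$, which the paper renders as the single string-diagram chain $f=\delta_r\cp(f_1\otimes f_2)=\delta_r\cp(g_1\otimes g_2)=g$, established from the fact that left adjoints are \emph{strict} comonoid homomorphisms together with counitality and the supply compatibility for $\delta_{s\otimes s'}$. The only divergence is that the paper simply cites \cite[Corollary 6.2]{fong2019regular} for the strictness of the comonoid-homomorphism inequalities, whereas you re-derive it via the unit/counit sandwich applied to the lax inequality for $f\tp$ --- a correct, slightly more self-contained rendering of the same step.
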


\begin{proof}
  Certainly $f=g$ implies the given condition. For the converse recall that by \cite[Corollary 6.2]{fong2019regular} left adjoints are comonoid homomorphisms -- the inequalities of \eqref{eqn.lax_hom_prerel} are equalities --, so that we may argue as follows.
  \[
    \begin{tikzpicture}[inner WD]
      \node(f)[syntax,funcr]{$f$};
      \draw (f.west) -- +(-1,0);
      \draw ($(f.east)+(0,0.3)$) -- +(1,0);
      \draw ($(f.east)-(0,0.3)$) -- +(1,0);

      \node(f2)[syntax,funcr,right=1.5cm of f]{$f$};
      \draw (f2.west) -- +(-1,0);
      \draw($(f2.east)+(0,0.3)$) to[out=45,in=180] +(1,1) -- +(1,0) node[link](upper){};
      \draw($(f2.east)-(0,0.3)$) to[out=-45,in=180] +(1,-1) -- +(1,0) node[link](lower){};
      \draw(upper) to[out=45,in=180] +(1,1) -- +(3,0);
      \draw(upper) to[out=-45,in=180] ++(1,-1) -- ++(0.5,0) to[out=0,in=180] ++(2,-1) -- ++(0.5,0) node[link]{};
      \draw(lower) to[out=45,in=180] ++(1,1) -- ++(0.5,0) to[out=0,in=180] ++(2,1) -- ++(0.5,0) node[link]{};
      \draw(lower) to[out=-45,in=180] +(1,-1) -- +(3,0);
      \node[font=\normalsize] at ($(f)!0.5!(f2)$) {$=$};

      \node(dot)[link,right=2.75cm of f2]{};
      \draw (dot.west) -- +(-1,0);
      \draw(dot) to[out=45,in=180] +(1,1) -- +(1,0) node(upf)[syntax,funcr]{$f$};
      \draw(dot) to[out=-45,in=180] +(1,-1) -- +(1,0) node(dnf)[syntax,funcr]{$f$};
      \draw($(upf.east)+(0,0.3)$) -- +(1,0);
      \draw($(upf.east)-(0,0.3)$) -- +(1,0) node[link]{};
      \draw($(dnf.east)+(0,0.3)$) -- +(1,0) node[link]{};
      \draw($(dnf.east)-(0,0.3)$) -- +(1,0);
      \node[font=\normalsize] at ($(f2)!0.75!(dot)$) {$=$};

      \node(dot2)[link,right=2cm of dot]{};
      \draw (dot2.west) -- +(-1,0);
      \draw(dot2) to[out=45,in=180] +(1,1) -- +(1,0) node(upg)[syntax,funcr]{$g$};
      \draw(dot2) to[out=-45,in=180] +(1,-1) -- +(1,0) node(dng)[syntax,funcr]{$g$};
      \draw($(upg.east)+(0,0.3)$) -- +(1,0);
      \draw($(upg.east)-(0,0.3)$) -- +(1,0) node[link]{};
      \draw($(dng.east)+(0,0.3)$) -- +(1,0) node[link]{};
      \draw($(dng.east)-(0,0.3)$) -- +(1,0);
      \node[font=\normalsize] at ($(dot)!0.7!(dot2)$) {$=$};

      \node(g)[funcr,syntax,right=2.25cm of dot2]{$g$};
      \draw (g.west) -- +(-1,0);
      \draw ($(g.east)+(0,0.3)$) -- +(1,0);
      \draw ($(g.east)-(0,0.3)$) -- +(1,0);
      \node[font=\normalsize] at ($(dot2)!0.6!(g)$) {$=$};
    \end{tikzpicture}
    \qedhere
  \]
\end{proof}

\begin{lemma}\label{lemma.ladjlaxnat_m}
  Let $(F,\varphi),(G,\psi)\colon\rr\to\rr'$ be strong symmetric monoidal po-functors between prerelational po-categories, and let $\alpha\colon F\tto G$ be a left adjoint oplax-natural transformation. Then $\alpha$ is a monoidal oplax-natural transformation (\cref{def.symm_mon_po}).
\end{lemma}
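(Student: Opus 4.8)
The plan is to exploit the observation, stressed in the remark preceding the statement, that the components of $\alpha$ are left adjoints and therefore behave like the ``cartesian'' morphisms of $\rr'$; between such morphisms the monoidal-naturality equations are forced. Concretely, writing $\varphi,\psi$ for the strongators of $F,G$, I must verify the unit law $\varphi\cp\alpha_I=\psi$ as maps $I\to GI$ and the multiplicativity law $\varphi_{c,c'}\cp\alpha_{c\otimes c'}=(\alpha_c\otimes'\alpha_{c'})\cp\psi_{c,c'}$ as maps $Fc\otimes'Fc'\to G(c\otimes c')$. By \cref{lemma.adj_in_pocat} every component $\alpha_c\colon Fc\to Gc$ is a left adjoint; since isomorphisms are left adjoints, since the po-functor $\otimes'$ preserves adjunctions, and since left adjoints compose, both sides of each equation are left adjoints.

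The crux is a strictness claim: \emph{if $f\colon c\to c'$ is a left adjoint in $\rr$, then the oplax-naturality inequality $F(f)\cp\alpha_{c'}\le\alpha_c\cp G(f)$ is in fact an equality.} To see this, write $f\tp$ for the right adjoint of $f$; as po-functors preserve adjunctions we have $F(f)\dashv F(f\tp)$ and $G(f)\dashv G(f\tp)$. Applying oplax-naturality of $\alpha$ at the morphism $f\tp\colon c'\to c$ gives $F(f\tp)\cp\alpha_c\le\alpha_{c'}\cp G(f\tp)$. Taking mates with the rules recorded in the proof of \cref{lemma.lax_comons_oplax_mons} (for $l\dashv r$, $a\cp l\le b\iff a\le b\cp r$ and $a\le l\cp b\iff r\cp a\le b$), this inequality transposes first along $G(f)\dashv G(f\tp)$ and then along $F(f)\dashv F(f\tp)$ to yield $\alpha_c\cp G(f)\le F(f)\cp\alpha_{c'}$, the reverse of oplax-naturality. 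Hence equality holds. This mate manipulation is the conceptual heart of the argument and the place where the left-adjoint hypothesis is genuinely used.

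For the unit law, both $\varphi\cp\alpha_I$ and $\psi$ are left adjoints $I\to GI$; since $GI\cong I$ via $\psi$ and $I$ is terminal among the left adjoints of $\rr'$ (finite products exist among left adjoints by \cref{ex.wiring_comonoids}), there is a unique such left adjoint, so $\varphi\cp\alpha_I=\psi$. For multiplicativity I would transport along the iso $\psi_{c,c'}$ and reduce to showing $\varphi_{c,c'}\cp\alpha_{c\otimes c'}\cp\psi_{c,c'}\inv=\alpha_c\otimes'\alpha_{c'}$ as left adjoints $Fc\otimes'Fc'\to Gc\otimes'Gc'$. By \cref{lemma.prerelational_epsilon_joint_mono} it then suffices to check agreement after composing with each of the two projections. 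Consider $q\coloneqq\epsilon_c\otimes\id_{c'}\colon c\otimes c'\to c'$, a left adjoint since the supplied $\epsilon_c$ is one (by \cref{prop.left_adj_in_ww} and \cref{cor.change_of_supply}). As $F$ and $G$ preserve the supply (\cref{lemma.pres_all_struct_pre}) they carry $q$ to the evident projections of $Fc\otimes'Fc'$ and $Gc\otimes'Gc'$ modulo strongators and unitors; and as $\alpha_c$ is a comonoid homomorphism, so that the inequalities of \eqref{eqn.lax_hom_prerel} are equalities by \cite[Corollary 6.2]{fong2019regular}, one has $\alpha_c\cp\epsilon_{Gc}=\epsilon_{Fc}$. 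A short unwinding then shows the $Gc'$-projection of the right-hand side is $\varphi_{c,c'}\cp F(q)\cp\alpha_{c'}$ and that of the left-hand side is $\varphi_{c,c'}\cp\alpha_{c\otimes c'}\cp G(q)$, which agree by the strictness claim applied to $q$; the symmetric computation handles the other projection. The only real obstacle beyond the mate argument is the routine bookkeeping of strongators and unitors when pushing the projections through $F$ and $G$, which I would not grind through in detail.
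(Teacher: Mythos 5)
Your proposal is correct, and its skeleton coincides with the paper's: the multiplication law is transported along the invertible strongator $\psi_{c,c'}$ and reduced, via \cref{lemma.prerelational_epsilon_joint_mono}, to checking agreement against the two projections, where both sides collapse to $\pi'_{Fc'}\cp\alpha_{c'}$ (resp.\ $\pi'_{Fc}\cp\alpha_{c}$) using supply preservation (\cref{lemma.pres_all_struct_pre}), the fact that the left adjoint $\alpha_c$ is a strict comonoid homomorphism, and strictness of the oplax-naturality square of $\alpha$ at the left adjoint projection morphism; like the paper, you decline to grind through the strongator/unitor bookkeeping. The differences lie in the auxiliary inputs. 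Where the paper twice invokes \cite[Proposition 6.5]{fong2019regular} --- once to reduce an equality of parallel left adjoints to a single inequality, and once for the strictness of oplax-naturality squares at left adjoints --- you re-derive the strictness claim yourself by a mate argument: apply oplax-naturality at $f\tp$ and transpose along $G(f)\dashv G(f\tp)$ and then $F(f)\dashv F(f\tp)$ to obtain the reverse inequality. This argument is correct and makes the proof more self-contained. For the unit law, the paper establishes the inequality $\alpha_I\cp\psi_I\inv\leq\varphi_I\inv$ from lax homomorphicity of $\alpha_I$ and again appeals to discreteness of the order on left adjoints, whereas you obtain equality outright from uniqueness of left adjoints $I\to GI$, using that $I$ is terminal among left adjoints by Fox's theorem (\cref{ex.wiring_comonoids}, with left adjoints being genuine comonoid homomorphisms so that the supply restricts homomorphically to $\fladj\rr'$). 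Both routes are valid; yours trades the external citation for the cartesian structure on left adjoints, at the modest cost of having to justify that restriction.
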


\begin{proof}
  We must show that the diagrams
  \begin{center}
    \begin{tikzpicture}[node distance=1.25cm,baseline=(phi.base)]
      \node(1)[]{$I'$};
      \node(2)[below of= 1]{$FI$};
      \node(3)[right of= 2]{$GI$};
      \draw[a](1)to node(phi)[la,left]{$\varphi_{I}$}(2);
      \draw[a](1)to node[la,auto,squeeze]{$\psi_{I}$}(3);
      \draw[a](2)to node[la,below]{$\alpha_{I}$}(3);
    \end{tikzpicture}
    \hspace{1cm} and \hspace{1cm}
    \begin{tikzpicture}[node distance=1.25cm,baseline=(phi.base)]
      \node(1)[]{$Fr\otimes'Fs$};
      \node(2)[right=1.5cm of 1]{$Gr\otimes'Gs$};
      \node(3)[below of=1]{$F(r\otimes s)$};
      \node(4)[below of=2]{$G(r\otimes s)$};
      \draw[a](1)to node[la,above]{$\alpha_{r}\otimes\alpha_{s}$}(2);
      \draw[a](3)to node[la,below]{$\alpha_{r\otimes s}$}(4);
      \draw[a](1)to node(phi)[la,left]{$\varphi_{r,s}$}(3);
      \draw[a](2)to node[la,right]{$\psi_{r,s}$}(4);
    \end{tikzpicture}
  \end{center}
  are strictly commutative for all objects $r,s\in\ob\rr$. Observe that all the strongators are isomorphisms and so are, in particular, left adjoints. Thus these are diagrams of left adjoints and by \cite[Proposition 6.5]{fong2019regular} it suffices to show in the first case that $\varphi_{I}\cp \alpha_{I}\leq \psi_{I}$, equivalently $\alpha_{I}\cp\psi_{I}\inv\leq\varphi_{I}\inv$. In the second case we will use \cref{lemma.prerelational_epsilon_joint_mono} to show $\varphi_{r,s}\cp\alpha_{r\otimes s}\cp \psi\inv=\alpha_{r}\otimes \alpha_{s}$.

  In the first case, by \cref{lemma.pres_all_struct_pre} above, we have the equalities $\varphi_{I}\inv=\epsilon'_{FI}\colon FI\to I$ and $\psi_{I}\inv=\epsilon'_{GI}\colon GI\to I$ as both $(F,\varphi)$ and $(G,\psi)$ preserve the supply of $\ww$. As $\rr'$ is prerelational, $\alpha_{I}\colon FI\to GI$ must be lax homomorphic and so we have \[\alpha_{I}\cp\psi_{I}\inv=\alpha_{I}\cp\epsilon'_{GI}\leq\epsilon'_{FI}=\varphi_{I}\inv\ ,\] as desired.

  To show the equality of the left adjoints $\varphi_{r,s}\cp\alpha_{r\otimes s}\cp \psi\inv$ and $\alpha_{r}\otimes \alpha_{s}$ let us begin by writing $\pi'_{Gr}\coloneqq(\epsilon'_{Gr}\otimes\id_{Gs})\cp \lambda'_{Gs}\colon Gr\otimes'Gs\to Gs$ and likewise for $\pi'_{Gs}$,$\pi'_{Fr}$, and $\pi'_{Fs}$. \Cref{lemma.prerelational_epsilon_joint_mono} shows that it is enough to prove that
  \[
    \varphi_{r,s}\cp\alpha_{r\otimes s}\cp \psi\inv\cp\pi'_{Gr} =\alpha_{r}\otimes \alpha_{s}\cp \pi'_{Gr}\qand
    \varphi_{r,s}\cp\alpha_{r\otimes s}\cp \psi\inv\cp\pi'_{Gs} =\alpha_{r}\otimes \alpha_{s}\cp \pi'_{Gs}\ .
  \]
  The proofs that these equalities hold involve large diagrams which make use of the commutativity of the triangle established above, \cite[Proposition 6.5]{fong2019regular} to see that the oplax-naturality squares for $\alpha$ on left adjoints are actually strict, and \Cref{lemma.pres_all_struct_pre} for equalities like $G(\epsilon_{r})\cp\psi_{I}\inv=\epsilon'_{Gr}$. These diagrams are not especially illuminating and so we have not included them here. Nevertheless, with the aforementioned techniques, it is possible to show that both terms of the above-left claimed equality reduce to $\pi'_{Fs}\cp\alpha_{s}$, while for the above-right claimed equality the reduced form is $\pi'_{Fr}\cp\alpha_{r}$.
\end{proof}

\section{The \texorpdfstring{$2$}{2}-category \texorpdfstring{$\mathcal{R}\Cat{lPoCat}$}{RlPoCat} of relational po-categories}\label{sec.relational}

To make a prerelational po-category relational, we need tabulations. The following definition is due to Freyd and Scedrov \cite{freyd1990categories}.

\begin{definition}[Tabulation] \label{def.tabulation}
  Suppose $\cc$ supplies $\ww$ and let $f\colon r \to s$ be a morphism in $\cc$. A \define{tabulation $(f_R,f_L)$ of $f$} is a factorisation $r\xrightarrow{f_R}\tab f\xrightarrow{f_L}s$ of $f$ where
  \begin{enumerate}[label=(\roman*)]
  \item $f_R\colon r\to \tab f$ is a right adjoint in $\cc$;
  \item $f_L\colon \tab f\to s$ is a left adjoint in $\cc$; and
  \item $\hat{f}\cp\hat{f}\tp=\id_{\tab f}$, where $\hat{f}\coloneqq\delta_{\tab f}\cp(f_L\otimes f_R\tp)$; in pictures
    \begin{equation}\label{eqn.tabulation}
      \begin{tikzpicture}[baseline=(P1)]
        \node (P1) {
          \begin{tikzpicture}[inner WD, baseline=(dot1)]
            \node[funcr, syntax, minimum size=3ex] (fl) {$f_L$};
            \node[funcl, syntax, right=of fl, minimum size=3ex] (fl*) {$f_L$};
            \node[funcr, syntax, below=of fl, minimum size=3ex] (fr!) {$f_R\tp$};
            \node[funcl, syntax, minimum size=3ex] at (fl*|-fr!) (fr) {$f_R\tp$};
            \draw (fl.east) -- (fl*.west);
            \draw (fr!.east) -- (fr.west);
            \node[link] at ($(fl.west)!.5!(fr!.west)+(-1,0)$) (dot1) {};
            \node[link] at ($(fl*.east)!.5!(fr.east)+(1,0)$) (dot2) {};
            \draw (fl.west) to[out=180, in=60] (dot1);
            \draw (fr!.west) to[out=180, in=-60] (dot1);
            \draw (dot1) to node[above, font=\scriptsize] {$\tab f$} +(-2,0);
            \draw (fl*.east) to[out=0, in=120] (dot2);
            \draw (fr.east) to[out=0, in=-120] (dot2);
            \draw (dot2) to node[above, font=\scriptsize] {$\tab f$} +(2,0);
          \end{tikzpicture}
        };
        \node at ($(P1.east)+(.5,0)$) {$=$};
        \draw ($(P1.east)+(1,0)$) to node[above, font=\scriptsize] {$\tab f$} +(1,0);
      \end{tikzpicture}
    \end{equation}
  \end{enumerate}
\end{definition}

\begin{definition}[Relational po-category]\label{def.rl2cat}
  A \define{relational po-category} is a prerelational po-category $\rr$ in which additionally every morphism has a a chosen tabulation.

  The \define{$2$-category $\rrlpocat$ of relational po-categories} is the $2$-full sub-$2$-category $\pprlpocat$ whose objects are relational po-categories. That is, it has as objects relational po-categories, as morphisms strong symmetric monoidal po-functors, and as $2$-morphisms the left adjoint oplax-natural transformations.
\end{definition}

By \cref{lemma.pres_all_struct_pre} we thus know that strong symmetric monoidal po-functors between relational po-categories preserve the supply, but in fact more is true.

\begin{lemma}[{\cite[Proposition 6.22]{fong2019regular}}]\label{lemma.pres_all_struct}
  If $\rr$ and $\rr'$ are relational po-categories and $F\colon\rr\to\rr'$ is any strong symmetric monoidal po-functor, then $F$ automatically preserves the supply of $\ww$ and tabulations.
\end{lemma}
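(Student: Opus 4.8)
The first assertion, that $F$ preserves the supply of $\ww$, is immediate: a relational po-category is in particular prerelational, so this is exactly \cref{lemma.pres_all_struct_pre}. It therefore remains only to show that $F$ preserves tabulations. The plan is to show that $F$ carries the chosen tabulation $(f_R,f_L)$ of any $f\colon r\to s$, with tabulation object $\tab f$, to a tabulation $(Ff_R,Ff_L)$ of $Ff$ with tabulation object $F\tab f$. Since tabulations are unique up to canonical isomorphism, this factorisation will then agree, up to isomorphism, with the chosen tabulation of $Ff$ in $\rr'$, which is precisely what preservation of tabulations asks.

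To produce such a tabulation I would verify the three conditions of \cref{def.tabulation} for the factorisation $Fr\xrightarrow{Ff_R}F\tab f\xrightarrow{Ff_L}Fs$. Conditions (i) and (ii) require that $Ff_R$ be a right adjoint and $Ff_L$ a left adjoint in $\rr'$. These hold because $F$, being a po-functor, is in particular a $2$-functor, and $2$-functors send adjunctions to adjunctions (recall the functoriality of $\fladj$); hence $Ff_R$ and $Ff_L$ inherit the one-sided adjointness of $f_R$ and $f_L$.

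For condition (iii) I must check the equation $\widehat{Ff}\cp\widehat{Ff}\tp=\id_{F\tab f}$, where $\widehat{Ff}\coloneqq\delta_{F\tab f}\cp(Ff_L\otimes Ff_R\tp)$ is formed using the structures of $\rr'$. This is where the first part of the lemma does the work: because $F$ preserves the supply of $\ww$, the square \cref{eqn.unpack_preserve_supply} commutes, so $F$ commutes with $\delta$, and — since the transpose is assembled from the supplied $\thecup$ and $\thecap$ — with the transpose as well, all modulo the strongators $\varphi$. Consequently $\widehat{Ff}$ coincides, up to strongators, with $F(\hat f)$ where $\hat f=\delta_{\tab f}\cp(f_L\otimes f_R\tp)$, and likewise $\widehat{Ff}\tp$ coincides with $F(\hat f\tp)$. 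Applying $F$ to the tabulation equation $\hat f\cp\hat f\tp=\id_{\tab f}$ of \cref{eqn.tabulation} and cancelling the strongator isomorphisms then yields $\widehat{Ff}\cp\widehat{Ff}\tp=\id_{F\tab f}$, as required.

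The only genuine work is the strongator bookkeeping in this last step: one must track how the isomorphisms $\varphi$ thread through the tensor $f_L\otimes f_R\tp$, the comultiplication $\delta$, and the cup and cap used to build the transpose, and confirm that they cancel against the strongators appearing in $\widehat{Ff}\tp$. The commuting square \cref{eqn.unpack_preserve_supply} guaranteed by preservation of supply provides exactly the identities needed, so while the computation is notation-heavy I expect no conceptual obstacle beyond this accounting.
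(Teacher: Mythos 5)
The paper offers no proof of this lemma at all: like \cref{lemma.pres_all_struct_pre}, it is stated purely as a citation to \cite[Proposition~6.22]{fong2019regular}, so your self-contained derivation is necessarily a different route, and it is a correct one. Supply preservation does reduce to \cref{lemma.pres_all_struct_pre}, since relational po-categories are prerelational; conditions (i) and (ii) of \cref{def.tabulation} do transfer, because po-functors are $2$-functors and $2$-functors preserve adjunctions; and condition (iii) does follow by applying $F$ to \cref{eqn.tabulation} and commuting $F$ past the supplied structure via \cref{eqn.unpack_preserve_supply}. What the citation buys the paper is brevity; what your argument buys is the observation that the lemma already follows from machinery internal to the paper. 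One step of your ``bookkeeping'' deserves to be made explicit, because it is the single place where more than monoidal coherence is needed: cancelling the strongators in $\widehat{Ff}\cp\widehat{Ff}\tp$ requires $\varphi_{s,r}\tp=\varphi_{s,r}\inv$, and transposition does \emph{not} invert arbitrary morphisms. It does invert isomorphisms here: an isomorphism is a left adjoint, in a prerelational po-category the right adjoint of a left adjoint is its transpose (the \cite[Corollary~6.2]{fong2019regular} fact quoted in \cref{sec.prerel}), and right adjoints in a po-category are unique, so $\varphi\tp=\varphi\inv$. With that supplied, one gets $\widehat{Ff}\cp\widehat{Ff}\tp=F(\hat f)\cp\varphi\inv\cp\varphi\cp F(\hat f\tp)=F(\hat f\cp\hat f\tp)=\id_{F\tab f}$, so the image factorisation is a tabulation of $Ff$; your appeal to uniqueness of tabulations up to canonical isomorphism (standard, though nowhere stated in this paper) then identifies it with the chosen tabulation of $Ff$.
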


\begin{remark}
  A prerelational po-category is exactly what Carboni and Walters called a \emph{`bicategory of relations'} (quotation marks are an explicit part of the their terminology), and a relational po-category is exactly what they called a \emph{functionally complete `bicategory of relations'}. There are a few, ultimately immaterial differences in the definition; see \cite[Section 8.1]{fong2019regular} for details.
\end{remark}

\begin{example}\label{ex.ww_not_relational}
  As seen in \cref{ex.ww_prerelational}, the po-category $\ww$ is prerelational. However, it is not relational: the cospan $\ord 0\to \ord 1\from \ord 0$ in $\ww$ does not have a tabulation. On the other hand, the po-category of finite sets and corelations is relational.
\end{example}

Every regular category has a po-category of relations, and it is relational; this mapping extends to a $2$-functor $\rrel\colon\rrgcat\to\rrlpocat$ sending a regular category $\rr$ to the po-category with the same objects and with hom-posets given by
\begin{equation}\label{eqn.def_rel}
  \rrel(\rr)(r_1,r_2)\coloneqq\sub_{\rr}(r_1\times r_2).
\end{equation}
In the other direction, the category of left adjoints in any relational po-category is regular; this also extends to a $2$-functor $\fladj\colon\rrlpocat\to\rrgcat$. It turns out these functors form an equivalence of $2$-categories. Indeed, this is the Carboni-Walters idea, although they did not explicitly give the full $2$-categorical account. However, this equivalence was proven as the main theorem in \cite[Theorem 7.3]{fong2019regular}.

\begin{theorem}\label{thm.carboni_equivalence}
  The $2$-functors $\rrel\colon\rrgcat\leftrightarrows\rrlpocat\cocolon\fladj$ form an equivalence of $2$-categories. Their underlying 1-functors moreover form an equivalence between the underlying 1-categories.
\end{theorem}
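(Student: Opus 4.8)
The plan is to produce mutually inverse comparison $2$-natural equivalences $\id_{\rrgcat}\iso\fladj\circ\rrel$ and $\rrel\circ\fladj\iso\id_{\rrlpocat}$ and then assemble them, rather than appealing to an abstract biequivalence criterion. Everything is organised around the classical observation behind the Carboni--Walters correspondence: \emph{in a po-category of relations, the left adjoints are exactly the functional relations}. I would take the two $2$-functors $\rrel$ and $\fladj$ as already constructed (well-definedness of $\rrel(\cat R)$ as a relational po-category, of $\fladj\rr$ as a regular category, and functoriality on $1$- and $2$-cells being assumed), and concentrate on the comparison data.

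First I would treat the direction $\fladj\circ\rrel\iso\id_{\rrgcat}$. Fix a regular category $\cat R$ and recall that $\rrel(\cat R)(r,s)=\sub_{\cat R}(r\times s)$, with relational composition computed by pullback followed by image. The key lemma is that a relation $R\rightarrowtail r\times s$ is a left adjoint in $\rrel(\cat R)$ if and only if it is total and single-valued---the unit and counit inequalities of the adjunction are precisely totality and single-valuedness---if and only if $R$ is the graph of a unique morphism $r\to s$ in $\cat R$, regularity being what guarantees the resulting span is a bona fide morphism. Since the left adjoints in any hom-poset are pairwise incomparable, this gives a bijection of hom-sets $\cat R(r,s)\iso\fladj\rrel(\cat R)(r,s)$ that is the identity on objects, hence an isomorphism of categories. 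I would then check that this comparison sends regular functors to regular functors and is $2$-natural with respect to the $1$- and $2$-cells of $\rrgcat$; this is routine because graphs are preserved by regular functors and natural transformations transport along graphs.

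The substantive direction is $\rrel\circ\fladj\iso\id_{\rrlpocat}$, and this is where tabulations do the work. Given a relational po-category $\rr$ and a morphism $f\colon r\to s$, the chosen tabulation $(f_R,f_L)$ of \cref{def.tabulation} yields, after transposing, a span $r\From{f_R\tp}\tab f\To{f_L}s$ of \emph{left adjoints}, hence a span in the cartesian category $\fladj\rr$; tabulation axiom~(iii) is exactly the statement that this span is jointly monic, so it names a subobject of $r\times s$ in $\fladj\rr$, i.e.\ a morphism of $\rrel(\fladj\rr)$. I would show that $f\mapsto(f_R\tp,f_L)$ and the passage $(\text{subobject})\mapsto f_R\cp f_L$ are mutually inverse and monotone, giving an isomorphism of posets $\rr(r,s)\iso\rrel(\fladj\rr)(r,s)=\sub_{\fladj\rr}(r\times s)$. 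As this comparison is the identity on objects and an isomorphism on homs, it is fully faithful and essentially surjective, hence an equivalence of po-categories; \cref{lemma.pres_all_struct} guarantees it is compatible with the supply of $\ww$ and with tabulations, so it assembles into a natural equivalence of $2$-functors, while \cref{lemma.ladjlaxnat_m} ensures the $2$-cells (left adjoint oplax transformations) are automatically monoidal and so match up.

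Finally I would splice the two comparisons into an equivalence of $2$-categories and deduce the $1$-categorical statement by restricting to underlying $1$-functors. The main obstacle is the composition-compatibility in the second direction: verifying that relational composition in $\rrel(\fladj\rr)$---pullback-then-image in the regular category $\fladj\rr$---coincides with the intrinsic composition of $\rr$ under the tabulation correspondence. This is the point at which regularity of $\fladj\rr$, the special and Frobenius laws of the $\ww$-supply together with self-duality (\cref{ex.W_self_dual}), and the tabulation identity must all be made to cooperate, and it is where the bulk of the diagrammatic labour---carried out in full in \cite{fong2019regular}---resides.
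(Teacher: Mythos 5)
Your proposal is correct in outline and matches the paper's treatment: the paper gives no independent proof of this theorem, simply citing \cite[Theorem 7.3]{fong2019regular}, and your sketch is precisely the Carboni--Walters strategy (graphs of morphisms as left adjoints in one direction, tabulations as jointly-monic spans of left adjoints in the other) that the cited theorem carries out. Since you also defer the essential composition-compatibility computation to \cite{fong2019regular}, your argument rests on exactly the same external source as the paper's, so the two approaches coincide.
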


\begin{example}
  Under the equivalence from \cref{thm.carboni_equivalence}, the relational po-category of finite sets and equivalence relations corresponds to the regular category $\finset\op$. This is \emph{almost} the free regular category on one object, but not quite: it is the free regular category in which every object $x$ is inhabited (the unique map $x\to 1$ is a regular epi).
\end{example}

\begin{remark}
  In fact, more is true of $\fladj$. As a consequence of the main result \cref{thm.main} and the work of companion paper \cite{grl2}, in \cref{cor.ladj_birep} we record a result of the companion: $\fladj$ is ``bi-represented'' by the relational po-category, $\fsyn\fprd\ww$, the syntactic category of the regular calculus of predicates in $\ww$. That is, understanding the regular category of left adjoints of a relational po-category is equivalent to mapping into it from $\fsyn\fprd\ww$.
\end{remark}

\section{The predicates \texorpdfstring{$2$}{2}-functor \texorpdfstring{$\mathbb{P}\Cat{rd}\colon\mathcal{P}\Cat{rlPoCat}\to\mathcal{R}\Cat{gCalc}$}{Prd}}\label{sec.prd}

We are now ready to address the second large source of regular calculi: in this section we will construct a $2$-functorial assignment $\rrgcat\to\rrgcalc$ of regular categories to regular calculi.

To begin, let us recall that \cref{thm.carboni_equivalence} gives a $2$-equivalence between the $2$-category of regular categories $\mathcal{R}\Cat{gCat}$ and that of relational po-categories $\mathcal{R}\Cat{lPoCat}$ (\cref{def.rl2cat}). With this in mind, our goal may be achieved by instead constructing a $2$-functor $\fprd\colon\rrlpocat\to\mathcal{R}\Cat{gCalc}$. However, it turns out that our construction of this to-be $2$-functor does not depend on the presence of tabulators, and so we further factor our promised assignment as follows:

\[
  \mathcal{R}\Cat{gCat}\overset{\ref{thm.carboni_equivalence}}\eqv\rrlpocat\rightarrowtail\pprlpocat\To{\fprd}\rrgcalc\ .
\]

With that let us turn our attention to the construction of the to-be $2$-functor $\fprd$.
Our first step will be to associate a right ajax po-functor to every prerelational po-category $\rr$. In the coming proposition we will show that $\rr(I,-)$ is right ajax. While we will have no use for the following, it is more generally true that $\rr(r,-)$ is right ajax for all objects $r\in\ob\rr$.

\begin{proposition}\label{prop.prerel_ajax}
  Let $(\rr,\otimes,I)$ be a prerelational po-category. The representable po-functor $\rr(I,-)\colon\rr\to\pposet$ has a lax monoidal structure whose laxators are right adjoints, and thus supports the structure of a regular calculus on $\rr$.
\end{proposition}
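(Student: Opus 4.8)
The plan is to equip the representable $\rr(I,-)$ with its canonical lax monoidal structure and then use the prerelational (lax homomorphic) hypothesis to produce left adjoints to the laxators. Since $\rr$ is prerelational it supplies $\ww$, so by \cref{def.reg_calc} it suffices to prove that $\rr(I,-)\colon\rr\to\pposet$ is a right ajax po-functor in the sense of \cref{def.ajax}; the regular calculus structure on $\rr$ then follows at once.

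First I would write down the laxators, which are the general ones available because the unit $I$ is a cocommutative comonoid (here, the comonoid $(\delta_I,\epsilon_I)$ supplied by $\ww$). The nullary laxator $\true\colon 1\to\rr(I,I)$ picks out $\id_I$, and the binary laxator $\boxplus_{c,c'}\colon\rr(I,c)\times\rr(I,c')\to\rr(I,c\otimes c')$ sends $(f,g)$ to $\delta_I\cp(f\otimes g)$, a kind of exterior conjunction. That these data assemble into a lax symmetric monoidal po-functor is the standard fact that $\rr(r,-)$ is lax monoidal whenever $r$ is a cocommutative comonoid, specialised to $r=I$: coassociativity, counitality and compatibility with the symmetry reduce to the comonoid axioms and coherence, while monotonicity and $2$-naturality are automatic in the po-categorical setting. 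Here I would record the two facts about the unit that I need below, namely $\epsilon_I=\id_I$ and that $\delta_I$ is the inverse unitor $I\to I\otimes I$; these follow from the supply conditions (equivalently, $\epsilon_I$ is the unique left adjoint $I\to I$ and $I$ is terminal among left adjoints by \cref{ex.wiring_comonoids}).

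The heart of the argument is to show these laxators are right adjoints in $\pposet$. Because $\pposet$ is a po-category, by \cref{lemma.adj_in_pocat} it is enough to exhibit candidate left adjoints and verify only the unit and counit inequalities; the triangle equalities hold automatically. For the left adjoint to $\boxplus_{c,c'}$ I would take $\lambda_{c,c'}(h)\coloneqq(h\cp p_c,\,h\cp p_{c'})$, where $p_c\colon c\otimes c'\to c$ and $p_{c'}\colon c\otimes c'\to c'$ are the projections built from the supplied counits $\epsilon_{c'},\epsilon_c$ (and the unitors). The counit inequality $\lambda_{c,c'}(\boxplus_{c,c'}(f,g))\leq(f,g)$ follows by computing $(f\otimes g)\cp p_c=f\otimes(g\cp\epsilon_{c'})$ up to unitors and applying lax homomorphicity of $g$ from \cref{rem.lax_hom_prerel}, namely $g\cp\epsilon_{c'}\leq\epsilon_I=\id_I$, so that the first component is $\leq f$, and symmetrically for the second. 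The unit inequality $h\leq\boxplus_{c,c'}(\lambda_{c,c'}(h))$ uses the other half of lax homomorphicity, $h\cp\delta_{c\otimes c'}\leq\delta_I\cp(h\otimes h)$, together with the comonoid identity $\delta_{c\otimes c'}\cp(p_c\otimes p_{c'})=\id_{c\otimes c'}$: indeed $\boxplus_{c,c'}(\lambda_{c,c'}(h))=\delta_I\cp(h\otimes h)\cp(p_c\otimes p_{c'})\geq h\cp\delta_{c\otimes c'}\cp(p_c\otimes p_{c'})=h$. For the nullary laxator, the left adjoint $\lambda_I$ is the unique map $\rr(I,I)\to 1$, and the adjunction amounts to $\id_I$ being the top element of $\rr(I,I)$; this again follows from lax homomorphicity, since any $h\colon I\to I$ satisfies $h=h\cp\epsilon_I\leq\epsilon_I=\id_I$.

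Finally, having shown that $\rr(I,-)$ is lax symmetric monoidal with right adjoint laxators, it is right ajax by \cref{def.ajax}, and hence $(\rr,\rr(I,-))$ is a regular calculus by \cref{def.reg_calc}. The main obstacle is the verification of the two adjunction inequalities, which is exactly where the prerelational hypothesis is essential; the remaining delicacy is the bookkeeping with unitors and the comonoid counit laws, and in particular extracting the unit-level identities $\epsilon_I=\id_I$ and $\delta_I=$ inverse unitor cleanly from the supply axioms.
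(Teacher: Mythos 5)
Your proposal is correct and follows essentially the same route as the paper's proof: the same laxators ($\id_I$ and $(f,g)\mapsto\delta_I\cp(f\otimes g)$), the same candidate left adjoints (the terminal map and the $\epsilon$-built projections, the paper's $\pi$), and the same use of lax comonoid homomorphicity together with the unit-object identities $\epsilon_I=\id_I$ and $\delta_I={\rho_I}\inv$ to establish the unit and counit inequalities, with triangle identities automatic in a po-category. The only divergence is that the paper verifies the lax-monoidal associativity coherence by an explicit diagram chase (stated on the left adjoints $\pi$), whereas you invoke the standard fact that a cocommutative comonoid makes its representable lax symmetric monoidal---both are fine, though your citation of \cref{lemma.adj_in_pocat} should really just be the remark that triangle equalities are trivial in locally posetal $2$-categories.
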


\begin{proof}
  For every $r\in\rr$, we have a poset $\rr(I,r)$, and this is clearly functorial in $r$. Consider the proposed adjunctions below:
  \begin{equation}\label{eqn.prd_right_ajax}
    \adjr{1}{\id_{I}}{!}{\rr(I,I)}
    \qqand
    \adjr{\rr(I,r)\times \rr(I,r')}{\otimes\cp (\delta_{I})^{*}}{\pi}{\rr(I,r\otimes r')}.
  \end{equation}
  The poset map labelled $\id_{I}$ sends 1 to the identity; that labelled $!$ is uniquely determined; that labelled $\otimes\cp(\delta_{I})^{*}$ sends $\pair{f,f'}$ to $f\otimes f'$ pre-composed with $\delta_{I}\colon I\to I\otimes I$; and that labelled $\pi$ sends $h\colon I\to r\otimes r'$ to the pair $\pi(h)\coloneqq\pair{h\cp(r\otimes\epsilon_{r'})\cp\rho_{r}, h\cp(\epsilon_r\otimes r')\cp\lambda_{r'}}$. The coming contents of the proof rely on the lax comonoid homomorphism condition in \cref{def.prerel_pocat}.

  To see that the first pair of maps form an adjunction, take any $s\in\rr(I,I)$. By definition of supply \eqref{eqn.supply_commute_tensors} we have $\epsilon_I=\id_I$, so lax comonoidality implies $s=s\cp\epsilon_I\leq\epsilon_I=\id_I$, as desired. Let us pause here to note that the same supply conditions imply the equality of morphisms $\delta_{I}={\lambda_{I}}\inv={\rho_{I}}\inv\colon I\to I\otimes I$.

  To see that the second pair of maps form an adjunction, take $\pair{f,f'}\in\rr(I,r)\times\rr(I,r')$ and $h\in\rr(I,r\otimes r')$ and consider the below arguments, where we have used the lax comonoidality properties of \eqref{eqn.lax_hom_prerel} and the facts $\epsilon_{I}=\id_{I}$ and $\delta_{I}={\lambda_{I}}\inv$ from above.
  \[(\otimes\cp(\delta_{I})^{*}\cp\pi)\left(\left<
        \begin{tikzpicture}[inner WD,baseline=(1.-20)]
          \node(1)[oshellr,syntax]{$f\vphantom{'}$};
          \draw(1.east) -- +(1,0);
        \end{tikzpicture},
        \begin{tikzpicture}[inner WD,baseline=(1.-20)]
          \node(1)[oshellr,syntax]{$f'$};
          \draw(1.east) -- +(1,0);
        \end{tikzpicture}\right>\right)=
    \left<
      \begin{tikzpicture}[inner WD,baseline=(2.north)]
        \node(1)[syntax,oshellr]{$f\vphantom{'}$};
        \node(2)[syntax,oshellr,below= 1 of 1]{$f'$};
        \node(dot)[link,right= 1 of 2]{};
        \draw(1.east) -- +(1,0);
        \draw(2.east) -- (dot);
      \end{tikzpicture}\ \ ,\ \
      \begin{tikzpicture}[inner WD,baseline=(2.north)]
        \node(1)[syntax,oshellr]{$f\vphantom{'}$};
        \node(2)[syntax,oshellr,below= 1 of 1]{$f'$};
        \node(dot)[link,right= 1 of 1]{};
        \draw(2.east) -- +(1,0);
        \draw(1.east) -- (dot);
      \end{tikzpicture}\right>\leq
    \left<
      \begin{tikzpicture}[inner WD,baseline=(1.-20)]
        \node(1)[oshellr,syntax]{$f\vphantom{'}$};
        \draw(1.east) -- +(1,0);
      \end{tikzpicture},
      \begin{tikzpicture}[inner WD,baseline=(1.-20)]
        \node(1)[oshellr,syntax]{$f'$};
        \draw(1.east) -- +(1,0);
      \end{tikzpicture}\right>
  \]

  \[
    \begin{tikzpicture}[inner WD,baseline=(1.-20)]
      \node(1)[oshellr,syntax]{$h$};
      \draw(1) to[out=45,in=180] +(1,1) -- +(1,0);
      \draw(1) to[out=-45,in=180] +(1,-1) -- +(1,0);
    \end{tikzpicture}\quad=\quad
    \begin{tikzpicture}[inner WD,baseline=(1.-20)]
      \node(1)[oshellr,syntax]{$h$};
      \draw(1) to[out=45,in=180] +(2,2) -- +(1,0) node[link](upper){};
      \draw(1) to[out=-45,in=180] +(2,-2) -- +(1,0) node[link](lower){};
      \draw(upper) to[out=45,in=180] +(1,1) -- +(1,0);
      \draw(upper) to[out=-45,in=180] +(1,-1) -- +(1,0) node[link]{};
      \draw(lower) to[out=45,in=180] +(1,1) -- +(1,0) node[link]{};
      \draw(lower) to[out=-45,in=180] +(1,-1) -- +(1,0);
    \end{tikzpicture}\quad=\quad
    \begin{tikzpicture}[inner WD,baseline=(1.-20)]
      \node(1)[oshellr,syntax]{$h$};
      \draw(1) to[out=45,in=180] +(2,2) -- +(1,0) node[link](upper){};
      \draw(1) to[out=-45,in=180] +(2,-2) -- +(1,0) node[link](lower){};
      \draw(upper) to[out=45,in=180] +(1,1) -- +(3,0);
      \draw(upper) to[out=-45,in=180] ++(1,-1) -- ++(0.5,0) to[out=0,in=180] ++(2,-2) -- ++(0.5,0) node[link]{};
      \draw(lower) to[out=45,in=180] ++(1,1) -- ++(0.5,0) to[out=0,in=180] ++(2,2) -- ++(0.5,0) node[link]{};
      \draw(lower) to[out=-45,in=180] +(1,-1) -- +(3,0);
    \end{tikzpicture}\quad\leq\quad
    \begin{tikzpicture}[inner WD,baseline=(c)]
      \node(1)[oshellr,syntax]{$h$};
      \draw(1) to[out=45,in=180] +(1,1) -- +(1,0);
      \draw(1) to[out=-45,in=180] +(1,-1) -- +(1,0) node[link]{};
      \node(2)[oshellr,syntax,below=1.5 of 1]{$h$};
      \draw(2) to[out=45,in=180] +(1,1) -- +(1,0) node[link]{};
      \draw(2) to[out=-45,in=180] +(1,-1) -- +(1,0);
      \coordinate (c) at ($(1)!0.5!(2)-(0,0.25)$);
    \end{tikzpicture}\quad=(\pi\cp\otimes\cp(\delta_{I})^{*})\
    \begin{tikzpicture}[inner WD,baseline=(1.-20)]
      \node(1)[oshellr,syntax]{$h$};
      \draw(1) to[out=45,in=180] +(1,1) -- +(1,0);
      \draw(1) to[out=-45,in=180] +(1,-1) -- +(1,0);
    \end{tikzpicture}
  \]

  It remains to establish that $(\rr(I,-),\id_{I},\otimes\cp(\delta_{I})^{*})$ assembles into a lax monoidal functor. The conditions on $!$ are all automatic by the terminality of $1\in\pposet$, and so we have reduced our claim to the assertion that the following diagram is commutative.
  \[\begin{tikzpicture}
      \node(1)[]{$\rr(I,(r\otimes r')\otimes r'')$};
      \node(2)[right=3 of 1]{$\rr(I,r\otimes(r'\otimes r''))$};
      \node(3)[below of= 1]{$\rr(I,r\otimes r')\times\rr(I,r'')$};
      \node(4)[below of= 2]{$\rr(I,r)\times\rr(I,r'\otimes r'')$};
      \node(5)[below of= 3]{$(\rr(I,r)\times\rr(I,r'))\times\rr(I,r'')$};
      \node(6)[below of= 4]{$\rr(I,r)\times(\rr(I,r')\times\rr(I,r''))$};
      \draw[a](1)to node[la,above]{$\alpha^{\rr}_{*}$}(2);
      \draw[a](1)to node[la,left]{$\pi$}(3);
      \draw[a](2)to node[la,right]{$\pi$}(4);
      \draw[a](3)to node[la,left]{$\pi\times\id$}(5);
      \draw[a](4)to node[la,right]{$\id\times\pi$}(6);
      \draw[a](5)to node[la,below]{$\alpha^{\pposet}$}(6);
    \end{tikzpicture}\]

  The commutativity of this diagram on $f\in\rr(I,(r\otimes r')\otimes r'')$ is equivalently three equalities between the corresponding components in $\rr(I,r)\times(\rr(I,r')\times\rr(I,r''))$. To show all three of these equalities it is advantageous to cast $\delta_{I}={\lambda_{I}}\inv$ which follows from \eqref{eqn.supply_commute_tensors} and the classical fact $\lambda_{I}=\rho_{I}$ in monoidal categories. In this way, the second of these equalities follows only from formal arguments on the axioms of a monoidal category, while the first and third follow from additional equations on $\epsilon$ assured by definition of supply \eqref{eqn.supply_commute_tensors}.
\end{proof}

Thus, if $(\rr,I,\otimes)$ is a prerelational po-category, we define
\begin{equation}\label{eqn.prd_obs}
  \fprd(\rr,I,\otimes)\coloneqq\left(\rr\To{\rr(I,-)}\pposet\right)
  \text{ with}\hspace{0.5cm}
  \begin{gathered}
    \adjr{1}{\id}{!}{\rr(I,I)}\\
    \adjr{\rr(I,r)\times \rr(I,r')}{\otimes\cp (\delta_{I})^{*}}{\pi}{\rr(I,r\otimes r')}
  \end{gathered}
\end{equation}
where the right ajax structure indicated on $\rr(I,-)$ was constructed in \eqref{eqn.prd_right_ajax} above.

Next let $(F,\varphi)\colon\rr\to\rr'$ be a morphism of prerelational po-categories, that is, a strong symmetric po-functor. Recall that by \cref{lemma.pres_all_struct_pre}, $F$ preserves the supply of $\ww$. We define $\fprd(F,\varphi)\coloneqq (F,F^\sharp)$, where the monoidal natural transformation $F^{\sharp}$ below-left is given in components $F^{\sharp}_{r}\colon\rr(I,r)\to\rr'(I',Fr)$ as below-right.
\begin{equation}\label{eqn.prd_mors}
  \fprd(F,\varphi)\coloneqq (F,F^\sharp)\text{ where}\hspace{1cm}
  \begin{gathered}
    \begin{tikzpicture}
      \node(1)[]{$\rr\phantom{'}$};
      \node(2)[below=0.50cm of 1]{$\rr'$};
      \node(3)[right=1.00cm of 2]{$\pposet$};
      \draw[a](1)to node(F)[la,left]{$F$}(2);
      \draw[a,rc](1) to (3|-1) to (3);
      \draw[a](2)to node(P')[la,below]{$\rr'(I,-)$}(3);
      \node[above,la] at (P'|-1) {$\rr(I,-)$};
      \cell[right,la]{(P'|-1)}{(P'.north)}{$F^{\sharp}$};
    \end{tikzpicture}\\
    F^{\sharp}_{r}\coloneqq\rr(I,r)\xrightarrow{F} \rr'(FI,Fr)\xrightarrow{(\varphi_{I})^{*}}\rr'(I',Fr)
  \end{gathered}
\end{equation}
It is straightforward to check that this definition is 1-functorial in $(F,\varphi)$.

Finally, let $\alpha\colon (F,\varphi)\tto (G,\psi)$ be a $2$-morphism of prerelational po-categories, that is, a left adjoint oplax-natural transformation. We wish to provide a $2$-morphism of regular calculi of the form $\fprd(\alpha)\coloneqq(\alpha,F^{\sharp}\leq G^{\sharp})\colon\fprd(F,\varphi)\tto\fprd(G,\psi)$.
To achieve this, let us note that \cref{lemma.ladjlaxnat_m} implies that $\alpha$ is, in fact, a monoidal oplax-natural transformation. As such, it remains to provide the data of a modification $F^{\sharp}\leq G^{\sharp}$ as in \eqref{eqn.two_morphism_modification}.

The required inequality $F^{\sharp}_{r}\cp P'\alpha_{r}\leq G^{\sharp}_{r}$ for each $c\in\ob\rr$ follows from the monoidal nature of $\alpha$ and the oplax-naturality of $\alpha$.
\begin{equation}\label{eqn.prd_twomors}
  \fprd(\alpha)\coloneqq(\alpha,F^{\sharp}\cp\rr'(I',\alpha)\leq G^{\sharp})\text{ where}\hspace{1cm}
  \begin{tikzpicture}[node distance=1.25cm,baseline=(3.base)]
    \node(1)[]{$I'$};
    \node(2)[right of= 1]{$I'$};
    \node(3)[below of= 1]{$FI$};
    \node(4)[below of= 2]{$GI$};
    \node(5)[below of= 3]{$Fr$};
    \node(6)[below of= 4]{$Gr$};
    \draw[d](1)to(2);
    \draw[a](1)to node[la,left]{$\varphi_{I}$}(3);
    \draw[a](2)to node[la,right]{$\psi_{I}$}(4);
    \draw[a](3)to node[la,above]{$\alpha_{I}$}(4);
    \draw[a](3)to node[la,left]{$Ff$}(5);
    \draw[a](4)to node[la,right]{$Gf$}(6);
    \draw[a](5)to node[la,below]{$\alpha_{r}$}(6);
    \path(5) to node[la,sloped]{$\leq$} (4);
    \path(3) to node[la,sloped]{$=$} (2);
  \end{tikzpicture}
\end{equation}
Again, it is straightforward to see that this is $2$-functorial by transitivity.

In summary, we have demonstrated the following.

\begin{proposition}
  The assignments of objects, morphisms and $2$-morphisms given by \eqref{eqn.prd_obs}, \eqref{eqn.prd_mors}, and \eqref{eqn.prd_twomors} assemble into a $2$-functor $\fprd\colon\pprlpocat\to\rrgcalc$.
\end{proposition}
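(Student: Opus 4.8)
The plan is to verify in turn that each of the three assignments lands in the target $2$-category and that together they respect identities and all forms of composition. The object-level claim is already settled by \cref{prop.prerel_ajax}: for a prerelational $\rr$ the representable $\rr(I,-)$ is right ajax with the laxators displayed in \eqref{eqn.prd_obs}, so $\fprd(\rr,I,\otimes)$ is a genuine regular calculus. What remains is to check the $1$- and $2$-cells and functoriality.

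For a strong symmetric monoidal po-functor $(F,\varphi)\colon\rr\to\rr'$ I would first invoke \cref{lemma.pres_all_struct_pre} to conclude that $F$ preserves the supply of $\ww$; this supplies exactly the half of \cref{def.reg_calc_morphisms} concerning the underlying po-functor. The work is then to show that $F^{\sharp}$ of \eqref{eqn.prd_mors} is a \emph{monoidal} $2$-natural transformation $\rr(I,-)\tto F\cp\rr'(I',-)$. Strict $2$-naturality is immediate: for $g\colon r\to r'$ and $h\in\rr(I,r)$, both legs of the naturality square send $h$ to $\varphi_{I}\cp F(h\cp g)$, using only functoriality of $F$ and the fixed reindexing $(\varphi_{I})^{*}$. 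The monoidal conditions of \cref{def.symm_mon_po} are the substantive step. The unit condition reduces to $\varphi_{I}\cp F(\id_{I})=\varphi_{I}$ once one observes that the induced lax structure on $F\cp\rr'(I',-)$ has unit laxator picking out $\varphi_{I}$. The binary condition, comparing $F^{\sharp}$ against the laxators $\otimes\cp(\delta_{I})^{*}$ on either side, is where the computation lives: on $\pair{f,f'}$ the two routes produce $\varphi_{I}\cp F(\delta_{I})\cp F(f\otimes f')$ against a term built from $\varphi_{r,r'}$, $\delta_{I'}$, and $F^{\sharp}_{r}(f)\otimesp F^{\sharp}_{r'}(f')$, and equating them uses precisely that $F$ preserves the supply (so $F$ intertwines $\delta_{I},\epsilon_{I}$ with $\delta_{I'},\epsilon_{I'}$ through the strongators, cf.\ \eqref{eqn.unpack_preserve_supply}) together with strong-monoidal coherence. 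This is the main obstacle, and I expect it to be a diagram chase entirely analogous in flavour to the three equalities dispatched at the end of the proof of \cref{prop.prerel_ajax}.

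For a left adjoint oplax-natural transformation $\alpha\colon F\tto G$ I would first appeal to \cref{lemma.ladjlaxnat_m} to upgrade $\alpha$ to a \emph{monoidal} oplax-natural transformation, which is exactly the hypothesis required of a $2$-morphism of regular calculi in \cref{def.reg_calc_morphisms}. It then remains to supply the modification inequality $F^{\sharp}_{r}\cp\rr'(I',\alpha_{r})\le G^{\sharp}_{r}$ demanded by \eqref{eqn.two_morphism_modification}; as indicated around \eqref{eqn.prd_twomors} this is witnessed objectwise by the pasting in that display, combining the oplax-naturality square of $\alpha$ at $f$ with the triangle $\varphi_{I}\cp\alpha_{I}=\psi_{I}$ coming from monoidality of $\alpha$. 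No further verification of the inequality's coherence is needed, since all $2$-cells in $\pposet$ are unique for a given order relation.

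Finally I would confirm functoriality. Preservation of identities is immediate, since the identity strong monoidal structure has identity strongators and hence $\id^{\sharp}=\id$. Preservation of $1$-cell composition amounts to checking that the sharp-component assigned to $\fprd(G)\cp\fprd(F)$ coincides with that of $\fprd(G\cp F)$; this follows from functoriality of reindexing together with the formula for the unit strongator of a composite of strong monoidal functors. Preservation of vertical and horizontal $2$-cell composition then reduces, as the text notes, to transitivity and monotonicity of the defining inequalities in $\pposet$. Collecting these verifications yields the asserted $2$-functor $\fprd\colon\pprlpocat\to\rrgcalc$.
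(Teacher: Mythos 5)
Your proposal is correct and follows essentially the same route as the paper: the object-level claim is delegated to \cref{prop.prerel_ajax}, supply preservation of $F$ to \cref{lemma.pres_all_struct_pre}, monoidality of $\alpha$ to \cref{lemma.ladjlaxnat_m}, and the modification inequality to the pasting of the oplax-naturality square with $\varphi_{I}\cp\alpha_{I}=\psi_{I}$, exactly as in \eqref{eqn.prd_twomors}. The only difference is that you spell out the monoidal $2$-naturality of $F^{\sharp}$ and the functoriality checks which the paper dismisses as straightforward; your elaborations of these are accurate.
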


We will henceforth freely confuse the $2$-functor $\fprd$ with its restriction to relational po-categories \[\rrlpocat\rightarrowtail\pprlpocat\To{\fprd}\rrgcalc\ .\]

\chapter{Graphical regular logic} \label{chap.graphical_reglog}

In this section we finally develop our graphical formalism for regular logic by defining the notion of graphical term, showing how these graphical terms represent predicates in contexts, and explaining how to reason with them. We sketch how the collection of graphical terms, together with our graphical reasoning, allows us to form the ``syntactic po-category'' of a regular calculus in a $2$-functorial fashion. In the companion paper \cite{grl2}, we make extensive use of this graphical regular logic to prove the that regular categories are ``pseudo-reflective'' in regular calculi by means of our syntactic po-category construction.

\section{Graphical terms}

Given a regular calculus $(\cont{P},P)$, graphical terms provide representations of its predicates, i.e.\ the elements in $P(\Gamma)$ for various contexts $\Gamma\in\cont{P}$.

We invite the reader to recall \cref{def.extended_wiring_diagram}, our definition of wiring diagrams in a po-category supplying $\ww$, as well as \cref{notation.wiring_diagrams_supplying_ww}, our graphical notation  therefor, before considering this next definition and its accompanying notation.

\begin{definition}(Graphical term)\label{def.graphical_term}
  Let $(\cont{P},P)$ be a regular calculus. A \define{graphical term} is the data of
  \begin{enumerate}[label=(\roman*)]
  \item a wiring diagram $(s,k,\{\Gamma_{i}\}_{i\in\ord s},\, \{(\kappa_i,f_i)\}_{i \in \ord k},\, \omega^\ast\colon \Gamma_1\otimes \dots \otimes \Gamma_s \otimes \kappa_1 \otimes \dots \otimes \kappa_k \to \Gamma_\out)$ in $\cont P$,
  \item for each $i\in\ord s$, a predicate $\theta_i \in P(\Gamma_i)$ in context $\Gamma_{i}=\bigotimes_{j\in\ord{n_{i}}} \tau_{i}(j)$.
  \end{enumerate}
  We will choose to suppress the details of the wiring diagram and notate such a graphical term by $(\theta_1,\dots,\theta_s;\omega)$, where $\omega$ is the morphism of $\cc$ represented by the wiring diagram, as given by \cref{def.morphism_represented_by_wd}. If $s=0$ then a graphical term $(;\omega)$ is simply a morphism $\omega\colon I\to\Gamma_\out$.

  We say that the graphical term $t = (\theta_1,\dots,\theta_s; \omega)$ \define{represents} the predicate
  \begin{equation}\label{eqn.graph_term_reps}
    \church{ t } \coloneqq P(\omega)(\theta_1\boxplus^{s}\cdots\boxplus^{s}\theta_s) \in P(\Gamma_\out)
  \end{equation}
  where $\boxplus^{s}$ is the $s$-ary laxator $\boxplus^{s}\colon \prod_{i\in\ord s} P(\Gamma_{i})\to P(\bigotimes_{\ord s} \Gamma_{i})$. In particular, if $s=0$ then a graphical term $(;\omega)$ represents the predicate $P(\omega)(\true)$. We extend the equality and implication of the poset $P(\Gamma_{\out})$ to graphical terms $t$, $t'$ via $\church{-}$ in the following sense: we say that \define{$t$ implies $t'$} and that \define{$t$ equals $t'$} when $\church{t}\vdash\church{t'}$ and $\church{t}=\church{t'}$ respectively. In a slight abuse of notation we will write $t\vdash t'$ and $t=t'$ for this implication and equality.\footnote{In this sense graphical terms inherit a pre-order as well as an equivalence relation, relative to which anti-symmetry of the pre-order holds.}
\end{definition}

\begin{example}
  When $s=1$ and $\omega=\id$ is the identity, then $\church{(-;\Gamma)}\colon P(\Gamma)\to P(\Gamma)$ is also the identity. More generally for any $s$, when $\omega=\id$, the map \[\church{(-,\,\ldots,\,-;\textstyle\bigotimes_{i\in\ord s} \Gamma_{i})}\colon\prod_{i\in\ord s} P(\Gamma_i)\to P(\textstyle\bigotimes_{i\in\ord s} \Gamma_i)\] is $\boxplus^{s}$, the $s$-ary laxator. We shall see other special cases in \cref{prop.diagrams_meet}.
\end{example}

\begin{notation}[Graphical terms]
  We draw a graphical term $(\theta_1,\dots,\theta_s; \omega)$ by drawing the morphism $\omega$ as in \cref{notation.wiring_diagrams_supplying_ww} and annotating the $i$\textsuperscript{th} inner IO shell with its corresponding predicate $\theta_i$.  In the case that $\omega$ is the identity morphism, we may simply draw the contexts annotated by their predicates. For instance,
  \begin{diagram*}[.][A]
    \node(A) {
      \begin{tikzpicture}[inner WD, baseline=(rho)]
        \node[pack,minimum size = 3ex] (rho) {$\theta$};
        \draw (rho.180) to[pos=1] node[left] {$\tau(1)$} +(180:2pt);
        \draw (rho.110) to[pos=1] node[above] {$\tau(2)$} +(110:2pt);
        \node at ($(rho.10)+(10:6pt)$) {$\vdots$};
        \draw (rho.-110) to[pos=1] node[below] {$\tau(s)$} +(-110:2pt);
      \end{tikzpicture}
    };
    \node(B)[left=3.5cm of A.west, anchor=east]{$\church{(\theta;\id_{\bigotimes\tau(i)})}$};
    \path(B) -- node[midway]{is represented by} (A);
  \end{diagram*}
\end{notation}

\begin{example}
  Let $\rc P$ be a regular calculus, and let $\theta_{i}\in P(\Gamma_{i})$ for $i\in\{1,2,3\}$ be predicates, let $f_{1}\colon\Gamma_{1}\to \Gamma'_{1}$ and $f_{2}\colon \Gamma_{1}\otimes \Gamma_{2}\to\Gamma'_{2}$ be morphisms of $\cont P$. Let us write $\sigma\colon \Gamma_{1}\otimes\Gamma_{1}\tpow2\otimes\Gamma_{2}\to\Gamma_{1}\tpow2\otimes(\Gamma_{1}\otimes\Gamma_{2})$ for the appropriate symmetry of $\cont P$, then the predicate  \[\left\llbracket\big(\theta_{1},\theta_{2},\theta_{3};(\Gamma_{1}\otimes\delta_{\Gamma_{1}}\otimes \Gamma_{2})\cp\sigma\cp((\mu_{\Gamma_{1}}\cp f_{1})\otimes f_{2})\big)\right\rrbracket\]
  is represented by the graphical term
  \begin{diagram*}[.][][inner WD]
    \node(t1)[pack]{$\theta_{1}$};
    \draw(t1.east) -- +(1, 0) node(l1)[link]{};
    \draw(l1) to[out=-90, in=180] +(1, -1.5) -- +(2, 0) node(f2)[oshellr,syntax,anchor=140]{$f_{2}$};
    \draw(f2.east) -- +(1, 0);
    \node(t2)[pack] at (t1|-f2.220) {$\theta_{2}$};
    \draw(t2.east) -- (t2-|f2.220);
    \draw(l1) to[out=+90, in=180] +(1, +1.5) -- +(1,0) node(l2)[link]{};
    \node(f1)[oshellr,syntax,anchor=west] at (l2-|f2.west) {$f_{1}$};
    \draw(f1.east) -- +(1, 0);
    \draw(l2) -- (f1.west);
    \node(t3)[pack, above of=l2, anchor=south]{$\theta_{3}$};
    \draw(t3.south) -- (l2);
  \end{diagram*}
\end{example}

\begin{example}
  We saw in \cref{ex.reg_calc_meet_sl} that right ajax po-functors $1\to\pposet$ are $\wedge$-semilattices. The corresponding diagrammatic language has no wires, since $1$ comprises only the monoidal unit. The semantics of an arbitrary graphical term $(\theta_1,\,\ldots,\,\theta_s;\id)$ is simply the meet $\theta_1\wedge\cdots\wedge\theta_s$.
\end{example}

\begin{remark}\label{rem.graphical_regular}
  Graphical terms can be used to depict formulae in regular logic. Fix a regular calculus $\rc{P}$, a context $\Gamma\in\cont P$ and suppose $\omega\colon \ord n_1+ \dots + \ord n_s \to \ord n_\out$ is a morphism of $\ww$, that is, by \cref{rem.canonical_decomp}, canonically a cospan of the form
  \[ \ord n_{1}+\dots+\ord n_{s} \xrightarrow{[\omega_{1},\dots,\omega_{s}]}\ord n_{\omega}\xleftarrow{\omega_{\out}}\ord n_{\out}\ .\]
  By the supply of $\ww$ this induces a morphism $\omega\colon\bigotimes\Gamma\tpow{n_{i}}\iso\Gamma^{\otimes(\sum n_{i})}\to\Gamma^{\otimes n_{\out}}$. While we will not dwell on the translation, a graphical term $(\theta_1,\dots,\theta_s; \omega)$ represents the following regular formula in free variables $x_{(\out,1)},\,\ldots,\,x_{(\out,{n}_{\out})}$.
  \[
    \bigexists_{\substack{i \in \{1,\dots,s,\omega\} \\ j \in \ord{n}_i}} x_{(i,j)} \left[
      \bigwedge_{i'\in\{
        1,\dots s\}} \theta_{i'}(x_{(i',1)},\, \ldots,\, x_{(i',n_{i'})}) \quad\wedge\quad \bigwedge_{\substack{i'\in\{1,\,\ldots,\,s,\out\} \\j' \in \ord{n}_{i'}
        }} \big(x_{(i',j')} = x_{(\omega,\omega_{i'}{(j')})}\big)\right]
  \]
  For example, given the supplied morphism $\omega\colon\Gamma\tpow{3}\otimes\Gamma\tpow{3}\otimes\Gamma\tpow{4}\to\Gamma\tpow{6}$ of \cref{ex.wiring_diagram} and predicates $\theta_{1}\in P(\Gamma\tpow{3})$, $\theta_{2}\in P(\Gamma\tpow{2})$, and $\theta_{3}\in P(\Gamma\tpow{4})$, the graphical term
  \begin{diagram*}[][][penetration=0, inner WD,  pack size=9pt, link size=2pt, scale=2, baseline=(out)]
    \node[packs] at (-1.5,-1) (f) {$\theta_{3}$};
    \node[packs] at (0,1.9) (g) {$\theta_{1}$};
    \node[packs] at (1.5,-1) (h) {$\theta_{2}$};
    \node[outer pack, inner sep=34pt] at (0,.2) (out) {};

    \node[link] at ($(f)!.5!(h)$) (link1) {};
    \node[link] at (-2.4,-.25) (link2) {};
    \node[link] at ($(f.75)!.5!(g.-135)$) (link3) {};

    \draw[fr_out] (out.270) to node[auto]{$x'$} (link1);
    \draw[fr_out] (out.190) to node[near end, above]{$z''$} (link2);
    \draw[fr_out] (out.155) to node[auto, inner sep=2pt]{$y$} (link3);
    \draw[fr_out] (out.-35) to node[auto]{$x'$} (h.-30);
    \draw[to_out,fr_out] (out.15) to[out=-165,in=-110] node[auto,swap, near start]{$z$} node[auto,swap,near end,inner sep=1pt]{$z'$} (out.70);
    \draw (f.30) to[out=0,in=130] (link1);
    \draw (f.-30) to[out=0,in=-130] (link1);
    \draw (h.180) to (link1);
    \draw (g.-60) to node[auto,inner sep=2pt]{$\tilde x$} (h.120);
    \draw (f.45) to node[auto,swap, inner sep=2pt]{$\tilde y$} (g.-105);
    \draw (f.75) to (link3);
    \draw (g.-135) to (link3);
  \end{diagram*}

  would represent the following formula after simplification.
  \[
    \psi(y,z,z',x,x',z'') = \exists\, \tilde{x},\tilde{y}, \left[\theta_1(\tilde{x},\tilde{y},y) \wedge \theta_2(x',\tilde{x},x) \wedge \theta_3(y,\tilde{y},x',x') \wedge (z=z')\right]
  \]
\end{remark}

Before we address the \emph{calculus} portion of our graphical notation for regular calculi, let us turn our attention to a final class of examples of our graphical notation: bare regular calculi.

\begin{example}\label{ex.graphical_terms_in_bare}
  Recall that a regular calculus $\rc P$ is bare if its category of contexts $\cont P$ is of the form $\cont P=\bigsqcup_{J}\ww$. Given our notation of \cref{sec.depict_ww,sec.wds_for_ww_suppliers} for wiring diagrams in $\ww$ and the result of \cref{prop.barebare}, we see that a graphical term $(\theta_{1,i_{1}},\,\ldots\,\theta_{k,i_{k}};\omega)$ in the regular calculus $(\bigsqcup_{J}\ww,P)$ is precisely a collection of wiring diagrams for $\ww$ each of whose IO shells has been annotated by predicates $\theta_{j,i_{j}}\in P(\ord n_{j,i_{j}})$ where $j,i_{j}\in J$. A somewhat typical example might therefore be
  \begin{diagram*}
    \node(diagram) {
      \begin{tikzpicture}[inner WD]
        \node[pack] (theta1) {$\theta_{1,i}$};
        \node[pack, below=.3 of theta1] (theta2) {$\theta_{2,i}$};
        \coordinate (helper) at ($(theta1)!.5!(theta2)$);
        \node[link, right=2 of helper] (dot R) {};
        \draw (theta1.east) to[out=0, in=120] (dot R);
        \draw (theta2.east) to[out=0, in=-120] (dot R);
        \draw (dot R) -- +(1, 0) -- node[link](end){} +(1,0);
        \node[outer pack, fit=(theta1) (theta2) (end)] (outer) {};
      \end{tikzpicture}
    };
    \node(w)[right=2cm of diagram.east, anchor=west] {$\theta_{1,j}$, $\theta_{2,j}\in P(\ord 1_{j})$ for $j\in J$ and $\omega=\mu\cp\epsilon\colon \ord 1_{j}+\ord 1_{j}\to \ord 0_{j}$.};
    \coordinate (m) at ($(diagram.east)!0.5!(w.west)$);
    \node[anchor=base] at (m|-w.base) {where};
  \end{diagram*}
  As there are no labelled morphisms decorating its graphical terms and instead only wires, the regular calculus $(\bigsqcup_{J}\ww,P)$ is in this visual sense considered \emph{bare}.
\end{example}

\section{Reasoning with graphical terms}

Now that we have understood the graphical notation, it is time to attend to the calculus of manipulations it supports. Let $\rc{P}$ be a regular calculus. The following basic rules for reasoning with graphical terms express the $2$-functoriality and monoidal structure of the po-functor $P\colon \cont P\to\pposet$.

\begin{proposition} \label{prop.diagrams_basic}
  Let $(\theta_1,\dots,\theta_s;\omega)$ be a graphical term, where $\theta_i \in P(\Gamma_i)$.
  \begin{propenum}
  \item\label{prop.diagrams_basic_mono} (Monotonicity) Suppose $\theta_i \vdash \theta_i'$ for some $i$. Then
    \[
      \church{(\theta_1,\dots,\theta_i,\dots,\theta_s; \omega)} \vdash \church{(\theta_1,\dots,\theta_i',\dots,\theta_s; \omega)}.
    \]
  \item (Breaking) Suppose $\omega \leq \omega'$ in $\cont P$. Then
    \[
      \church{(\theta_1,\dots,\theta_s; \omega)} \vdash
      \church{(\theta_1,\dots,\theta_s; \omega')}.
    \]
  \item\label{prop.diagrams_basic_nesting} (Nesting) Suppose $\theta_i = \church{(\theta'_1,\dots,\theta'_\ell; \omega')}$ for some $1\leq i\leq s$. Then
    \begin{align*}
      &\church{(\theta_{1}\,,\ldots,\, \theta_{i-1}, \church{(\theta'_1,\dots,\theta'_\ell; \omega')}, \theta_{i+1}, \ldots, \theta_{s};\omega)}  =
        \church{(\theta_1,\dots, \theta_{i}\ldots, \theta_s; \omega)}\\
      =\, &\church{(\theta_1,\dots,\theta_{i-1},\theta'_1, \dots,
            \theta'_\ell,\theta_{i+1},\dots,\theta_s; (\textstyle\bigotimes_{1\leq j< i}\Gamma_{j} \otimes\,  {\omega'} \otimes \textstyle\bigotimes_{i< j\leq s}\Gamma_{j} )\cp \omega)}
    \end{align*}
  \end{propenum}
\end{proposition}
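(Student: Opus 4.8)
The plan is to derive all three parts from two structural facts about a regular calculus $\rc P$: that $P$ is a po-functor (hence monotone on hom-posets and strictly functorial), and that the laxator $\boxplus$ is a \emph{monoidal $2$-natural} transformation, so that the iterated laxators $\boxplus^{s}$ are monotone, associative up to the coherence of the lax monoidal structure, and natural in each argument. Throughout I would keep in mind the defining formula $\church{(\theta_1,\dots,\theta_s;\omega)} = P(\omega)(\theta_1\boxplus^{s}\cdots\boxplus^{s}\theta_s)$ from \eqref{eqn.graph_term_reps}, and read each claim as an assertion about this composite.

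For \emph{monotonicity} (i), I would observe that $\boxplus_{\Gamma,\Gamma'}$ is a morphism of $\pposet$ and hence monotone, and that the binary cartesian product in $\pposet$ is monotone in each argument; iterating, $\boxplus^{s}$ is monotone in each of its $s$ slots. Thus $\theta_i\vdash\theta_i'$ gives $\theta_1\boxplus^{s}\cdots\boxplus^{s}\theta_i\boxplus^{s}\cdots\vdash\theta_1\boxplus^{s}\cdots\boxplus^{s}\theta_i'\boxplus^{s}\cdots$, and applying the monotone map $P(\omega)$ preserves this inequality. For \emph{breaking} (ii), the key point is that $P$ being a po-functor means $P_{\Gamma,\Gamma_\out}\colon\cont P(\Gamma,\Gamma_\out)\to\pposet(P\Gamma,P\Gamma_\out)$ is monotone; hence $\omega\leq\omega'$ yields $P(\omega)\leq P(\omega')$ as morphisms of $\pposet$, i.e.\ $P(\omega)(x)\vdash P(\omega')(x)$ for every $x$, which I then instantiate at $x=\theta_1\boxplus^{s}\cdots\boxplus^{s}\theta_s$.

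The substance lies in \emph{nesting} (iii). The middle equality is immediate from the hypothesis $\theta_i=\church{(\theta_1',\dots,\theta_\ell';\omega')}$, since the two graphical terms literally name the same predicate in slot $i$. For the outer equality I would first expand $\theta_i=P(\omega')(\theta_1'\boxplus^{\ell}\cdots\boxplus^{\ell}\theta_\ell')$, then use the $2$-naturality of the laxator $\boxplus$ in its $i$-th argument (with identity morphisms in the remaining arguments) to pull $P(\omega')$ outside the big laxator, obtaining
\[
\theta_1\boxplus^{s}\cdots\boxplus^{s}P(\omega')(\cdots)\boxplus^{s}\cdots\boxplus^{s}\theta_s
= P\bigl(\textstyle\bigotimes_{j<i}\Gamma_j\otimes\omega'\otimes\bigotimes_{j>i}\Gamma_j\bigr)\bigl(\theta_1\boxplus\cdots\boxplus(\theta_1'\boxplus\cdots\boxplus\theta_\ell')\boxplus\cdots\boxplus\theta_s\bigr).
\]
Associativity/coherence of the iterated laxator then flattens the inner parenthesised $\boxplus$ into the single $(s-1+\ell)$-fold laxator of the expanded list $\theta_1,\dots,\theta_{i-1},\theta_1',\dots,\theta_\ell',\theta_{i+1},\dots,\theta_s$. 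Finally I apply $P(\omega)$ to both sides and invoke strict functoriality of $P$, $P(w)\cp P(\omega)=P(w\cp\omega)$ with $w=\bigotimes_{j<i}\Gamma_j\otimes\omega'\otimes\bigotimes_{j>i}\Gamma_j$, which produces exactly the right-hand graphical term.

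The main obstacle I anticipate is not conceptual but bookkeeping: stating precisely and correctly the $2$-naturality square for $\boxplus$ with identities inserted in all but the $i$-th coordinate, and tracking the coherence isomorphisms that witness associativity of $\boxplus^{s}$ so that the ``inner then outer'' iterated laxator genuinely agrees with the flattened one. Since $\pposet$ is locally posetal and the lax monoidal coherences are (monotone) equalities of maps, these comparisons are forced, so I would present the argument as a short chain of equalities, relegating the explicit coherence manipulations to a remark rather than grinding through the associator bookkeeping in full.
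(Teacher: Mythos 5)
Your proposal is correct and follows essentially the same route as the paper: monotonicity of $\boxplus^{s}\cp P(\omega)$ for (i), po-functoriality of $P$ for (ii), and for (iii) the same three ingredients—naturality of the laxator in one slot, lax-monoidal coherence to flatten the iterated laxators, and strict functoriality of $P$—which the paper packages as a single commutative diagram (after a WLOG reduction to $i=s$ via symmetry) rather than your chain of equalities at general $i$.
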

\begin{proof}
  By examining $\church{-}$ of \cref{def.graphical_term} we may reason as below.
  \begin{enumerate}[label=(\roman*)]
  \item This claim follows from the monotonicity of the map $\boxplus^{s}\cp P(\omega)$.
  \item This claim follows from the $2$-functoriality of $P$.
  \item This claim follows from the monoidal structure and $1$-functoriality of $P$. By using the symmetry of $\cont P$, without loss of generality we may assume that $i=s$. In this case, to prove the desired equality it is sufficient to demonstrate the commutativity of the following diagram.
    \begin{diagram*}[][][node distance=2cm]
      \node(1)[]{$\prod\limits_{j=1}^{s-1}P(\Gamma_j) \times \prod\limits_{j=1}^\ell P(\Gamma'_j)$};
      \node(2)[below of= 1]{$\prod\limits_{j=1}^{s-1}P(\Gamma_j) \times P\left(\bigotimes_{j=1}^\ell\Gamma'_j\right)$};
      \node(3)[below of= 2]{$\prod\limits^{s}_{j=1}P(\Gamma_{j})$};
      \node(4)[right=0.75cm of  2]{$P\left(\bigotimes^{s-1}_{j=1}\Gamma_{j}\otimes \bigotimes^{\ell}_{j=1}\Gamma'_{j}\right)$};
      \node(5)[below of= 4]{$P\left(\bigotimes_{j=1}^{s}\Gamma_{j}\right)$};

      \node(6) at ($(4)-(1)+(4)$){$P(\Gamma_{\out})$};
      \draw[a](1)to node[la,left]{$\id\times\boxplus^{\ell}$}(2);
      \draw[a](2)to node[la,left]{$\id\times P(\omega)$}(3);
      \draw[a](1)to node[la,auto]{$\boxplus^{(s-1)+\ell}$}(4);
      \draw[a](2)to node[la,above]{$\boxplus^{s}$}(4);
      \draw[a](4)to node[la,over,inner sep=3pt]{$P(\bigotimes^{s-1}_{j=1}\Gamma_{j}\otimes \omega')$}(5);
      \draw[a](3)to node[la,below]{$\boxplus^{s}$}(5);
      \draw[a](4)to node[la,auto]{$P((\bigotimes^{s-1}_{j=1}\Gamma_{j}\otimes \omega')\cp \omega)$}(6);
      \draw[a](5)to node[la,below]{$P(\omega)$}(6);
    \end{diagram*}
    In the above diagram, the upper triangle commutes by coherence laws for $\boxplus$, the square commutes by naturality of $\boxplus$, and the right hand triangle commutes by functoriality of $P$. \qedhere
  \end{enumerate}
\end{proof}

\begin{example}
  \cref{prop.diagrams_basic} is perhaps more quickly grasped through a graphical example of these facts in action. Suppose we have the entailment
  \begin{center}
    \begin{tikzpicture}[unoriented WD, font=\small, pack size=7pt, baseline=(P1.south)]
      \def\angle{-65};
      \node (P1) {\begin{tikzpicture}[inner WD,baseline=(theta.base)]
          \node[pack] (theta) {$\theta_1$};
          \draw (theta.180) -- +(180:2pt);
          \draw (theta.0) -- +(0:2pt);
          \draw (theta.\angle) -- +(\angle:2pt);
        \end{tikzpicture}};
      \node[right=3 of P1] (P2) {
        \begin{tikzpicture}[inner WD,baseline=(xi1.base)]
          \node[pack] (xi1) {$\xi_1$};
          \node[pack, right=1 of xi1] (xi2) {$\xi_2$};
          \node[link] at ($(xi1.east)!.5!(xi2.west)$) (dot) {};
          \node[outer pack, fit=(xi1) (xi2)] (outer) {};
          \draw[fr_out] (outer) -- (xi1.west);
          \draw (xi1.east) -- (dot);
          \draw (dot) -- (xi2);
          \draw[to_out] (xi2) -- (outer);
          \draw[to_out] (dot) -- (outer.\angle);
        \end{tikzpicture}};
      \node at ($(P1.east)!.5!(P2.west)$) {$\vdash$};
    \end{tikzpicture}
  \end{center}
  Then using monotonicity, nesting, and then breaking we may deduce the entailment
  \begin{center}
    \begin{tikzpicture}[unoriented WD, font=\small, baseline=(P1.195)]
      \node (P1) {
        \begin{tikzpicture}[inner WD]
          \node[pack] (theta1) {$\theta_1$};
          \node[pack, right=1.5 of theta1] (theta2) {$\theta_2$};
          \node[pack] at ($(theta1)!.5!(theta2)+(0,-2)$) (theta3) {$\theta_3$};
          \node[outer pack, inner xsep=2pt, inner ysep=1pt, fit=(theta1) (theta2) (theta3.-30)] (outer) {};
          \node[link] at ($(theta2.30)!.5!(outer.30)$) (dot1) {};
          \node[link,  left=.1 of theta3.west] (dot2) {};
          \draw (theta2) -- (dot1);
          \draw[to_out] (dot1) to[bend right] (outer.20);
          \draw[to_out] (dot1) to[bend left] (outer.45);
          \draw (dot2) -- (theta3);
          \draw[to_out] (theta1) -- (outer);
          \draw[to_out] (theta3) -- (outer);
          \draw (theta1) -- (theta3);
          \draw (theta1) -- (theta2);
          \draw (theta2) -- (theta3);
        \end{tikzpicture}
      };
      \node[right=2 of P1] (P2) {
        \begin{tikzpicture}[inner WD]
          \node[pack] (xi1) {$\xi_1$};
          \node[pack, right=1 of xi1] (xi2) {$\xi_2$};
          \node[link] at ($(xi1.east)!.5!(xi2.west)$) (dot) {};
          \draw (xi1.east) -- (dot);
          \draw (dot) -- (xi2);
          \node[outer pack, inner xsep=0, inner ysep=0, fit=(xi1) (xi2)] (outerxi) {};

          \node[pack, right=1.5 of xi2] (theta2) {$\theta_2$};
          \node[pack, below=.6 of xi2] (theta3) {$\theta_3$};
          \node[outer pack, inner xsep=1pt, inner ysep=2pt, fit=(xi1.west) (theta2) (theta3.-10)] (outer) {};
          \node[link] at ($(theta2.30)!.5!(outer.30)$) (dot1) {};
          \node[link,  left=.1 of theta3.west] (dot2) {};
          \draw (theta2) -- (xi2.east);
          \draw (dot) -- (theta3);
          \draw[to_out] (dot1) to[bend right] (outer.20);
          \draw[to_out] (dot1) to[bend left] (outer.35);
          \draw (dot2) -- (theta3);
          \draw[to_out] (xi1) -- (outer);
          \draw[to_out] (theta3.south) -- (theta3|-outer.south);
          \draw (theta2) -- (theta3);
          \draw (theta2) -- (dot1);
        \end{tikzpicture}
      };
      \node[right=2 of P2] (P3) {
        \begin{tikzpicture}[inner WD]
          \node[pack] (xi1) {$\xi_1$};
          \node[pack, right=1 of xi1] (xi2) {$\xi_2$};
          \node[link] at ($(xi1.east)!.5!(xi2.west)$) (dot) {};
          \draw (xi1.east) -- (dot);
          \draw (dot) -- (xi2);

          \node[pack, right=1 of xi2] (theta2) {$\theta_2$};
          \node[pack, below=.6 of xi2] (theta3) {$\theta_3$};
          \node[outer pack, inner xsep=1pt, inner ysep=1pt, fit=(theta1) (theta2) (theta3.-20)] (outer) {};
          \node[link] at ($(theta2.30)!.5!(outer.30)$) (dot1) {};
          \node[link,  left=.1 of theta3.west] (dot2) {};
          \draw (theta2) -- (xi2.east);
          \draw (dot) -- (theta3);
          \draw[to_out] (dot1) to[bend right] (outer.20);
          \draw[to_out] (dot1) to[bend left] (outer.35);
          \draw (dot2) -- (theta3);
          \draw[to_out] (xi1) -- (outer);
          \draw[to_out] (theta3) -- (outer);
          \draw (theta2) -- (theta3);
          \draw (theta2) -- (dot1);
        \end{tikzpicture}
      };
      \node[right=2 of P3] (P4) {
        \begin{tikzpicture}[inner WD]
          \node[pack] (xi1) {$\xi_1$};
          \node[pack, right=1 of xi1] (xi2) {$\xi_2$};
          \node[link] at ($(xi1.east)!.5!(xi2.west)+(.5,-1)$) (dotc) {};

          \draw (xi1.east) -- (xi2.west);
          \node[pack, right=1 of xi2] (theta2) {$\theta_2$};
          \node[pack, below=.6 of xi2] (theta3) {$\theta_3$};
          \node[outer pack, inner xsep=1pt, inner ysep=1pt, fit=(theta1) (theta2) (theta3.-20)] (outer) {};
          \node[link] at ($(theta2.30)!.5!(outer.30)$) (dot1) {};
          \node[link,  left=.1 of theta3.west] (dot2) {};
          \draw (theta2) -- (xi2.east);
          \draw (dotc) -- (theta3);
          \draw[to_out] (dot1) to[bend right] (outer.20);
          \draw[to_out] (dot1) to[bend left] (outer.35);
          \draw (dot2) -- (theta3);
          \draw[to_out] (xi1) -- (outer);
          \draw[to_out] (theta3) -- (outer);
          \draw (theta2) -- (theta3);
          \draw (theta2) -- (dot1);
        \end{tikzpicture}
      };
      \node (imp1) at ($(P1.east)!.5!(P2.west)$) {$\vdash$};
      \node[above=-.5 of imp1] {(i)};
      \node (imp2) at ($(P2.east)!.5!(P3.west)$) {$=$};
      \node[above=-.5 of imp2] {(iii)};
      \node (imp3) at ($(P3.east)!.5!(P4.west)$) {$\vdash$};
      \node[above=-.5 of imp3] {(ii)};
    \end{tikzpicture}
  \end{center}
  We'll see many further examples in \cite{grl2}, where we prove that we can construct a regular category from a regular calculus.
\end{example}

The nesting rule of \cref{prop.diagrams_basic_nesting} has two particularly important cases.

\begin{example}[Wiring diagrams as predicates]\label{ex.wd_as_preds}
  Let $\omega\colon I\to \Gamma$ be a morphism in $\cont P$. Observe that we have the equalities
  \begin{diagram*}[,][6]
    \node(1)[anchor=east]{$\church{(;\omega)}=\big(1$};
    \node(2)[right=2cm of 1]{$P(I)$};
    \node(3)[right=2cm of 2]{$P(\Gamma)\big)$};
    \node(4)[anchor=east] at ($(1.east)-(0,0.75cm)$){$=\big(1$};
    \node(5)[] at ($(2)-(0,0.75cm)$){$P(\Gamma)$};
    \node(6)[anchor=west] at ($(3.west)-(0,0.75cm)$){$P(\Gamma)\big)=\church{(\church{(;w)};\Gamma)}$};
    \draw[a](1)to node[la,above]{$\true$}(2);
    \draw[a](2)to node[la,above]{$P(\omega)$}(3);
    \draw[a](4)to node[la,above]{$\true\cp P(\omega)$}(5);
    \draw[a](5)to node[la,above]{$P(\Gamma)$}(6);
  \end{diagram*}
  so that we are justified in equating the following two graphical terms
  \begin{diagram*}[.][pred]
    \node(wd){\begin{tikzpicture}[inner WD]
        \node(w)[oshellr,syntax]{$\omega$};
        \node(outer)[outer pack,fit={(w)},inner sep=7pt]{};
        \draw[to_out](w.east) -- (outer);
      \end{tikzpicture}};
    \node(pred)[right=1cm of wd]{\begin{tikzpicture}[inner WD]
        \node(p)[pack]{$\church{(;\omega)}$};
        \node(outer)[outer pack,fit={(p)},inner sep=7pt]{};
        \draw[to_out](p.south) -- (outer);
      \end{tikzpicture}};
    \node at ($(wd)!0.45!(pred)$) {$=$};
  \end{diagram*}
  Moreover, every regular calculus has a rich stock of such morphisms $I\to \Gamma$. In \cref{lemma.name_unfolding_iso} we exhibited an isomorphism $\cont P(\bigotimes \Gamma_{i},\Gamma_{\out})\iso \cont P(I,\bigotimes\Gamma_{i}\otimes \Gamma_{\out})$ mediated by taking the name $\omega\mapsto \name\omega$ of a morphism. In this way we may view arbitrary wiring diagrams $\omega\colon \bigotimes\Gamma_{i}\to \Gamma_{\out}$ in $\cont P$ as graphical terms $\church{(\church{(;\name\omega)};\bigotimes \Gamma_{i}\otimes \Gamma_{\out})}$ of the above-right form.

  Note, however, that in general the above merely constitutes a map of wiring diagrams into predicates which preserves representation. It is not necessarily the case that all predicates $\theta\in P(\Gamma)$ may be realised as $\church{(\church{(;\omega)};\Gamma)}$ for some wiring diagram $\omega$. Nevertheless, in \cref{sec.prd_graphical_terms} we will see that there is a large class of regular calculi in which wiring diagrams and predicates do coincide.
\end{example}

\begin{example}[Exterior conjunction]
  Let $\Gamma_{1}$ and $\Gamma_{2}$ be contexts, and let $\theta_{1}\in P(\Gamma_{1})$ and $\theta_{2}\in P(\Gamma_{2})$ be predicates. Observe that we have the equalities
  \[ \theta_{1}\boxplus \theta_{2} = \church{(\theta_{1}\boxplus \theta_{2};\Gamma_{1}\otimes \Gamma_{2})} =\church{(\theta_{1},\theta_{2};\Gamma_{1}\otimes \Gamma_{2})}\]
  of elements of $P(\Gamma_{1}\otimes \Gamma_{2})$ so that we are justified in equating, for example, the following two graphical terms.
  \begin{diagram*}
    \node (P1) {
      \begin{tikzpicture}[inner WD]
        \node[pack] (theta1) {$\theta_1$};
        \node[pack, below=.4 of theta1] (theta2) {$\theta_2$};
        \node[outer pack, inner ysep=0pt, fit=(theta1) (theta2)] (outer) {};
        \draw (theta1.0) -- (theta1.0-|outer.east);
        \draw (theta1.90) -- (outer.north);
        \draw (theta1.180) -- (theta1.180-|outer.west);
        \draw (theta2.270) -- (outer.south);
      \end{tikzpicture}
    };
    \node[right=1cm of P1] (P2) {
      \begin{tikzpicture}[inner WD, pack size=6pt]
        \node[pack] (rho) {$\theta_1\boxplus\theta_2$};
        \draw (rho.0) -- +(0:2pt);
        \draw (rho.90) -- +(90:2pt);
        \draw (rho.180) -- +(180:2pt);
        \draw (rho.270) -- +(270:2pt);
      \end{tikzpicture}
    };
    \node at ($(P1.east)!.5!(P2.west)$) {$=$};

    \pgfresetboundingbox
    \useasboundingbox (P1.130) rectangle (P2.-40);
  \end{diagram*}
  Under the interpretation of graphical terms as formulae in regular logic suggested by \cref{rem.graphical_regular}, this process of vertical merging of graphical terms corresponds to the logical conjunction of the formulae they represent.
\end{example}

In \cref{prop.meet_sl} we saw how a regular calculus endows each poset $P(\Gamma)$ with the structure of a meet-semilattice. As we will now see, this structure permits an intuitive graphical interpretation. In the following proposition, the graphical terms on right are illustrative examples of the equalities stated on the left.

\begin{proposition} \label{prop.diagrams_meet}
  For all contexts $\Gamma\in\ob\cont P$ and predicates $\theta_{1}$, $\theta_{2}\in P(\Gamma)$, we have
  \begin{propenum}
  \item\label{prop.diagrams_meet_true} (True is removable) $\church{(\true_{\Gamma};\Gamma)} = \church{ (;\eta_\Gamma) }$, \hfill $
    \begin{tikzpicture}[unoriented WD, font=\small,baseline=(true)]
      \node (P1) {
        \begin{tikzpicture}[inner WD]
          \node[pack] (true) {$\true$};
          \draw (true.0) -- +(0:2pt);
          \draw (true.120) -- +(120:2pt);
          \draw (true.240) -- +(240:2pt);
        \end{tikzpicture}
      };
      \node[right=3 of P1] (P2) {
        \begin{tikzpicture}[inner WD, shorten >=-2pt]
          \coordinate (helper);
          \node[link] (dot0) at ($(helper)+(0:5pt)$) {};
          \node[link] (dot120) at ($(helper)+(120:5pt)$) {};
          \node[link] (dot240) at ($(helper)+(240:5pt)$) {};
          \node[outer pack, surround sep=8pt, fit=(helper)] (outer) {};
          \draw (dot0) -- (outer.0);
          \draw (dot120) -- (outer.120);
          \draw (dot240) -- (outer.240);
        \end{tikzpicture}
      };
      \node at ($(P1.east)!.5!(P2.west)$) {$=$};
    \end{tikzpicture}
    $

  \item (Meets-are-merges)
    $
    \church{ (\theta_1\wedge\theta_2;\Gamma)} = \church{
      (\theta_1,\theta_2;\mu_\Gamma) }.
    $
    \hfill
    $\begin{tikzpicture}[unoriented WD,baseline=(theta)]
      \node (P1) {
        \begin{tikzpicture}[inner WD]
          \node[pack] (theta1) {$\theta_1$};
          \node[pack, below=.3 of theta1] (theta2) {$\theta_2$};
          \coordinate (helper) at ($(theta1)!.5!(theta2)$);
          \node[link, right=2 of helper] (dot R) {};
          \node[link, left=2 of helper] (dot L) {};
          \draw (theta1.east) to[out=0, in=120] (dot R);
          \draw (theta2.east) to[out=0, in=-120] (dot R);
          \draw (theta1.west) to[out=180, in=60] (dot L);
          \draw (theta2.west) to[out=180, in=-60] (dot L);
          \draw (dot R) -- +(5pt, 0);
          \draw (dot L) -- +(-5pt, 0);
        \end{tikzpicture}
      };
      \node[right=3 of P1] (P2) {
        \begin{tikzpicture}[inner WD]
          \node[pack] (theta) {$\theta_1\wedge\theta_2$};
          \draw (theta.180) -- +(180:2pt);
          \draw (theta.0) -- +(0:2pt);
        \end{tikzpicture}
      };
      \node at ($(P1.east)!.5!(P2.west)$) {$=$};
    \end{tikzpicture}
    $
  \end{propenum}
\end{proposition}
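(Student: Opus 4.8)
The plan is to prove both equalities by directly unwinding the semantic bracket $\church{-}$ of \cref{def.graphical_term} and matching the results against the explicit formulas for $\true_\Gamma$ and $\wedge_\Gamma$ recorded in the notation following \cref{prop.meet_sl}. No genuinely new computation is required; the work lies entirely in keeping track of which morphism each symbol names, remembering in particular that $\eta_\Gamma$ and $\mu_\Gamma$ denote the supplied monoid structure coming from $\ww$, whereas the bare symbol $\Gamma$ occupying a wiring slot denotes $\id_\Gamma$.

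For \cref{prop.diagrams_meet_true}, I would first expand the left-hand side. The graphical term $(\true_\Gamma;\Gamma)$ has $s=1$ and wiring morphism $\id_\Gamma$, so that $\church{(\true_\Gamma;\Gamma)}=P(\id_\Gamma)(\true_\Gamma)=\true_\Gamma$, using that $\boxplus^{1}$ is the identity and that $P$ preserves identities. On the right, the term $(;\eta_\Gamma)$ has $s=0$, whence by definition $\church{(;\eta_\Gamma)}=P(\eta_\Gamma)(\true)$, where $\true\in P(I)$ denotes the top element, i.e.\ the image of the $0$-ary laxator. It then remains only to observe that, per the first adjunction chain in the proof of \cref{prop.meet_sl}, the element $\true_\Gamma$ is the composite of left adjoints $1\To{\true}P(I)\To{P(\eta_\Gamma)}P(\Gamma)$ evaluated at the point; that is, $\true_\Gamma=P(\eta_\Gamma)(\true)$. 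The two sides therefore coincide.

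For the second claim (Meets-are-merges), the left-hand side expands as $\church{(\theta_1\wedge\theta_2;\Gamma)}=P(\id_\Gamma)(\theta_1\wedge\theta_2)=\theta_1\wedge\theta_2$, again because the wiring morphism is the identity, while the right-hand side expands as $\church{(\theta_1,\theta_2;\mu_\Gamma)}=P(\mu_\Gamma)(\theta_1\boxplus^{2}\theta_2)$. I would then invoke the notational definition $\wedge_\Gamma=\wedge^{2}_\Gamma=\boxplus^{2}\cp P(\mu^{2}_\Gamma)=\boxplus^{2}\cp P(\mu_\Gamma)$ (using $\mu^{2}_\Gamma=\mu_\Gamma$), which gives exactly $\theta_1\wedge\theta_2=P(\mu_\Gamma)(\theta_1\boxplus^{2}\theta_2)$, so that the right-hand side equals the left.

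The only subtlety, and the single point worth stating carefully, is the identification of the abstractly defined meet-semilattice data $(\true_\Gamma,\wedge_\Gamma)$ on $P(\Gamma)$ with the concrete functorial images $P(\eta_\Gamma)$ and $\boxplus^{2}\cp P(\mu_\Gamma)$. This is precisely the content of \cref{prop.meet_sl} together with its accompanying notation, so once those are cited the proof reduces to a single line in each case. I do not expect any real obstacle beyond this bookkeeping.
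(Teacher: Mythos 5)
Your proposal is correct and coincides with the paper's own proof, which disposes of both equalities in one line by observing that they are precisely the definitions of $\true_{\Gamma}$ and $\wedge_{\Gamma}$ combined with the definition of $\church{-}$; see \eqref{eqn.meet_sl} and \eqref{eqn.graph_term_reps} — your write-up merely spells out that unwinding. One cosmetic slip: $\true$ and $P(\eta_{\Gamma})$ are \emph{right} adjoints (to $!$ and $P(\epsilon_{\Gamma})$ respectively), not left adjoints, but nothing in your argument depends on adjointness, only on the identifications $\true_{\Gamma}=P(\eta_{\Gamma})(\true)$ and $\theta_{1}\wedge\theta_{2}=P(\mu_{\Gamma})(\theta_{1}\boxplus\theta_{2})$ from \cref{prop.meet_sl}.
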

\begin{proof}
  These equations are simply the definitions of $\true$
  and meet; see \eqref{eqn.meet_sl} and \eqref{eqn.graph_term_reps}.
\end{proof}

\begin{example}[Discarding]\label{lemma.dotting_off}
  Note that \cref{prop.diagrams_meet_true} and the monotonicity of diagrams
  (\cref{prop.diagrams_basic_mono}) further imply that for all $\theta \in P(\Gamma)$
  we have $\theta \vdash \church{
    (;\eta_\Gamma) }$:
  \[
    \begin{tikzpicture}[unoriented WD, font=\small, baseline=(phi)]
      \node (P1) {
        \begin{tikzpicture}[inner WD, shorten >=-2pt]
          \node[pack] (phi) {$\theta$};
          \draw (phi.0) -- +(0:2pt);
          \draw (phi.180) -- +(180:2pt);
          \draw (phi.270) -- +(270:2pt);
        \end{tikzpicture}
      };
      \node[right=3 of P1] (P2) {
        \begin{tikzpicture}[inner WD, shorten >=-2pt]
          \node[pack, fill=white, white] (phi) {$\theta$};
          \node[outer pack, fit=(phi)] (outer) {};
          \node[link] at ($(outer.180) - (180:5pt)$) (dot180) {};
          \node[link] at ($(outer.0) - (0:5pt)$) (dot0) {};
          \node[link] at ($(outer.270) - (270:5pt)$) (dot270) {};
          \draw (dot0) -- (outer.0);
          \draw (dot180) -- (outer.180);
          \draw (dot270) -- (outer.270);
        \end{tikzpicture}
      };
      \node at ($(P1.east)!.5!(P2.west)$) {$\vdash$};
    \end{tikzpicture}
  \]
\end{example}

\section{Graphical terms in the regular calculus of a regular theory}
\label{sec.graph_terms_regul}

In this section we will revisit \cref{con.rgcalc_rgtheory} which associates to a regular theory $(\Sigma,\mathbb T)$ a regular calculus $(\cc_{\mathbb T},F_{\mathbb T})$. In particular we will illustrate graphically the highlighted formulae of \cref{ex.rgth_inhabited,ex.rgth_preorder,ex.rgth_monoid,ex.rgth_cat}.

\subparagraph{Theory of an inhabited sort} In \cref{ex.rgth_inhabited} we outlined the theory of an inhabited sort. In the associated regular calculus, the graphical terms are simply wiring diagrams whose IO shells are annotated by formulae whose arity is no greater than the number of wires. The single axiom of the theory, $\top\vdash\exists_{s}\top$, is rendered as the below entailment of graphical terms. Note that $\top$ is the blank graphical term in the empty context and we have made use of the equation $\exists_{s}\top=F_{\mathbb T}(\eta;\epsilon)(\top)$ for the consequent.
\begin{diagram*}
  \node(B)[]{
    \begin{tikzpicture}[inner WD]
      \node[outer pack, minimum size=20pt]{};
    \end{tikzpicture}
  };
  \node(A)[right=2cm of B]{
    \begin{tikzpicture}[inner WD]
      \node(1)[link]{};
      \node[outer pack, minimum size=20pt,anchor=center] at (1){};
    \end{tikzpicture}
  };
  \node(At)[below=0.5 of A]{$\exists_{s}\top$};
  \node(Bt)at (At-|B){$\top$};
  \path (B) -- node[midway]{$\leq$} (A);
\end{diagram*}
In fact this inequality is an equality, for we always have $\exists_{s}\top\leq\top$, and so the theory of an inhabited sort allows us to freely erase or introduce dots; compare \eqref{eqn.extra_law} of $\ww$. The outer circles were rendered above to emphasise that these are graphical terms, but from here we will suppress this detail in keeping with \cref{rem.suppress}.

\subparagraph{Theory of a pre-order} In \cref{ex.rgth_preorder} we outlined the theory of a pre-order. Here we wish to draw attention to the two axioms of the theory, reflexivity $x\vdash xPx$ and transitivity $\exists_{y}[xPy\wedge yPz]\vdash xPz$, and their renditions as the below-left and below-right graphical terms in the associated regular calculus, respectively. Note in particular that the former contributes to the regular calculus genuine, strict inequalities between graphical terms.

\begin{diagram*}
  \node(A)[]{
    \begin{tikzpicture}[inner WD]
      \node(1)[minimum size=0, inner sep=0]{};
      \draw(1) -- ++(-2,0) -- ++(2,0);
      \node(2)[pack] at ($(1)+(6,0)$) {$P$};
      \draw(2.west) -- ++(-0.5, 0);
      \draw(2.east) -- ++(+0.5, 0);
      \path(1) -- node(a)[midway]{$\leq$} ($(2)-(1,0)$);
    \end{tikzpicture}
  };
  \node(B)[right=3cm of A.east,anchor=west]{
    \begin{tikzpicture}[inner WD]
      \node(1)[pack]{$P$};
      \node(2)[below= of 1,pack]{$P$};
      \coordinate(l) at ($(1)!0.5!(2)$){};
      \draw(1) -- (2);
      \draw(1.north) -- ++(0, +0.5);
      \draw(2.south) -- ++(0, -0.5);

      \node(3)[right=4 of l,pack]{$P$};
      \draw(3.north) -- ++(0,+0.5);
      \draw(3.south) -- ++(0,-0.5);
      \path(l-|2.east) -- node[midway]{$\leq$} (3);
    \end{tikzpicture}
  };
\end{diagram*}

\subparagraph{Theory of a monoid} In \cref{ex.rgth_monoid} we outlined the theory of a monoid, and explored the corresponding regular calculus. Here in particular we wish to highlight the utility of graphical terms by drawing a graphical term below-right which represents the formula for commutativity of two variables below-left.

\begin{diagram*}[][][]
  \node(A)[]{$\exists_{m,m'}\big[\star(x,y,m)\wedge\star(y,x,m')\wedge m=m'\big]$};
  \node(C)[right= of A]{
    \begin{tikzpicture}[inner WD]
      \node(1)[pack]{$\star$};
      \node(2)[pack,right= of 1]{$\star$};
      \draw(1.east)--(2.west);
      \node(l1)[left= of 1,link]{};
      \draw(l1)-- ++(-1.0,0);
      \node(l2)[right= of 2,link]{};
      \draw(l2)-- ++(+1.0,0);
      \draw(l1) to[out=-45,in=270] (1);
      \draw(l2) to[out=-135,in=270] (2);
      \draw(l1) to[out=45,in=120] (2);
      \draw(l2) to[out=135,in=60] (1);
    \end{tikzpicture}
  };
\end{diagram*}

\subparagraph{Theory of a category} In \cref{ex.rgth_cat} we outlined the theory of regular a category, and produced a formula on pairs $f,g$ of morphisms corresponding to the property of the pair being mutually inverse $[f\circ g=\id]\wedge [g\circ f=\id]$. Rendered as a graphical term in the associated regular calculus this formula might look as follows.

\begin{diagram*}[][][inner WD]
  \node(l1)[link]{};
  \node(c1)[pack]at ($(l1)+(5,+2)$) {$\comp$};
  \node(c2)[pack]at ($(l1)+(5,-2)$) {$\comp$};
  \node(l2) at ($(c1)+(5,-2)$)[link]{};
  \node(id1)[pack,above=of c1]{$\id$};
  \node(la1)[link,above= of id1]{};
  \node(cod1)[pack,left= of la1]{$\cod$};
  \node(dom1)[pack,right= of la1]{$\dom$};

  \node(id2)[pack,below=of c2]{$\id$};
  \node(la2)[link,below= of id2]{};
  \node(dom2)[pack,left= of la2]{$\dom$};
  \node(cod2)[pack,right= of la2]{$\cod$};

  \draw(c1.north) -- (id1.south);
  \draw(id1.north) -- (la1);
  \draw(cod1.east) -- (la1) -- (dom1.west);
  \draw(l1) to[out=60,in=180] (cod1);
  \draw(l1) to[out=45,in=180] (c1);
  \draw(l1) to[out=-45,in=180] (c2);
  \draw(l1) to[out=-60,in=180] (dom2);

  \draw(c2.south) -- (id2.north);
  \draw(id2.south) -- (la2);
  \draw(cod2.west) -- (la2) -- (dom2.east);
  \draw(l2) to[out=120,in=0] (dom1);
  \draw(l2) to[out=135,in=0] (c1);
  \draw(l2) to[out=-135,in=0] (c2);
  \draw(l2) to[out=-120,in=0] (cod2);

  \draw(l1) -- node[above]{$f$} ++(-3,0);
  \draw(l2) -- node[above]{$g$} ++(+3,0);
\end{diagram*}

\section{Graphical terms in the regular calculus of predicates}\label{sec.prd_graphical_terms}

We wish to highlight, as a special case, regular calculi of the form $\fprd\rr$ where $\rr$ is a prerelational po-category. Recall the definition of graphical terms (\cref{def.graphical_term}) and observe that a graphical term in such a regular calculus comprises the data of a wiring diagram $\omega\colon r_{1}\otimes\cdots\otimes r_{s}\to r_{\out}$ in $\rr$ as well as \emph{morphisms} $\{\theta_{i}\colon I\to r_{i}\}_{i\in\ord s}$ of $\rr$.

Note that the right ajax structure on $\rr(I,-)$ of \eqref{eqn.prd_right_ajax} has in particular $\true=\id_{I}$. As such, a graphical term $(\theta_{1},\,\ldots,\,\theta_{s};\omega)$ with $s>0$ represents the same predicate as the graphical term $(;\sigma\cp(\bigotimes \theta_{i})\cp\omega)$, namely $\church{(\theta_{1},\,\ldots,\,\theta_{s};\omega)}=\sigma\cp (\bigotimes \theta_{i})\cp\omega$, where $\sigma\colon I\to I\tpow{s}$ is the appropriate symmetry. As such, we have succeeded in obtaining an equality\footnote{Recall that graphical terms inherit an equality relation under taking of representations, $\church{-}$} of the graphical terms $(\theta_{1},\,\ldots,\,\theta_{s};\omega)=(;\sigma\cp(\bigotimes \theta_{i})\cp\omega)$, where the latter is merely the data of a wiring diagram $I\to r_{\out}$.

Of course we may read the equality $(\theta_{1},\,\ldots,\,\theta_{s};\omega)=(;\sigma\cp(\bigotimes \theta_{i})\cp\omega)$ ``the other way''. Given any wiring diagram $\omega\colon I\to r_{\out}$ -- that is, a graphical term of the form $(;\omega)$ -- we may construct the graphical term $(\omega;\id_{r_{\out}})$ such that $(;\omega)=(\omega;\id_{r_{\out}})$. Graphically this is the observation that any wiring diagram with empty domain $I$ may be equivalently re-drawn as a predicate.

Recall however that in the presence of the name-unfolding isomorphism of \cref{lemma.name_unfolding_iso}, the distinction between domain and codomain is fluid. With this final piece we are ready to observe that the data of graphical terms in $\fprd\rr$ is precisely the data of wiring diagrams in $\rr$, in the following sense.

\begin{lemma}\label{lemma.graphical_terms_in_prd}
  Let $\rr$ be a prerelational po-category and fix $s\in\nn$ and objects $r_{1},\,\ldots,\,r_{s},\, r_{\out}$ of $\rr$. Then, with graphical terms considered in $\fprd\rr$,
  \begin{lemenum}
  \item  the set of wiring diagrams $\omega\colon\bigotimes r_{i}\to r_{\out}$ in $\rr$ is in bijection with the set of graphical terms of the form $(;\omega'\colon I\to (\bigotimes r_{i})\otimes r_{\out})$, mediated by the assignment $\omega\mapsto(;\name{\omega})$,
  \item\label{lemma.graphical_terms_in_prd_ii} the set of graphical terms $(\theta_{1},\,\ldots,\,\theta_{s};\omega\colon\bigotimes r_{i}\to r_{\out})$ and the set of graphical terms of the form $(;\omega'\colon I\to r_{\out})$ admit the following opposed functions between them which preserve the represented predicate
    \begin{align*}  (\theta_{1},\,\ldots,\,\theta_{s};\omega)&\mapsto(;\sigma\cp(\textstyle\bigotimes\theta_{i})\cp\omega)\\
      (\omega';\id_{r_{\out}})&\mapsfrom(;\omega')
    \end{align*}
    where $\sigma\colon I\to I\tpow{s}$ is the appropriate symmetry of $\rr$.\hfill$\qed$
  \end{lemenum}\vspace{-\topsep}
\end{lemma}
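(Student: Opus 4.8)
The plan is to assemble the observations made in the discussion preceding the statement with the name--unfolding isomorphism of \cref{lemma.name_unfolding_iso}. Throughout, recall that in $\fprd\rr$ the predicate po-functor is $P=\rr(I,-)$, so a predicate in context $\Gamma$ is literally a morphism $I\to\Gamma$ of $\rr$, the map $P(\omega)$ is post-composition by $\omega$, the nullary laxator is $\true=\id_I$, and the binary laxator is $\boxplus=\otimes\cp(\delta_I)^{*}$, as recorded in \eqref{eqn.prd_right_ajax}. By \cref{def.graphical_term} a graphical term of the form $(;\omega')$ is precisely the datum of its represented morphism $\omega'\colon I\to\Gamma_\out$, and by \cref{def.morphism_represented_by_wd} (see also \cref{ex.wd_as_preds}) a wiring diagram $\omega\colon\bigotimes r_i\to r_{\out}$ is recorded by the morphism of $\rr$ it represents. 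Under these identifications every set appearing in the statement is simply a hom-set of $\rr$, which is what makes the two claims essentially formal.

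For the first claim, the set of wiring diagrams $\omega\colon\bigotimes r_i\to r_{\out}$ is $\rr(\bigotimes r_i,r_{\out})$ and the set of graphical terms $(;\omega'\colon I\to(\bigotimes r_i)\otimes r_{\out})$ is $\rr(I,(\bigotimes r_i)\otimes r_{\out})$. Taking $c=\bigotimes r_i$ and $d=r_{\out}$ in \cref{lemma.name_unfolding_iso}, the operations $\name{\cdot}$ and $\unname{\cdot}$ are mutually inverse bijections between exactly these two hom-sets, so $\omega\mapsto(;\name{\omega})$ is a bijection, as required.

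For \cref{lemma.graphical_terms_in_prd_ii}, I would first record the key identity $\church{(\theta_1,\ldots,\theta_s;\omega)}=\sigma\cp(\bigotimes_i\theta_i)\cp\omega$, where $\sigma\colon I\to I\tpow{s}$ is the coherence isomorphism. Unwinding \eqref{eqn.graph_term_reps} gives $\church{(\theta_1,\ldots,\theta_s;\omega)}=P(\omega)\big(\boxplus^{s}(\theta_1,\ldots,\theta_s)\big)$, and since $P(\omega)$ is post-composition it suffices to check that the $s$-ary laxator satisfies $\boxplus^{s}(\theta_1,\ldots,\theta_s)=\sigma\cp(\bigotimes_i\theta_i)$. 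This is an induction on $s$: the base cases are $\true=\id_I$ and $\boxplus(\pair{f,g})=\delta_I\cp(f\otimes g)$ from \eqref{eqn.prd_right_ajax}, and the inductive step uses coassociativity of the supplied comonoid on $I$ together with the identity $\delta_I=\lambda_I^{-1}$ established in the proof of \cref{prop.prerel_ajax}, which identifies the iterated comultiplication $I\to I\tpow{s}$ with $\sigma$. With this formula in hand the two opposed functions are verified directly. The forward map sends $(\theta_1,\ldots,\theta_s;\omega)$ to $(;\sigma\cp(\bigotimes_i\theta_i)\cp\omega)$, and since $\church{(;\omega')}=P(\omega')(\id_I)=\omega'$ we get $\church{(;\sigma\cp(\bigotimes_i\theta_i)\cp\omega)}=\sigma\cp(\bigotimes_i\theta_i)\cp\omega=\church{(\theta_1,\ldots,\theta_s;\omega)}$. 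The backward map sends $(;\omega')$ (with $\omega'\colon I\to r_{\out}$ viewed as a predicate in $P(r_{\out})$) to $(\omega';\id_{r_{\out}})$, and $\church{(\omega';\id_{r_{\out}})}=P(\id_{r_{\out}})(\boxplus^{1}(\omega'))=\omega'=\church{(;\omega')}$, using that the unary laxator is the identity. Hence both maps preserve $\church{-}$.

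The only step requiring genuine care is the inductive identification of $\boxplus^{s}$ with $\sigma\cp(\bigotimes_i\theta_i)$; this is pure coherence bookkeeping rather than a conceptual obstacle, and every other step is read off directly from the definitions once \cref{lemma.name_unfolding_iso} supplies the bijection underlying part~(i).
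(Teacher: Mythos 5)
Your proof is correct and follows essentially the same route as the paper, which states this lemma with no separate proof precisely because it is a summary of the immediately preceding discussion: part (i) is the name--unfolding isomorphism of \cref{lemma.name_unfolding_iso} applied to the hom-sets $\rr(\bigotimes r_i,r_{\out})$ and $\rr(I,(\bigotimes r_i)\otimes r_{\out})$, and part (ii) is the observation that $\true=\id_I$ and $\boxplus$ is exterior tensor in $\fprd\rr$, so that $\church{(\theta_1,\ldots,\theta_s;\omega)}=\sigma\cp(\bigotimes\theta_i)\cp\omega$. The only thing you add beyond the paper is the explicit induction identifying $\boxplus^{s}$ with $\sigma\cp(\bigotimes-)$ via $\delta_I=\lambda_I^{-1}$ and coassociativity, which the paper simply asserts; that is a faithful filling-in of the same argument, not a different one.
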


The first bijection above is a strengthening of the graphical phenomenon we identified in \cref{ex.wd_as_preds}. The second correspondence, although not strictly a bijection of sets\footnote{but rather an isomorphism of setoids}, lends itself readily to graphical understanding.

\begin{example}[Graphical terms as wiring diagrams]
  For a given graphical term, the combination of the correspondence of \cref{lemma.graphical_terms_in_prd_ii} above and nesting (\cref{prop.diagrams_basic_nesting}) establishes that any IO shell containing a predicate may be re-drawn as a morphism of $\rr$ with empty domain $I$ without altering the represented predicate. For example, the following two graphical terms represent the same predicate.
  \begin{diagram*}[][][baseline=(A.base)]
    \node(A){
      \begin{tikzpicture}[inner WD]
        \node(t1)[pack]{$\theta_{1}$};
        \draw(t1.east) -- +(1, 0) node(l1)[link]{};
        \draw(l1) to[out=-90, in=180] +(1, -1.5) -- +(2, 0) node(f2)[oshellr,syntax,anchor=140]{$f_{2}$};
        \draw(f2.east) -- +(1, 0);
        \node(t2)[pack] at (t1|-f2.220) {$\theta_{2}$};
        \draw(t2.east) -- (t2-|f2.220);
        \draw(l1) to[out=+90, in=180] +(1, +1.5) -- +(1,0) node(l2)[link]{};
        \node(f1)[oshellr,syntax,anchor=west] at (l2-|f2.west) {$f_{1}$};
        \draw(f1.east) -- +(1, 0);
        \draw(l2) -- (f1.west);
        \node(t3)[pack, above of=l2, anchor=south]{$\theta_{3}$};
        \draw(t3.south) -- (l2);
      \end{tikzpicture}
    };
    \node(B)[right=3cm of A.east, anchor=west]{
      \begin{tikzpicture}[inner WD]
        \node(t1)[oshellr,syntax]{$\theta_{1}$};
        \draw(t1.east) -- +(1, 0) node(l1)[link]{};
        \draw(l1) to[out=-90, in=180] +(1, -1.5) -- +(2, 0) node(f2)[oshellr,syntax,anchor=140]{$f_{2}$};
        \draw(f2.east) -- +(1, 0);
        \node(t2)[oshellr,syntax] at (t1|-f2.220) {$\theta_{2}$};
        \draw(t2.east) -- (t2-|f2.220);
        \draw(l1) to[out=+90, in=180] +(1, +1.5) -- +(1,0) node(l2)[link]{};
        \node(f1)[oshellr,syntax,anchor=west] at (l2-|f2.west) {$f_{1}$};
        \draw(f1.east) -- +(1, 0);
        \draw(l2) -- (f1.west);
        \node(t3)[oshelld,syntax, above of=l2, anchor=south]{$\theta_{3}$};
        \draw(t3.south) -- (l2);
      \end{tikzpicture}
    };
    \path (A) --node[midway]{$=$} (B);
  \end{diagram*}
\end{example}

\section{The syntactic po-category of a regular calculus: a sketch}\label{sec.syn}

As we have seen, graphical terms provide an effective way to reason in regular calculi. It is thus of interest to consider forming, from a given regular calculus $\rc P$, a po-category $\fsyn\rc P$ whose \emph{objects} are the graphical terms of $\rc P$. In this way we may study the collection of representations for predicates at once -- that is, we may study the \emph{syntax} of the regular calculus.

In what follows we will sketch the ``syntactic po-category'' construction, but we will choose here to defer the full details to the companion paper. Consider then the following collections of data which together form the objects and morphisms of the \define{syntactic po-category} $\fsyn\rc P$ of the regular calculus $\rc P$.
\begin{equation}\label{eqn.syntactic_def}
  \begin{cases}
    \ob\fsyn\rc{P} &\coloneqq\{(\Gamma,p)\mid \Gamma\in\ob\cont{P}, p\in P(\Gamma)\}\\
    \fsyn\rc{P}\big( (\Gamma_1,p_1), (\Gamma_2,p_2)\big) &\coloneqq\{\theta_{12}\in P(\Gamma_1\otimes \Gamma_2)\mid\theta_{12}\leq p_1\boxplus p_2\}
  \end{cases}
\end{equation}

Of course to claim that these data form a po-category we must provide various additional structures and prove properties thereof. While it is possible to continue our sketch in the language of supplied morphisms and right ajax po-functors -- that is, semantically -- we will instead make use of the tools of graphical regular logic. Thus, in pictures, the to-be po-category $\fsyn\rc P$ has:
\begin{itemize}
\item objects $(\Gamma,p)$ represented by graphical terms
  \begin{tikzpicture}[inner WD, baseline=(g)]
    \node[pack] (rho) {$p$};
    \draw (rho.south) to node(g)[anchor=west]{$\Gamma$} +(0,-0.5);
  \end{tikzpicture}
\item morphisms $\theta_{12}\colon(\Gamma_1,p_1)\to(\Gamma_2,p_2)$ represented by graphical terms
  \begin{tikzpicture}[inner WD, pack size=4pt, baseline=(g.base)]
    \node[pack] (theta) {$\theta_{12}$};
    \draw (theta.west) to[pos=1] node(g)[left=-3pt] {$\Gamma_1$} +(-2pt, 0);
    \draw (theta.east) to[pos=1] node[right=-2pt] {$\Gamma_2$} +(2pt, 0);
  \end{tikzpicture}
  together with an entailment
  \begin{tikzpicture}[inner WD, baseline=(g.base)]
    \node (P1) {\begin{tikzpicture}[inner WD]
        \node[pack, pack size=6pt] (theta) {$\theta_{12}$};
        \draw (theta.west) to[pos=1] node(g)[left=-3pt] {$\Gamma_1$} +(-2pt, 0);
        \draw (theta.east) to[pos=1] node[right=-2pt] {$\Gamma_2$} +(2pt, 0);
      \end{tikzpicture}};
    \node (P2) [right=3 of theta] {
      \begin{tikzpicture}[inner WD]
        \node[pack] (p1) {$p_1$};
        \node[pack, right=of p1] (p2) {$p_2$};
        \node[outer pack, fit=(p1) (p2)] (outer) {};
        \draw[shorten >=-2pt] (p1.west) to[pos=1] node[left] {$\Gamma_1$} (outer.west);
        \draw[shorten >=-2pt] (p2.east) to[pos=1] node[right] {$\Gamma_2$} (outer.east);
      \end{tikzpicture}
    };
    \node[font=\normalsize] at ($(P1.east)!.5!(P2.west)$) {$\vdash$};
  \end{tikzpicture}

  \vspace{-.2in}
\item the identity on $(\Gamma,p)$ represented by the graphical term
  \begin{tikzpicture}[inner WD, baseline=(g.base)]
    \node[pack] (phi) {$p$};
    \node[link, below=3pt of phi] (dot) {};
    \node[outer pack, fit=(phi) (dot)] (outer) {};
    \draw (phi) -- (dot);
    \draw (dot) to[pos=1] node(g)[left] {$\Gamma$} (dot-|outer.west);
    \draw (dot) to[pos=1] node[right] {$\Gamma$} (dot-|outer.east);
  \end{tikzpicture}
\item the composite $\theta_{12}\cp\theta_{23}$ represented by the graphical term
  \begin{tikzpicture}[inner WD, pack size=4pt, baseline=(theta.base)]
    \node[pack] (theta) {$\theta_{12}$};
    \node[pack, right=2 of theta] (theta') {$\theta_{23}$};
    \node[outer pack, fit=(theta) (theta')] (outer) {};
    \draw (outer.east) to (theta'.east) node[right=1] {$\scriptstyle \Gamma_3$};
    \draw (theta) to node[above=-.3] {$\scriptstyle \Gamma_2$} (theta');
    \draw (theta.west) node[left=1] {$\scriptstyle \Gamma_1$} to (outer.west);
  \end{tikzpicture}
\end{itemize}

With the induced poset structure on the homs, in the companion paper \cite{grl2} we prove that the collections $\fsyn\rc P$ with the composition and identities above indeed form a po-category.

\begin{example}
  In \cref{ex.reg_calc_meet_sl} we established that $\wedge$-semilattices $L$ are equivalently regular calculi $(1,L)$. By unwinding the syntactic po-category construction for such a regular calculus we see that $\fsyn(1,L)$ is the po-category whose objects are the elements of $L$, whose hom posets $\fsyn(1,L)(l,l')$ are the down-sets $\mathop\downarrow\{l\wedge l'\}$, whose composition is meet, and whose identities are given by the top element of each hom poset.
\end{example}

The syntactic po-category moreover inherits from $\cont P$ a canonical symmetric monoidal structure where the monoidal product of objects, the monoidal product of morphisms, and the braiding correspond respectively to the following graphical terms -- for details see \cite{grl2}.
\begin{diagram*}
  \node(prod_obj){\begin{tikzpicture}[inner WD, baseline=(theta2.north)]
      \node[pack] (theta1) {$p_1$};
      \node[pack, right=.4 of theta1] (theta2) {$p_2$};
      \node[inner ysep=5pt, fit=(theta1) (theta2)] (outer) {};
      \draw (theta1.south) -- (theta1.south|-outer.south);
      \draw (theta2.south) -- (theta2.south|-outer.south);
    \end{tikzpicture}};
  \node(prod_mor)[right=1 of prod_obj]{    \begin{tikzpicture}[inner WD, baseline=(theta2.north)]
      \node[pack] (theta1) {$\theta_1$};
      \node[pack, below=.4 of theta1] (theta2) {$\theta_2$};
      \node[inner ysep=0pt, fit=(theta1) (theta2)] (outer) {};
      \draw (theta1.0) -- (theta1.0-|outer.east);
      \draw (theta1.180) -- (theta1.180-|outer.west);
      \draw (theta2.0) -- (theta2.0-|outer.east);
      \draw (theta2.180) -- (theta2.180-|outer.west);
    \end{tikzpicture}};
  \node(braiding)[right=1 of prod_mor]{\begin{tikzpicture}[inner WD, baseline=(dot2.north)]
      \node[link] (dot1) {};
      \node[link,below=.4 of dot1] (dot2) {};
      \node[pack,above=.3 of dot1,inner sep=1pt] (phi1) {$p_1$};
      \node[pack,below=.3 of dot2,inner sep=1pt] (phi2) {$p_2$};
      \node[inner xsep=3pt, inner ysep=-3pt, fit=(phi1) (phi2)] (outer) {};
      \draw (dot1) -- (phi1.south);
      \draw (dot2) -- (phi2.north);
      \draw (dot1) -- (dot1-|outer.west);
      \draw (dot1) -- (dot2-|outer.east);
      \draw (dot2) -- (dot2-|outer.west);
      \draw (dot2) -- (dot1-|outer.east);
    \end{tikzpicture}};
\end{diagram*}

With this symmetric monoidal structure we may induce, from the supply of $\ww$ in $\cont P$, a supply of $\ww$ in $\fsyn\rc P$. This work appears in \cite{grl2}, but we summarise the results here. Recall that $\epsilon$, $\delta$, $\eta$, and $\mu$ are the generating morphisms of $\ww$; see \eqref{eqn.generating_wires}. For an object $(\Gamma,p)\in\ob\fsyn\rc P$, the supplied morphisms corresponding to these generators are the following graphical terms.
\begin{equation*}
  \begin{tikzpicture}[unoriented WD]
    \node (P1) {
      \begin{tikzpicture}[inner WD]
        \node[pack,inner sep=1pt] (phi1) {$p$};
        \node[outer pack, minimum size=7ex, fit=(phi1)] (outer) {};
        \draw (phi1.west) -- (outer.180);
      \end{tikzpicture}
    };
    \node[below=.1 of P1] {$\epsilon_{(\Gamma,p)}$};
    \node[right=3 of P1] (P2) {
      \begin{tikzpicture}[inner WD]
        \node[link] (dot1) {};
        \node[pack,above=.3 of dot1,inner sep=1pt] (phi1) {$p$};
        \node[outer pack, minimum size=7ex, fit=(dot1)] (outer) {};
        \draw (dot1) -- (phi1.south);
        \draw (dot1) -- (outer.180);
        \draw (dot1) -- (outer.30);
        \draw (dot1) -- (outer.-30);
      \end{tikzpicture}
    };
    \node[below=.1 of P2] {$\delta_{(\Gamma,p)}$};
    \node[right=3 of P2] (P3) {
      \begin{tikzpicture}[inner WD]
        \node[pack,inner sep=1pt] (phi1) {$p$};
        \node[outer pack, minimum size=7ex, fit=(phi1)] (outer) {};
        \draw (phi1.east) -- (outer.0);
      \end{tikzpicture}
    };
    \node[below=.1 of P3] {$\eta_{(\Gamma,p)}$};
    \node[right=3 of P3] (P4) {
      \begin{tikzpicture}[inner WD]
        \node[link] (dot1) {};
        \node[pack,above=.3 of dot1,inner sep=1pt] (phi1) {$p$};
        \node[outer pack, circle, minimum size=7ex, fit=(dot1)] (outer) {};
        \draw (dot1) -- (phi1.south);
        \draw (dot1) -- (outer.0);
        \draw (dot1) -- (outer.150);
        \draw (dot1) -- (outer.210);
      \end{tikzpicture}
    };
    \node[below=.1 of P4] {$\mu_{(\Gamma,p)}$};
  \end{tikzpicture}
\end{equation*}

In fact there is even more structure present, $\fsyn\rc P$ is a relational po-category. As a corollary of our main theorem we prove \cref{cor.vague_eqv} in the companion: there is a suitably natural equivalence of relational po-categories $\rr\eqv\fsyn\fprd\rr\eqv\fsyn(\bigsqcup_{J}\ww,F)$ for some ajax po-functor $F$ derived from $\rr$. By using the equivalence $\rrlpocat\eqv\rrgcat$ between relational po-categories and regular categories of \cref{thm.carboni_equivalence} we may therefore observe that we have obtained equivalences $\cat R\eqv\fladj\fsyn\rc P$ for some regular calculus $\rc P$ derived from $\cat R$.

Of interest here to the classical categorical logician is that we may work in regular categories using our graphical syntax for regular calculi. As this equivalence is one of regular categories, we may in particular compute limits graphically. For example, given two parallel left adjoints $h,k$ in $\fladj\fsyn\rc P$ we may compute the equaliser $\operatorname{eq}(h,k)\to\dom(h)$ and the image sub-object $\operatorname{im}(h)\rightarrowtail\cod(h)$ by the below-left and below-right graphical terms respectively.
\begin{diagram*}
  \node(A){
    \begin{tikzpicture}[inner WD]
      \node(1)[funcr,syntax]{$h$};
      \node(2)[funcr,syntax,below= of 1]{$k$};
      \coordinate(3) at ($(1)!0.5!(2)+(2,0)$);
      \draw[out=0,in=90] (1) to (3) to[out=270,in=0] (2);
      \node(l)[link] at ($(1)!0.5!(2)-(2,0)$){};
      \draw[out=180,in=90] (1) to (l) to[out=270,in=180] (2);
      \draw(l) -- ++(-1,0);
    \end{tikzpicture}
  };
  \node(B)[right=3cm of A.east,anchor=west]{
    \begin{tikzpicture}[inner WD]
      \node(1)[funcr,syntax]{$h$};
      \draw(1.east) -- ++(1,0);
      \draw(1.west) -- ++(-1,0) node[link]{};
    \end{tikzpicture}
  };
\end{diagram*}

We conclude this section with some remarks about the nature of our syntactic po-category construction.

\begin{remark}
  Observe that $\pposet$ is a sub-$2$-category of $\CCat{Cat}$. Instead of our bespoke construction of the syntactic po-category above, it is tempting to consider some appropriate po-categorical variant of a monoidal Grothendieck construction -- perhaps as developed in \cite{moeller2018monoidal} or \cite{Buckley2013}. Indeed, at the level of objects it would seem that there is a coincidence between $\fsyn\rc P$ and the total space of a Grothendieck-type construction $\int\rc P$.

  However, it presently appears to the authors that any so-attempted recasting of $\fsyn$ is doomed to failure. In a putative Grothendieck construction, consider the pair of objects $(I,\true)$ and $(\Gamma,p)$. In order for the total space to supply $\ww$, we would require the presence of morphisms $\widehat\epsilon\colon(\Gamma,p)\to(I,\true)$ and $\widehat\eta\colon(I,\true)\to(\Gamma,p)$. However, in general we have only $P(\epsilon)(p)\vdash\true$ and $p\vdash P(\eta)(\true)$, and thus there appears to be no uniform way to select the direction of the inequalities for morphisms in $\int\rc P$.

  In this way, some form of `symmetrisation' of domain and codomain becomes necessary, considerations of which result in our $\fsyn\rc P$.
\end{remark}

Despite the fact that in later sections and the companion paper we shall realise $\fsyn$ as a highly-structured $2$-functor, it fails to mediate any form of categorical equivalence. That is, the following example stands to establish that a regular calculus is \emph{more} than the data of its graphical terms. Of course this is completely expected from the syntax-semantics point of view, and stands in directly analogy with other means of describing theories: different logical axioms can result in the same theory or have the same semantics. For an example of the former, for the theory of a pre-order we see that the formulations $x P y\wedge y P z\vdash_{x,y,z} x Pz$ and $\exists_{y}[x P y\wedge y P z]\vdash_{x,z} x Pz$ of transitivity imply the same theory. For an example of the latter, a finite product sketch with $A=B\times C$ is semantically equivalent with one in which $A=B\times C$ and $A=C\times B$. In this way, the appropriate notion of ``same-ness'' for regular calculi is a form of ``Morita equivalence'' as detected by $\fsyn$; see \cref{thm.model_equiv} for a theorem from this perspective.

\begin{example}[$\fsyn$ identifies distinct regular calculi]
  Recall \cref{ex.reg_calc_meet_sl}, that is, that $\wedge$-semilattices $L$ are equivalently regular calculi $L\colon1\to\pposet$. Consider then that we may form the degenerate regular calculus $(\cc,\cc\To{!}1\To{L}\pposet)$ for any $\cc$ supplying $\ww$. The syntactic po-categories of these degenerate regular calculi are equally degenerate: in $\fsyn(\cc,\cc\To{!}1\To{L}\pposet)$ there is a (natural) isomorphism $(c,p)\iso(c',p)$ for all elements $p\in L$ and objects $c,c'\in\ob\cc$, viz, $p$ itself.

  It may be checked that given $\cc$ and $\dd$ inequivalent symmetric monoidal po-categories supplying $\ww$, the regular calculi $(\cc,!_{\cc}\cp L)$ and $(\dd,!_{\dd}\cp L)$ are inequivalent. However, the po-functor $\fsyn(\cc,!_{\cc}\cp L)\to\fsyn(\dd,!_{\dd}\cp L)$ which sends $(c,p)\mapsto (I^{\dd},p)$ and $\theta\mapsto\theta$ mediates an equivalence (with evident inverse) of symmetric monoidal po-categories supplying $\ww$. Thus $\fsyn$ cannot mediate a $2$-dimensional equivalence of $2$-categories.
\end{example}

\begin{remark}
  In fact this example is part of a more general class. In \cref{prop.syn_is_relational} below we record a result of the companion paper: whenever there is a morphism of regular calculi $(F,F^{\sharp})\colon\cont P\to\cont Q$ such that $F$ is essentially surjective (\cref{def.po_equiv}) and $F^{\sharp}$ is an isomorphism then $\fsyn(F,F^{\sharp})\colon\fsyn\cont P\to\fsyn\cont Q$ mediates an equivalence. This result corresponds to our understanding above: the essential surjectivity of $F$ says roughly that $\cont P$ has enough contexts to model those of $\cont Q$, and the invertibility of $F^{\sharp}$ says that we may bijectively translate relations across contexts from $\cont P$ to $\cont Q$ without altering semantics. At worst then, $\cont P$ contains somehow redundant copies of contexts of $\cont Q$, but without a difference in semantics.

  From this the above example may be derived. Observe that for any po-category $\cc$ supplying $\ww$, the canonical morphism $(!_{\cc},\id_{L})\colon(\cc,!_{\cc}\cp L)\to(1,L)$ of regular calculi satisfies these conditions. Thus we may conclude that $\fsyn(\cc,!_{\cc}\cp L)\eqv\fsyn(1,L)$.
\end{remark}

\section{Morphisms of regular calculi \texorpdfstring{\&}{and} graphical terms}\label{sec.graphical_morphisms}

Our notions of $1$- and $2$-morphisms of regular calculi, \cref{def.reg_calc_morphisms}, interact well with graphical terms and indeed preserve all of the desired structure. We have just seen the sense in which graphical terms in a regular calculus are meaningfully the objects of a syntactic po-category \eqref{eqn.syntactic_def}, so we now elucidate the manner in which morphisms of regular calculi $\rc P\to \rc Q$ act on graphical terms.

Given a morphism $(F,F^{\sharp})\colon \rc P\to\rc Q$ of regular calculi and a collection of contexts $\{\Gamma_{i}\}_{i\in\{1,\,\ldots,\,s,\,\out\}}$ in $\cont P$, the monoidal $2$-naturality of $F^{\sharp}$ renders commutative the following diagram.
\begin{diagram*}
  \node(1)[]{$\prod P(\Gamma_{i})$};
  \node(s)[right=2cm of 1.east]{};
  \node(2)[right=2cm of s,anchor=west]{$P(\bigotimes \Gamma_{i})$};
  \node(3)[right=2cm of 2,anchor=west]{$P(\Gamma_{\out})$};
  \node(4)[below= of 1]{$\prod P'F(\Gamma_{i})$};
  \node(5)[anchor=base] at (s|-4.base) {$P'(\bigotimes' F(\Gamma_{i}))$};
  \node(6)[below= of 2]{$P'F(\bigotimes \Gamma_{i})$};
  \node(7)[below= of 3]{$P'F(\Gamma_{\out})$};
  \draw[a](1)to node[la,above]{$\boxplus$}(2);
  \draw[a](2)to node[la,above]{$P(\omega)$}(3);
  \draw[a](1)to node[la,left]{$\prod F^{\sharp}_{\Gamma_{i}}$}(4);
  \draw[a](3)to node[la,right]{$F^{\sharp}_{\Gamma_{\out}}$}(7);
  \draw[a](4)to node[la,below]{$\boxplus'$}(5);
  \draw[a](5)to node[la,below]{$P'\varphi$}(6);
  \draw[a](6)to node[la,below]{$P'F(\omega)$}(7);
\end{diagram*}

Thus, given a graphical term $(\theta_{1},\,\ldots,\,\theta_{s};\omega)$ of $\rc P$ where $\theta_{i}\in P(\Gamma_{i})$, we see that we obtain the graphical term $(F^{\sharp}_{\Gamma_{1}}(\theta_{1}),\,\ldots,\,F^{\sharp}_{\Gamma_{s}}(\theta_{s});\varphi\cp F(\omega))$ of $\rc Q$ with the property \[\church{(F^{\sharp}_{\Gamma_{1}}(\theta_{1}),\,\ldots,\,F^{\sharp}_{\Gamma_{s}}(\theta_{s});\varphi\cp F(\omega))}=F^{\sharp}_{\Gamma_{\out}}(\church{(\theta_{1},\,\ldots,\,\theta_{s};\omega)})\ .\]
The fact that $\varphi$ is the strongator of the supply-preserving strong symmetric monoidal po-functor $F$ affords us an easy graphical understanding of this action. First we replace all the predicates $\theta_{i}\in P(\Gamma_{i})$ in IO shells with the predicates $F^{\sharp}_{\Gamma_{i}}(\theta_{i})$, and then when $\omega$ is composed of tensors of morphisms $\omega=\bigotimes\omega_{i}$, we may ``pull $\varphi$ through the tensors'' in $F(\omega)$ and preserve wiring as we go. These principles are illustrated by the following example.
\begin{diagram*}
  \node(G1) {
    \begin{tikzpicture}[inner WD]
      \node(t1)[pack]{$\theta_{1}$};
      \draw(t1.east) -- +(1, 0) node(l1)[link]{};
      \draw(l1) to[out=-90, in=180] +(1, -1.5) -- +(2, 0) node(f2)[oshellr,syntax,anchor=140]{$f_{2}$};
      \draw(f2.east) -- +(1, 0);
      \node(t2)[pack] at (t1|-f2.220) {$\theta_{2}$};
      \draw(t2.east) -- (t2-|f2.220);
      \draw(l1) to[out=+90, in=180] +(1, +1.5) -- +(1,0) node(l2)[link]{};
      \node(f1)[oshellr,syntax,anchor=west] at (l2-|f2.west) {$f_{1}$};
      \draw(f1.east) -- +(1, 0);
      \draw(l2) -- (f1.west);
      \node(t3)[pack, above of=l2, anchor=south]{$\theta_{3}$};
      \draw(t3.south) -- (l2);
    \end{tikzpicture}
  };
  \node(F1)[right=2cm of G1.east,anchor=west] {
    \begin{tikzpicture}[inner WD]
      \node(t1)[pack]{$F^{\sharp}\theta_{1}$};
      \draw(t1.east) -- +(1, 0) node(l1)[link]{};
      \draw(l1) to[out=-90, in=180] +(1, -2) -- +(2, 0) node(f2)[funcr,syntax,anchor=150,inner sep=7pt]{$\varphi$};
      \draw(f2.east) -- +(1, 0) node(f2p)[oshellr,syntax,anchor=west, shrink]{$Ff_{2}$};
      \draw(f2p.east) -- +(1, 0);
      \node(t2)[pack] at (t1|-f2.210) {$F^{\sharp}\theta_{2}$};
      \draw(t2.east) -- (t2-|f2.210);
      \draw(l1) to[out=+90, in=180] +(1, +2) -- +(1,0) node(l2)[link]{};
      \node(f1)[oshellr,syntax,anchor=west, shrink] at (l2-|f2.west) {$Ff_{1}$};
      \draw(f1.east) -- (f1.east-|f2p.east) -- +(1, 0);
      \draw(l2) -- (f1.west);
      \node(t3)[pack, above of=l2, anchor=south]{$F^{\sharp}\theta_{3}$};
      \draw(t3.south) -- (l2);
    \end{tikzpicture}
  };
  \path (G1) --node[midway]{$\xmapsto{(F,F^{\sharp})}$} (F1);
\end{diagram*}

This description suggests that morphisms of regular calculi preserve the connectivity, wiring, and compositionality of graphical terms, and so all the structure present in our syntactic po-categories. Indeed, as we prove in the companion paper \cite{grl2}, such a morphism $(F,F^{\sharp})\colon\rc P\to\rc Q$ of regular calculi induces a symmetric monoidal supply preserving po-functor $\fsyn(F,F^{\sharp})\colon\fsyn\rc P\to\fsyn\rc Q$. In this fashion we may prove that $\fsyn$ forms a $2$-functor $\fsyn\colon\rrgcalc\to\rrlpocat$, and observe moreover that it sends certain morphisms of regular calculi to equivalences.

\begin{proposition}[{\cite{grl2}}]\label{prop.syn_is_relational}
  The syntactic po-category construction is the on-objects component of a $2$-functor $\fsyn\colon\rrgcalc\to\rrlpocat$. Moreover, if $(F,F^{\sharp})\colon\cont P\to\cont Q$ is a morphism of regular calculi such that $F$ is essentially surjective and $F^{\sharp}$ is an isomorphism then $\fsyn(F,F^{\sharp})$ is an equivalence.
\end{proposition}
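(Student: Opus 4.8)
The plan is to treat the two assertions separately. That the assignment $\rc P\mapsto\fsyn\rc P$, together with the action on graphical terms recorded just above, assembles into a $2$-functor $\fsyn\colon\rrgcalc\to\rrlpocat$ is established in the companion \cite{grl2}: there one checks that each $\fsyn\rc P$ is a relational po-category, that $\fsyn(F,F^{\sharp})$ — sending a graphical term $(\theta_1,\ldots,\theta_s;\omega)$ of $\rc P$ to $(F^{\sharp}\theta_1,\ldots,F^{\sharp}\theta_s;\varphi\cp F(\omega))$ — is a well-defined strong symmetric monoidal supply-preserving po-functor, and that these data respect the strict composition and whiskering of $1$- and $2$-morphisms of regular calculi. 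I take all of this as given and concentrate on the ``moreover'' clause. So suppose $(F,F^{\sharp})\colon\rc P\to\rc Q$ has $F$ essentially surjective and $F^{\sharp}$ an isomorphism, and write $\Phi\coloneqq\fsyn(F,F^{\sharp})$. By the lemma following \cref{def.po_equiv}, it suffices to prove that $\Phi$ is fully-faithful and essentially surjective.

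For \emph{fully-faithfulness}, fix objects $(\Gamma_1,p_1),(\Gamma_2,p_2)$ of $\fsyn\rc P$. By \eqref{eqn.syntactic_def} the source hom-poset is the down-set $\{\theta\in P(\Gamma_1\otimes\Gamma_2)\mid\theta\leq p_1\boxplus p_2\}$, and $\Phi$ sends such a $\theta$ to $\Psi(\theta)\coloneqq Q(\varphi_{\Gamma_1,\Gamma_2})^{-1}\big(F^{\sharp}_{\Gamma_1\otimes\Gamma_2}(\theta)\big)\in Q(F\Gamma_1\otimes F\Gamma_2)$, where $\varphi$ is the strongator of $F$. Since $F$ is strong monoidal, $\varphi_{\Gamma_1,\Gamma_2}$ is invertible, so $Q(\varphi_{\Gamma_1,\Gamma_2})$ is an order-isomorphism of posets; and $F^{\sharp}_{\Gamma_1\otimes\Gamma_2}$ is an order-isomorphism by hypothesis. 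Hence $\Psi\colon P(\Gamma_1\otimes\Gamma_2)\xrightarrow{\cong}Q(F\Gamma_1\otimes F\Gamma_2)$ is an order-isomorphism. The monoidal $2$-naturality of $F^{\sharp}$ is exactly the statement $F^{\sharp}_{\Gamma_1\otimes\Gamma_2}(p_1\boxplus p_2)=Q(\varphi_{\Gamma_1,\Gamma_2})\big(F^{\sharp}p_1\boxplus F^{\sharp}p_2\big)$, i.e. $\Psi(p_1\boxplus p_2)=F^{\sharp}p_1\boxplus F^{\sharp}p_2$. As $\Psi$ is an order-isomorphism, it therefore carries the down-set below $p_1\boxplus p_2$ isomorphically onto the down-set below $F^{\sharp}p_1\boxplus F^{\sharp}p_2$, which is precisely the target hom-poset $\fsyn\rc Q\big(\Phi(\Gamma_1,p_1),\Phi(\Gamma_2,p_2)\big)$. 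Thus $\Phi$ is a poset isomorphism on every hom.

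For \emph{essential surjectivity}, let $(\Delta,q)$ be an object of $\fsyn\rc Q$. Since $F$ is essentially surjective, choose $\Gamma\in\ob\cont P$ and an isomorphism $g\colon F\Gamma\xrightarrow{\cong}\Delta$ in $\cont Q$. Because $F^{\sharp}_{\Gamma}$ is an order-isomorphism, put $p\coloneqq(F^{\sharp}_{\Gamma})^{-1}\big(Q(g)^{-1}(q)\big)\in P(\Gamma)$, so that $Q(g)\big(F^{\sharp}_{\Gamma}p\big)=q$. It remains to exhibit an isomorphism $(F\Gamma,F^{\sharp}_{\Gamma}p)\cong(\Delta,q)$ in $\fsyn\rc Q$. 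An isomorphism $g$ of contexts is in particular a left adjoint, and its graph — the graphical term obtained by wiring $F^{\sharp}_{\Gamma}p$ through a kite for $g$ — defines a morphism $(F\Gamma,F^{\sharp}_{\Gamma}p)\to(\Delta,q)$; the equality $Q(g)(F^{\sharp}_{\Gamma}p)=q$ guarantees the entailment required of a morphism of $\fsyn\rc Q$, the graph of $g^{-1}$ supplies a candidate inverse, and the yanking identities \eqref{eqn.yanking} show the two composites are identities. Hence $\Phi(\Gamma,p)=(F\Gamma,F^{\sharp}_{\Gamma}p)\cong(\Delta,q)$, completing the argument.

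The main obstacle is this last step: converting the isomorphism $g\colon F\Gamma\cong\Delta$ of contexts, together with the predicate-compatibility $Q(g)(F^{\sharp}_{\Gamma}p)=q$, into an honest isomorphism in the syntactic po-category. This requires the explicit description of isomorphisms in $\fsyn\rc Q$ in terms of (left-adjoint) morphisms of $\cont Q$ and the verification, via the yanking and adjunction identities, that the graph and co-graph of $g$ compose to the identities of \eqref{eqn.syntactic_def} — details that rest on the graphical calculus and the relational structure of $\fsyn\rc Q$ developed in \cite{grl2}. The fully-faithfulness half, by contrast, is entirely formal, turning only on the invertibility of the strongators and of $F^{\sharp}$ and on the single monoidal-naturality equation for $F^{\sharp}$.
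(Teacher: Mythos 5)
Before comparing: the paper you were given never proves this proposition at all --- both halves are imported from the companion \cite{grl2}, and the surrounding text (\cref{sec.syn}, \cref{sec.graphical_morphisms}) only sketches the construction of $\fsyn$. So your attempt has to be judged on its own terms. Your overall skeleton is sensible: take the $2$-functoriality for granted, and prove the ``moreover'' clause by fully-faithfulness plus essential surjectivity. The fully-faithfulness half is correct and essentially complete: the hom-action of $\fsyn(F,F^{\sharp})$ is indeed $\theta\mapsto Q(\varphi_{\Gamma_1,\Gamma_2}^{-1})\big(F^{\sharp}_{\Gamma_1\otimes\Gamma_2}(\theta)\big)$, this is an order-isomorphism, and the single monoidality square for $F^{\sharp}$ shows it carries $p_1\boxplus p_2$ to $F^{\sharp}p_1\boxplus F^{\sharp}p_2$, hence the down-set \eqref{eqn.syntactic_def} isomorphically onto the target down-set.

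The genuine gap is in essential surjectivity, at the sentence ``the yanking identities \eqref{eqn.yanking} show the two composites are identities.'' In a regular calculus, $\cont Q$ is only required to \emph{supply} $\ww$; it is not required to be prerelational (\cref{def.prerel_pocat}). Consequently an arbitrary isomorphism $g\colon F\Gamma\to\Delta$ of contexts has no compatibility with the supplied Frobenius structure: $g$ need not be a (co)monoid homomorphism, and its transpose need not satisfy $g\tp=g^{-1}$. (Those facts are theorems about \emph{prerelational} po-categories, via \cite[Corollary 6.2]{fong2019regular}; this is also why your remark that ``an isomorphism is in particular a left adjoint'' buys nothing here.) Any yanking-style verification must slide $g$ across a supplied cup or cap, and every such slide converts $g$ into $g\tp$; the composite of your two graphs then carries irreducible factors such as $g\cp g\tp$ or $\delta\cp(g\tp\otimes g^{-1})$, and the computation stalls. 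So the verification, as you describe it, would fail.

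The statement you need is nevertheless true, but its proof requires an idea you have not supplied: the hom-poset constraint and the category axioms of $\fsyn\rc Q$ must do the work that yanking cannot. Concretely, with $x=F^{\sharp}_{\Gamma}p$, $y=Q(g)(x)$, $\theta=Q(\delta\cp(\id\otimes g))(x)$ and $\theta'=Q(\delta\cp(\id\otimes g^{-1}))(y)$, naturality of $\boxplus$ together with the Frobenius and counit laws --- applied only to \emph{supplied} morphisms, never to $g$ --- rewrites the composite as $\theta\cp\theta' = Q(\id\otimes g^{-1})\big(Q(\id\otimes\mu)(\theta\boxplus y)\big)$, and $Q(\id\otimes\mu)(\theta\boxplus y)$ is precisely $\theta\cp\id_{(\Delta,q)}$. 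One then invokes the \emph{unit law} of composition in $\fsyn\rc Q$ --- equivalently the absorption property for $\theta\leq x\boxplus y$, a nontrivial fact established in \cite{grl2} as part of proving $\fsyn\rc Q$ is a po-category --- to get $\theta\cp\theta' = Q(\id\otimes g^{-1})(\theta) = Q(\delta)(x)$, the identity, and symmetrically for the other composite. The missing ingredient is thus the reduction to unitality via the constraint $\theta\leq x\boxplus y$, not yanking. A smaller slip in the same vein: your closing paragraph offers the ``co-graph of $g$'' as the inverse, but this is a genuinely different element from the graph of $g^{-1}$ unless $g$ is a comonoid homomorphism, and only the latter choice makes the above argument run.
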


\chapter{Preview of the next paper}\label{chap.preview}

Now that we have seen all of the major players in the theory of regular calculi, let us give an overview of the results and details developed in the companion paper \cite{grl2}.

To begin with, in the companion we give the full account of the construction of the syntactic po-category $\fsyn\rc P$ of a regular calculus $\rc P$ and prove that this extends to a $2$-functor to relational po-categories, subsuming the outlines of \cref{sec.syn,sec.graphical_morphisms} here.

Using this and the $2$-functor $\fprd$ of \cref{sec.prd} here, we turn our attention to rigorously stating and proving our main theorem which connects regular calculi to relational po-categories, and so to regular categories.

\begin{theorem}\label{thm.main}
  The $2$-functors $\fsyn\colon\rrgcalc\to\rrlpocat$ and $\fprd\colon\rrlpocat\to\rrgcalc$ are involved in a bi-adjunction $\fsyn\biadj\fprd$. Moreover, the co-unit of the bi-adjunction is part of an adjoint equivalence so that this bi-adjunction is pseudo-reflection of $\rrlpocat$ into $\rrgcalc$.
\end{theorem}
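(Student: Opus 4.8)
The plan is to exhibit the bi-adjunction through a hom-category equivalence
\[
  \rrlpocat(\fsyn\rc P,\rr)\;\eqv\;\rrgcalc(\rc P,\fprd\rr)
\]
pseudo-natural in the regular calculus $\rc P$ and the relational po-category $\rr$, and then to promote the resulting co-unit to an adjoint equivalence. Recall that $\fprd\rr=(\rr,\rr(I,-))$ by \eqref{eqn.prd_obs}, so a morphism $\rc P\to\fprd\rr$ is a pair $(F,F^{\sharp})$ with $F\colon\cont P\to\rr$ a supply-preserving strong monoidal po-functor and $F^{\sharp}_{\Gamma}\colon P(\Gamma)\to\rr(I,F\Gamma)$ a monoidal $2$-natural family; a morphism $\fsyn\rc P\to\rr$ is just such a po-functor out of the syntactic po-category \eqref{eqn.syntactic_def}.

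First I would build the unit. For $\rc P$ the component $\eta_{\rc P}\colon\rc P\to\fprd\fsyn\rc P=(\fsyn\rc P,\fsyn\rc P(I,-))$ has underlying po-functor $\cont P\to\fsyn\rc P$ sending $\Gamma\mapsto(\Gamma,\true_{\Gamma})$, together with the evident identification of $P(\Gamma)$ with $\fsyn\rc P(I,(\Gamma,\true_{\Gamma}))=\{\theta\in P(I\otimes\Gamma)\mid\theta\vdash\true\boxplus\true\}$, which is $P(\Gamma)$ up to the coherence isomorphism $I\otimes\Gamma\cong\Gamma$. That $\eta_{\rc P}$ preserves the supply and is monoidal follows from the construction of the supply of $\ww$ in $\fsyn\rc P$ sketched in \cref{sec.syn}. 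The transpose of a morphism $(F,F^{\sharp})\colon\rc P\to\fprd\rr$ is then the po-functor $\widehat{(F,F^{\sharp})}\colon\fsyn\rc P\to\rr$ defined on objects by $(\Gamma,p)\mapsto\tab{F^{\sharp}_{\Gamma}(p)}$, the chosen tabulation of $F^{\sharp}_{\Gamma}(p)\colon I\to F\Gamma$ in $\rr$. A morphism $\theta_{12}\vdash p_1\boxplus p_2$ of $\fsyn\rc P$ maps under $F^{\sharp}$ to a relation $I\to F\Gamma_1\otimes F\Gamma_2$, which the universal property of tabulations (\cref{def.tabulation}) and the identity \eqref{eqn.tabulation} turn into a morphism $\tab{F^{\sharp}_{\Gamma_1}(p_1)}\to\tab{F^{\sharp}_{\Gamma_2}(p_2)}$; the graphical calculus of \cref{prop.diagrams_basic} organises this bookkeeping.

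In the reverse direction I would send $G\colon\fsyn\rc P\to\rr$ to the composite $\fprd(G)\cp\eta_{\rc P}\colon\rc P\to\fprd\fsyn\rc P\to\fprd\rr$. These two assignments are mutually pseudo-inverse: one round trip is the identity by the explicit form of $\eta_{\rc P}$, while the other uses that $G$ preserves supply and tabulations by \cref{lemma.pres_all_struct}, whence $G(\Gamma,p)\cong\tab{\cdots}$ recovers $\widehat{(-)}$. Pseudo-naturality in $\rc P$ and $\rr$ is checked component-wise with the aid of \cref{lemma.ladjlaxnat_m,lemma.pres_all_struct_pre}, and assembling the equivalences yields $\fsyn\biadj\fprd$, the triangle identities holding up to the invertible modifications that are automatic in the locally posetal setting. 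For the co-unit $\epsilon_{\rr}\colon\fsyn\fprd\rr\to\rr$, whose object part sends $(r,\theta)$ with $\theta\colon I\to r$ to $\tab{\theta}$, I would invoke the criterion that a fully-faithful essentially surjective po-functor is an equivalence (\cref{def.po_equiv}). Essential surjectivity holds because each $r\in\rr$ is the tabulation of a canonical total relation $I\to r$; fully-faithfulness is the statement that $\fsyn\fprd\rr((r,\theta),(r',\theta'))=\{\phi\colon I\to r\otimes r'\mid\phi\le\theta\boxplus\theta'\}$ is isomorphic to $\rr(\tab{\theta},\tab{\theta'})$, which is exactly the tabulation correspondence between relations and morphisms proved by graphical manipulation of \eqref{eqn.tabulation}. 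As the co-unit is then an equivalence, the bi-adjunction is a pseudo-reflection of $\rrlpocat$ into $\rrgcalc$.

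The hard part will be twofold. First, tracking the monoidal and supply coherences through $\fsyn$: although all $2$-cell coherences degenerate in a po-category, the behaviour of $\fsyn$ on morphisms and its symmetric monoidal supply structure are only sketched in \cref{sec.syn}, so the full verification of pseudo-naturality rests on the detailed companion construction. Second, and most delicate, is the fully-faithfulness of $\epsilon_{\rr}$: one must show the tabulation assignment is a bijection on hom-posets, and it is precisely here that tabulations---the feature separating relational from merely prerelational po-categories, cf.\ \cref{ex.ww_not_relational}---are indispensable, and where the graphical reasoning of \cref{chap.graphical_reglog} is deployed most heavily.
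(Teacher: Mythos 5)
First, a caveat about the comparison itself: this paper does not actually prove \cref{thm.main}. The theorem is stated in \cref{chap.preview} as a preview of the companion \cite{grl2}, where the full construction of $\fsyn$ as a $2$-functor and the bi-adjunction argument live; so your proposal can only be measured against the hints the paper gives (\cref{sec.syn,sec.graphical_morphisms}, \cref{prop.syn_is_relational}, \cref{cor.vague_eqv}). Measured that way, your architecture is the natural one and is consistent with those hints, and several of your key sub-claims do check out: since the laxators $\boxplus$ are right adjoints they preserve top elements, so $(I,\true_I)$ is the monoidal unit of $\fsyn\rc P$ and $\fsyn\rc P\bigl(I,(\Gamma,\true_\Gamma)\bigr)\cong P(\Gamma)$ as you assert; the counit is essentially surjective because $\true_r=\eta_r$ in $\fprd\rr$ and the factorisation $I\To{\eta_r}r\To{\id_r}r$ satisfies all three clauses of \cref{def.tabulation} (here $\eta_r$ is a right adjoint by \cref{prop.left_adj_in_ww} and the supply), so $\tab{\true_r}\cong r$; and fully-faithfulness of the counit reduces, via \cref{thm.carboni_equivalence}, to the correspondence in $\rrel(\cat R)$ between subobjects of $\theta\times\theta'$ and relations contained in $\theta\times\theta'$.

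There are, however, genuine gaps. First, a bi-adjunction requires pseudo-natural equivalences of hom-\emph{categories} $\rrlpocat(\fsyn\rc P,\rr)\eqv\rrgcalc(\rc P,\fprd\rr)$, but you only construct the correspondence on the objects of these hom-categories (the $1$-cells $G$ and $(F,F^{\sharp})$); you never say what it does to the $2$-cells---the left adjoint oplax-natural transformations on one side and the pairs $\bigl(\alpha,\,F^{\sharp}\cp P'\alpha\leq G^{\sharp}\bigr)$ of \cref{def.reg_calc_morphisms} on the other---and without that neither the bi-adjunction nor its coherence exists. (Relatedly, the unit's action on \emph{morphisms} of $\cont P$ is missing: $f\colon\Gamma_1\to\Gamma_2$ must go to an element of $P(\Gamma_1\otimes\Gamma_2)$, presumably $\church{(;\name{f})}$, and functoriality of that assignment is a real verification.) Second, your justification that the triangle identities hold ``up to the invertible modifications that are automatic in the locally posetal setting'' conflates two levels: the \emph{objects} of $\rrgcalc$ and $\rrlpocat$ are locally posetal, but these $2$-categories are not---between two parallel $1$-cells there are in general many distinct $2$-cells, so the invertible modifications and swallowtail equations are genuine data and conditions. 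Had you completed the hom-category equivalences, this data would follow from the general theory of bi-adjunctions, but that is a different argument from the one you give. Third, \cref{def.tabulation} is purely equational and states no universal property, so the step converting $\theta_{12}\leq p_1\boxplus p_2$ into a morphism $\tab{F^{\sharp}_{\Gamma_1}(p_1)}\to\tab{F^{\sharp}_{\Gamma_2}(p_2)}$, and the poset isomorphism $\fsyn\fprd\rr\bigl((r,\theta),(r',\theta')\bigr)\cong\rr(\tab{\theta},\tab{\theta'})$ underlying fully-faithfulness, must be constructed and proven by hand from \eqref{eqn.tabulation} and the graphical calculus; you rightly flag this as the hard part, but then lean on it as if it were available.
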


This main theorem allows us to deduce almost all of the rest of our results. For instance, we use it to give a $2$-dimensional account of the regular completion of a category with finite limits.

\begin{theorem}\label{thm.reg_lex}
  The forgetful $2$-functor $U\colon\rrgcat\to\fflcat$ is right bi-adjoint to the composite
  \[ \fflcat\To{\fspanpo}\pprlpocat\To{\fprd}\rrgcalc\To{\fsyn}\rrlpocat\To{\fladj}\rrgcat\]
  where $\fflcat$ is the category of categories with finite limits.
\end{theorem}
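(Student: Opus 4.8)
The plan is to exhibit $L\coloneqq\fladj\circ\fsyn\circ\fprd\circ\fspanpo$ as left bi-adjoint to $U$ by producing a family of equivalences of hom-categories
\[ \rrgcat(L\cat{C},\cat{R})\eqv\fflcat(\cat{C},U\cat{R}) \]
pseudo-natural in $\cat{C}\in\fflcat$ and $\cat{R}\in\rrgcat$, and then invoking the standard bicategorical recognition principle that such a pseudo-natural hom-equivalence determines a bi-adjunction (the unit and co-unit being extracted by a Yoneda argument). The bulk of the work is assembling this equivalence by chaining together the adjunctions and equivalences already in hand.

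I would build the chain as follows. Since $\fladj$ and $\rrel$ form an equivalence of $2$-categories (\cref{thm.carboni_equivalence}) and $\fsyn\biadj\fprd$ is a bi-adjunction (\cref{thm.main}), the composite $\fladj\circ\fsyn$ is left bi-adjoint to $\fprd\circ\rrel$; hence, writing $Q\coloneqq\fprd\fspanpo\cat{C}$, we obtain
\[ \rrgcat(L\cat{C},\cat{R})=\rrgcat(\fladj\fsyn Q,\cat{R})\eqv\rrgcalc(Q,\fprd\rrel\cat{R}). \]
Both arguments of this last hom-category lie in the image of $\fprd$, and $\fprd$ is $2$-dimensionally fully-faithful (\cref{thm.main}), so this is equivalent to $\pprlpocat(\fspanpo\cat{C},\rrel\cat{R})$. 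I would then apply the universal property of the span construction, namely an equivalence
\[ \pprlpocat(\fspanpo\cat{C},\rr)\eqv\fflcat(\cat{C},\fladj\rr) \]
valid for relational $\rr$, followed by \cref{thm.carboni_equivalence} once more in the form $\fladj\rrel\cat{R}\eqv\cat{R}$ to rewrite the right-hand side as $\fflcat(\cat{C},U\cat{R})$. Composing these four equivalences yields the desired hom-equivalence.

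Two ingredients require attention. First, \cref{thm.main} states fully-faithfulness of $\fprd$ on $\rrlpocat$, whereas here the source $\fspanpo\cat{C}$ is only prerelational; I would note that the construction of $\fprd$ in \cref{sec.prd} and its supporting results (e.g.\ \cref{lemma.pres_all_struct_pre}) never invoke tabulations, so the fully-faithfulness argument applies verbatim on $\pprlpocat$. Second, and this is the main obstacle, the universal property of $\fspanpo$. I would prove it by observing that a strong symmetric monoidal po-functor $F\colon\fspanpo\cat{C}\to\rr$ preserves the supply of $\ww$ (\cref{lemma.pres_all_struct_pre}) and so restricts on left adjoints to a finite-product-preserving functor $\overline{F}\colon\cat{C}\eqv\fladj\fspanpo\cat{C}\to\fladj\rr$; conversely, since every span factors as a right adjoint followed by a left adjoint, $F$ is determined by $\overline{F}$, and the only constraint for the assignment to respect span composition (computed by pullback in $\cat{C}$) is precisely that $\overline{F}$ preserve pullbacks. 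As $\rr$ is relational, $\fladj\rr$ has all finite limits, so preservation of products and pullbacks amounts to preservation of finite limits, and $F\leftrightarrow\overline{F}$ is the claimed equivalence. This is exactly where the hypothesis that $\cat{C}$ has finite limits (rather than mere finite products) is used, and where care with the poset reflection built into $\fspanpo$ is needed.

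It remains to verify that each equivalence in the chain is pseudo-natural in $\cat{C}$ and $\cat{R}$: the first two inherit pseudo-naturality from the bi-adjunctions and the equivalence of $2$-categories, the third from the pseudo-naturality of the fully-faithful structure of $\fprd$, and the last two from the naturality of the span universal property and of \cref{thm.carboni_equivalence}. Granting pseudo-naturality, the recognition principle produces the bi-adjunction $L\biadj U$. I expect the span universal property and the verification of its pseudo-naturality to be the most delicate points; the remaining steps are formal consequences of \cref{thm.main,thm.carboni_equivalence}.
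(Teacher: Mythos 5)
A preliminary caveat: the paper you were given does not itself prove \cref{thm.reg_lex}. The theorem sits in the preview chapter (\cref{chap.preview}) with its proof deferred to the companion \cite{grl2}; the only strategic indication is that it is ``deduced'' from \cref{thm.main}. So your proposal can only be compared against that indication, with which it is consistent: chaining pseudo-natural hom-equivalences built from \cref{thm.main} and \cref{thm.carboni_equivalence} and invoking the recognition principle for bi-adjunctions is a sound skeleton, and it correctly isolates the two ingredients that those black boxes do not supply: extending the full-faithfulness of $\fprd$ to prerelational sources, and the universal property of $\fspanpo$.

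On the first ingredient, your justification is invalid as stated, although the claim itself is true. Full-faithfulness of $\fprd$ on $\rrlpocat$ is not established anywhere in this paper by an argument you could apply ``verbatim''; it is a formal consequence of the co-unit of $\fsyn\biadj\fprd$ being an equivalence, and that co-unit argument genuinely needs tabulations---it cannot extend to $\pprlpocat$, since $\fsyn\fprd\ww$ is relational while $\ww$ is not (\cref{ex.ww_not_relational}). What saves you is a short direct computation from results that \emph{are} in the paper: for any morphism $(G,G^{\sharp})\colon\fprd\rr_{1}\to\fprd\rr_{2}$ of regular calculi, $2$-naturality of $G^{\sharp}$ evaluated at $\id_{I}\in\rr_{1}(I,I)$ gives $G^{\sharp}_{r}(\omega)=G^{\sharp}_{I}(\id_{I})\cp G\omega$, and the strict unit condition on monoidal $2$-natural transformations forces $G^{\sharp}_{I}(\id_{I})=\varphi_{I}$, so $G^{\sharp}$ is exactly the canonical transformation of \eqref{eqn.prd_mors}; at the $2$-cell level, monoidality is automatic by \cref{lemma.ladjlaxnat_m} and the required inequality is automatic by the computation \eqref{eqn.prd_twomors}. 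You should replace the ``verbatim'' claim with this argument; as written, that step asserts that an argument you have not seen (and which in fact does use tabulations) transfers to a setting where its conclusion-by-that-route fails.

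On the second ingredient, which you rightly flag as the main obstacle, your proposal does not close the gap: the equivalence $\pprlpocat(\fspanpo\cat{C},\rr)\eqv\fflcat(\cat{C},\fladj\rr)$ is the real mathematical content of the theorem once \cref{thm.main} and \cref{thm.carboni_equivalence} are granted, and your sketch leaves its hard parts open. Concretely, one must prove that the left adjoints in the poset reflection of spans are precisely the graphs (so that $\fladj\fspanpo\cat{C}\eqv\cat{C}$); that the functor built from $\overline{F}$ is well defined on the poset reflection and monotone on $2$-cells; the Beck--Chevalley-type claim that respecting span composition is \emph{equivalent} to pullback preservation (not merely implied by it); the $2$-cell correspondence between left adjoint oplax-natural transformations and ordinary natural transformations; and pseudo-naturality of the whole equivalence in both variables. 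As it stands, your argument is a correct and useful \emph{reduction} of \cref{thm.reg_lex} to an unproven lemma of real substance---indeed the place where finite limits rather than finite products enter---but it is not yet a proof.
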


As a corollary of this result, we are able to show that taking the regular category of left adjoints in a relational po-category is in fact bi-represented.

\begin{corollary}\label{cor.ladj_birep}
  The $2$-functor $\fladj\colon\rrlpocat\to\rrgcat$ is bi-represented by the relational po-category $\fsyn\fprd\ww$, that is, there is a pseudo-natural adjoint equivalence of the $2$-functors
  \[ \rrlpocat(\fsyn\fprd\ww,-)\aeqv\fladj\colon\rrlpocat\to\rrgcat\ .\]
\end{corollary}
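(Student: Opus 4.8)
The plan is to derive the bi-representation from the main theorem \cref{thm.main} together with a universal property of $\ww$ as the free supplier of wiring on one object. Concretely, the target asserts for each relational po-category $\rr$ an equivalence between the hom-category $\rrlpocat(\fsyn\fprd\ww,\rr)$ and the underlying category of the regular category $\fladj\rr$, pseudo-natural in $\rr$, so I would produce it as a composite of three pseudo-natural equivalences and then upgrade the whole to an adjoint equivalence.

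First I would establish the free property $\pprlpocat(\ww,-)\eqv\fladj$ on $\rrlpocat$. A $1$-morphism $\ww\to\rr$ is a strong symmetric monoidal po-functor, which by \cref{lemma.pres_all_struct_pre} automatically preserves the supply of $\ww$; since $\ww$ supplies itself freely on the generating object $\ord1$ (\cref{prop.p_supplies_itself}) and every morphism of $\ww$ is a composite of tensors of the supplied Frobenius generators and symmetries, such a po-functor is determined up to isomorphism by its value $F(\ord1)\in\ob\rr$, with $F(\ord n)=F(\ord1)\tpow n$. Hence objects of $\pprlpocat(\ww,\rr)$ correspond to objects of $\rr$, i.e.\ of $\fladj\rr$. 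A $2$-morphism $\alpha\colon F\tto G$ is a left adjoint oplax-natural transformation; by \cref{lemma.ladjlaxnat_m} it is automatically monoidal, so it is pinned down by its single generating component $\alpha_{\ord1}\colon F(\ord1)\to G(\ord1)$, a left adjoint in $\rr$, which is exactly a morphism of $\fladj\rr$. Naturality in $\rr$ is routine, since post-composition with $H\colon\rr\to\rr'$ corresponds to $\fladj H$.

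Next I would transport this across the bi-adjunction. By \cref{thm.main}, $\fsyn\biadj\fprd$, giving a pseudo-natural equivalence
\[ \rrlpocat(\fsyn\fprd\ww,\rr)\eqv\rrgcalc(\fprd\ww,\fprd\rr). \]
Because the co-unit of $\fsyn\biadj\fprd$ is an adjoint equivalence, $\fprd$ is $2$-dimensionally fully-faithful; invoking this (in the form established for prerelational sources in \cite{grl2}) yields $\rrgcalc(\fprd\ww,\fprd\rr)\eqv\pprlpocat(\ww,\rr)$. Composing with the free property above produces
\[ \rrlpocat(\fsyn\fprd\ww,\rr)\eqv\pprlpocat(\ww,\rr)\eqv\fladj\rr. \]
The same conclusion also follows as a corollary of \cref{thm.reg_lex}, as the surrounding text signposts: that theorem bi-represents the underlying-category $2$-functor $U\colon\rrgcat\to\fflcat$ by the free regular category on one object $\fladj\fsyn\fprd\fspanpo(\cat F)$, where $\cat F$ is the free finite-limit category on one object; identifying $\fspanpo(\cat F)$ with $\ww$ via the cospan description of \cref{def.wwiring}, and applying the equivalence $\rrel\colon\rrgcat\eqv\rrlpocat\cocolon\fladj$ of \cref{thm.carboni_equivalence}, rewrites the representing object as $\fsyn\fprd\ww$.

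The main obstacle is the correct handling of the representing object at the boundary where $\ww$ fails to be relational (\cref{ex.ww_not_relational}): the full-faithfulness of $\fprd$ must be applied to the prerelational $\ww$ rather than to a relational object, and in the reg-lex route one must verify that taking spans of the free finite-limit category on one object and passing to the poset reflection genuinely recovers $\ww$. Finally, the bare hom-category equivalences must be assembled coherently into a pseudo-natural \emph{adjoint} equivalence of $2$-functors, the details of which are deferred to \cite{grl2}.
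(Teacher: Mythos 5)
Your proposal is correct, and your primary route is genuinely different from the paper's. Within this paper the corollary is presented as a consequence of \cref{thm.reg_lex} (with details in \cite{grl2}), and that derivation is exactly your secondary route: instantiate the reg--lex bi-adjunction at the free finite-limit category $\cat F$ on one object, use freeness to identify $\fflcat(\cat F, U\cat R)\eqv U\cat R$, identify $\fspanpo(\cat F)\eqv\ww$ via the cospan description, and transfer along the equivalence of \cref{thm.carboni_equivalence}. Your primary route instead works directly from the bi-adjunction of \cref{thm.main}, trading the lex-side input for two facts on the $\ww$ side: freeness of $\ww$ among prerelational po-categories, $\pprlpocat(\ww,-)\eqv\fladj$, and $2$-dimensional full-faithfulness of $\fprd$ with prerelational source. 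What your route buys is independence from \cref{thm.reg_lex} and from knowing what the free finite-limit category on one object is (and that its po-category of spans is $\ww$); what the paper's route buys is that, once \cref{thm.reg_lex} is in hand, no full-faithfulness statement beyond the relational one is needed. You also correctly isolate the one genuine danger point common to both routes: $\ww$ is not relational (\cref{ex.ww_not_relational}), so the counit-equivalence argument alone does not cover the representing object.

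Two of your deferrals can in fact be closed with tools already in this paper. First, full-faithfulness of $\fprd\colon\pprlpocat\to\rrgcalc$ needs no appeal to \cite{grl2}: for any morphism of regular calculi $(F,F^\sharp)\colon\fprd\rr\to\fprd\rr'$, evaluating $2$-naturality of $F^\sharp$ at morphisms $h\colon I\to s$ gives $F^\sharp_s(h)=F^\sharp_I(\id_I)\cp F(h)$, and the monoidal unit condition forces $F^\sharp_I(\id_I)=\varphi_I$, so $F^\sharp$ is the canonical transformation of \eqref{eqn.prd_mors}; on $2$-cells, \cref{lemma.ladjlaxnat_m} together with the computation \eqref{eqn.prd_twomors} shows the required inequality is automatic. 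Hence $\fprd$ is locally an isomorphism on all of $\pprlpocat$. Second, in your freeness step, ``pinned down by $\alpha_{\ord1}$'' only gives faithfulness; fullness requires checking that every left adjoint $f\colon F(\ord1)\to G(\ord1)$ extends to a left adjoint oplax-natural transformation with components $f\tpow{n}$ (modulo strongators), i.e.\ that $\omega_{F(\ord1)}\cp f\tpow{n}\leq f\tpow{m}\cp\omega_{G(\ord1)}$ for every $\omega\colon\ord m\to\ord n$ in $\ww$. This holds because left adjoints are strict comonoid homomorphisms while every morphism is oplax homomorphic for the monoid generators (\cref{lemma.lax_comons_oplax_mons}), and oplax squares paste across composites and tensors; it is routine, but it is a step rather than a definition.
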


Our main theorem also affords us an important ``bare-ification'' result for regular calculi: every relational po-category is equivalent to the syntactic po-category of a bare regular calculus. As such, we are entitled to reason in an arbitrary relational po-category using graphical terms with \emph{no} kites.

\begin{corollary}\label{cor.vague_eqv}
  Let $\rr$ be a relational po-category, then there are pseudo-natural equivalences in $\rrlpocat$
  \[
    \rr \eqv \fsyn\fprd\rr\eqv \fsyn\big(\textstyle\bigsqcup_{\ob\rr}\ww,\ s^{\sqcup}\cp \rr(I,-)\big)\ .
  \]
\end{corollary}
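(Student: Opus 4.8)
The plan is to read the displayed chain as two independent claims and dispatch each with a different tool. The left-hand equivalence $\rr\eqv\fsyn\fprd\rr$ is nothing but the assertion that the co-unit of the bi-adjunction $\fsyn\biadj\fprd$, which points $\fsyn\fprd\rr\to\rr$, is an equivalence. This is exactly the second half of \cref{thm.main}, which records that this co-unit is part of an adjoint equivalence, and its pseudo-naturality in $\rr$ is precisely the pseudo-naturality of the co-unit of a bi-adjunction. So for this half there is nothing to do beyond invoking the main theorem.

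For the right-hand equivalence I would exhibit a single morphism of regular calculi and feed it to \cref{prop.syn_is_relational}. Recall that $\fprd\rr=(\rr,\rr(I,-))$ and that the supply of $\ww$ in $\rr$ induces, by \cref{prop.supply_coprod}, a strong symmetric monoidal po-functor $s^{\sqcup}\colon\bigsqcup_{\ob\rr}\ww\to\rr$. Precomposing the predicate functor of $\fprd\rr$ along $s^{\sqcup}$ is precisely how the bare calculus $(\bigsqcup_{\ob\rr}\ww,\ s^{\sqcup}\cp\rr(I,-))$ is built; hence the pair $(s^{\sqcup},\id)$, with identity $\sharp$-component since $s^{\sqcup}\cp\rr(I,-)$ literally equals the composite of $\rr(I,-)$ with $s^{\sqcup}$, is a morphism of regular calculi from the bare calculus to $\fprd\rr$.

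To apply \cref{prop.syn_is_relational} I would check its two hypotheses on $(s^{\sqcup},\id)$. First, $s^{\sqcup}$ is essentially surjective: by \cref{prop.supply_coprod} the $c$-th coprojection satisfies $\iota_c\cp s^{\sqcup}=s_c$, and the supply axioms give $s_c(\ord 1)=c^{\otimes 1}=c$, so every object of $\rr$ is hit on the nose. Second, the $\sharp$-component is the identity and so an isomorphism. (That $(s^{\sqcup},\id)$ is a legitimate morphism at all uses that $s^{\sqcup}$ preserves the supply, which is automatic by \cref{lemma.pres_all_struct_pre} once one notes, via \cref{lemma.irritating_prerelational_coprod} and \cref{ex.ww_prerelational}, that both $\bigsqcup_{\ob\rr}\ww$ and $\rr$ are prerelational; and that the bare calculus is a genuine regular calculus follows since $\rr(I,-)$ is right ajax by \cref{prop.prerel_ajax} and postcomposition with the strong, hence right ajax, functor $s^{\sqcup}$ preserves this by \cref{lemma.ajax}.) \Cref{prop.syn_is_relational} then yields $\fsyn(\bigsqcup_{\ob\rr}\ww,\ s^{\sqcup}\cp\rr(I,-))\eqv\fsyn\fprd\rr$, and splicing the two equivalences gives the statement.

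The main obstacle is not either equivalence in isolation but their pseudo-naturality in $\rr$, that is, assembling the pointwise equivalences into pseudo-natural transformations of $2$-functors $\rrlpocat\to\rrlpocat$. For the first equivalence this comes for free from the bi-adjunction. For the second it requires organising the assignment $\rr\mapsto(\bigsqcup_{\ob\rr}\ww,\ s^{\sqcup}\cp\rr(I,-))$ into a (pseudo)functor and checking that the morphisms $(s^{\sqcup},\id)$ are the components of a pseudo-natural transformation into $\fprd$; applying the $2$-functor $\fsyn$ componentwise and invoking \cref{prop.syn_is_relational} then promotes it to a pseudo-natural equivalence. Verifying this functorial naturality, in particular the coherence of $s^{\sqcup}$ with morphisms $F\colon\rr\to\rr'$ at the level of the coproduct indexing over $\ob\rr$, is the step I expect to require the most care.
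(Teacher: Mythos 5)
Your proposal is correct and follows essentially the same route the paper intends: the first equivalence is the co-unit adjoint equivalence supplied by \cref{thm.main}, and the second comes from applying \cref{prop.syn_is_relational} to the morphism of regular calculi $(s^{\sqcup},\id)\colon\big(\bigsqcup_{\ob\rr}\ww,\ s^{\sqcup}\cp\rr(I,-)\big)\to\fprd\rr$, whose first component is essentially surjective and whose $\sharp$-component is an isomorphism---precisely the pattern the paper itself uses (cf.\ the analogous application of \cref{prop.syn_is_relational} to $(!_{\cc},\id_{L})$). The paper defers the detailed verification, including the pseudo-naturality you rightly flag as the delicate step, to the companion, but your construction and your checks of the prerelational, supply-preservation, and right-ajax hypotheses all line up with the intended argument.
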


With these tools, in the companion we then return to the topic of regular theories and their connection to and realisations as regular calculi. We connect our work on regular calculi to the classical devices of categorical regular logic by first proving the following theorem.

\begin{theorem}\label{thm.syn_equiv}
  Given a regular theory $(\Sigma,\mathbb T)$, there is an equivalence of categories between the syntactic regular category $\cat C^{\mathrm{reg}}_{\mathbb T}$ associated to the regular theory and the regular category of left adjoints $\fladj\fsyn(\cc_{\mathbb T},F_{\mathbb T})$ in the syntactic po-category of the regular calculus associated to the regular theory (\cref{con.rgcalc_rgtheory}).
\end{theorem}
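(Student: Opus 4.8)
The plan is to prove the equivalence at the level of relational po-categories and then transport it across the Carboni--Walters equivalence of \cref{thm.carboni_equivalence}. Concretely, I will establish that $\fsyn(\cc_{\mathbb T},F_{\mathbb T})$ is equivalent, as a relational po-category, to the po-category of relations $\rrel(\cat C^{\mathrm{reg}}_{\mathbb T})$ of the classical syntactic regular category. Granting this relational equivalence, one applies the $2$-functor $\fladj\colon\rrlpocat\to\rrgcat$ to obtain $\fladj\fsyn(\cc_{\mathbb T},F_{\mathbb T})\eqv\fladj\rrel(\cat C^{\mathrm{reg}}_{\mathbb T})$, and since $\fladj$ and $\rrel$ are mutually inverse equivalences by \cref{thm.carboni_equivalence}, the right-hand side is $\eqv\cat C^{\mathrm{reg}}_{\mathbb T}$; composing yields the claimed equivalence $\cat C^{\mathrm{reg}}_{\mathbb T}\eqv\fladj\fsyn(\cc_{\mathbb T},F_{\mathbb T})$.

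The substance is therefore the relational comparison, which I would prove by exhibiting an explicit strong symmetric monoidal po-functor $G\colon\fsyn(\cc_{\mathbb T},F_{\mathbb T})\to\rrel(\cat C^{\mathrm{reg}}_{\mathbb T})$ implementing the ``formula-in-context'' dictionary. On objects, recall from \cref{con.rgcalc_rgtheory} that an object of $\cc_{\mathbb T}=\bigsqcup_{\Sigma\text{-sort}}\ww$ records a multiset of sorts, so a pair $(\Gamma,p)\in\ob\fsyn(\cc_{\mathbb T},F_{\mathbb T})$ is precisely a formula $p$ together with a suitable context $\Gamma$; I send it to the formula-in-context $\{\vec x:\Gamma\mid p\}$ in $\cat C^{\mathrm{reg}}_{\mathbb T}$. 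On homs, by \eqref{eqn.syntactic_def} a morphism $(\Gamma_1,p_1)\to(\Gamma_2,p_2)$ is a formula $\theta\in F_{\mathbb T}(\Gamma_1\otimes\Gamma_2)$ with $\theta\vdash p_1\boxplus p_2$; I send it to the subobject $\{(\vec x,\vec y)\mid\theta\}\rightarrowtail\{\vec x\mid p_1\}\times\{\vec y\mid p_2\}$, an element of $\rrel(\cat C^{\mathrm{reg}}_{\mathbb T})(\{p_1\},\{p_2\})=\sub_{\cat C^{\mathrm{reg}}_{\mathbb T}}(\{p_1\}\times\{p_2\})$ by \eqref{eqn.def_rel}. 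Functoriality is then checked against the graphical descriptions of identities and composition sketched in \cref{sec.syn}: the identity on $(\Gamma,p)$ is the diagonal relation $[p(\vec x)\wedge\vec x=\vec x']$, and composition is the existential quantification of the shared wire, $\exists_{\vec y}[\theta_{12}(\vec x,\vec y)\wedge\theta_{23}(\vec y,\vec z)]$, which are exactly the identity and relational composite in $\rrel(\cat C^{\mathrm{reg}}_{\mathbb T})$. The monoidal product and unit match likewise, since $(\Gamma_1,p_1)\otimes(\Gamma_2,p_2)=(\Gamma_1\otimes\Gamma_2,p_1\boxplus p_2)$ corresponds to $\{p_1\}\times\{p_2\}$ and $(I,\true)$ to the terminal object.

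To see that $G$ is an equivalence I would verify that it is fully faithful and essentially surjective (\cref{def.po_equiv}). Full faithfulness is the observation that both hom-posets are ``formulae modulo provable equivalence, ordered by provability'': on one side this is built into the poset quotient defining $F_{\mathbb T}$ in \cref{con.rgcalc_rgtheory}, and on the other it is the standard fact that subobjects of $\{p_1\}\times\{p_2\}$ in the syntactic regular category are represented by provable-equivalence classes of formulae entailing $p_1\wedge p_2$ (see \cite{butz1998regular,Johnstone:2002a}), so $G$ restricts to an isomorphism on each hom. Essential surjectivity holds because every object of $\cat C^{\mathrm{reg}}_{\mathbb T}$ is a formula-in-context $\{\vec x:\vec X\mid\varphi\}$, and choosing the object $\Gamma\in\ob\cc_{\mathbb T}$ with the same multiset of sorts as $\vec X$ gives $G(\Gamma,\varphi)\iso\{\vec x:\vec X\mid\varphi\}$, the loss of ordering in $\cc_{\mathbb T}$ being harmless since reorderings of a context are isomorphisms in $\cat C^{\mathrm{reg}}_{\mathbb T}$. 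Being a strong symmetric monoidal po-functor that is an equivalence, $G$ automatically preserves the supply of $\ww$ and tabulations by \cref{lemma.pres_all_struct}, hence is an equivalence in $\rrlpocat$, completing the relational comparison.

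The main obstacle is the hom-level identification in the full-faithfulness step: one must match the hom-poset $\{\theta\in F_{\mathbb T}(\Gamma_1\otimes\Gamma_2)\mid\theta\vdash p_1\boxplus p_2\}$ of \eqref{eqn.syntactic_def} with the subobject poset $\sub_{\cat C^{\mathrm{reg}}_{\mathbb T}}(\{p_1\}\times\{p_2\})$, which requires unwinding the explicit construction of $\cat C^{\mathrm{reg}}_{\mathbb T}$ and taking care that the three syntactic quotients ($\alpha$-equivalence, provable equivalence, poset reflection) are performed compatibly on both sides. The remaining bookkeeping---that relational composition agrees with the graphical composition of \cref{sec.syn}, and that the context-as-multiset convention of $\cc_{\mathbb T}$ does not obstruct the comparison---is routine given the graphical calculus of \cref{prop.diagrams_basic,prop.diagrams_meet}, but its full discharge, together with the full $2$-functoriality of $\fsyn$, is elaborated in the companion \cite{grl2}.
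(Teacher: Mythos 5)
There is no in-paper proof to compare you against: \cref{thm.syn_equiv} is stated in the preview chapter (\cref{chap.preview}) and its proof is deferred wholesale to the companion \cite{grl2}. What can be assessed is whether your argument is sound and consistent with the paper's framework, and on that score it is: your strategy coincides exactly with the paper's own gloss of this theorem in the introduction (the first lettered theorem there), which phrases it precisely in your intermediate form---$\fsyn(\cc_{\mathbb T},F_{\mathbb T})$ is equivalent to the po-category of relations of $\cat C^{\mathrm{reg}}_{\mathbb T}$, ``under the equivalence between relational po-categories and regular categories.'' Your reduction step is correct, since $\fladj$ is a $2$-functor and hence carries an equivalence in $\rrlpocat$ to an equivalence of regular categories, with $\fladj\rrel(\cat C^{\mathrm{reg}}_{\mathbb T})\eqv\cat C^{\mathrm{reg}}_{\mathbb T}$ by \cref{thm.carboni_equivalence}. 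The formula-in-context dictionary, the identification of the hom-poset of \eqref{eqn.syntactic_def} with $\sub_{\cat C^{\mathrm{reg}}_{\mathbb T}}(\{p_1\}\times\{p_2\})$, and the matching of identities and composites with diagonals and relational composition are all the right content, and you correctly locate the real work in the hom-level identification.

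Two dependencies should be made explicit, because they are not available in this paper and your write-up leans on them silently. First, for ``equivalence in $\rrlpocat$'' to be meaningful, and for \cref{lemma.pres_all_struct} to apply, you need $\fsyn(\cc_{\mathbb T},F_{\mathbb T})$ to be a well-defined symmetric monoidal po-category and indeed a \emph{relational} one; in this paper those facts are only sketched in \cref{sec.syn} and are themselves theorems of the companion. So your argument is a proof relative to the companion's foundational results on $\fsyn$, not a self-contained alternative to them. Second, your functoriality and composition checks rely on the translation of graphical terms into regular formulae, which the paper leaves informal (\cref{rem.graphical_regular}); discharging this requires verifying from \cref{con.rgcalc_rgtheory} that the wiring $\omega$ appearing in the graphically defined composite is sent by $F_{\mathbb T}$ to the operation $\theta_{12},\theta_{23}\mapsto\exists_{\vec y}[\theta_{12}(\vec x,\vec y)\wedge\theta_{23}(\vec y,\vec z)]$. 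Neither point is a flaw in the strategy; they simply mark the boundary between what your proposal establishes and what the companion must supply.
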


To address the question of models of a regular calculus, we first make the following definition.

\begin{definition}\label{def.models}
  The $2$-category of models of a regular calculus $\rc P$ in a relational po-category $\rr$ is defined to be $\rc P\model(\rr)\coloneqq\rrlpocat(\fsyn\rc P,\rr)$. In this way, $\rc P\model(\cdot)$ extends to a $2$-functor $\rc P\model(\cdot)\colon\rrlpocat\to\CCat{Cat}$.
\end{definition}

Using this, our main theorem, \cref{thm.main}, and \cref{thm.syn_equiv} we are able to prove that models in our sense suitably agree with and therefore generalise the classical notion of models of a regular theory.

\begin{theorem}\label{thm.model_equiv}
  Let $(\Sigma, \mathbb T)$ be a regular theory. There is an equivalence of categories \[(\cc_{\mathbb T},F_{\mathbb T})\text-\operatorname{Mod}(\frel\cat R)\eqv \mathbb T\text-\operatorname{Mod}(\cat R)\] pseudo-natural in regular categories $\cat R$.
\end{theorem}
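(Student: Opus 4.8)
The plan is to prove \cref{thm.model_equiv} as a corollary of \cref{thm.syn_equiv} together with the main bi-adjunction \cref{thm.main}, chaining together the various equivalences already at hand. First I would unfold the left-hand side using \cref{def.models}: by definition we have
\[
  (\cc_{\mathbb T},F_{\mathbb T})\model(\frel\cat R)=\rrlpocat\big(\fsyn(\cc_{\mathbb T},F_{\mathbb T}),\frel\cat R\big).
\]
The key move is to replace the syntactic po-category $\fsyn(\cc_{\mathbb T},F_{\mathbb T})$ by the po-category of relations of the classical syntactic regular category. By \cref{thm.syn_equiv}, there is an equivalence of regular categories $\cat C^{\mathrm{reg}}_{\mathbb T}\eqv\fladj\fsyn(\cc_{\mathbb T},F_{\mathbb T})$; transporting this across the $2$-equivalence $\rrel\colon\rrgcat\leftrightarrows\rrlpocat\cocolon\fladj$ of \cref{thm.carboni_equivalence} yields a pseudo-natural equivalence of relational po-categories $\fsyn(\cc_{\mathbb T},F_{\mathbb T})\eqv\frel(\cat C^{\mathrm{reg}}_{\mathbb T})$. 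Since $\rrlpocat(-,\frel\cat R)$ sends equivalences of relational po-categories to equivalences of categories, this gives
\[
  (\cc_{\mathbb T},F_{\mathbb T})\model(\frel\cat R)\eqv\rrlpocat\big(\frel(\cat C^{\mathrm{reg}}_{\mathbb T}),\frel\cat R\big),
\]
pseudo-naturally in $\cat R$.

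Next I would descend the right-hand side back to regular categories. Applying the $2$-equivalence $\rrel\colon\rrgcat\eqv\rrlpocat$ of \cref{thm.carboni_equivalence} once more—now on hom-categories—identifies $\rrlpocat(\frel(\cat C^{\mathrm{reg}}_{\mathbb T}),\frel\cat R)$ with the hom-category $\rrgcat(\cat C^{\mathrm{reg}}_{\mathbb T},\cat R)$ of regular functors, up to equivalence. This is exactly where the classical representation theorem for the syntactic regular category enters: by the defining universal property of $\cat C^{\mathrm{reg}}_{\mathbb T}$ (see for instance \cite{butz1998regular}), there is an equivalence
\[
  \rrgcat(\cat C^{\mathrm{reg}}_{\mathbb T},\cat R)\eqv\mathbb T\model(\cat R),
\]
natural in $\cat R$. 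Composing the three equivalences established above then delivers the desired equivalence $(\cc_{\mathbb T},F_{\mathbb T})\model(\frel\cat R)\eqv\mathbb T\model(\cat R)$.

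I expect the main obstacle to be the \emph{naturality} bookkeeping, rather than the construction of the equivalence at a single $\cat R$. Each ingredient is pseudo-natural in $\cat R$ separately—the hom-wise action of the $2$-equivalence $\rrel$, the transported equivalence of \cref{thm.syn_equiv}, and the classical representation—but to conclude a pseudo-natural equivalence of the two $2$-functors $\rrlpocat\to\CCat{Cat}$ I must verify that these coherences assemble compatibly. The cleanest way to handle this is to phrase everything at the level of $2$-functors before evaluating: realise both $(\cc_{\mathbb T},F_{\mathbb T})\model(\cdot)$ and $\mathbb T\model(\cdot)$ as composites of $2$-functors, observe that $\fsyn(\cc_{\mathbb T},F_{\mathbb T})\eqv\frel(\cat C^{\mathrm{reg}}_{\mathbb T})$ is an equivalence in $\rrlpocat$ (so that whiskering by $\rrlpocat(-,-)$ is automatically pseudo-natural), and invoke that $\rrel$ and $\fladj$ are mutually pseudo-inverse $2$-functors so their hom-wise comparison $2$-cells are coherent by \cref{thm.carboni_equivalence}. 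With this packaging, the naturality in $\cat R$ follows formally from the pseudo-naturality of the classical representation $\rrgcat(\cat C^{\mathrm{reg}}_{\mathbb T},-)\eqv\mathbb T\model(-)$, and the only genuine content beyond \cref{thm.syn_equiv} is the invocation of that classical fact.
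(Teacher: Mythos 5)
Your proposal is correct, and it is worth being precise about how it sits relative to the paper: this paper never writes out a proof of \cref{thm.model_equiv} at all---it defers to the companion \cite{grl2}, recording only that the theorem follows from \cref{def.models}, \cref{thm.main}, and \cref{thm.syn_equiv}. Your chain (unfold \cref{def.models}; transport \cref{thm.syn_equiv} across \cref{thm.carboni_equivalence} to get $\fsyn(\cc_{\mathbb T},F_{\mathbb T})\eqv\frel(\cat C^{\mathrm{reg}}_{\mathbb T})$ in $\rrlpocat$, which is legitimate since an equivalence of categories between regular categories is automatically an equivalence in $\rrgcat$; apply the hom-wise equivalences of the biequivalence; finish with the classical bi-representation $\rrgcat(\cat C^{\mathrm{reg}}_{\mathbb T},-)\eqv\mathbb T\model(-)$ from \cite{butz1998regular}) uses two of the paper's three stated ingredients but never invokes \cref{thm.main}. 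The paper's attribution suggests the companion instead routes the comparison through the bi-adjunction $\fsyn\biadj\fprd$---for instance by identifying $\rrlpocat(\fsyn(\cc_{\mathbb T},F_{\mathbb T}),\frel\cat R)$ with $\rrgcalc\big((\cc_{\mathbb T},F_{\mathbb T}),\fprd\frel\cat R\big)$ and comparing the latter with classical models---keeping the argument internal to the regular-calculus framework. What your substitution buys is brevity: the only non-classical inputs are \cref{thm.syn_equiv}, \cref{thm.carboni_equivalence}, and the fact (from the companion, \cref{prop.syn_is_relational}) that $\fsyn$ lands in $\rrlpocat$ so that \cref{def.models} typechecks. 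What it costs is dependence on the classical representation theorem in its pseudo-natural, $2$-functorial form, which the paper's own footnote flags as the delicate version of that classical statement (a bi-representation rather than a strict one); your naturality bookkeeping---fixing the equivalence $\fsyn(\cc_{\mathbb T},F_{\mathbb T})\eqv\frel\cat C^{\mathrm{reg}}_{\mathbb T}$ once and using that precomposition and the hom-action of a $2$-functor are $2$-natural in $\cat R$---is the right way to assemble the composite, provided that classical input is granted in this strength.
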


\printbibliography

\end{document}